\theoremstyle{plain}
\newtheorem{teorema}{Theorem}[section]
\newtheorem{proposizione}[teorema]{Proposition}
\newtheorem{lemma}[teorema]{Lemma}
\newtheorem*{theorem*}{Theorem}
\theoremstyle{definition}
\newtheorem{definizione}{Definition}[section]
\theoremstyle{remark}
\newtheorem{osservazione}{Remark}[section]
\newcommand{\Mass}{\mathbb{M}}
\newcommand{\R}{\mathbb{R}}
\newcommand{\N}{\mathbb{N}}
\newcommand{\GG}{\mathbb{G}}
\newcommand{\HH}{\mathbb{H}}
\newcommand{\im}{\mathrm{im}}
\newcommand{\Haus}{\mathscr{H}}
\newcommand{\Leb}{\mathscr{L}}
\newcommand{\F}{\mathscr{F}}
\newcommand{\G}{\mathscr{G}}
\newcommand{\dom}{\mathrm{dom}}
\newcommand{\M}{\mathscr{M}}
\newcommand{\bd}{\partial}
\newcommand{\cc}{\text{Carnot-Carath\'eodory }}
\newcommand{\Lip}{\mathrm{Lip}}
\newcommand{\Tan}{\mathrm{Tan}}
\newcommand{\Int}{\mathrm{Int}}
\newcommand{\supp}{\mathrm{supp}}
\newcommand{\Gr}{\mathrm{Gr}}
\newcommand{\wrt}{w.r.t.\ }
\newcommand{\eps}{\varepsilon}
\newcommand{\dV}{d_V\kern-1pt}
\newcommand{\dW}{d_W\kern-1pt}
\newcommand{\scal}[2]{\langle #1\, ; \, #2\rangle}
\DeclareMathOperator{\largewedge}{\mbox{\large$\wedge$}}
\newcommand{\trait}[3]{\vrule width #1ex height #2ex depth #3ex}
\newcommand{\trace}{\mathchoice%
  {\mathbin{\trait{.12}{1.2}{.03}\trait{.8}{0.09}{0.03}}}
  {\mathbin{\trait{.12}{1.2}{.03}\trait{.8}{0.09}{0.03}}}
  {\mathbin{\hskip.15ex\trait{.09}{.84}{0.02}\trait{.56}{.07}{.02}}\hskip.15ex}
  {\mathbin{\trait{.07}{.6}{.01}\trait{.4}{.06}{.01}}}}
\newenvironment{itemizeb}
{\begin{itemize}\itemsep=2pt}
{\end{itemize}}
\newcounter{const}
\newcommand{\newC}{\refstepcounter{const}\ensuremath{C_{\theconst}}}
\newcommand{\oldC}[1]{\ensuremath{C_{\ref{#1}}}}
\newcounter{eps}
\newcommand{\vertiii}[1]{{\left\vert\kern-0.25ex\left\vert\kern-0.25ex\left\vert #1 
    \right\vert\kern-0.25ex\right\vert\kern-0.25ex\right\vert}}
\title{\normalfont\spacedallcaps{On the converse of Pansu's Theorem}} 
\author{\spacedlowsmallcaps{Guido De Philippis\textsuperscript{\dag}, Andrea Marchese\textsuperscript{*}, Andrea Merlo\textsuperscript{**},}\\ \spacedlowsmallcaps{Andrea Pinamonti\textsuperscript{*}, Filip Rindler\textsuperscript{\dag\dag}}}
\date{}
\begin{document}

\renewcommand{\sectionmark}[1]{\markright{\spacedlowsmallcaps{#1}}} 
\lehead{\mbox{\llap{\small\thepage\kern1em\color{halfgray} \vline}\color{halfgray}\hspace{0.5em}\rightmark\hfil}} 
\pagestyle{scrheadings}
\maketitle 
\setcounter{tocdepth}{2}

{\let\thefootnote\relax\footnotetext{{\dag} 
\emph{Courant Institute of Mathematical Sciences}, New York University, 251
Mercer St., New York, NY 10012, USA.
\textit{.}}}
{\let\thefootnote\relax\footnotetext{* \textit{Università di Trento, Via Sommarive 14, 38123 Trento,  Italy.}}}
{\let\thefootnote\relax\footnotetext{** \textit{Universidad del Pa\'is Vasco (UPV/EHU), Barrio Sarriena S/N 48940 Leioa, Spain.}}}
{\let\thefootnote\relax\footnotetext{{\dag\dag} \textit{Mathematics Institute, University of Warwick, Coventry CV4 7AL, United Kingdom.}}}

{\rightskip 1 cm
\leftskip 1 cm
\parindent 0 pt
\footnotesize

	%
{\textsc Abstract.}
We provide a suitable generalisation of Pansu's differentiability theorem to general Radon measures on Carnot groups and we show that if Lipschitz maps between Carnot groups are Pansu-differentiable almost everywhere for some Radon measures $\mu$, then $\mu$ must be absolutely continuous with respect to the Haar measure of the group. 
\par
\medskip\noindent
{\textsc Keywords:} Lipschitz functions, Carnot groups, differentiability, 
Pansu's theorem, Normal currents, Smirnov's theorem.
\par
\medskip\noindent
{\textsc MSC (2010):} 
26B05, 49Q15, 26A27, 28A75, 53C17.
\par
}


%
%

\section{Introduction}
\label{s1}

Rademacher's theorem asserts that Lipschitz functions defined on the Euclidean space are differentiable almost everywhere with respect to the Lebesgue measure. Obviously this result fails if the Lebesgue measure is replaced by an arbitrary measure, for instance a Dirac delta. So it is natural to ask whether this is a rigidity property of the Lebesgue measure, see \cite{preissjfa,ACP1,ACP2,AlbMar}. Namely, does there exist a singular measure for which Rademacher's theorem holds? In \cite{DPR}, the first and the last author showed that the answer to the previous question is negative (in two dimensions, this also follows by combining the main result of~\cite{ACP1,ACP2} with~\cite{AlbMar}). Such a result opened the road to a better understanding of the structure of Lipschitz differentiablility spaces, $RCD(K,N)$ spaces, certain types of Sobolev spaces and also some general measures satisfying linear PDE constraints, see \cite{BURQ, MDPR,GP17,KM18,marino2020short,DPDRG18,Rabasa,Bate2}.

In \cite{Cheeger}, Cheeger generalized Rademacher's theorem to the setting of metric spaces endowed with a doubling measure and a Poincar\'e type inequality. This has inspired a lot of research in the area of analysis on metric measure spaces.  The notion of Lipschitz differentiability space has been later axiomatised  by Keith (\cite{Keith}). In \cite{Bate}, Bate characterized Lipschitz differentiability spaces in terms of the existence of a sufficiently rich family of representations of the underlying measure as an integral of Lipschitz curve fragments.

Carnot groups are connected, simply connected, nilpotent Lie groups whose Lie algebra is stratified.
Referring to the next section for more details,  we only mention here  that they are metric measure spaces whose ambient vector space is $\R^d$, the metric allowing movements only along certain horizontal curves, tangent to a given smooth non-involutive
distribution of planes, the so-called first layer of the Lie algebra stratification. One can define a natural notion of differentiablity for functions between Carnot groups and a seminal  theorem of Pansu,  \cite{Pansu}, proves  the analogue
of Rademacher's theorem in this setting, see also \cite{MaPiSp}. In particular,  Carnot groups endowed with the Haar measure are Lipschitz differentiability spaces.

\smallskip
Also in the context of Canot groups, it is then natural to understand how rigid is the Haar measure for the validity of a Rademacher type theorem. 
In this paper we  prove the analogue in the Sub-Riemannian setting of the result proved in \cite{DPR}, namely that the Haar meaesure is indeed (essentially) the unique measure with respect to which is possible to differentiate Lipschitz function almost everywhere.

\begin{teorema}\label{t:main}
Let $\GG$ be a Carnot group and $\mathbb{H}$ be an homogeneous group and let $\mu$ be a Radon measure on $\GG$. If every Lipschitz function $f:\GG\to\HH$ is Pansu-differentiable $\mu$-almost everywhere, see Definition \ref{def:Pansu_diff_sub}, then $\mu$ is absolutely continuous with respect to the Haar measure on $\GG$.
\end{teorema}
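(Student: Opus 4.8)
The plan is to argue by contradiction. Let $\mu = \mu^a + \mu^s$ be the Radon--Nikodym decomposition of $\mu$ with respect to the Haar measure of $\GG$, which up to a multiplicative constant equals $\Haus^Q$, with $Q$ the homogeneous dimension of $\GG$, and suppose $\mu^s \neq 0$. Since $\mu^s \le \mu$ the hypothesis applies verbatim to $\mu^s$, so we may assume $\mu = \mu^s \perp \Haus^Q$ and $\mu \neq 0$, and we must reach a contradiction; moreover, as $\R$ is an abelian homogeneous group, it suffices to use the hypothesis for the target $\HH = \R$.

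\emph{Step 1 (from differentiability to a full decomposability bundle).} In the spirit of the generalised Pansu differentiability theorem (see Definition~\ref{def:Pansu_diff_sub}) and of \cite{AlbMar}, attach to $\mu$ a $\mu$-measurable field $x \mapsto V(\mu,x)$ of subspaces of the horizontal layer $H_x\GG$ with: every Lipschitz $f : \GG \to \R$ Pansu-differentiable along $V(\mu,x)$ at $\mu$-a.e.\ $x$; and, conversely, whenever a $\mu$-measurable field $W(x) \subseteq H_x\GG$ has $W(x) \not\subseteq V(\mu,x)$ on a set of positive $\mu$-measure, some Lipschitz $f : \GG \to \R$ fails to be Pansu-differentiable along $W$ there, the latter by transplanting the Alberti--Marchese construction to $\GG$. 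Combining the hypothesis with the second property forces $V(\mu,x) = H_x\GG$ for $\mu$-a.e.\ $x$; equivalently, fixing a global horizontal frame $X_1,\dots,X_m$, for each $i$ the measure $\mu$ (on a set of almost full measure) admits an Alberti representation by horizontal Lipschitz curves with $\Haus^1$-absolutely continuous profiles dominated by $\mu$, whose velocities are positive multiples of $X_i$ up to an arbitrarily small transverse error.

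\emph{Step 2 (normal currents and Smirnov's theorem).} Viewing $\GG$ as $\R^d$, each such family integrates to a normal $1$-current $T_i = \int \llbracket \gamma \rrbracket \, \de\pi_i(\gamma)$ with $c\,\mu \le \|T_i\|$ on a set of almost full $\mu$-measure, $\vec{T}_i$ close to $X_i$ there, and $\Mass(\partial T_i) < \infty$. Applying Smirnov's decomposition theorem to each $T_i$, without cancellation and compatibly across $i = 1,\dots,m$, shows that $\mu$ is simultaneously carried by Lipschitz horizontal curve families running in all $m$ independent horizontal directions -- on each curve of which $\mu$ restricts absolutely continuously with respect to arclength -- so that the directions available at $\mu$-a.e.\ point span the whole horizontal layer. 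This coupling rules out the degenerate configuration, a mass concentrated on a single integral curve, that a representation in one direction alone would allow.

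\emph{Step 3 (the density estimate; the main obstacle).} It remains to deduce $\mu \ll \Haus^Q$, and here the stratification of $\GG$ is used decisively. Because the horizontal distribution is bracket-generating, concatenating short arcs from the $m$ curve families fills, inside a Carnot--Carath\'eodory ball $B(x,r)$, a quantitatively large and $Q$-dimensionally spread amount of mass; combined with the ball--box theorem and the scale invariance supplied by the dilations $\delta_r$ -- whose anisotropic weights reproduce exactly the layer contributions to $Q$ -- this yields $\limsup_{r\to 0}\mu(B(x,r))/\Haus^Q(B(x,r)) < \infty$ for $\mu$-a.e.\ $x$. A measure with $\mu$-a.e.\ finite upper $\Haus^Q$-density has no nontrivial part singular with respect to $\Haus^Q$, so $\mu = 0$, contradicting $\mu \neq 0$. (An alternative for this last implication: recognise the $\R^{dm}$-valued measure assembled from $T_1,\dots,T_m$ as annihilated by a constant-rank first-order operator on $\GG$ and invoke the De Philippis--Rindler structure theorem for $\mathcal A$-free measures \cite{DPR}, whose singular part must have polar in the associated wave cone -- excluded by full horizontal rank.) The only genuinely new ingredients over \cite{DPR} are the $\GG$-adapted sharpness in Step~1 and the density argument of Step~3, in which the stratification intervenes; Steps~1 and~2 are otherwise soft.
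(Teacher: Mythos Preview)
Your outline tracks the paper's strategy in broad strokes, but there are two substantive gaps where you have understated genuine difficulties.

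In Step~1 you propose to force $V(\mu,x)=H_x\GG$ by ``transplanting the Alberti--Marchese construction'' of a Lipschitz function non-differentiable outside $V(\mu,\cdot)$. That sharpness construction in \cite{AlbMar} hinges on extending a Lipschitz fragment $\gamma:K\to\R^n$ to a full curve on an interval while keeping the velocity inside a prescribed cone; for horizontal curves in Carnot groups this extension property \emph{fails in general} (see \cite{unextendablecurve,JS2017}), and the paper explicitly flags the sharpness of the decomposability bundle on $\GG$ as open. The paper bypasses this entirely: it observes that the Pansu hypothesis makes $(\GG,d_c,\mu)$ a Lipschitz differentiability space with global chart $\pi_1$ (Proposition~\ref{prop:lipdiffspace}) and then invokes Bate's universal Alberti representations \cite{Bate} to obtain curve families in every horizontal direction, whence $V(\mu,x)=\GG$.

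In Step~3 both alternatives you offer break down. The density/ball--box sketch is too vague to carry weight; there is no evident mechanism that turns $n_1$ one-dimensional curve families into a filling of a $Q$-dimensional ball controlling $\mu$. The second alternative --- invoking the structure theorem of \cite{DPR} for $\mathcal{A}$-free measures --- fails for a sharper reason. That theorem is for \emph{constant-coefficient} operators, whereas the horizontal divergence $\sum_i X_i$ has variable coefficients. If instead you use the Euclidean divergence on each of the $n_1$ currents (which is constant-coefficient), the polar of the assembled $\R^{n\times n_1}$-valued measure at $x_0$ is the matrix $\mathscr{C}(x_0)=[X_1(x_0),\dots,X_{n_1}(x_0)]$, which has rank $n_1<n$ whenever $\GG$ has step $\ge 2$; any $n_1$-tuple of vectors in $\R^n$ lies in the wave cone of the $n_1$-fold divergence, so \cite{DPR} yields no constraint. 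The paper's Proposition~\ref{propofond} instead redoes the blow-up argument of \cite{DPR} from scratch in the hypoelliptic setting: it replaces the Laplacian by the sub-Laplacian $\sum_i\kappa_i^2X_i^2$, uses Folland's homogeneous fundamental solution and the Calder\'on--Zygmund theory for kernels of type~$0$ on $\GG$ \cite{folland75}, and also has to work around the absence of a Besicovitch covering theorem (Appendix). This hypoelliptic adaptation is the genuinely new analytic input and cannot be replaced by a black-box citation of \cite{DPR}.
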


In order to prove  Theorem \ref{t:main} we follow  the same strategy of its euclidean counterpart. First we generalize the work of  Alberti and the second author \cite{AlbMar} by associateing to every Radon measure \(\mu\)
on \(\GG\) a decomposability bundle $V(\mu,\cdot)$ which identifies a set of directions along which a  Rademacher-type theorem, adapted to the measure $\mu$ holds true, see Section \ref{s-decomp}. More precisely we have the following theorem,  where we refer to Definition \ref{def:Pansu_diff_sub} for the notion  of differentiability along a homogeneous subgroup.

\begin{teorema}\label{t:differentiability_along_bundle}
Let $\mu$ be a Radon measure on a Carnot group $\GG$. For every homogeneous group $\HH$, every Lipschiz function $f:\GG\to\HH$ is Pansu differentiable at $\mu$-almost every $x\in\GG$ with respect to the homogeneous subgroup $V(\mu,x)$.
\end{teorema}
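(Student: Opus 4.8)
My plan is to reduce to real-valued target maps and then to assemble the Pansu differential along $V(\mu,x)$ from one-dimensional directional derivatives, closely following the scheme of Alberti and the second author \cite{AlbMar}. For the reduction, one writes points of the homogeneous group $\HH$ in exponential (graded) coordinates and composes $f$ with the projection onto the first layer $V_1(\HH)\cong\R^m$; since Pansu differentiability of a Lipschitz map into a homogeneous group is equivalent to that of its first-layer part (by standard arguments, cf.\ \cite{Pansu,MaPiSp}) and is componentwise in the target, it suffices to treat a Lipschitz $f:\GG\to\R$, with $\R$ carrying the standard dilations. Thus the goal becomes: for $\mu$-a.e.\ $x$ produce a homogeneous homomorphism $L_x:V(\mu,x)\to\R$ such that
\[
\lim_{r\to0^+}\ \sup_{w\in K}\Big|\tfrac{1}{r}\big(f(x\,\delta_r w)-f(x)\big)-L_x(w)\Big|=0
\qquad\text{for every compact }K\subseteq V(\mu,x),
\]
$\delta_r$ being the dilations of $\GG$.

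Next I would extract directional derivatives. By the construction of the decomposability bundle in Section~\ref{s-decomp} one may fix a countable family of decompositions $\mu=\int\mu^i_t\,d\pi^i(t)$ into measures carried by horizontal Lipschitz curve fragments whose unit tangents span $V(\mu,x)\cap V_1$ at $\mu$-a.e.\ $x$. Restricted to such a fragment, $f$ is a real Lipschitz function of arclength, hence $\Leb^1$-a.e.\ differentiable by the classical Rademacher theorem; a Fubini-type argument applied to each of the countably many decompositions then gives, for $\mu$-a.e.\ $x$, the derivative $\partial_v f(x):=\lim_{r\to0}\tfrac1r\big(f(x\,\delta_r v)-f(x)\big)$ for every $v$ in a countable dense subset $D_x$ of $V(\mu,x)\cap V_1$.

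The crux---and what I expect to be the main obstacle---is to show that for $\mu$-a.e.\ $x$ the map $v\mapsto\partial_v f(x)$ is the restriction of a linear functional $\lambda_x$ on $V(\mu,x)\cap V_1$. Here one uses the \emph{maximality} built into $V(\mu,\cdot)$, in the spirit of \cite{AlbMar}: were the directional derivatives non-linear on a set of positive $\mu$-measure, then, after subtracting a measurably chosen best linear approximation of $f$, one could build a new family of horizontal Lipschitz-curve decompositions of $\mu$ realising a horizontal direction not contained in $V(\mu,\cdot)$, contradicting its defining maximality. The genuinely new features relative to the Euclidean case are that the fragments must be horizontal, distances are Carnot--Carath\'eodory, and first-layer directions no longer commute; the non-commutativity is felt only through second-order dilation terms and is handled via the Baker--Campbell--Hausdorff formula, but checking that the Euclidean maximality argument survives these constraints is the delicate point.

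Finally, one upgrades this to differentiability along all of $V(\mu,x)$. Let $L_x$ be the homogeneous homomorphism extending $\lambda_x$ and vanishing on the higher layers. The rescalings $f_r(w):=\tfrac1r(f(x\,\delta_r w)-f(x))$ are uniformly Lipschitz, so by Arzel\`a--Ascoli every blow-up limit is Lipschitz; on the finite-dimensional horizontal part, pointwise convergence on the dense set $D_x$ plus a finite-net argument (the second-order BCH terms again being negligible) yields the uniform convergence to $\lambda_x$ on compacts of $V(\mu,x)\cap V_1$. For a direction $w$ in a higher layer of $V(\mu,x)$, generated by horizontal $v_1,\dots,v_k\in V(\mu,x)\cap V_1$, one writes $\delta_r w$ as a commutator word of bounded length in the elements $\delta_r v_1,\dots,\delta_r v_k$ and estimates $f$ along that word using the $\mu$-a.e.\ differentiability along the $v_j$ together with a Lebesgue-differentiation/Fubini argument along the curve foliations to control the intermediate base points; the legs cancel in pairs, so $f(x\,\delta_r w)-f(x)=o(r)$, consistent with $L_x(w)=0$. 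Undoing the first-step reduction then produces the homogeneous homomorphism $V(\mu,x)\to\HH$, which finishes the proof.
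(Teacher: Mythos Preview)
Your proposal has a genuine gap at what you yourself flag as the crux: the additivity/linearity of the directional derivatives $v\mapsto\partial_v f(x)$ on $V(\mu,x)\cap V_1$. The ``maximality contradiction'' you sketch does not work. Curve decompositions of $\mu$ are a property of $\mu$ alone; subtracting from $f$ a measurably chosen linear part does not manufacture new horizontal curve decompositions, and in particular it cannot produce tangent directions outside $V(\mu,\cdot)$. If $v_1,v_2\in V(\mu,x)\cap V_1$ then also $v_1+v_2\in V(\mu,x)\cap V_1$, so there is simply no direction that could contradict maximality. What has to be shown is that $\partial_{v_1*v_2} f(x)$ exists and equals $\partial_{v_1}f(x)\,\partial_{v_2}f(x)$, and this needs a mechanism to move the base point from $x$ to a point near $x*\delta_t v_1$ where differentiability along $v_2$ is again available. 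Your phrase ``Lebesgue-differentiation/Fubini argument along the curve foliations to control the intermediate base points'' gestures at this, but in Carnot groups the intermediate point $x*\delta_t v_1$ need not lie on \emph{any} curve of the $v_2$-foliation, and the Euclidean fix of extending fragments to full curves with tangent in a cone is known to fail (see the references to \cite{unextendablecurve,JS2017} in the introduction).

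The paper supplies the missing mechanism in two steps. First, via the normal-current machinery of Sections~\ref{s4} and \ref{s-decomp} (Proposition~\ref{p:normal}, Theorem~\ref{theoAu=Dec}, Proposition~\ref{costruzionecampi}) together with Smirnov's theorem, one produces Borel vector fields $\zeta_1,\dots,\zeta_{n_1}$ spanning $V(\mu,\cdot)\cap V_1$ such that \emph{every} Lipschitz $f:\GG\to\HH$ is differentiable along each $\zeta_i$ at $\mu$-a.e.\ point (Lemma~\ref{LEMMAvector fieldS}). Second---and this is the key idea you are missing---Proposition~\ref{prop:diff_impl_direct} applies this directional differentiability to the Lipschitz function $g(y):=\inf\{r:\mu(B(y,r)\cap B)>0\}$ to conclude that, for any $\mu$-positive set $B$ and $\mu$-a.e.\ $x\in B$, there exist points $x(t)\in B$ with $d(x(t),x*\delta_t\zeta(x))=o(t)$. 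With this ``density along directions'' in hand and an Egoroff/Lusin reduction to a set where the incremental ratios converge uniformly, Proposition~\ref{composition} proves the product rule $Df(x,\zeta_{i_1}^{\beta_1}\zeta_{i_2}^{\beta_2})=Df(x,\zeta_{i_1})^{\beta_1}Df(x,\zeta_{i_2})^{\beta_2}$ directly, not by contradiction. Theorem~\ref{th:main2.1} then iterates this over a countable dense set of words and passes to the closure to get the homogeneous homomorphism on all of $\mathfrak{S}(\{\zeta_i(x)\})=V(\mu,x)$. Your higher-layer step (commutator words, ``legs cancel in pairs'') is exactly this iteration, but it presupposes the product rule you have not established.
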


Once this bundle is obtained, we exploit the work of  Bate \cite{Bate} to show that for a  measure $\mu$ satisfying the assumptions of Theorem \ref{t:main},  $V(\mu,x)=\GG$ for $\mu$-a.e. $x$, see Proposition \ref{prop:decomposizionei}. Finally we show that this forces \(\mu\) to be absolutely continuous with respect to the Haar measure. This last step is obtained by a PDE type argument that extends the result of \cite{DPR} to the hypoelliptc setting.

We note that, although the general strategy follows the one used to prove the euclidean counterpart of Theorem \ref{t:main}, its adaptation to the Carnot setting requires several  non trivial adjustments. In particular, one of the key step in the proof of the converse of  Rademacher theorem is the link between the fact that the decomposability bundle of a measure has full dimension and the existence of a suitable family of normal currents, proved in \cite[Section 6]{AlbMar}. This is indeed a crucial point in order to rely on the results in \cite{DPR}. The key  geometric property used  to show the existence of this family of currents is  the fact that, given a compact set \(K\subset [0,1]\) and a Lipschitz  fragment \(\gamma: K \to \mathbb{R}^n\) with \(\gamma`(t)\) belonging to a cone \(C\) for a.e. \(t \in K\), this mostly coincides, locally a.e., with a full Lipschitz curve \(\tilde{\gamma}: (a,b)\subset[0,1] \to \mathbb{R}^n\), still satisfying \(\tilde{\gamma}`(t)\in C\), for a.e. $t\in(a,b)$, see \cite[(6.13)]{AlbMar}. This property is in general false for Carnot  groups, \cite{unextendablecurve} and \cite{JS2017} and it requires specific assumptions to be true \cite{ZimmermanC1,Speight2016,LDS,PSZ}. We need thus to rely  to a completely different construction which we believe to be of its own interest, see Section \ref{s4}.

The second key point consists  in the extension of  the theory established in \cite{DPR} to the setting of differential operators defined by H\"ormander type vector fields. Indeed the results in \cite{DPR} strongly rely on the notion of wave cone associated with a differential operator which is, loosely speaking, related to the notion of ellipticity. This notion is too strong in this context and it should be relaxed to the notion of hypoellipticty, which however is less "explicit". Luckily for second order operators (which are the only ones needed in this context), the notions can be characterized algebraically and this allows to adapt the proofs in \cite{DPR} to this setting, see Proposition \ref{propofond}. We conclude by noticing that it is an interesting question to extend the full results of \cite{DPR} to a "hypoellitpic wave cone".

	\subsection*{Acknowledgements.}
	The work of G.D.P is  partially supported by the NSF grant DMS 2055686 and by the Simons Foundation. A.Ma. acknowledges partial support from PRIN 2017TEXA3H\_002 "Gradient flows, Optimal Transport and Metric Measure Structures".   
During the writing of this work	  A.Me.~was supported by the Simons Foundation grant 601941, GD., by the Swiss National Science Foundation
(grant 200021-204501 `\emph{Regularity of sub-Riemannian geodesics and
applications}')
and by the European Research Council (ERC Starting Grant 713998 GeoMeG and by the European Union’s Horizon Europe research and innovation programme under the Marie Sk\l odowska-Curie grant agreement no 101065346.
A. Pi. is partially supported by the Indam-GNAMPA project 2022 ``\textit{Problemi al bordo e applicazioni geometriche}, codice CUP\_E55\-F22\-00\-02\-70\-001\,'' and by MIUR and the University of Trento, Italy.
F.R. acknowledges funding from the European Research Council (ERC) under the European Union's Horizon 2020 research and innovation programme, grant agreement No 757254 (SINGULARITY).


\section*{List of notations}
We add below a list of frequently used notations, together with the page of their first appearance:
\medskip

\begin{longtable}{c p{0.7\textwidth} p{\textwidth}}

$\lvert\cdot\rvert$\label{euclide} & Euclidean norm, & \pageref{euclide}\\

$d_c$ & \cc metric & \pageref{ccmetrix}\\

$\delta_\lambda$ & intrinsic dilations&\pageref{dilatax}\\

$X_i$ & canonical horizontal vector fields &\pageref{campii}\\

$\mathscr{M}(\R^n,\R^n)$& family of vector-valued Radon measures endowed with the topology of weak* topology &\label{Mspace}\pageref{Mspace}\\

$B(x,r)$& ball of centre $x$ and radius $r$ with respect to the metric $d_c$\label{pallasubx}&\pageref{pallasubx}\\

$U(x,r)$ & ball of centre $x$ and radius $r$ with respect to the Euclidean metric \label{pallaeux}&\pageref{pallaeux}\\

$\mathrm{Gr}(\mathbb{G})$ & Grassmannian of homogeneous subgroups of $\mathbb{G}$&\pageref{def:Grassmannian}\\

$\mathrm{Gr}_\mathfrak{C}(\mathbb{G})$ & Grassmannian of Carnot subgroups of $\mathbb{G}$&\pageref{def:Grassmannian}\\

$V(\mu,\cdot)$& decomposability bundle of the Radon measure & \pageref{Dec-bundle}\\

$N(\mu,\cdot)$& auxiliary decomposability bundle of the Radon measure & \pageref{def:auxdecbundle}\\

$\partial T$ & boundary of current $T$  & \pageref{def:boundarycurrent}\\

$d_Vf(x)$ & differential of a Borel map $f$ along the subgroup $V\in \mathrm{Gr}(\mathbb{G})$&\pageref{def:Pansu_diff_sub}\\ 
\end{longtable}

\section{Notation and preliminaries}
\label{s2}


\subsection{Preliminaries on Carnot groups}
\label{s-prelim_carnot}

In this subsection we briefly introduce some notations on Carnot groups that we will extensively use throughout the paper. For a detailed account on Carnot groups we refer to \cite{LD17}.

A Carnot group $\mathbb{G}$ of step $\mathfrak s$  is a connected and simply connected Lie group whose Lie algebra $\mathfrak g$ admits a stratification $\mathfrak g=V_1\, \oplus \, V_2 \, \oplus \dots \oplus \, V_{\mathfrak{s}}$. We say that $V_1\, \oplus \, V_2 \, \oplus \dots \oplus \, V_\mathfrak{s}$ is a {\em stratification} of $\mathfrak g$ if $\mathfrak g = V_1\, \oplus \, V_2 \, \oplus \dots \oplus \, V_\mathfrak{s}$,\label{straix}
$$
[V_1,V_i]=V_{i+1}, \quad \text{for any $i=1,\dots,\mathfrak{s}-1$}, \quad \text{and} \quad [V_1,V_\mathfrak{s}]=\{0\},
$$ 
where $[A,B]:=\mathrm{span}\{[a,b]:a\in A,b\in B\}$. We call $V_1$ the \emph{horizontal layer} of $\mathbb G$. We denote by $n$ the topological dimension of $\mathfrak g$ and by $n_j$ the dimension of $V_j$ for every $j=1,\dots,\mathfrak{s}$. 
Furthermore, we define $\pi_i:\mathfrak{g}\to V_i$ to be the projection maps on the $i$-th strata. 
We will often shorten the notation to $v_i:=\pi_iv$.

The exponential map $\exp :\mathfrak g \to \mathbb{G}$ is a global diffeomorphism from $\mathfrak g$ to $\mathbb{G}$.
Hence, if we choose a basis $\{X_1,\dots , X_n\}$ of $\mathfrak g$,  any $p\in \mathbb{G}$ can be written in a unique way as $p=\exp (p_1X_1+\dots +p_nX_n)$. This means that we can identify $p\in \mathbb{G}$ with the $n$-tuple $(p_1,\dots , p_n)\in \R^n$ and the group $\mathbb{G}$ itself with $\R^n$ endowed with $*$ the group operation determined by the Baker-Campbell-Hausdorff formula. \emph{From now on, we will always assume that $\mathbb{G}=(\R^n,*)$ and, as a consequence, that the exponential map $\exp$ acts as the identity.}

The stratificaton of $\mathfrak{g}$ carries with it a natural family of dilations $\delta_\lambda :\mathfrak{g}\to \mathfrak{g}$, that are Lie algebra automorphisms of $\mathfrak{g}$ and are defined by\label{dilatax}
\begin{equation}
     \delta_\lambda (v_1,\dots , v_\mathfrak{s}):=\begin{cases}
     (\lambda v_1,\lambda^2 v_2,\dots , \lambda^\mathfrak{s} v_\mathfrak{s}), \quad &\text{for any $\lambda>0$},\\
     (-\lvert \lambda\rvert v_1,-\lvert\lambda\rvert^2 v_2,\dots , -\lvert\lambda\rvert^\mathfrak{s} v_\mathfrak{s}), \quad &\text{for any $\lambda\leq 0$}.
     \end{cases}
     \nonumber
\end{equation}
where $v_i\in V_i$. The stratification of the Lie algebra $\mathfrak{g}$  naturally induces a gradation on each of its homogeneous Lie sub-algebras $\mathfrak{h}$, i.e., a sub-algebra that is $\delta_{\lambda}$-invariant for any $\lambda>0$, that is
\begin{equation}
    \mathfrak{h}=V_1\cap \mathfrak{h}\oplus\ldots\oplus V_\mathfrak{s}\cap \mathfrak{h}.
    \label{eq:intr1}
\end{equation}
We say that $\mathfrak h=W_1\oplus\dots\oplus W_{\mathfrak{s}}$ is a {\em gradation} of $\mathfrak h$ if $[W_i,W_j]\subseteq W_{i+j}$ for every $1\leq i,j\leq \mathfrak{s}$, where we mean that $W_\ell:=\{0\}$ for every $\ell > \mathfrak{s}$.
Since the exponential map acts as the identity, the Lie algebra automorphisms $\delta_\lambda$ are also group automorphisms of $\mathbb{G}$.

\begin{definizione}\label{homsub}
A subgroup $\mathbb V$ of $\mathbb{G}$ is said to be \emph{homogeneous} if it is a Lie subgroup of $\mathbb{G}$ that is invariant under the dilations $\delta_\lambda$.
A homogeneous subgroup $\mathbb V\subset \mathbb{G}$ is called {\em horizontal  subgroup} if $\mathbb V\subseteq \exp(V_1)=V_1$.
\end{definizione}
 The following general fact will play a crucial role later on.

\begin{proposizione}\label{prop:hom1}
Suppose $H$ is a closed subgroup of $\mathbb{G}\cong(\R^n,*)$. Then $H$ can be identified with a vector subspace of $\R^n$. In particular, homogeneous closed subgroups of $\mathbb{G}$ are in bijective correspondence through $\exp$ with the Lie sub-algebras of $\mathfrak{g}$ that are invariant under the dilations $\delta_\lambda$. 
\end{proposizione}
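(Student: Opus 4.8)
I would reduce everything to the classical structure theory of simply connected nilpotent Lie groups, using crucially the normalisation $\exp=\mathrm{id}$ fixed in the text. The starting observation is that, since $\mathfrak g$ is nilpotent, the Baker--Campbell--Hausdorff series terminates, so $x*y=x+y+\tfrac12[x,y]+\cdots$ is a polynomial expression involving only $x$, $y$ and iterated brackets. Hence, if $W\subseteq\R^n$ is a linear subspace with $[W,W]\subseteq W$ (a Lie subalgebra of $\mathfrak g$), then $(W,*)$ is a subgroup of $\GG$: it is stable under $*$ because every BCH term lands in $W$, and $x^{-1}=-x\in W$. Moreover $(W,*)$ is a linear subspace of $\R^n$, hence closed and connected, and since $*$ is smooth it is an embedded Lie subgroup whose Lie algebra at $0$ is precisely $W$ (the induced bracket is the restriction of that of $\mathfrak g$). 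Thus, via $\exp=\mathrm{id}$, every Lie subalgebra $\mathfrak h\subseteq\mathfrak g$ yields a closed connected Lie subgroup $\exp(\mathfrak h)=\mathfrak h$ of $\GG$ with Lie algebra $\mathfrak h$.

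Next I would prove the first assertion for a closed \emph{connected} subgroup $H\subseteq\GG$ (the connectedness hypothesis being harmless here, since the subgroups we ultimately care about, the homogeneous ones, are automatically connected --- see below). By Cartan's closed subgroup theorem $H$ is an embedded Lie subgroup, and $\mathfrak h:=T_0H$ is a Lie subalgebra of $\mathfrak g$. Then $H$ and $(\mathfrak h,*)$ are two connected Lie subgroups of $\GG$ with the same Lie algebra $\mathfrak h$, so by uniqueness of the connected subgroup integrating a given subalgebra they coincide as subsets; therefore $H=\mathfrak h$ is a vector subspace of $\R^n$, and $\exp^{-1}(H)=\mathfrak h$. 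Equivalently, one may invoke directly the structure theorem for simply connected nilpotent Lie groups: $\exp$ restricts to a diffeomorphism of each subalgebra onto a closed subgroup, and every connected Lie subgroup is obtained this way.

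For the "in particular" part, I would first note that every homogeneous subgroup $H$ is path-connected: for $p\in H$ the curve $\lambda\mapsto\delta_\lambda p$ is continuous on $[0,1]$, lies in $H$ for $\lambda\in(0,1]$ by homogeneity, and converges to $0\in H$ as $\lambda\to0$. So the previous step gives $H=\mathfrak h$ with $\mathfrak h=T_0H$; and since under $\exp=\mathrm{id}$ the group automorphisms $\delta_\lambda$ of $\GG$ are literally the Lie algebra automorphisms $\delta_\lambda$ of $\mathfrak g$, the subalgebra $\mathfrak h=H$ is $\delta_\lambda$-invariant. Conversely, by the first paragraph a $\delta_\lambda$-invariant Lie subalgebra $\mathfrak h$ exponentiates to the closed connected subgroup $\exp(\mathfrak h)=\mathfrak h$, which is $\delta_\lambda$-invariant, i.e. a homogeneous subgroup with Lie algebra $\mathfrak h$. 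The assignments $H\mapsto T_0H$ and $\mathfrak h\mapsto\exp(\mathfrak h)$ are then mutually inverse ($\exp(T_0H)=H$ for connected $H$, and $T_0(\exp\mathfrak h)=\mathfrak h$), which is exactly the claimed bijection.

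The only non-formal ingredient is the structure theory of simply connected nilpotent Lie groups --- that connected Lie subgroups are closed and correspond bijectively to Lie subalgebras --- together with Cartan's theorem for the embeddedness of $H$; the rest is bookkeeping with the BCH formula and with the identification $\exp=\mathrm{id}$, so I do not anticipate a genuine obstacle. The single point deserving care is the reduction to the connected case: "closed subgroup" has to be read as connected (analytic) subgroup, which in the situation of interest --- homogeneous subgroups --- holds automatically.
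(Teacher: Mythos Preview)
Your proof is correct and follows the same route as the paper's: invoke Cartan's closed subgroup theorem and then use the identification $\exp=\mathrm{id}$ to conclude that $H$ coincides with its Lie algebra as a subset of $\R^n$. The paper's argument is only three lines and omits the details you supply --- the BCH argument that subalgebras are closed under $*$, the uniqueness of the connected subgroup integrating a given subalgebra, and the bijection for the homogeneous case.

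You are also right to flag the connectedness issue, which the paper's proof glosses over: the first sentence of the proposition is literally false for an arbitrary closed subgroup (e.g.\ $\mathbb{Z}\subset\R$), and the identification $H=\mathfrak h$ genuinely requires $H$ connected. Your observation that homogeneous subgroups are automatically path-connected via $\lambda\mapsto\delta_\lambda p$ is exactly what is needed to make the ``in particular'' clause go through, and this is the only part of the proposition actually used later in the paper.
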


\begin{proof}
Thanks to \cite[Theorem 3.6]{Cartan} we know that $H$ is a Lie subgroup of $\mathbb{G}$. In particular its Lie algebra $\mathfrak{h}$ is a Lie subalgebra of $\mathfrak{g}$. Thanks to the definition of the operation $*$, the exponential map $\mathrm{exp}$ acts as the identity and thus $H$, 
can be identified with its Lie algebra in $\mathfrak{g}\cong \R^n$ and thus it can be viewed as a vector subspace of $\R^n$.
\end{proof}

\medskip

\emph{From now on, since as already remarked $\mathrm{exp}$ acts as the identity thanks to the choice of $*$, we will always identify with abuse of notation the elements of $\GG$, with their preimage under $\mathrm{exp}$ in $\mathfrak{g}$.} \emph{In addition, from now on and if not otherwise stated, $\mathbb G$ will be a fixed Carnot group}.

\begin{definizione}[Homogeneous left-invariant distance and norm]
A metric $d:\mathbb{G}\times \mathbb{G}\to \R$ is said to be {\em homogeneous} and {\em left-invariant} if for any $x,y\in \mathbb{G}$ we have, respectively
\begin{itemize}
    \item[(i)] $d(\delta_\lambda x,\delta_\lambda y)=\lambda d(x,y)$ for any $\lambda>0$,
    \item[(ii)] $d(\tau_z x,\tau_z y)=d(x,y)$ for any $z\in \mathbb{G}$.
\end{itemize}
Given a homogeneous left-invariant distance, its associated norm is defined by $\|g\|_{d}:=d(g,0)$, for every $g\in\mathbb G$, where $0$ is the identity element of $\mathbb G$\footnote{In the following we will leave the dependence of the norm on the metric always implicit.}. 
Given a homogeneous left-invariant distance $d$ on $\mathbb G$, for every $x\in \mathbb G$ and every $E\subseteq \mathbb G$ we define $\mathrm{dist}(x,E):=\inf\{d(x,y):y\in E\}$.
\end{definizione}

The specific choice of the metric is not relevant for our purposes thanks to the following result, we refer to \cite[Proposition 5.1.4]{equivmetr} for a proof.

\begin{proposizione}\label{equivbilip}
Assume $d_1,d_2$ are two homogeneous left-invariant metrics on $\mathbb{G}$. Then there exists a constant $C>0$ depending on $d_1$ and $d_2$ such that $C^{-1}d_1(x,y)\leq d_2(x,y)\leq Cd_1(x,y)$ for any $x,y\in\mathbb{G}$.
\end{proposizione}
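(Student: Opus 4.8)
The plan is to reduce the statement --- via left-invariance --- to a comparison of the two associated homogeneous \emph{norms}, then --- via the dilation-homogeneity --- to a \emph{compactness} statement on a single sphere, and finally to invoke the fact that a homogeneous left-invariant distance induces the Euclidean topology on $\GG\cong\R^n$; this last point is where the real work lies.

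\emph{Reduction to norms on a sphere.} By property (ii), $d_i(x,y)=d_i(\tau_{x^{-1}}x,\tau_{x^{-1}}y)=d_i(0,x^{-1}*y)=\|x^{-1}*y\|_{d_i}$ for $i=1,2$, so it suffices to find $C>0$ with $C^{-1}\|g\|_{d_1}\le\|g\|_{d_2}\le C\|g\|_{d_1}$ for all $g\in\GG$. Since each $d_i$ is a metric, $\|g\|_{d_i}=0$ if and only if $g=0$, and property (i) gives $\|\delta_\lambda g\|_{d_i}=\lambda\|g\|_{d_i}$ for $\lambda>0$. Writing a nonzero $g$ as $g=\delta_{\|g\|_{d_1}}h$ with $h:=\delta_{1/\|g\|_{d_1}}g$, we get $\|h\|_{d_1}=1$ and $\|g\|_{d_2}/\|g\|_{d_1}=\|h\|_{d_2}$ with $h$ ranging over $S:=\{g\in\GG:\|g\|_{d_1}=1\}$. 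Hence it is enough to show
$$0<\inf_{S}\|\cdot\|_{d_2}\ \le\ \sup_{S}\|\cdot\|_{d_2}<\infty,$$
which follows from the extreme value theorem once $S$ is known to be Euclidean-compact and $\|\cdot\|_{d_2}$ Euclidean-continuous (it is automatically strictly positive on $S$, as $0\notin S$), with $C:=\max\{\sup_S\|\cdot\|_{d_2},(\inf_S\|\cdot\|_{d_2})^{-1}\}$.

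\emph{The key lemma.} We are thus reduced to the following: any homogeneous left-invariant distance $d$ on $\GG$ induces the Euclidean topology, i.e.\ $\|\cdot\|_d$ is Euclidean-continuous and the closed balls $\overline B_d(0,r)$ are Euclidean-compact. For continuity I would use exponential coordinates of the second kind (a standard fact for simply connected nilpotent Lie groups): $(t_1,\dots,t_n)\mapsto(t_1X_1)*\cdots*(t_nX_n)$ is a global diffeomorphism onto $\GG$, and if $X_i\in V_{j_i}$ then $t_iX_i=\delta_{|t_i|^{1/j_i}}(\pm X_i)$; hence by the triangle inequality and homogeneity
$$\|g\|_d\ \le\ \sum_{i=1}^n\|t_iX_i\|_d\ \le\ \sum_{i=1}^n c_i\,|t_i(g)|^{1/j_i},\qquad c_i:=\max\{d(X_i,0),d(-X_i,0)\}<\infty.$$
Since $g\mapsto t(g)$ is continuous with $t(0)=0$, this makes $\|\cdot\|_d$ bounded on Euclidean-bounded sets and continuous at $0$; together with $|\,\|g\|_d-\|g'\|_d\,|\le d(g,g')=\|g^{-1}*g'\|_d$ it is Euclidean-continuous everywhere, so $d$-balls are Euclidean-open. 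If some $\overline B_d(0,r)$ were Euclidean-unbounded, pick $g_k$ in it with $|g_k|\to\infty$ and $\lambda_k\to 0$ with $|\delta_{\lambda_k}g_k|=1$; then $\|\delta_{\lambda_k}g_k\|_d=\lambda_k\|g_k\|_d\le\lambda_k r\to 0$, while a Euclidean subsequential limit $w$ of $\delta_{\lambda_k}g_k$ has $|w|=1$ and, by continuity, $\|w\|_d=0$ --- a contradiction. So $\overline B_d(0,r)$ is Euclidean-closed and bounded, hence compact, the two topologies coincide, and in particular $S\subseteq\overline B_{d_1}(0,1)$ is Euclidean-compact.

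\emph{Main obstacle.} The delicate step is precisely the Euclidean local boundedness of $\|\cdot\|_d$; the device that makes it work is to split $g$ into a fixed number ($n$) of one-parameter factors, each an explicit dilate of a \emph{fixed} group element whose $d$-norm is merely a finite constant --- everything else (triangle inequality, homogeneity, compactness of Euclidean spheres, the extreme value theorem) is soft. One should of course also verify the routine points left implicit above: that such $\lambda_k\to 0$ can indeed be arranged, that $S\ne\emptyset$ when $\GG\ne\{0\}$, and that $t(\cdot)$ is continuous; these are standard.
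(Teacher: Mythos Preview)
Your argument is correct: the reduction to the $d_1$-unit sphere via left-invariance and homogeneity, the proof that any homogeneous left-invariant norm is Euclidean-continuous using second-kind exponential coordinates and the estimate $\|g\|_d\le\sum_i c_i|t_i(g)|^{1/j_i}$, and the compactness step via the rescaling $\delta_{\lambda_k}g_k$ all go through as you describe. The routine points you flag (existence of $\lambda_k\to 0$, nonemptiness of $S$, continuity of $t(\cdot)$) are indeed easy to fill in.

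Regarding comparison with the paper: there is nothing to compare, because the paper does not prove this proposition --- it simply refers the reader to \cite[Proposition~5.1.4]{equivmetr}. Your compactness proof is essentially the standard one found in that reference and in most treatments of homogeneous groups, so you have supplied what the paper chose to outsource.
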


\begin{lemma}\label{lem:EstimateOnConjugate}
For any left-invariant and homogeneous distance and for any $k>0$ there exists a constant $\newC\label{c:1}:=\oldC{c:1}(k,\mathbb G,d)>1$ such that if $x,y\in B(0,k)$, then
$$
\|y^{-1}* x* y\| \leq \oldC{c:1}\|x\|^{1/\mathfrak{s}}.
$$
\end{lemma}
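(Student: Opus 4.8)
The plan is to exploit the fact that the conjugation map $\phi_y\colon x\mapsto y^{-1}*x*y$ is a polynomial automorphism of $\GG$ fixing the origin, and to extract the exponent $1/\mathfrak s$ from the homogeneity of its coordinate components under the dilations $\delta_\lambda$, $\lambda>0$. First I would record the well-known equivalence of $\|\cdot\|$ with the anisotropic ``box'' gauge $g\mapsto \max_{1\le i\le\mathfrak s}|\pi_i g|^{1/i}$ (here $|\cdot|$ is the Euclidean norm on the layer $V_i$): both functions are continuous, vanish only at $0$, and are $\delta_\lambda$-homogeneous of degree $1$, so a compactness argument on $\{\max_i|\pi_i g|^{1/i}=1\}$ (in the spirit of Proposition~\ref{equivbilip}) yields a constant $C_0=C_0(\GG,d)$ with $|\pi_i g|\le C_0\|g\|^{i}$ for all $i$ and $\|g\|\le C_0\max_i|\pi_i g|^{1/i}$ for all $g\in\GG$. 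In particular $x,y\in B(0,k)$ forces all Euclidean coordinates of $x,y$ to be bounded by a constant depending only on $k$ and $\GG$.

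Next I would analyse the structure of $\phi_y$. Since $y^{-1}=-y$ in our exponential coordinates and the group law $*$ is polynomial by Baker--Campbell--Hausdorff, the map $(x,y)\mapsto y^{-1}*x*y$ is polynomial, of degree bounded in terms of $\GG$ alone; write $P_i(x,y):=\pi_i(y^{-1}*x*y)$ for its component in the $i$-th layer. Two facts pin down the monomials of $P_i$. First, $y^{-1}*0*y=0$ for every $y$, so $P_i(0,y)\equiv0$ and hence every monomial of $P_i$ contains at least one of the variables $x_1,\dots,x_n$. Second, for $\lambda>0$ the dilation $\delta_\lambda$ is a group automorphism, so $\delta_\lambda(y^{-1}*x*y)=(\delta_\lambda y)^{-1}*(\delta_\lambda x)*(\delta_\lambda y)$, which upon applying $\pi_i$ gives $P_i(\delta_\lambda x,\delta_\lambda y)=\lambda^{i}P_i(x,y)$ for all $\lambda>0$; assigning to each variable $x_j$ and $y_j$ the weight $j$, this forces every monomial of $P_i$ to have total weight exactly $i$. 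Combining the two facts, each monomial of $P_i$ has the form $c\cdot x_{a_1}\cdots x_{a_p}\,y_{b_1}\cdots y_{b_q}$ with $p\ge1$, $a_1+\dots+a_p+b_1+\dots+b_q=i$, and therefore the ``$x$-weight'' $a_1+\dots+a_p\ge p\ge1$.

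Finally I would plug in the bounds. For $x,y\in B(0,k)$ we have $|x_{a_m}|\le C_0\|x\|^{a_m}$ and $|y_{b_n}|\le C_0\|y\|^{b_n}\le C_0 k^{b_n}$, so each monomial above is bounded in absolute value by $C_1(k,\GG)\,\|x\|^{a_1+\dots+a_p}$, the constant absorbing the (finitely many, bounded) coefficients and powers of $C_0$ and $k$. Summing the finitely many monomials and using $a_1+\dots+a_p\ge1$, we get, whenever $\|x\|\le1$, that $|P_i(x,y)|\le C_2(k,\GG)\|x\|$, hence $|P_i(x,y)|^{1/i}\le C_2^{1/i}\|x\|^{1/i}\le C_3(k,\GG)\|x\|^{1/\mathfrak s}$ (using $i\le\mathfrak s$ and $\|x\|\le1$), and so by the box-gauge equivalence $\|y^{-1}*x*y\|\le C_0\max_i|P_i(x,y)|^{1/i}\le C_4(k,\GG,d)\|x\|^{1/\mathfrak s}$. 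On the complementary range $1<\|x\|<k$ the components $P_i(x,y)$ are polynomials evaluated on a bounded set, so $\|y^{-1}*x*y\|\le M(k,\GG,d)\le M(k,\GG,d)\|x\|^{1/\mathfrak s}$ since $\|x\|^{1/\mathfrak s}>1$; taking $\oldC{c:1}:=1+\max\{C_4,M\}$ finishes the proof. The only mildly delicate step is the homogeneity bookkeeping for $P_i$ in the previous paragraph — in particular, observing that the mandatory $x$-factor forces the $x$-weight of each monomial to be at least $1$, which is precisely what produces the exponent $1/\mathfrak s$ (and shows it is sharp, via a monomial $x_1\cdot y_{\mathfrak s-1}$ in $P_\mathfrak s$); the rest is routine.
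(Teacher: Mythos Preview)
Your proof is correct. The paper does not actually supply a proof of this lemma; it simply refers the reader to \cite[Lemma~3.6]{MR3123745}. Your self-contained argument --- extracting the exponent $1/\mathfrak{s}$ from the weighted homogeneity of the Baker--Campbell--Hausdorff polynomials together with the observation that $P_i(0,y)\equiv0$ forces every monomial of $P_i$ to carry at least one $x$-factor (hence $x$-weight $\ge1$) --- is the standard route to such conjugation estimates and is almost certainly what the cited reference does as well. The case split $\|x\|\le1$ versus $1<\|x\|<k$ is handled cleanly, and the box-gauge equivalence you invoke at the outset is exactly the content of Proposition~\ref{equivbilip} applied to the two homogeneous functions in question.
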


\begin{proof}
For a proof we refer to \cite[Lemma 3.6]{MR3123745}.
\end{proof}

\begin{osservazione}\label{rk.norm}
Let $d$ be a left-invariant homogeneous distance on $\GG$. It is well known, see for instance \cite[Proposition 5.15.1]{equivmetr}, that for any compact subset $K$ of $\R^n$ there is a constant $C(K,d)>1$ such that:
$$C(K,d)^{-1}\lvert x-y\rvert\leq d(x,y)\leq C(K,d)\lvert x-y\rvert^{1/\mathfrak{s}}\qquad\text{for any }x,y\in K.$$
More precisely the constant $C$ introduced above depends only on $\mathrm{dist}(0,K)+\mathrm{diam}(K)$ and $d$. 
\end{osservazione}

\medskip

For any Lie algebra $\mathfrak{h}$ with gradation $\mathfrak h= W_1\oplus\ldots\oplus W_{\mathfrak{s}}$, we define its \emph{homogeneous dimension} as
$$\text{dim}_{\mathrm{hom}}(\mathfrak{h}):=\sum_{i=1}^{\mathfrak{s}} i\cdot\text{dim}(W_i).$$
Thanks to \eqref{eq:intr1} we infer that, if $\mathfrak{h}$ is a homogeneous Lie sub-algebra of $\mathfrak{g}$, then $$\text{dim}_{\mathrm{hom}}(\mathfrak{h}):=\sum_{i=1}^{\mathfrak{s}} i\cdot\text{dim}(\mathfrak{h}\cap V_i).$$ It is well-known that the Hausdorff dimension, 
of a graded Lie group $\mathbb G$ with respect to a left-invariant homogeneous distance coincides with the homogeneous dimension of its Lie algebra. For a reference for the latter statement, see \cite[Theorem 4.4]{LDNG19}.

\begin{definizione}[Carnot subgroups]\label{definitiongenerazione}
Let $\Lambda\subset [0,\infty)$. Given a collection $\mathscr{F}=\{v_\lambda\in\GG:\lambda\in \Lambda\}$ of elements of $\GG$ we define the homogeneous subgroup $\mathfrak{S}(\mathscr{F})$ of $\mathbb{G}$ generated by $\mathscr{F}$ as:
$$\mathfrak{S}(\mathscr{F}):=\mathrm{cl}\Big(\big\{\delta_{\rho_1}(v_{\lambda_1})*\cdots*\delta_{\rho_N}(v_{\lambda_N}):N\in\N,\,\rho_i\in\R\text{ and }v_i\in \mathscr{F}\text{ for any $i_j\in\Lambda$ and $j\in \{1,\ldots,N\}$}\big\}\Big).$$
We say that a subgroup $V$ of $\GG$ is a \emph{Carnot subgroup} if $V=\mathfrak{S}(V\cap V_1)$.
\end{definizione}



\begin{definizione}[Intrinsic Grassmannian on Carnot groups]\label{def:Grassmannian}
Let $\mathcal{Q}:=\text{dim}_{hom}(\mathfrak{g})$\label{homog} and let  $1\leq h\leq Q$. We define $\Gr(h)$ and $\Gr_{\mathfrak{C}}(h)$ to be the family of all homogeneous subgroups $ W$ of $\mathbb{G}$ with Hausdorff dimension $h$ and the family of all Carnot subgroups $ W$ of $\mathbb{G}$ with Hausdorff dimension $h$ respectively.
Finally, we  denote by $\Gr(\mathbb{G})$ and $\Gr_\mathfrak{C} (\GG)$ the sets 
\[
\Gr(\mathbb{G})=\bigcup_{h=1}^Q \Gr(h),\quad \Gr_{\mathfrak{C}}(\mathbb{G})=\bigcup_{h=1}^Q \Gr_{\mathfrak{C}}(h).
\]
Since it will be occasionally used, it will be convenient to denote by $\mathrm{Gr}_{eu}(\GG)$ the Euclidean Grassmannian of the underlying space of $\GG$ endowed with the topology induced by the Hausdorff distance induced by the Euclidean distance. Such topology and that induced by the \cc Hausdorff distance are easily seen to be the same.
\end{definizione}

\begin{proposizione}\label{generato}
Let $V\in \Gr_\mathfrak{C}(\GG)$ and assume $v_1,\ldots,v_N\in V\cap V_1$ are such that $V\cap V_1$ coincides with the linear span of $\{v_1,\ldots,v_N\}$ when seen as vectors of $\R^n$. Then
$\mathfrak{S}(\{v_1,\ldots,v_N\})=V$.
\end{proposizione}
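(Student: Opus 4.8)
Write $\G:=\mathfrak{S}(\{v_1,\ldots,v_N\})$. The plan is to establish the two inclusions separately. The inclusion $\G\subseteq V$ is immediate: since $\{v_1,\ldots,v_N\}\subseteq V\cap V_1$, every defining product $\delta_{\rho_1}(v_{\lambda_1})*\cdots*\delta_{\rho_k}(v_{\lambda_k})$ appearing in the definition of $\G$ is also a defining product for $\mathfrak{S}(V\cap V_1)$, so $\G\subseteq\mathfrak{S}(V\cap V_1)$, and the right-hand side equals $V$ because $V$ is a Carnot subgroup (Definition~\ref{definitiongenerazione}). Hence everything reduces to proving $V\subseteq\G$.

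Before that I would record two elementary facts: $\G$ is stable under the group operation $*$, and under taking limits, hence under $k$-fold powers $z\mapsto z^{*k}$. Indeed the set of finite products in Definition~\ref{definitiongenerazione} is manifestly stable under concatenation, and $*$ is continuous, so its closure $\G$ is $*$-stable; stability under limits holds by definition of the closure. Granting the key claim $V\cap V_1\subseteq\G$, the conclusion follows: for horizontal $w$ one has $\delta_\rho(w)=\rho w\in V\cap V_1\subseteq\G$, so every defining product for $\mathfrak{S}(V\cap V_1)$ is a $*$-product of elements of $\G$, hence lies in $\G$, and therefore so does its limit; thus $V=\mathfrak{S}(V\cap V_1)\subseteq\G$.

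It remains to prove $V\cap V_1=\Span\{v_1,\ldots,v_N\}\subseteq\G$, which I would do by induction on $m$, showing $\Span\{v_1,\ldots,v_m\}\subseteq\G$. For $m=0$ this is trivial since $0=\delta_0(v_1)\in\G$. For the inductive step, a generic element of $\Span\{v_1,\ldots,v_{m+1}\}$ is $u+s\,v_{m+1}$ with $u\in\Span\{v_1,\ldots,v_m\}$ and $s\in\R$. Recalling that in the chosen coordinates $\exp$ is the identity and that $\delta_{s/k}(v_{m+1})=(s/k)v_{m+1}$ since $v_{m+1}$ is horizontal, the Baker--Campbell--Hausdorff formula gives $\tfrac1k u * \delta_{s/k}(v_{m+1})=\tfrac1k\bigl(u+s\,v_{m+1}\bigr)+\tfrac{s}{2k^{2}}[u,v_{m+1}]+O(1/k^{3})$, where the error collects the BCH terms of homogeneity at least $3$. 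Since $z*z=z+z+\tfrac12[z,z]+\cdots=2z$ for every $z$, iteration yields $z^{*k}=kz$, so $\bigl(\tfrac1k u * \delta_{s/k}(v_{m+1})\bigr)^{*k}=\bigl(u+s\,v_{m+1}\bigr)+\tfrac{s}{2k}[u,v_{m+1}]+O(1/k^{2})\longrightarrow u+s\,v_{m+1}$ as $k\to\infty$. Now $\tfrac1k u\in\Span\{v_1,\ldots,v_m\}\subseteq\G$ by the inductive hypothesis and $\delta_{s/k}(v_{m+1})\in\G$ by definition, so the product and its $k$-th power lie in $\G$, and passing to the limit gives $u+s\,v_{m+1}\in\G$. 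Taking $m=N$ yields $V\cap V_1\subseteq\G$.

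The only genuinely non-routine point is this last step: one must show that a linear combination of the horizontal generators, which a priori lives only in the \emph{Lie algebra}, is in fact a \emph{limit of honest group products} of dilations of the $v_i$. The mechanism is that the non-linear Baker--Campbell--Hausdorff corrections have strictly higher homogeneity and are therefore suppressed by the factor $1/k$ when one amplifies by taking $k$-th powers --- the Carnot-group substitute for the trivial Euclidean identity $u+s\,v=\bigl(\tfrac1k u\bigr)*\cdots*\bigl(\tfrac1k u\bigr)*\bigl(\tfrac{s}{k}v\bigr)*\cdots$. Equivalently, one may phrase the whole argument by identifying $\G$ with $\exp$ of the Lie subalgebra generated by $v_1,\ldots,v_N$, which coincides with the one generated by $\Span\{v_1,\ldots,v_N\}=V\cap V_1$, hence with $V$; but this identification rests on the same BCH computation. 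Everything else --- monotonicity of $\mathscr{F}\mapsto\mathfrak{S}(\mathscr{F})$, stability of $\mathfrak{S}(\mathscr{F})$ under $*$ and limits, and the reduction through the definition of Carnot subgroup --- is routine bookkeeping.
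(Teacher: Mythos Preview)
Your proof is correct, but it takes a more laborious route than the paper's. Both arguments dispense with the inclusion $\G\subseteq V$ in the same way and reduce the reverse inclusion to showing $V\cap V_1=\Span\{v_1,\ldots,v_N\}\subseteq\G$. At this point the paper simply invokes Proposition~\ref{prop:hom1}: since $\G=\mathfrak{S}(\{v_1,\ldots,v_N\})$ is a closed (homogeneous) subgroup of $\GG$, it is automatically a \emph{vector subspace} of $\R^n$; containing the $v_i$, it therefore contains their linear span, and the argument is over in two lines.

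You instead build linear combinations by hand, using the BCH identity $z^{*k}=kz$ together with a limiting argument to kill the higher-homogeneity correction terms. This is a perfectly valid and self-contained substitute for the structural input from Proposition~\ref{prop:hom1}; effectively, you are reproving ad hoc the piece of that proposition you need (closure of $\G$ under vector addition in the horizontal layer). The gain is that your argument is elementary and does not appeal to Cartan's closed-subgroup theorem or the identification of closed subgroups with Lie subalgebras; the cost is a page of computation where the paper uses a one-line citation.
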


\begin{proof}
The inclusion $\mathfrak{S}(\{v_1,\ldots,v_N\})\subseteq V$ is obvious and thus we just need to prove the converse. Since $\mathfrak{S}(\{v_1,\ldots,v_N\})$ is a closed homogeneous subgroup of $\mathbb{G}$, it is also a vector subspace of $\GG$, see Proposition \ref{prop:hom1}. Therefore, we have $\mathrm{span}\{v_1,\ldots,v_N\}=V\cap V_1\subseteq \mathfrak{S}(\{v_1,\ldots,v_N\})$ and thus: $$V=\mathfrak{S}(V_1\cap V)\subseteq \mathfrak{S}(\{v_1,\ldots,v_N\}),$$
where the first identity follows from the fact that $V$ is a Carnot subgroup of $\GG$.
\end{proof}

For any $p\in \mathbb{G}$, we define the {\em left translation} $\tau _p:\mathbb{G} \to \mathbb{G}$ as\label{tran}
\begin{equation*}
q \mapsto \tau _p q := p* q.
\end{equation*}
As already remarked above, we can suppose without loss of generality that the group operation $*$ is determined by the Campbell-Hausdorff formula. It is well known that $*$ has a polynomial expression in the coordinates, see \cite[Proposition 2.1]{step2}, and more precisely
\begin{equation*}
p*q= p+q+\mathscr{Q}(p,q), \quad \mbox{for all }\, p,q \in  \R^n,
\end{equation*} 
where $\mathscr{Q}=(\mathscr{Q}_1,\dots , \mathscr{Q}_s):\R^n\times \R^n \to V_1\oplus\ldots\oplus V_\mathfrak{s}$, and the $\mathscr{Q}_i$s are vector valued polynomials.
For any $i=1,\ldots \mathfrak{s}$ and any $p,q\in \mathbb{G}$ we have
\begin{itemize}
    \item[(i)]$\mathscr{Q}_i(\delta_\lambda p,\delta_\lambda q)=\lambda^i\mathscr{Q}_i(p,q)$,
    \item[(ii)] $\mathscr{Q}_i(p,q)=-\mathscr{Q}_i(-q,-p)$,
    \item[(iii)] $\mathscr{Q}_1=0$ and $\mathscr{Q}_i(p,q)=\mathscr{Q}_i(p_1,\ldots,p_{i-1},q_1,\ldots,q_{i-1})$.
\end{itemize}
Thus, we can represent the operation $*$ as
\begin{equation}\label{opgr}
p * q= (p_1+q_1,p_2+q_2+\mathscr{Q}_2(p_1,q_1),\dots ,p_s +q_s+\mathscr{Q}_s (p_1,\dots , p_{s-1} ,q_1,\dots ,q_{s-1})). 
\end{equation}


\begin{definizione}
A Borel set $E\subset \mathbb{G}$ is called $1$-rectifiable if there exists a countable family of Lipschitz maps $\gamma_i:K_i\to\GG$, where $K_i$ are compact subsets of $\R$ such that $\mathcal{H}^1(E\setminus \bigcup_{i=1}^{\infty}\gamma_i(K_i))=0$.
A Radon measure $\phi$ on $\mathbb{G}$ is said to be $1$-rectifiable if there exists a $1$-dimensional rectifiable set $E$ such that $\phi\ll f\mathcal{H}^1\trace E$.
\end{definizione}

\subsection{Lipschitz curves and the horizontal distribution and the \cc distance}

A fundamental role in this work will be played by Lipschitz curves and fragments in $\GG$. In this subsection we introduce the \emph{horizontal distribution} of $n_1$-dimensional planes associated to $\R^n$ and we define the \cc di\-stan\-ce. 

\begin{definizione}\label{campii}
Let $\{e_1,\ldots,e_{n_1}\}$ be an orthonormal basis of $V_1$. For every $i=1,\ldots,n_1$ we say that  the left-invariant vector field tangent to $e_i$ at the origin
\begin{equation}
    X_i(x):=\lim_{t\to 0}\frac{x*\delta_t(e_i)-x}{t},
    \label{eq:campi}
\end{equation}
is the $i$-th \emph{horizontal vector field}.
Furthermore, for any $i=1,\ldots,n_1$ we can write the vector field $X_i$ as:
$$X_i(x):=\sum_{j=1}^n \mathfrak{c}_j^i(x)\partial_j,$$
where $\mathfrak{c}_j^i(x)$ are smooth functions since the $\mathscr{Q}_i$s are polynomial functions. We further let  
\begin{equation}
    H\GG(x):=\mathrm{span}(X_1,\ldots,X_{n_1})
    \label{horizontaldistribution}
\end{equation}
The distribution $ H\GG(x)$ of $n_1$-dimensional planes is usually said to be the \emph{horizontal distribution} associated to the group $\GG$. In the following it will be useful to write the coefficients $\mathfrak{c}_j^i$ in the form of the matrix
$$\mathscr{C}(x):=\begin{pmatrix}
\mathfrak{c}_1^1(x) &\dots& \mathfrak{c}^{n_1}_1(x)\\
\vdots&\ddots&\vdots\\
\mathfrak{c}^1_n(x)&\dots&\mathfrak{c}^{n_1}_n(x)
\end{pmatrix}.	$$
\end{definizione}

\begin{osservazione}[Expression for the $\mathfrak{c}_i$'s]\label{rk:expc}
Thanks to the definition of the vector fields $X_i$ in \eqref{eq:campi} and to the coordinate-wise expression of the operation $*$ given in \eqref{opgr}, we infer that
$$X_i(x)=e_i+\frac{\partial\mathscr{Q}}{\partial q_i}(x,0).$$
This shows in particular that the matrix $\mathscr{C}(x)$ can be represented as
$$\mathscr{C}(x):=\begin{pmatrix}\mathrm{id}_{n_1}\\
\hline
\partial_q \mathscr{Q}(x,0)\end{pmatrix}.$$
\end{osservazione}

\begin{definizione}\label{canonicalcoo}
Let $B$ be a bounded Borel subset 
of the real line. Given a curve $\gamma:B\to \GG$ and a Lebesgue density point $t\in B$, we denote
$$\gamma^\prime(t):=\lim_{\substack{r\to 0 \\ t+r\in B}}\frac{\gamma(t+r)-\gamma(t)}{r},\qquad \text{whenever the right-hand side exists.}$$

Furthermore, given $a<b$ we say that an absolutely continuous curve $\gamma:[a,b]\to \GG$ is \emph{horizontal} if there exists a measurable function $h: [a,b]\to V_1$ such that:
\begin{itemize}
    \item[(i)] $\gamma^\prime(t)=\mathscr{C}(\gamma(t))[h(t)]$ for $\Leb^1$-almost every $t\in [a,b]$,
    \item[(ii)] $\lvert h\rvert\in L^\infty([a,b])$.
\end{itemize}
Following the notation of \cite{tesimonti} we shall refer to $h$ as the \emph{canonical coordinates of }$\gamma$ and if $\lVert h\rVert_\infty\leq 1$ we will say that $\gamma$ is a \emph{sub-unit} path.
Finally, we define the Carnot-Carath\'eodory distance $d_c$ on $\GG$ as:\label{ccmetrix}
\begin{equation}
d_c(x,y):=\inf\{T\geq 0:\text{ there is a sub-unit path }\gamma:[0,T]\to\R^n\text{ such that }\gamma(0)=x\text{ and }\gamma(T)=y\}.\nonumber
\end{equation}
It is well known that $d_c(\cdot,\cdot)$ is a left-invariant homogeneous metric on $\GG$. Finally throughout the paper we will denote by $\lVert \cdot\rVert$ the homogeneous function $x\mapsto d_c(x,0)$ and \emph{from now on and if not otherwise specified, $\GG$ will always be endowed with the distance $d_c$.}
\end{definizione}

\begin{proposizione}\label{cc.geodesic}
The distance $d_c$ is a geodesic distance, i.e. for any $x,y\in\mathbb{G}$ there exists a sub-unit path $\gamma:[0,T]\to \mathbb{G}$ such that $\gamma(0)=x$, $\gamma(T)=y$ and $d_c(x,y)=T$.
\end{proposizione}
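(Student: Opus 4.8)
The plan is to run a direct-method argument. Fix $x,y\in\GG$ and set $L:=d_c(x,y)$; this is finite since $d_c$ is a genuine metric, and if $L=0$ then $x=y$ and the constant path works, so assume $L>0$. By definition of the infimum there are sub-unit paths $\gamma_k\colon[0,T_k]\to\GG$ with $\gamma_k(0)=x$, $\gamma_k(T_k)=y$ and $T_k\to L$, so $T_k\le L+1$ for $k$ large. First I would rescale to the unit interval: $\eta_k(s):=\gamma_k(T_k s)$, $s\in[0,1]$, is horizontal with canonical coordinates $h_k(s)=T_k\,h_{\gamma_k}(T_k s)$, hence $\|h_k\|_\infty\le T_k\le L+1$, while $\eta_k(0)=x$ and $\eta_k(1)=y$.

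Next I would produce compactness. Since $d_c(x,\eta_k(s))\le T_k s\le L+1$ and closed $d_c$-balls of finite radius are compact (a well-known consequence of the Chow--Rashevskii theorem), all the $\eta_k$ take values in a single compact set $K\subset\R^n$. On $K$ the matrix $\mathscr{C}$, which has polynomial and hence continuous entries by Remark \ref{rk:expc}, is bounded, say $\|\mathscr{C}\|\le M$ on $K$; from $\eta_k'=\mathscr{C}(\eta_k)[h_k]$ we get $\lvert\eta_k'(s)\rvert\le M(L+1)$ for a.e.\ $s$, so $\{\eta_k\}$ is equi-Lipschitz and uniformly bounded. By Arzel\`a--Ascoli some subsequence converges uniformly to $\eta\colon[0,1]\to K$ with $\eta(0)=x$, $\eta(1)=y$ (the convergence being also with respect to $d_c$, by Remark \ref{rk.norm}); and since $\{h_k\}$ is bounded in $L^\infty([0,1];V_1)$, a further subsequence converges weakly-$*$ to some $h\in L^\infty([0,1];V_1)$ with $\|h\|_\infty\le\liminf_k\|h_k\|_\infty\le L$.

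Then I would pass to the limit in $\eta_k(s)=x+\int_0^s\mathscr{C}(\eta_k(\sigma))[h_k(\sigma)]\,\de\sigma$. The left-hand side tends to $\eta(s)$. For the right-hand side I would split $\mathscr{C}(\eta_k)[h_k]=\bigl(\mathscr{C}(\eta_k)-\mathscr{C}(\eta)\bigr)[h_k]+\mathscr{C}(\eta)[h_k]$: the first summand tends to $0$ in $L^1([0,1])$, since $\mathscr{C}(\eta_k)\to\mathscr{C}(\eta)$ uniformly (continuity of $\mathscr{C}$ together with $\eta_k\to\eta$ uniformly in $K$) and $\{h_k\}$ is bounded in $L^1$; and $\int_0^s\mathscr{C}(\eta(\sigma))[h_k(\sigma)]\,\de\sigma\to\int_0^s\mathscr{C}(\eta(\sigma))[h(\sigma)]\,\de\sigma$ because $\sigma\mapsto\mathscr{C}(\eta(\sigma))$ is bounded, hence lies in $L^1$ on $[0,s]$, and $h_k\rightharpoonup^*h$ in $L^\infty=(L^1)^*$. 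Therefore $\eta(s)=x+\int_0^s\mathscr{C}(\eta(\sigma))[h(\sigma)]\,\de\sigma$, so $\eta$ is a horizontal curve with canonical coordinates $h$ and $\|h\|_\infty\le L$. Reparametrizing, $\gamma(t):=\eta(t/L)$ on $[0,L]$ has canonical coordinates $h_\gamma(t)=L^{-1}h(t/L)$ with $\|h_\gamma\|_\infty\le 1$, so $\gamma$ is a sub-unit path from $x$ to $y$ on $[0,L]$; hence $d_c(x,y)\le L$, which together with $L=d_c(x,y)$ shows that $\gamma$ realizes the distance with $T=L$.

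The step I expect to be the main obstacle is the compactness: confining the entire minimizing sequence to a fixed compact set (here the comparison between $d_c$ and the Euclidean metric from Remark \ref{rk.norm}, and the properness of $(\GG,d_c)$, are what is really used) and then reconciling the merely weak-$*$ convergence of the controls $h_k$ with the uniform convergence of the curves $\eta_k$ when taking the limit in the nonlinear ODE. A shorter alternative would be to invoke the Hopf--Rinow theorem for the complete, locally compact length space $(\GG,d_c)$.
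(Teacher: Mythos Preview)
Your direct-method argument is correct and self-contained: the compactness of the minimizing sequence via properness of $(\GG,d_c)$ and the bound on $\mathscr{C}$, the weak-$*$ extraction for the controls, and the splitting trick to pass to the limit in the ODE are all standard and carried out carefully. The lower semicontinuity of the $L^\infty$ norm under weak-$*$ convergence gives exactly the bound $\|h\|_\infty\le L$ you need, and the final reparametrization closes the argument.

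The paper, by contrast, does not prove this at all: it simply invokes Proposition~\ref{equivbilip} (equivalence of homogeneous left-invariant metrics) together with a lemma from the literature (Franchi--Serapioni--Serra Cassano). So your route is genuinely different in that you supply the full variational proof rather than a citation. What your approach buys is self-containment and an explicit identification of where the structure of the Carnot group enters (polynomial, hence continuous, coefficients of $\mathscr{C}$; properness of the CC metric); what the paper's one-line citation buys is brevity. Your concluding remark that Hopf--Rinow for complete locally compact length spaces would shortcut everything is also a valid alternative, and is in spirit closer to what the cited reference does.
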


\begin{proof}
This follows immediately from Proposition \ref{equivbilip} and \cite[Lemma 3.12]{MFSSC}.
\end{proof}

The following lemma allows us to characterise those Euclidean Lipschitz curves that are also Lipschitz curves when $\R^n$ is endowed with the \cc distance $d_c$ introduced above.

\begin{lemma}\label{lemma.monti1}
Let $B$ be a bounded Borel subset of the real line. If a curve $\gamma:B\to\GG$ is $L$-Lipschitz with respect to the distance $d_c$ on $\GG$, then $\gamma$ is an Euclidean absolutely continuous curve such that: $$\gamma^\prime(t)=\mathscr{C}(\gamma(t))[h(t)]\text{ for }\Leb^1\text{-almost every }t\in B,$$
for some $h\in L^\infty(B,V_1)$ with $\lVert h\rVert_\infty\leq L$.
\end{lemma}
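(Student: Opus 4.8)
The plan is to reduce the statement to one about \emph{Euclidean} Lipschitz curves by means of Remark~\ref{rk.norm}, and then to differentiate the coordinate expression \eqref{opgr} of the group law. The only non-elementary input needed is a ball--box type estimate for the homogeneous norm, which forces the coordinates of a $d_c$-short increment in the higher layers to be of higher order in the Euclidean parameter.

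\smallskip
\emph{Step 1 (Euclidean regularity).} First I would assume $B\neq\emptyset$ and fix $t_0\in B$. Since $\gamma$ is $L$-Lipschitz for $d_c$ and $B$ is bounded, $d_c(\gamma(t),\gamma(t_0))\le L\,\mathrm{diam}(B)$ for all $t\in B$, so $\gamma(B)$ lies in a closed $d_c$-ball $K$, which is compact. By Remark~\ref{rk.norm} there is $C=C(K,d_c)>1$ with $|x-y|\le C\,d_c(x,y)$ on $K$; hence $\gamma$ is $CL$-Lipschitz on $B$ for the Euclidean distance, and extends, with the same constant, to a Lipschitz (hence absolutely continuous) curve on $[\inf B,\sup B]$. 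In particular, for $\Leb^1$-a.e.\ $t\in B$ the point $t$ is a Lebesgue density point of $B$ and the derivative $\gamma'(t)$ of Definition~\ref{canonicalcoo} exists.

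\smallskip
\emph{Step 2 (ball--box estimate).} Next I would record that there is $M=M(\GG,d_c)>0$ with $|p_i|\le M\|p\|^i$ for every $p\in\GG$ and $i=1,\dots,\mathfrak s$: writing $p=\delta_{\|p\|}(q)$ with $\|q\|=1$ and using that $\delta_\lambda$ acts on $V_i$ as multiplication by $\lambda^i$ gives $|p_i|=\|p\|^i|q_i|\le M\|p\|^i$, with $M:=\max\{|q_i|:\|q\|=1,\,1\le i\le\mathfrak s\}<\infty$ since the $d_c$-unit sphere is compact. For $i=1$ one has the sharp inequality $|x_1-y_1|\le d_c(x,y)$: along any sub-unit path $\alpha$ with canonical coordinates $h$, the top $n_1$ rows of $\mathscr C$ being $\mathrm{id}_{n_1}$ give $(\pi_1\circ\alpha)'=h$, whence $|x_1-y_1|\le\int|h|\le\mathrm{length}(\alpha)$, and one minimises over $\alpha$ joining $x$ to $y$.

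\smallskip
\emph{Step 3 (differentiation in coordinates).} Now fix $t$ as in Step~1, put $p:=\gamma(t)$ and, for $t+r\in B$, $\sigma(r):=p^{-1}*\gamma(t+r)$. By left-invariance $\|\sigma(r)\|=d_c(p,\gamma(t+r))\le L|r|$, so $|\sigma_1(r)|\le L|r|$ and, by Step~2, $|\sigma_i(r)|\le M(L|r|)^i=o(|r|)$ as $r\to0$ for every $i\ge2$. Since $\gamma(t+r)=p*\sigma(r)$, \eqref{opgr} gives
\[
\big(\gamma(t+r)-\gamma(t)\big)_i=\sigma_i(r)+\mathscr Q_i\big(p_1,\dots,p_{i-1},\sigma_1(r),\dots,\sigma_{i-1}(r)\big).
\]
For $i=1$ (as $\mathscr Q_1=0$) this reads $\sigma_1(r)$, so $\sigma_1(r)/r\to\gamma'(t)_1=:h(t)\in V_1$ with $|h(t)|\le L$. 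For $i\ge2$, divide by $r$ and let $r\to0$: first $\sigma_i(r)/r\to0$; next, since $0$ is the identity we have $\mathscr Q_i(\,\cdot\,,0,\dots,0)\equiv0$, so every monomial of $\mathscr Q_i(p_1,\dots,p_{i-1},\sigma_1(r),\dots,\sigma_{i-1}(r))$ contains some factor $\sigma_j(r)$, $1\le j\le i-1$, and those not of the form $(\mathrm{const})\cdot(p\text{-monomial})\cdot\sigma_1(r)$ contain either $\sigma_1(r)^k$ with $k\ge2$ or some $\sigma_j(r)$ with $j\ge2$, hence are $o(|r|)$. Therefore
\[
\frac1r\,\mathscr Q_i\big(p_1,\dots,p_{i-1},\sigma_1(r),\dots,\sigma_{i-1}(r)\big)\ \longrightarrow\ \sum_{j=1}^{n_1}\frac{\partial\mathscr Q_i}{\partial q_j}(p,0)\,h(t)_j,
\]
which by Remark~\ref{rk:expc} is exactly the $V_i$-component of $\mathscr C(p)[h(t)]$. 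Collecting all layers yields $\gamma'(t)=\mathscr C(\gamma(t))[h(t)]$ for $\Leb^1$-a.e.\ $t\in B$; since $h=\pi_1\circ\gamma'$ is measurable and $\|h\|_\infty\le L$, we get $h\in L^\infty(B,V_1)$, as required.

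\smallskip
\emph{Main obstacle.} The essential point is Step~2 — equivalently, that the coordinates of $\sigma(r)$ in the layers $V_i$, $i\ge2$, are $o(|r|)$: this is what annihilates the term $\sigma_i(r)/r$ and thereby forces $\gamma'(t)$ into the horizontal distribution. The rest (the bookkeeping of which monomials of $\mathscr Q_i$ survive division by $r$) is then routine, using only $\mathscr Q_i(\,\cdot\,,0,\dots,0)=0$ together with the homogeneity of $\mathscr Q_i$.
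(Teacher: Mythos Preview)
Your proof is correct. The paper does not actually argue the lemma: it simply cites \cite[Lemma 1.3.3]{tesimonti} and adds ``together with an elementary localisation argument'' for the passage from intervals to Borel domains. What you have written is precisely a self-contained version of that cited result in the exponential coordinates adopted in the paper: the ball--box estimate $|p_i|\le M\|p\|^i$ (your Step~2) forces the higher-layer components of the increment $\sigma(r)=\gamma(t)^{-1}*\gamma(t+r)$ to be $o(|r|)$, and then differentiating the Baker--Campbell--Hausdorff expression \eqref{opgr} at $q=0$ recovers exactly the columns of $\mathscr C(\gamma(t))$ via Remark~\ref{rk:expc}. So the strategy is not different from the paper's intended one --- you have just unpacked the citation. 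The only cosmetic point is that ``$\sigma_1(r)^k$ with $k\ge2$'' should be read as ``a product of at least two scalar components of $\sigma_1(r)$''; with that reading the monomial bookkeeping in Step~3 is clean.
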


\begin{osservazione}\label{osservazionecoocanoniche}
With abuse of language, for any Lipschitz curve $\gamma:B\to \GG$ we will refer to the function $h$ yielded by Lemma \ref{lemma.monti1} as \emph{the canonical coordinates of $\gamma$}. For the original definition of canonical coordinates, see Definition \ref{canonicalcoo}. 
\end{osservazione}

\begin{proof}
The proof of the lemma follows from \cite[Lemma 1.3.3]{tesimonti} together with an elementary localisation argument. 
\end{proof}

\begin{proposizione}\label{identificazionehaus1}
Let $B$ be a Borel subset of the real line and $\gamma:B\to\mathbb{G}$ be a Lipschitz map.
Then the measures $\Haus^1\trace\im(\gamma)$ and $\Haus^1_{eu}\trace\im(\gamma)$ are mutually absolutely continuous.
	\end{proposizione}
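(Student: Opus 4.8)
The plan is to establish the two absolute-continuity relations separately. The inclusion $\Haus^1_{eu}\trace\im(\gamma)\ll\Haus^1\trace\im(\gamma)$ is elementary and does not use the curve structure at all: covering $\R^n$ by countably many compact sets $K_m$, Remark \ref{rk.norm} says that the identity map $(K_m,d_c)\to(K_m,|\cdot|)$ is Lipschitz with some constant $C(K_m)$, so $\Haus^1_{eu}(A)\le C(K_m)\,\Haus^1(A)$ for every $A\subseteq K_m$; summing over $m$ yields $\Haus^1_{eu}(E)=0$ whenever $\Haus^1(E)=0$. The real content is the reverse inclusion $\Haus^1\trace\im(\gamma)\ll\Haus^1_{eu}\trace\im(\gamma)$, and here one genuinely needs that $\im(\gamma)$ is the image of a Lipschitz fragment: the other half of Remark \ref{rk.norm}, $d_c(x,y)\le C|x-y|^{1/\mathfrak{s}}$, is useless since the exponent $1/\mathfrak{s}<1$ allows $\Haus^1$ to be far larger than $\Haus^1_{eu}$ on general sets (e.g.\ on a vertical segment in the Heisenberg group).

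For this reverse inclusion I would first reduce to $B$ bounded, since $\im(\gamma)=\bigcup_{k}\gamma(B\cap[-k,k])$ and it suffices to prove the statement with $B$ replaced by each $B\cap[-k,k]$. Assume $B$ bounded. Then $\gamma$ has $d_c$-bounded, hence relatively compact, image, so it extends to an $L$-Lipschitz (\wrt $d_c$) map on $\clos{B}$, whose image $K:=\gamma(\clos{B})$ is Euclidean compact; by Remark \ref{rk.norm} applied on $K$, $\gamma$ is moreover Euclidean Lipschitz on $B$. Now fix $E\subseteq\im(\gamma)$ with $\Haus^1_{eu}(E)=0$ and put $F:=\gamma^{-1}(E)$, so $\gamma(F)=E$. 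Applying the classical area formula to the Euclidean Lipschitz curve $\gamma$,
\[
\int_F|\gamma^\prime(t)|\,\de t=\int_{\R^n}\#\big(\gamma^{-1}(y)\cap F\big)\,\de\Haus^1_{eu}(y)=0,
\]
because the right-hand integrand vanishes off $\gamma(F)=E$ and $\Haus^1_{eu}(E)=0$; thus $\gamma^\prime(t)=0$ for a.e.\ $t\in F$. By Lemma \ref{lemma.monti1}, $\gamma^\prime(t)=\mathscr{C}(\gamma(t))[h(t)]$ for a.e.\ $t$, and since the matrix $\mathscr{C}(x)$ has $\mathrm{id}_{n_1}$ as its top block (Remark \ref{rk:expc}) it is injective; hence $h(t)=0$ for a.e.\ $t\in F$.

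It remains to pass from ``$h=0$ a.e.\ on $F$'' to $\Haus^1(E)=0$. Since $\gamma$ is defined only on the Borel set $B$, two points of $F$ need not be joined by a curve inside $\im(\gamma)$, so I would extend $\gamma$ to a genuine curve on an interval: extend first to $\clos{B}$ as above, then fill each bounded connected component $(a,b)$ of $I\setminus\clos{B}$, where $I:=[\min\clos{B},\max\clos{B}]$, by a constant-speed $d_c$-geodesic between the values already assigned at $a$ and $b$ (possible by Proposition \ref{cc.geodesic}, with $d_c$-speed $\le L$). This yields an $L$-Lipschitz (\wrt $d_c$) curve $\wdt\gamma\colon I\to\GG$ with $\wdt\gamma|_B=\gamma$, whose canonical coordinates $\wdt h$ (Lemma \ref{lemma.monti1}) agree a.e.\ on $B$ with those of $\gamma$---again by injectivity of $\mathscr{C}(\cdot)$---so $\wdt h=0$ a.e.\ on $F$. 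Setting $\ell(u):=\int_{\min I}^{u}|\wdt h(r)|\,\de r$, the function $\ell$ is Lipschitz and non-decreasing, and unwinding the definition of $d_c$ through sub-unit paths (Definition \ref{canonicalcoo}; cf.\ \cite{tesimonti}) gives $d_c\big(\wdt\gamma(u),\wdt\gamma(u^\prime)\big)\le|\ell(u)-\ell(u^\prime)|$ for all $u,u^\prime\in I$. Therefore $\wdt\gamma$ factors as $\wdt\gamma=\Psi\circ\ell$ for a $1$-Lipschitz map $\Psi\colon(\ell(I),|\cdot|)\to(\GG,d_c)$, whence
\[
\Haus^1(E)=\Haus^1\big(\Psi(\ell(F))\big)\le\Haus^1_{eu}\big(\ell(F)\big)=\Leb^1\big(\ell(F)\big)\le\int_F|\ell^\prime(t)|\,\de t=0,
\]
the last equality since $\ell^\prime=|\wdt h|=0$ a.e.\ on $F$, and the preceding inequality being the area formula on the line. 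This proves $\Haus^1\trace\im(\gamma)\ll\Haus^1_{eu}\trace\im(\gamma)$ and completes the argument.

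The difficulty is entirely in this second inclusion, and I expect two steps to be the crux: turning the hypothesis $\Haus^1_{eu}(E)=0$ into the vanishing of the canonical coordinates on $\gamma^{-1}(E)$, which rests on the area formula together with the injectivity of $\mathscr{C}(x)$; and, more delicately, turning ``$h=0$ a.e.\ on $F$'' into $\Haus^1(\gamma(F))=0$, which is false for the bare fragment $\gamma$ and forces the extension-by-geodesics step. Keeping track of the interplay among the Euclidean derivative $\gamma^\prime$, the canonical coordinates $h$, and the $d_c$-arclength $\ell$---all governed by Lemma \ref{lemma.monti1} and Remark \ref{rk:expc}---is the part needing the most care; by contrast, once $\wdt\gamma=\Psi\circ\ell$ is in place the conclusion follows from elementary Hausdorff-measure estimates.
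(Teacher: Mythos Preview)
Your proof is correct and takes a genuinely different route from the paper's. For the nontrivial inclusion $\Haus^1\trace\im(\gamma)\ll\Haus^1_{eu}\trace\im(\gamma)$, the paper applies the \emph{sub-Riemannian} area formula \cite[Theorem~4.4]{magnaniarea} directly: it writes $\Haus^1\trace\im(\gamma)(A)$ as an integral of $|D\gamma|$, observes that $|D\gamma(t)|=|h(t)|$ is pointwise dominated by $|\gamma'(t)|=|\mathscr{C}(\gamma(t))[h(t)]|$ (the top block of $\mathscr{C}$ being the identity, Remark~\ref{rk:expc}), and then recognizes the latter integral via the Euclidean area formula---no extension of the fragment is needed. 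You instead avoid the CC area formula entirely: after the Euclidean area formula gives $h=0$ a.e.\ on $F=\gamma^{-1}(E)$, you extend the fragment to a full horizontal curve by geodesic filling (exactly as in Proposition~\ref{estensionetame}), factor it through the $|h|$-arclength $\ell$ using only the \emph{definition} of $d_c$ via sub-unit paths, and conclude by a one-dimensional Sard-type estimate. This buys self-containment---no appeal to Magnani's result---at the price of the extension and reparametrization; the paper's route is a two-line computation once that external area formula is in hand. The geodesic-filling step you flag as delicate is handled correctly: the $L$-Lipschitz bound for $\tilde\gamma$ follows by the same triangle-inequality bridging through the gap endpoints in $\clos B$ used in the proof of Proposition~\ref{estensionetame}, and the agreement $\tilde h=h$ a.e.\ on $B$ holds at density points of $B$ since $\tilde\gamma|_B=\gamma$.
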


\begin{proof}
Since $\lvert x-y\rvert\leq  d_c(x,y)$ for every $x,y\in\mathbb{G}$ the definition of Hausdorff measure immediately implies that $ \Haus^1_{\mathrm{eu}}\leq\Haus^1$. For the converse, let us note that for any Lipschitz curve $\gamma:B\to \GG$ the area formula \cite[Theorem 4.4]{magnaniarea} implies that for any Borel set $A\subseteq \GG$ we have
$$\Haus^1\llcorner \im(\gamma)(A)=\int_{B\cap A} \lvert D\gamma(t)\rvert dt\leq\frac{1}{\max_{x\in \im\gamma}\lVert \mathscr{C}(x)\rVert}\int_{B\cap A}\lvert \mathscr{C}(\gamma(t))[D\gamma(t)]\rvert dt=\frac{\Haus_{\mathrm{eu}}^1\trace \im(\gamma)(A)}{\max_{x\in \im\gamma}\lVert \mathscr{C}(x)\rVert},$$
where the last identity follows from Lemma \ref{osservazionecoocanoniche}. This concludes the proof. 
\end{proof}

\begin{definizione}[Pansu differentiability]\label{def:Pansu_diff_sub}

We say that a map $f:\GG\to\HH$ is 
Pansu differentiable at the point $x\in\GG$ with respect to
a homogeneous subgroup $V$ of $\GG$ if 
there exists a homogeneous homomorphism $L:V\to\HH$ such 
that
\[
d_{\HH}\big(f(x)^{-1}*f(xh), L(h)\big)=o(\lVert h\rVert_{\GG})
\quad\hbox{for all $h\in V$.}
\]
When it exists, $L$ is called the (Pansu) derivative of $f$ at
$x$ with respect to $V$ and denoted by $\dV f(x)$.
If $V=\GG$ then $\dV f(x)$ is the usual (Pansu) 
derivative, and is simply denoted by $df(x)$.
\end{definizione}

The following lemma can be proved with an immediate adaptation of the argument used to prove \cite[Lemma 2.1.4]{tesimonti} that allows us to characterise the Pansu derivative of Lipschitz curves.

\begin{lemma}\label{lemma.monti2}
Let $B$ a bounded Borel subset of the real line and assume $\gamma:B\to \GG$ is a Lipschitz curve. If $h\in L^\infty(B,V_1)$ is the vector of canonical coordinates of $\gamma$, then for $\Leb^1$-almost every $t\in B$ we have:
    $$D\gamma(t):=\lim_{\substack{s\to 0\\t+s\in B}}\delta_{1/s}(\gamma(t)^{-1}*\gamma(t+s))=(h_1(t),\ldots,h_{n_1}(t),0,\ldots,0).$$
    In particular $D\gamma(t)$ exists for $\Leb^1$-almost every $t\in B$.
\end{lemma}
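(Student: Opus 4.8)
The plan is to turn $D\gamma(t)$ into a pointwise computation at a good point and then expand the group law $*$ layer by layer, following the computation in \cite[Lemma 2.1.4]{tesimonti}. Since the horizontal vector fields $X_i$ are left-invariant, for fixed $t$ the curve $\tau\mapsto\gamma(t)^{-1}*\gamma(\tau)$ is again Lipschitz for $d_c$ with the same Lipschitz constant and the same canonical coordinates $h$, and replacing $\gamma$ by it does not change $\delta_{1/s}\big(\gamma(t)^{-1}*\gamma(t+s)\big)$; so I may assume $\gamma(t)=0$. By Lemma \ref{lemma.monti1} and standard measure theory, $\Leb^1$-a.e.\ $t\in B$ is simultaneously a Lebesgue density point of $B$, a Lebesgue point of $h$, and a point of Euclidean differentiability of $\gamma$ with $\gamma'(t)=\mathscr C(\gamma(t))[h(t)]$, and I work at such a $t$. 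Under the normalisation $\gamma(t)=0$, Remark \ref{rk:expc} together with $\mathscr Q_i(0,\cdot)\equiv 0$ shows that $\mathscr C(0)$ has identity top block and zero lower block, hence $\gamma'(t)=(h(t),0,\dots,0)$.

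The first layer is immediate: by \eqref{opgr} and $\mathscr Q_1=0$ the first-layer component of $\gamma(t)^{-1}*\gamma(t+s)=\gamma(t+s)$ is $\gamma_1(t+s)=s\,h(t)+o(s)$, so $\delta_{1/s}(\gamma(t+s))_1\to h(t)$. For the higher layers $2\le i\le\mathfrak s$ I first record the a priori bound $|\gamma_i(t+s)|=O(|s|^i)$: indeed $\|\gamma(t+s)\|=d_c(\gamma(t),\gamma(t+s))\le L|s|$, and comparing the $\delta_\lambda$-homogeneous functions $x\mapsto|x_i|$ and $x\mapsto\|x\|^i$ on $\{\|x\|=1\}$ gives $|x_i|\le C\|x\|^i$ for all $x$. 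This makes every $\delta_{1/s}(\gamma(t+s))_i$ bounded; the point is to improve this to $o(|s|^i)$.

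The heart of the proof is an induction on $i\ge 2$. Differentiating the scaling identity $\mathscr Q_i(\delta_\lambda p,\delta_\lambda q)=\lambda^i\mathscr Q_i(p,q)$ in the weight-one variable $q$ and using Remark \ref{rk:expc}, one sees that $x\mapsto(\mathscr C(x)[v])_i$ is, for each $v\in V_1$, a polynomial in $x_1,\dots,x_{i-1}$, linear in $v$, and weighted-homogeneous of degree $i-1$ (with $x_k$ of weight $k$). I then write $\gamma_i(t+s)=\int_0^s(\mathscr C(\gamma(t+u))[h(t+u)])_i\,\de u$ and split the integrand into the monomials depending on $x_1$ only and the rest. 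The ``rest'' always carries a factor $\gamma_k(t+u)$ with $2\le k\le i-1$, which by the inductive hypothesis is $o(|s|^k)$ uniformly for $|u|\le|s|$, the remaining factors being of the right polynomial order and $h$ bounded, so after integration this contributes $o(|s|^i)$. The $x_1$-only part is $\int_0^s P_i(\gamma_1(t+u);h(t+u))\,\de u$ with $P_i$ homogeneous of degree $i-1$ in the first slot and linear in the second; substituting $\gamma_1(t+u)=u\,h(t)+\rho(u)$ with $\sup_{|u|\le|s|}|\rho(u)|=o(|s|)$ (Euclidean differentiability and the Lebesgue-point property of $h$) and $h(t+u)=h(t)+r(u)$ with $\int_0^{|s|}|r|=o(|s|)$, every term carries a factor $\rho$ or $r$ except the single term $P_i(h(t);h(t))\int_0^s u^{i-1}\,\de u$, which vanishes because $P_i(h(t);h(t))=0$. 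This last algebraic fact is where I would use Baker--Campbell--Hausdorff: the horizontal line $u\mapsto\delta_u(h(t))=(u\,h(t),0,\dots,0)$ is a one-parameter subgroup of $\GG$, so all its coordinates of order $\ge 2$ vanish identically, whence $(\mathscr C((u\,h(t),0,\dots,0))[h(t)])_i=u^{i-1}P_i(h(t);h(t))\equiv 0$. The induction then gives $\gamma_i(t+s)=o(|s|^i)$, so $\delta_{1/s}(\gamma(t+s))_i\to 0$; undoing the left translation yields $D\gamma(t)=(h_1(t),\dots,h_{n_1}(t),0,\dots,0)$.

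I expect the main obstacle to be precisely this inductive higher-layer estimate --- in particular the algebraic vanishing $P_i(h(t);h(t))=0$ and the correct control of the error terms across the layers --- while the first-layer identity, the a priori bound, and the reduction from an interval to the Borel domain $B$ (handled by restricting to density points, as in the preceding lemmas) are routine; this is why the statement can be phrased as an ``immediate adaptation'' of \cite[Lemma 2.1.4]{tesimonti}. A more conceptual alternative would be to observe that the rescaled curves $\sigma_s(\tau):=\delta_{1/s}\big(\gamma(t)^{-1}*\gamma(t+s\tau)\big)$ are uniformly $d_c$-Lipschitz with $\sigma_s(0)=0$, hence precompact by Ascoli--Arzelà, with canonical coordinates $\tau\mapsto h(t+s\tau)$ converging to the constant $h(t)$ at a Lebesgue point; by closure of horizontal curves under uniform convergence together with weak-$*$ convergence of their canonical coordinates, every subsequential limit is the horizontal line $\tau\mapsto(\tau\,h(t),0,\dots,0)$, and evaluating at $\tau=1$ together with uniqueness of the limit gives the claim --- in that route the obstacle shifts to the closure property and to handling the varying domains of the $\sigma_s$.
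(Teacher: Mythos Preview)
Your proposal is correct and follows essentially the same route as the paper: the paper's proof consists solely of the sentence ``follows from \cite[Lemma 2.1.4]{tesimonti} together with an elementary localization argument'', and you have reconstructed in detail exactly what that cited computation does (the layer-by-layer induction using left-invariance, the BCH-based vanishing $P_i(h(t);h(t))=0$, and the Lebesgue-point estimates), together with the density-point reduction from intervals to the Borel domain $B$. Your alternative Ascoli--Arzel\`a argument is a genuinely different, more conceptual route not mentioned in the paper; it trades the explicit polynomial bookkeeping for a compactness/closure argument on horizontal curves, at the cost of having to handle the varying domains of the rescaled fragments, as you note.
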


\begin{proof}
The proof of this lemma follows from \cite[Lemma 2.1.4]{tesimonti} together with an elementary localization argument.
\end{proof}


\begin{definizione}[$C$-curves]
\label{C-curves}
Let $e\in V_1$ be a unit vector and $\sigma\in (0,1)$. We denote by $C(e,\sigma)$ the one-sided, closed, convex cone with axis $e$ and opening $\sigma$ in $V_1$, namely\label{s-cones}
$$C(e,\sigma):=\{x\in V_1:\langle x,e\rangle\geq(1-\sigma^2)|x|\}.$$
Let $B$ be a bounded Borel subset of the real line. A Lipschitz curve $\gamma:B\to \GG$, is said to be a $C(e,\sigma)$-curve if: 
   \begin{equation}
       \text{ $\pi_1(\gamma(s))-\pi_1(\gamma(t))\in C(e,\sigma)\setminus\{0\}$ for any $t,s\in B$ with $t<s$.}
       \label{conecondition}
   \end{equation}
If the domain of a $C(e,\sigma)$-curve $\gamma$ is a compact interval, we will say that $\gamma$ is a \emph{full} $C(e,\sigma)$-curve.
\end{definizione}

\begin{osservazione}\label{ossnice}
Thanks to Lemmas \ref{lemma.monti1} and \ref{lemma.monti2} if $\gamma:B\to \mathbb{G}$ is a $C$-curve, then for $\Leb^1$-almost every $t\in B$ we have:
$$(\pi_1\circ \gamma)^\prime(t)=\pi_1(\gamma^\prime(t))=\pi_1(\mathscr{C}(x)[h(t)])=D\gamma(t),$$
where $h$ is map of canonical coordinates associated to $\gamma$, see Remark \ref{osservazionecoocanoniche}.
\end{osservazione}

\begin{osservazione}
Note that any $C(e,\sigma)$-curve is injective. Indeed, if we suppose by contradiction that $\gamma(s)=\gamma(t)$ for some $t<s$ we would infer that $\pi_1(\gamma(s))=\pi_1(\gamma(t))$. This however is not possible thanks to \eqref{conecondition}.
\end{osservazione}

\begin{lemma}\label{campovgamma}
Let $\gamma:K\to \mathbb{G}$ be a Lipschitz curve. Then, there exists an $\Haus^1$-measurable map $\mathfrak{v}_\gamma:\im(\gamma)\to \mathbb{G}$ such that $\mathfrak{v}_\gamma(x)\in \{D\gamma(t):t\in\gamma^{-1}(x)\}$ and $\mathfrak{v}_\gamma(x)\neq 0$ for $\Haus^1\llcorner \im(\gamma)$-almost every $x\in\mathbb{G}$.
\end{lemma}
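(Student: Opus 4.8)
The plan is to construct $\mathfrak{v}_\gamma$ by a first--hitting--time selection along $\gamma$, after removing two $\Haus^1$-negligible subsets of $\im(\gamma)$. First I would set
$$K_0:=\{t\in K:\ t\text{ is a Lebesgue density point of }K\text{ and }D\gamma(t)\text{ exists}\},$$
which has full $\Leb^1$-measure in $K$ by Lemma~\ref{lemma.monti2}. Since $\gamma$ is Euclidean Lipschitz (Lemma~\ref{lemma.monti1}, or simply because $\lvert x-y\rvert\le d_c(x,y)$), it maps the $\Leb^1$-null set $K\setminus K_0$ to an $\Haus^1_{eu}$-null set, hence to an $\Haus^1$-null set by Proposition~\ref{identificazionehaus1}; call it $N_1:=\gamma(K\setminus K_0)$.

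Next I would discard the locus where the derivative degenerates. Let $h\in L^\infty(K,V_1)$ be the canonical coordinates of $\gamma$ (Remark~\ref{osservazionecoocanoniche}); by Lemma~\ref{lemma.monti2} one has $D\gamma(t)=(h_1(t),\dots,h_{n_1}(t),0,\dots,0)$, so on the set $Z:=\{t\in K_0:D\gamma(t)=0\}$ we get $h(t)=0$ and therefore $\gamma'(t)=\mathscr{C}(\gamma(t))[h(t)]=0$. The area formula, used exactly as in the proof of Proposition~\ref{identificazionehaus1}, then gives $\Haus^1_{eu}(\gamma(Z))\le\int_Z\lvert\gamma'(t)\rvert\,\de t=0$, so $\gamma(Z)$ is $\Haus^1$-negligible, again by Proposition~\ref{identificazionehaus1}. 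Setting $N:=N_1\cup\gamma(Z)$, for every $x\in\im(\gamma)\setminus N$ we have $\gamma^{-1}(x)\subseteq K_0$ and $\gamma^{-1}(x)\cap Z=\varnothing$, hence $\{D\gamma(t):t\in\gamma^{-1}(x)\}$ is a non-empty subset of $\GG\setminus\{0\}$.

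Now I would define $\tau:\im(\gamma)\to K$ by $\tau(x):=\min\gamma^{-1}(x)$ — the minimum is attained because $\gamma^{-1}(x)$ is a non-empty compact subset of the compact set $K$ — and put $\mathfrak{v}_\gamma(x):=D\gamma(\tau(x))$ for $x\in\im(\gamma)\setminus N$, extending $\mathfrak{v}_\gamma$ by an arbitrary fixed element of $\GG$ on $N$. By the previous paragraph, on $\im(\gamma)\setminus N$ this choice satisfies both $\mathfrak{v}_\gamma(x)\in\{D\gamma(t):t\in\gamma^{-1}(x)\}$ and $\mathfrak{v}_\gamma(x)\neq 0$, i.e.\ the two asserted properties hold $\Haus^1\trace\im(\gamma)$-a.e.

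The step I expect to need the most care is the $\Haus^1$-measurability of $\mathfrak{v}_\gamma$. Here the idea is that $\tau$ is Borel, since $\{x\in\im(\gamma):\tau(x)\le s\}=\gamma(K\cap(-\infty,s])$ is compact for every $s\in\R$, while $D\gamma$ is $\Leb^1$-measurable on $K_0$ because $h$ is measurable (Lemma~\ref{lemma.monti1}). For a Borel set $A\subseteq\GG$, write $(D\gamma)^{-1}(A)=B\cup M$ with $B$ Borel and $\Leb^1(M)=0$; then $\tau^{-1}(B)$ is Borel, and $\tau^{-1}(M)\subseteq\gamma(M)$ lies in an $\Haus^1$-null set, so $(D\gamma\circ\tau)^{-1}(A)$ is $\Haus^1\trace\im(\gamma)$-measurable. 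Since modifying $\mathfrak{v}_\gamma$ on the $\Haus^1$-null set $N$ does not affect measurability, this completes the argument.
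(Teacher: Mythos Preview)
Your argument is correct and takes a genuinely different route from the paper. The paper's proof discards, in addition to the non-differentiability and zero-derivative loci, the set $N_1=\{x:\mathrm{Card}(\gamma^{-1}(x))=\infty\}$ so that the multimap $x\mapsto\{D\gamma(t):t\in\gamma^{-1}(x)\}$ becomes closed-valued (finite-valued), checks it is measurable, and then appeals to a measurable selection theorem (Srivastava) to produce $\mathfrak{v}_\gamma$. Your approach instead performs an explicit selection via the first hitting time $\tau(x)=\min\gamma^{-1}(x)$; the Borelianity of $\tau$ follows from compactness of the sublevel sets $\gamma(K\cap(-\infty,s])$, and the $\Haus^1$-measurability of the composition $D\gamma\circ\tau$ is handled by the observation $\tau^{-1}(M)\subseteq\gamma(M)$ together with the fact that Lipschitz images of $\Leb^1$-null sets are $\Haus^1$-null. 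This is more elementary (no selection theorem needed) and, as a bonus, it does not require treating points with infinite preimage separately. The paper's approach, on the other hand, is shorter once one is willing to invoke the selection machinery and makes the multivalued structure of the tangent explicit.
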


\begin{proof}
Let us denote by $N_1$ the set of those $x\in \im(\gamma)$ such that $\mathrm{Card}(\gamma^{-1}(x))=\infty$ and define
$$\mathscr{N}_2:=\{t\in K:D\gamma(t)\text{ does not exists}\}\qquad\text{and}\qquad\mathscr{N}_3:=\{t\in K\setminus \mathscr{N}_2:D\gamma(t)=0\}.$$
Therefore, defined $N:=N_1\cup \gamma(\mathscr{N}_2\cup \mathscr{N}_3)$ it is immediate to see that Pansu's differentiability theorem and the area formula in  \cite[Theorem 4.4]{magnaniarea}, imply that $\Haus^{1}(N)=0$.
Denote by $\mathscr{T}:\im(\gamma)\setminus N\to \mathbb{G}$ the multimap which assigns to each $x\in\im(\gamma)$ the set $\{D\gamma(t):t\in \gamma^{-1}(x)\}$. 
Note that $\mathscr{T}$ the multimap is closed-valued as it takes values in the family of non-empty finite sets. Finally, since the map $t\mapsto D\gamma(t)$ is Borel and for any Borel set $B\subseteq \GG$ we have $\{x:\mathscr{T}(x)\cap B\neq \emptyset\}=\gamma(\{t:D\gamma(t)\in B\})$, we see that the Lipschitzianity\footnote{Lipschitz curves send sets of $\Leb^1$-null sets to $\Haus^1$-null sets.} of $\gamma$ implies that $\{x:\mathscr{T}(x)\cap B\neq \emptyset\}$ is $\mathcal{H}^1$-measurable for any Borel set $B$. This in particular proves that $\mathscr{T}$ is a measurable multimap and hence \cite[Theorem 5.2.1]{Sriva} implies the existence of the vector field $\mathfrak{v}_\gamma$ by setting $\mathfrak{v}_\gamma=0$ on $\mathbb{G}\setminus (\im(\gamma)\setminus N)$.
\end{proof}

\begin{definizione}[Tangents to curves]\label{tg:curvesoso}
Let $B\subset\mathbb{R}$ be a Borel set and $\gamma:B\to \GG$ a Lipschitz map. Then, for any $x\in \gamma(K)$ where $\mathfrak{v}_\gamma(x)\neq 0$, we define $\Tan(\gamma(K),x)$ as the $1$-parameter subgroup generated by $\mathfrak{v}_\gamma(x)$ and as $\{0\}$ otherwise.
Note that thanks to  Lemma \ref{lemma.monti2}  we have that $\Tan(\gamma(K),x)$ is a $1$-dimensional homogeneous subgroup for $\Haus^1\llcorner \gamma(K)$-almost every $x\in\GG$.
Furthermore, since $\gamma$ is in particular an Euclidean Lipschitz curve, we denote by $\Tan_{eu}(\gamma,x)$ the usual Euclidean tangent to $\gamma$. Note that by Lemmas \ref{lemma.monti1}, \ref{lemma.monti2} and Proposition \ref{identificazionehaus1} we have:
$$\Tan_{eu}(\gamma,x)=\mathscr{C}(x)[\Tan(\gamma,x)]\qquad \text{for $\Haus^1$-almost every $x\in\GG$}.$$
\end{definizione}

\subsection{Euclidean and horizontal currents.}

We recall here the basic notions and terminology from
the theory of Euclidean currents.
A \emph{$k$-dimensional current} (or $k$-current) 
in $\R^n$ is a continuous linear functional on the space of smooth and 
compactly supported differential $k$-forms on $\R^n$, endowed with the topology of test functions. 

The boundary of a $k$-current $\mathbf{T}$ is the $(k-1)$-current 
$\bd \mathbf{T}$ defined by $\scal{\bd \mathbf{T}}{\omega} := \scal{\mathbf{T}}{d\omega}$\label{def:boundarycurrent}
for every smooth and compactly supported 
$(k-1)$-form $\omega$ on $\R^n$, and where $d\omega$ denotes the exterior derivative of $\omega$.
The \emph{mass} of $\mathbf{T}$, denoted by 
$\Mass(\mathbf{T})$, is the supremum of $\scal{\mathbf{T}}{\omega}$ over
all forms $\omega$ such that $|\omega|\le 1$
everywhere. 
A current $\mathbf{T}$ is called \emph{normal} if both $\mathbf{T}$ 
and $\bd \mathbf{T}$ have finite mass.

\medskip

By Riesz theorem a current $\mathbf{T}$ with finite mass
can be represented as a finite measure with 
values in the space $\largewedge_k(\R^n)$
of $k$-vectors in $\R^n$,
and therefore it can be written in the form
$\mathbf{T}=\tau\mu$ where $\mu$ is a finite positive measure
and $\tau$ is a $k$-vector field such that 
$\int |\tau|d\mu < +\infty$.
In particular the action of $\mathbf{T}$ on a form
$\omega$ is given by
\[
\scal{\mathbf{T}}{\omega} 
= \int_{\R^n} \scal{\tau(x)}{\omega(x)} \, d\mu(x)
\, ,
\]
and the mass $\Mass(\mathbf{T})$ is the total mass of $\mathbf{T}$ as a
measure, that is, $\Mass(\mathbf{T})=\int |\tau| d\mu$. Note that $0$-dimensional currents with locally finite mass are signed Radon measures and the mass coincides with the total variation.

In the following, whenever we write a current $\mathbf{T}$ as $\mathbf{T}=\tau\mu$ 
we tacitly assume that $\tau(x)\ne 0$ for $\mu$-almost every ~$x$;
in this case we say that $\mu$ is a measure
\emph{associated} to the current $\mathbf{T}$.%

Moreover, if $\mathbf{T}$ is a $k$-current with finite mass 
and $\mu$ is an arbitrary measure, we can
write $\mathbf{T}$ as $\mathbf{T}=\tau\mu+\boldsymbol\nu$ where $\tau$
is a $k$-vector field in $L^1(\mu)$, 
called the Radon-Nikod\'ym density of $\mathbf{T}$ 
\wrt $\mu$, and $\boldsymbol\nu$ is a measure 
with values in $k$-vectors which is singular 
with respect to $\mu$.

\smallskip

Given a Carnot group $\GG$, in the previous subsection we have already remarked how we can identify $\GG$ with $\R^n$, the underlying vector subspace of its Lie algebra, endowed with the operation given by the Baker-Campbell-Hausdorff formula. The $1$-dimensional currents of finite mass in $\R^n$ that are of particular importance for this paper and for the geometry of $\GG$ are those that are \emph{tangent to the horizontal distribution of $G$} or simply horizontal. 

\begin{definizione}[Horizontal $1$-dimensional currents of finite mass]\label{defin:hor:curr}
Let $\GG=(\R^n,*)$.
A $1$-dimensional current of finite mass $\mathbf{T}=\tau\mu$ on $\R^n$ is said to be $\GG$-\emph{horizontal}, or simply horizontal, if for $\mu$-almost every $x\in\R^n$ we have $\tau(x)\in H\GG(x)$.
\end{definizione}

The following definition is a central concept throughout the paper, which is the integration of a family of measures.

\begin{definizione}[Integration of measures]
\label{s-measint}
Let $(I,dt)$ be a ($\sigma$-)finite measure space and for every $t\in I$ 
let $\mu_t$ be a real- or vector-valued measure on $\R^n\cong \GG$ such that:
\begin{itemizeb}
\item[(a)]
for every Borel set $E$ in $\GG$ the function $t\mapsto \mu_t(E)$ 
is measurable; 
\item[(b)]
$\int_I \Mass(\mu_t) \, dt <+\infty$.
\end{itemizeb}
Then we denote by $\int_I \mu_t\, dt$ the measure on 
$\GG$ defined by
\[
{\textstyle \big[ \int_I \mu_t\, dt \big]}(E)
:= \int_I \mu_t(E) \, dt
\quad\text{for every Borel set $E$ in $\GG$.}
\]
Note that the assumption (a) is equivalent to say that $t\mapsto \mu_t$ is a measurable map from $I$ to the space of finite measures on $\mathbb{G}$ endowed with the weak* topology.
\end{definizione}

We now introduce some notation that will be used throughout the paper.

\begin{definizione}\label{correnticurve}
Let $B$ be a Borel subset of $\R$ and $\gamma:B\to \GG$ be a Lipschitz curve. We denote by $\llbracket \gamma\rrbracket$ the current of finite mass that acts on compactly supported smooth $1$-forms $\omega$ as:
$$\scal{\llbracket \gamma\rrbracket}{\omega}:=\int_B \scal{\gamma'(t)}{\omega(\gamma(t))}dt.$$

In the following it will be also useful to write $\llbracket \gamma\rrbracket=\tau_\gamma\rho\Haus^1\trace \im(\gamma)$, where $\rho$ is a suitable non-negative function in $L^1(\Haus^1\llcorner \im(\gamma))$ 
and $\tau_\gamma(x)$ is a unitary Borel vector field that coincides with $\mathscr{C}(x)[\mathfrak{v}_\gamma(x)]$, up to a real (non-zero) multiple, $\rho\Haus^1\trace \im(\gamma)$-almost everywhere. For a definition of the vector field $\mathfrak{v}_\gamma$, see Lemma \ref{campovgamma}.
\end{definizione}

With this notation at hand we can introduce the following result, essentially due to Smirnov, see \cite{Smirnov}.

\begin{teorema}\label{smirnov}
Let $\GG$ be a Carnot group and let $\mathbf{T}=\tau \mu$ be a $1$-dimensional normal and horizontal current with $\partial \mathbf{T}=0$ and with $\lvert\tau(x)\rvert=1$ for $\mu$-almost every $x\in \GG$. Then, there exists a family of vector-valued measures $t\mapsto \boldsymbol\mu_t$ satisfying the hypothesis (a) and (b) of Definition \ref{s-measint} such that
\begin{itemize}
    \item[(i)] for almost every $t\in I$ there exists a Lipschitz curve $\gamma_t:[0,1]\to\GG$ for which $\boldsymbol\mu_t=\llbracket \gamma_t\rrbracket$ and 
    $$\scal{\mathbf{T}}{\omega}=\int_I \scal{\llbracket \gamma_t\rrbracket}{\omega}\,dt=\int_I\int\rho_t\scal{\tau_{\gamma_t}}{\omega} d\Haus^1\trace \im(\gamma_t)\, dt,$$
    for every smooth and compactly supported $1$-form $\omega$;
    \item[(ii)] the following identity holds
    $$\Mass(\mathbf{T})=\int_I \Mass(\llbracket \gamma_t\rrbracket)\,dt=\int_I\lVert \rho_t\rVert_{L^1(\Haus^1\llcorner\im(\gamma_t))} \,dt,$$ 
    and in particular  $\tau(x)=\tau_{\gamma_t}(x)$
   for $\Haus^1$-almost every $x\in\im(\gamma_t)$ and for almost every $t\in I$;
   \item[(iii)] the measure $\mu$ can be written as $\mu=\int_I \rho_t\Haus^1\trace \mathrm{im}(\gamma_t)dt$.
\end{itemize}
Further, one can also rewrite $\mathbf{T}$ as 
\begin{equation}\label{e:smirnov1}
\scal{\mathbf{T}}{\omega}=\int_\R\int_I\scal{\tau_{\gamma_t}}{\omega} d\Haus^1\trace (\im(\gamma_t)\cap E_{t,s})\, dtds,    
\end{equation}
where $E_{t,s}$ is a suitable Borel subset of $\im(\gamma_t)$ and the map $(t,s)\mapsto\Haus^1\trace (\im(\gamma_t)\cap E_{t,s})$ satisfies the hypothesis (a) and (b) of Definition \ref{s-measint}. In addition 
\begin{equation}\label{e:smirnov2}
\Mass(\mathbf{T})=\int_\R\int_I\Haus^1(\im(\gamma_t)\cap E_{t,s}) \,dtds\qquad \text{and}\qquad \mu=\int_\R\int_I \Haus^1\llcorner (\im(\gamma_t)\cap E_{t,s})\,dtds ,
\end{equation}
with $\tau_{\gamma_t}(x)=\tau(x)$ for $\Haus^1$-almost every $x\in\im(\gamma_t)\cap E_{t,s}$ and for almost every $(s,t)\in \R\times I$. 
\end{teorema}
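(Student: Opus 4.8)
The plan is to reduce the statement to Smirnov's classical decomposition of a divergence-free normal $1$-current in $\R^n$ (\cite{Smirnov}), taking advantage of the fact that, as explained in Subsection~\ref{s-prelim_carnot}, $\GG$ is nothing but $\R^n$ equipped with a polynomial group law, so a $\GG$-horizontal normal $1$-current \emph{is} an ordinary Euclidean normal $1$-current with zero boundary. First I would invoke Smirnov's theorem to write $\mathbf{T}=\int_I \llbracket \sigma_t\rrbracket\, dt$, where each $\sigma_t:\R\to\R^n$ is a (Euclidean) Lipschitz curve, the decomposition being \emph{compatible with the mass}, i.e.\ $\Mass(\mathbf{T})=\int_I\Mass(\llbracket\sigma_t\rrbracket)\,dt$ and $\mu=\int_I\Haus^1_{eu}\trace\im(\sigma_t)\,dt$ (after reparametrising each $\sigma_t$ by Euclidean arclength and absorbing the density). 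Since $\mathbf{T}$ is $\GG$-horizontal with $|\tau|=1$, the mass-compatibility forces, for a.e.\ $t$, $\sigma_t'(s)\in H\GG(\sigma_t(s))$ for $\Leb^1$-a.e.\ $s$; indeed the Euclidean tangent of $\sigma_t$ must agree $\Haus^1$-a.e.\ with $\tau$ on $\im(\sigma_t)$ for the integral identity of $\mathbf{T}$ to hold, and $\tau(x)\in H\GG(x)$ by hypothesis. By Lemma~\ref{lemma.monti1} this is exactly the condition that $\sigma_t$ is also Lipschitz for $d_c$; composing with a bi-Lipschitz (w.r.t.\ $d_c$) reparametrisation onto $[0,1]$ and using Proposition~\ref{identificazionehaus1} to pass between $\Haus^1$ and $\Haus^1_{eu}$ on the image, I obtain the curves $\gamma_t:[0,1]\to\GG$ and the densities $\rho_t$ claimed in (i)--(iii), with $\tau_{\gamma_t}=\tau$ $\Haus^1$-a.e.\ on $\im(\gamma_t)$ because $\tau_{\gamma_t}$ is, by Definition~\ref{correnticurve}, the unit tangent to $\gamma_t$ which we have just identified with $\tau$.

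The second half of the statement, formulas \eqref{e:smirnov1}--\eqref{e:smirnov2}, is a bookkeeping refinement: it only says that one may further disintegrate each single-curve piece $\rho_t\Haus^1\trace\im(\gamma_t)$ over a one-parameter family $\{E_{t,s}\}_{s\in\R}$ of Borel subsets of the image, with matching mass and matching tangent orientation. I would produce the $E_{t,s}$ by a layer-cake (coarea) slicing of the density: for a nonnegative $\rho_t\in L^1(\Haus^1\trace\im(\gamma_t))$ one has $\rho_t=\int_0^\infty \mathbf 1_{\{\rho_t>s\}}\,ds$, so setting $E_{t,s}:=\{x\in\im(\gamma_t):\rho_t(x)>s\}$ immediately gives $\rho_t\,\Haus^1\trace\im(\gamma_t)=\int_\R \Haus^1\trace(\im(\gamma_t)\cap E_{t,s})\,ds$ and correspondingly $\Mass(\llbracket\gamma_t\rrbracket)=\int_\R\Haus^1(\im(\gamma_t)\cap E_{t,s})\,ds$; integrating in $t$ and using (i)--(iii) yields \eqref{e:smirnov1} and \eqref{e:smirnov2}. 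The measurability requirements (a)--(b) of Definition~\ref{s-measint} for the two-parameter family follow from Fubini together with the measurability already established for $t\mapsto\mu_t$, and the orientation identity $\tau_{\gamma_t}=\tau$ on $\im(\gamma_t)\cap E_{t,s}$ is inherited from the corresponding identity in (ii) since $E_{t,s}\subseteq\im(\gamma_t)$.

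The main obstacle is the first step: checking carefully that the Euclidean Smirnov decomposition can be chosen \emph{without loss of mass} (this is the nontrivial content of \cite{Smirnov}, as opposed to a mere measure-theoretic disintegration) and, crucially, that the horizontality of $\mathbf{T}$ is transferred to the individual curves $\sigma_t$. The point is that $|\tau|=1$ together with $\Mass(\mathbf{T})=\int_I\Mass(\llbracket\sigma_t\rrbracket)\,dt$ leaves no room for cancellation, so for a.e.\ $t$ the oriented tangent of $\sigma_t$ coincides with $\tau$ at $\Haus^1$-a.e.\ point of $\im(\sigma_t)$, whence $\sigma_t$ is horizontal; one must also argue that the exceptional set of parameters $t$ where this fails can be discarded, and that each good $\sigma_t$ has an image of \emph{finite} $\Haus^1$-measure so that Lemma~\ref{lemma.monti1}, Proposition~\ref{identificazionehaus1}, Lemma~\ref{campovgamma} and Definition~\ref{correnticurve} all apply on it. Once horizontality of the pieces is secured, the reparametrisation to $[0,1]$ and the passage between the two Hausdorff measures are routine given the preliminaries already proved.
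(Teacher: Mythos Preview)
Your proposal is correct and follows essentially the same route as the paper: invoke a Euclidean Smirnov-type decomposition with mass compatibility (the paper cites \cite[Theorem~3.1]{PaoliniStepanov} and \cite[Theorem~5.5]{AlbMar}), use the identity $\tau=\tau_{\gamma_t}$ forced by the absence of cancellation to transfer horizontality from $\mathbf{T}$ to the individual curves, and then obtain \eqref{e:smirnov1}--\eqref{e:smirnov2} by layer-cake slicing of $\rho_t$. One small correction: Lemma~\ref{lemma.monti1} gives the implication $d_c$-Lipschitz $\Rightarrow$ horizontal, whereas you need the converse (horizontal with bounded canonical coordinates $\Rightarrow$ $d_c$-Lipschitz); the paper appeals to \cite[Proposition~1.3.3]{tesimonti} for this direction, which is in any case immediate from the very definition of $d_c$ in Definition~\ref{canonicalcoo}.
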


\begin{proof}
Thanks to \cite[Theorem 3.1]{PaoliniStepanov}, there exists a family of vector-valued measures $t\mapsto \mu_t$ satisfying the hypothesis (a) and (b) of Definition \ref{s-measint} such that for $\mathcal{L}^1$-almost every $t\in [0,\Mass(T)]$ there exists a Lipschitz curve $\gamma_t:[0,1]\to \mathbb{G}$ such that $\boldsymbol\mu_t=\llbracket \gamma_t\rrbracket$ and
\begin{equation}
    \begin{split}
        \textbf{T}=\int_0^{\Mass( \textbf{T})}\llbracket \gamma_t\rrbracket dt\qquad\text{and}\qquad\Mass( \textbf{T})=\int_0^{\Mass( \textbf{T})}\Mass(\llbracket \gamma_t\rrbracket) dt=\int_0^{\Mass( \textbf{T})}\int_0^1\lvert\gamma_t^\prime(s)\rvert ds\, dt.
    \end{split}
\end{equation}
The proof of the first part of the result can be obtained from the above discussion with the same argument used for \cite[Theorem 5.5]{AlbMar}. The only variation on \cite[Theorem 5.5]{AlbMar} is how to prove that the curves $\gamma_t$ used to decompose the current are Lipschitz, where the codomain is endowed with the \cc metric. This can be obtained as follows. 
The argument in \cite[Theorem 5.5]{AlbMar} implies that for $\Haus^1$-almost every $x\in\im(\gamma_t)$ and almost every $t\in I$ we have 
$$H\mathbb{G}(x)\ni\tau(x)=\tau_{\gamma_t}(x),$$
which coincides with $\mathfrak{v}_{\gamma_t}(x)$ for $\Haus^1$-almost every $x\in \im(\gamma_t)$ and almost every $t\in I$, see for instance Definition \ref{correnticurve}.
This implies thanks to \cite[Proposition 1.3.3]{tesimonti} that for $\Haus^1\trace\im(\gamma_t)$-almost every $x\in \R^n$ and almost every $t$ the curve $\gamma_t$ is horizontal and thus Lipschitz if seen as curve $\gamma_t:[0,1]\to \mathbb{G}$.

The second part of the statement can be obtained defining $E_{t,s}:=\{x\in\mathrm{im}(\gamma_t):\rho_t(x)\geq s\}$ by applying the Cavalieri formula $\rho_t\mathcal{H}^1\llcorner \im(\gamma_t)=\int_0^\infty\mathcal{H}^1\llcorner (\im(\gamma_t)\cap \{\rho_t\geq \lambda\}) d\lambda$.
\end{proof}

\begin{osservazione}
Thanks to \cite[Remark 2.7 (iii)]{AlbMar} one can also rewrite \eqref{e:smirnov1} and \eqref{e:smirnov2} as integrals with respect to a single real variable as follows
\begin{equation}
    \scal{\mathbf{T}}{\omega}=\int_{\R}\scal{\tau_{\gamma_t}}{\omega} d\Haus^1\trace (\im(\gamma_t)\cap E_{t})\, dt\qquad \Mass(\mathbf{T})=\int_\R\Haus^1(\im(\gamma_t)\cap E_{t}) \,dt\qquad \mu=\int_\R \Haus^1\llcorner (\im(\gamma_t)\cap E_{t})\,dt.
    \nonumber
\end{equation}
\end{osservazione}

\begin{osservazione}\label{remark:rappresentazione}
Let $\mathbf{T}$ be a horizontal normal current such that $\partial \mathbf{T}=0$. Then for any smooth  compactly supported $1$-form we have 
\begin{equation}
    \langle\partial \mathbf{T}; \omega\rangle=\int_{\R^n}\langle \tau(x);d\omega(x)\rangle d\mu(x)=\int\langle \tilde{\tau}(x),d_H \omega(x)\rangle_{\R^{n_1}} d\mu(x),\footnote{In the following we will drop the subscript $\R^{n_1}$ in the scalar product in the first layer if not otherwise specified.}
    \label{rappre1}
\end{equation}
where $\langle \cdot,\cdot\rangle_{\R^{n_1}}$ is the scalar product on $\R^{n_1}$ and
\begin{equation}
\begin{split}
     \tilde  \tau(x):=\sum_{i=1}^{n_1}\tau_i(x)\partial_i\qquad \text{and}\qquad
     d_H\omega(x):=\sum_{i=1}^{n_1} X_i\omega(x)dx_i,
     \label{rappre2}
\end{split}
\end{equation}
where $\tau(x)=\sum_{i=1}^{n_1}\tau_i(x)X_i(x)$. It will be convenient in the following to view horizontal finite mass $1$-dimensional currents as Radon measures $\mathbf{T}\in\mathcal{M}(\mathbb{G},\R^{n_1})$ which acts by duality on vector-valued smooth function $\omega\in \mathcal{C}^\infty(\mathbb{G},\R^{n_1})$.
\end{osservazione}

\section{The decomposability bundle}
\label{s-decomp}

In this section we introduce an intrinsic notion of decomposability bundle to the setting of Carnot groups and we prove some of its elementary properties. 

First of all, we need to introduce a notion of intrinsic Grassmannian.

\begin{proposizione}[{\cite[Proposition 2.3]{MerloAntonelli}}]\label{prop:CompGrassmannian}
Fix $1\leq h\leq Q$. For any $ W_1, W_2\in \Gr(\GG)$ let
$$
d_{\mathbb G}(W_1, W_2):=d_{\Haus,\mathbb G}( W_1\cap B(0,1), W_2\cap B(0,1)),
$$
where $d_{\Haus,\mathbb G}$ is the Hausdorff distance of sets induced by the some homogenous left invariant distance $d$ on $\mathbb{G}$. Then, $d_\mathbb{G}$ is a metric on $\Gr(h)$. Moreover $(\Gr(h),d_{\mathbb G})$ is a compact metric space for any $h\in\{1,\ldots,Q\}$ and thus $(\Gr(\GG),d_{\mathbb G})$ is a compact metric space as well.
\end{proposizione}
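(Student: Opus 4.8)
The plan is to realise every homogeneous subgroup, via Proposition~\ref{prop:hom1}, as a graded linear subspace of $\R^n$, to transfer the question to the Euclidean Grassmannian, and then to deduce compactness of each $\Gr(h)$ from compactness of the Euclidean Grassmannian, the only real point being that $\Gr(h)$ sits inside it as a \emph{closed} subset. For the metric axioms I would first note that $d_{\Haus,\mathbb G}$ is unchanged under passing to closures, so in the definition of $d_{\mathbb G}$ one may replace $W_i\cap B(0,1)$ by the compact sets $W_i\cap\barB(0,1)$; symmetry and the triangle inequality are then inherited verbatim from the Hausdorff distance. If $d_{\mathbb G}(W_1,W_2)=0$ then $W_1\cap\barB(0,1)=W_2\cap\barB(0,1)$, and since each $W_i$ is $\delta_\lambda$-invariant with $\lVert\delta_\lambda x\rVert=\lambda\lVert x\rVert$ one recovers $W_i=\bigcup_{\lambda>0}\delta_\lambda\big(W_i\cap\barB(0,1)\big)$, hence $W_1=W_2$. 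This step uses no constraint on the dimension, so $d_{\mathbb G}$ is already a metric on all of $\Gr(\mathbb G)$.

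Next I would set up the reduction. By Proposition~\ref{prop:hom1} each $W\in\Gr(\mathbb G)$ is a linear subspace of $\R^n$ that is a $\delta_\lambda$-invariant Lie subalgebra of $\mathfrak g$; by \eqref{eq:intr1} it carries the grading $W=\bigoplus_{i=1}^{\mathfrak s}(W\cap V_i)$, and by \cite[Theorem 4.4]{LDNG19} its Hausdorff dimension equals $\dim_{\mathrm{hom}}(W)=\sum_{i=1}^{\mathfrak s}i\dim(W\cap V_i)$. Moreover, combining the closing remark of Definition~\ref{def:Grassmannian} with the standard fact that the Hausdorff distance between unit balls of linear subspaces metrises the Euclidean Grassmannian, one sees that $d_{\mathbb G}$ restricted to linear subspaces induces exactly the Euclidean Grassmannian topology (independently of the auxiliary distance $d$, by Proposition~\ref{equivbilip}). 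So it suffices to show that every sequence in $\Gr(h)$ has a subsequence $d_{\mathbb G}$-converging to an element of $\Gr(h)$.

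Given $(W_k)_k\subset\Gr(h)$, I would first pass to a subsequence along which $\dim(W_k\cap V_i)=d_i$ is independent of $k$ for each $i$ (possible since there are finitely many layers and $\dim(W_k\cap V_i)\le n_i$), with $\sum_i i\,d_i=h$. Using compactness of the Grassmannian $\Gr_{eu}(d_i,V_i)$ of $d_i$-planes in $V_i$, pass to a further subsequence with $W_k\cap V_i\to U_i$; choosing orthonormal frames of $W_k\cap V_i$ converging (in $\R^n$) to an orthonormal frame of $U_i$, their unions are orthonormal frames of $W_k=\bigoplus_i(W_k\cap V_i)$ converging to an orthonormal frame of $V:=\bigoplus_i U_i$, whence $W_k\to V$ in the Grassmannian and $d_{\mathbb G}(W_k,V)\to0$. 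Then I would check $V\in\Gr(h)$: it is a linear subspace, hence closed; it is $\delta_\lambda$-invariant since $\delta_\lambda$ acts as a scalar on each $V_i\supseteq U_i$, hence graded; it is a Lie subalgebra because $[W_k,W_k]\subseteq W_k$ passes to the limit by continuity and bilinearity of the bracket, so by Proposition~\ref{prop:hom1} $V$ is a homogeneous subgroup; and since $\bigoplus_i U_i\subseteq\bigoplus_i(V\cap V_i)=V$ with $\dim V=\sum_i d_i$, comparison of dimensions forces $V\cap V_i=U_i$, so $\dim_{\mathrm{hom}}(V)=\sum_i i\,d_i=h$, i.e. the Hausdorff dimension of $V$ is $h$ and $V\in\Gr(h)$. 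This proves $(\Gr(h),d_{\mathbb G})$ is (sequentially) compact; and $\Gr(\mathbb G)=\bigcup_{h=1}^Q\Gr(h)$ is compact as a finite union of compact metric spaces.

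The main obstacle is exactly the last verification that the limit stays in $\Gr(h)$, i.e. that the Hausdorff dimension cannot drop: the quantities $\dim(W_k\cap V_i)$ are only upper semicontinuous in general, and it is the rigidity supplied by the grading $W_k=\bigoplus_i(W_k\cap V_i)$ (valid precisely because the $W_k$ are homogeneous) together with the bookkeeping $\sum_i\dim(W_k\cap V_i)=\dim W_k$ that forces $V\cap V_i=U_i$ and keeps $\dim_{\mathrm{hom}}(V)$ equal to $h$. An essentially equivalent route would be to use the Blaschke selection theorem to extract a Hausdorff limit $C$ of $W_k\cap\barB(0,1)$, set $W=\bigcup_{\lambda>0}\delta_\lambda(C)$, and check directly that $W$ is a closed homogeneous subgroup; but identifying $\dim_{\mathrm{hom}}(W)$ would still reduce to the same gradation argument.
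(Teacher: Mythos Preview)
The paper does not supply its own proof of this proposition; it simply records the statement with the citation \cite[Proposition 2.3]{MerloAntonelli}. Your argument is correct and gives a self-contained proof: the metric axioms follow from homogeneity of the $W_i$ under the dilations, and sequential compactness of $\Gr(h)$ is reduced, via the gradation $W=\bigoplus_i(W\cap V_i)$ from \eqref{eq:intr1}, to compactness of the product of Euclidean Grassmannians $\prod_i\Gr_{eu}(d_i,V_i)$, with the Lie-subalgebra property and the homogeneous dimension shown to pass to limits. One small simplification: once you set $V=\bigoplus_i U_i$ with $U_i\subseteq V_i$, the identity $V\cap V_i=U_i$ follows directly from the direct-sum decomposition $\mathfrak g=\bigoplus_i V_i$ (if $v\in V\cap V_i$ with $v=\sum_j u_j$, $u_j\in U_j\subseteq V_j$, then $u_j=0$ for $j\ne i$), so no dimension comparison is needed there.
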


\begin{proposizione}[{\cite[Proposition 2.10]{MerloAntonelli}}]\label{prop:projmap}
Let $\mathcal{L}(\GG,\GG)$ be the set of linear maps from the vector space underlying $\GG$ into itself, endowed with the operator norm $\rho$. Then, the following are equivalent
\begin{itemize}
    \item[(i)]$V:\GG\to \Gr_{\mathrm{eu}}(\GG)$ is a Borel map;\footnote{Note that we are allowing $V$ to take values in the Euclidean Grassmannian.}
    \item[(ii)]the projection map $\pi_V:\GG\to\mathcal{L}(\GG,\GG)$, defined as
$\pi_V(x):=\Pi_{V(x)}$ where $\Pi_{V(x)}$ is the Euclidean orthogonal projection onto $V(x)$, is Borel;
\item[(iii)]the projection map $\pi_{V^\perp}:\GG\to\mathcal{L}(\GG,\GG)$  defined as
$\pi_{V^\perp}(x):=\Pi_{V(x)^\perp}$ where $\Pi_{V^\perp(x)}$\footnote{From here on, if not otherwise specified the symbol $V^\perp$ will denote the \emph{Euclidean} orthogonal of $V$ in $\mathbb{G}\cong\R^n$.} is the Euclidean orthogonal projection onto $V^\perp(x)$,
is Borel.
\end{itemize}
Finally the Borelianity of $\pi_V$ is also equivalent to saying that  for any fixed $v,w\in \GG$, seen as vectors of coordinates, the map $x\mapsto \langle v,\pi_V(x)[w]\rangle$ is Borel.
\end{proposizione}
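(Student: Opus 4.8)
The plan is to reduce everything to the classical fact that the orthogonal projection onto a subspace depends continuously --- indeed homeomorphically --- on the subspace, after which the equivalences become routine descriptive set theory. Concretely, I would first record that $\Psi\colon\Gr_{\mathrm{eu}}(\GG)\to\mathcal L(\GG,\GG)$, $\Psi(W):=\Pi_W$, is a homeomorphism of $\Gr_{\mathrm{eu}}(\GG)$ onto a compact subset $\mathcal P:=\Psi(\Gr_{\mathrm{eu}}(\GG))$ of $\mathcal L(\GG,\GG)$. Indeed $\Gr_{\mathrm{eu}}(\GG)$ is compact (its topology being the one of Proposition~\ref{prop:CompGrassmannian}), the dimension is locally constant on it --- subspaces of different dimension being at Hausdorff distance at least $1$, since $W_2\cap W_1^{\perp}\neq\{0\}$ whenever $\dim W_2>\dim W_1$ --- and on each component $\Gr_{\mathrm{eu}}(h)$ the map $W\mapsto\Pi_W$ is continuous for the Hausdorff and operator-norm topologies; this last point is the one classical ingredient, checked by verifying that $W_k\to W$ in Hausdorff distance forces $\Pi_{W_k}x\to x$ for $x\in W$ and $\Pi_{W_k}x\to 0$ for $x\in W^{\perp}$. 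As $\Psi$ is injective (one recovers $W=\{x:\Pi_W x=x\}$), it is a homeomorphism onto $\mathcal P$, which is then compact and in particular Borel in $\mathcal L(\GG,\GG)$, and $\Psi^{-1}\colon\mathcal P\to\Gr_{\mathrm{eu}}(\GG)$ is continuous.

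With this in hand the implications (i)$\Leftrightarrow$(ii) and (ii)$\Leftrightarrow$(iii) are immediate. Since $\pi_V=\Psi\circ V$, if $V$ is Borel then $\pi_V$ is the composition of a Borel map with a continuous map, hence Borel; conversely, because $\mathcal P$ is Borel in $\mathcal L(\GG,\GG)$ a Borel $\pi_V$ is also Borel as a map into $\mathcal P$, so $V=\Psi^{-1}\circ\pi_V$ is Borel by continuity of $\Psi^{-1}$. For (ii)$\Leftrightarrow$(iii) one only notes that $\Pi_{V(x)^{\perp}}=\mathrm{id}-\Pi_{V(x)}$, so $\pi_{V^{\perp}}=\iota\circ\pi_V$ where $\iota(A):=\mathrm{id}-A$ is a homeomorphism of $\mathcal L(\GG,\GG)$; hence $\pi_V$ is Borel if and only if $\pi_{V^{\perp}}$ is.

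Finally, for the scalar criterion: if $\pi_V$ is Borel, then for fixed $v,w\in\R^n$ the map $x\mapsto\langle v,\pi_V(x)[w]\rangle$ is the composition of $\pi_V$ with the continuous linear functional $A\mapsto\langle v,A[w]\rangle$ on $\mathcal L(\GG,\GG)$, hence Borel. Conversely, assuming these maps are Borel for all $v,w$ and specialising to a fixed basis $e_1,\dots,e_n$ of $\R^n$, every matrix entry $x\mapsto\langle e_i,\pi_V(x)[e_j]\rangle$ of $\pi_V(x)$ is Borel; since on the finite-dimensional space $\mathcal L(\GG,\GG)$ the operator norm is equivalent to $A\mapsto\max_{i,j}|A_{ij}|$, a map into it is Borel for the operator-norm topology precisely when all its entries are Borel, so $\pi_V$ is Borel. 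The only non-formal point in this scheme is the continuity of $W\mapsto\Pi_W$ in Step~1: it is classical, but it is where the geometry of the Hausdorff distance on the Grassmannian genuinely enters, and hence the natural candidate for the main obstacle.
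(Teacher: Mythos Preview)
Your proposal is correct and follows essentially the same approach as the paper, which omits the proof but indicates that it reduces to showing that the map $\Psi\colon W\mapsto\Pi_W$ from $\Gr_{\mathrm{eu}}(\GG)$ to $\mathcal{L}(\GG,\GG)$ is a homeomorphism onto its image (the paper remarks that $\Psi$ is in fact bi-H\"older). Your compactness argument (continuous injection from a compact space to a Hausdorff space) achieves this without needing the quantitative bi-H\"older estimate, and your derivations of the equivalences from it are sound.
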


\begin{proof}
The proof of the proposition is omitted. It can be achieved by proving that the map $\Psi$ associating an element of the Grassmannian $V\in \Gr_{\mathrm{eu}}(\mathbb{G})$ to its Euclidean orthogonal projection $\Pi_V$ is an homeomorphism. Actually what can be shown is that $\Psi$ is bi-H\"older.
\end{proof}

\begin{proposizione}\label{prop:Borelintersez}
For any couple of Borel distributions of homogeneous subgroups $V,W:\GG\to \Gr_{\mathrm{eu}}(\GG)$, the intersection map $V\cap W(x):= V(x)\cap W(x)$ is Borel. 
\end{proposizione}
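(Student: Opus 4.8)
The plan is to reduce Borel measurability of the intersection map $x \mapsto V(x) \cap W(x)$ to the already-established equivalences in Proposition \ref{prop:projmap}, by finding a Borel characterisation of membership in the intersection. First I would recall that, by Proposition \ref{prop:projmap} applied to $V$ and to $W$ separately, the maps $x \mapsto \pi_V(x)$ and $x \mapsto \pi_W(x)$ are Borel from $\GG$ into $\mathcal{L}(\GG,\GG)$. The key elementary linear-algebra fact I would invoke is that, for two linear subspaces $A,B \subseteq \R^n$ with orthogonal projections $\Pi_A, \Pi_B$, the orthogonal projection onto $A \cap B$ can be recovered as a limit of iterated projections: $\Pi_{A \cap B} = \lim_{k\to\infty} (\Pi_A \Pi_B)^k$ (von Neumann's alternating projection formula), with convergence in operator norm. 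Since composition and powers are continuous operations on $\mathcal{L}(\GG,\GG)$ with the operator norm $\rho$, the map $x \mapsto (\pi_V(x)\pi_W(x))^k$ is Borel for each fixed $k$, and hence the pointwise limit $x \mapsto \Pi_{V(x)\cap W(x)} = \pi_{V\cap W}(x)$ is Borel as a pointwise limit of Borel maps.

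Having shown $\pi_{V\cap W}$ is Borel, I would then appeal once more to Proposition \ref{prop:projmap} — this time in the direction (ii) $\Rightarrow$ (i) — to conclude that $x \mapsto V(x)\cap W(x)$ is a Borel map into $\Gr_{\mathrm{eu}}(\GG)$. One small point to address: Proposition \ref{prop:projmap} as stated concerns maps into the \emph{Euclidean} Grassmannian, and $V(x)\cap W(x)$ is a priori only a linear subspace of $\R^n$, not necessarily a homogeneous subgroup; but the statement of the present proposition only claims Borel measurability of the intersection as a subspace-valued (equivalently $\Gr_{\mathrm{eu}}$-valued) map, so this is exactly the content delivered, and no homogeneity of the intersection is needed or claimed here. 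If one does want the intersection to land in $\Gr(\GG)$, one notes separately that the intersection of two homogeneous subgroups (each $\delta_\lambda$-invariant and closed under $*$) is again homogeneous.

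An alternative, perhaps more self-contained route avoiding von Neumann's theorem: use the final characterisation in Proposition \ref{prop:projmap}, namely that Borel measurability of a projection-valued map is equivalent to $x \mapsto \langle v, \pi(x)[w]\rangle$ being Borel for all fixed $v,w$. One can write $\Pi_{A\cap B}$ explicitly in terms of $\Pi_A + \Pi_B$ using functional calculus — e.g. $\Pi_{A\cap B}$ is the spectral projection of the self-adjoint operator $\Pi_A + \Pi_B$ onto the eigenvalue $2$ — and approximate this spectral projection by polynomials in $\Pi_A + \Pi_B$; each such polynomial depends Borel-measurably on $x$, and again a limiting argument finishes the proof. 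The main obstacle in either approach is purely bookkeeping: ensuring the convergence (alternating-projection limit, or polynomial spectral approximation) is genuinely pointwise in $x$ with no uniformity needed, so that the stability of Borel maps under pointwise limits applies cleanly. No dimension or rank issue obstructs this, since $\Gr_{\mathrm{eu}}(\GG) = \bigcup_h \mathrm{Gr}_{eu}(h)$ is a disjoint union of closed pieces and the limit operator automatically has the correct rank $\dim(V(x)\cap W(x))$.
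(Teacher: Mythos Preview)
Your proposal is correct and takes a genuinely different route from the paper. The paper proceeds via the identity $V(x)\cap W(x)=(V(x)^\perp+W(x)^\perp)^\perp$: it uses Proposition \ref{prop:projmap} to get Borel vector fields $\zeta_i(x)=\pi_{V^\perp(x)}[e_i]$ and $\zeta_{i+n}(x)=\pi_{W^\perp(x)}[e_i]$ spanning $V(x)^\perp+W(x)^\perp$, runs an explicit Gram--Schmidt (with the convention that zero vectors stay zero) to obtain Borel orthonormal fields $\omega_1,\ldots,\omega_{2n}$, writes $\pi_{V(x)^\perp+W(x)^\perp}=\sum_i\omega_i(x)\otimes\omega_i(x)$, and concludes by Proposition \ref{prop:projmap}(iii)$\Rightarrow$(i). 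Your argument bypasses the sum-of-complements construction entirely by invoking von Neumann's alternating projection formula $\Pi_{V(x)\cap W(x)}=\lim_k(\pi_V(x)\pi_W(x))^k$ and closing under pointwise limits of Borel maps. Your route is shorter and rests on a classical black box; the paper's route is fully self-contained and produces explicit Borel spanning vector fields along the way, a construction style it reuses elsewhere (e.g.\ in Lemma \ref{LEMMAvector fieldS}). Your remark on the codomain is well taken and matches the paper's level of care: the statement only asserts Borelianity into $\Gr_{\mathrm{eu}}(\GG)$, and the homogeneity of the intersection is a separate trivial observation.
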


\begin{proof}
Since for any $\mathbb{V}, \mathbb{W}\in \mathrm{Gr}(\GG)$ we have $\mathbb{V}\cap \mathbb{W}=(\mathbb{V}^\perp+\mathbb{W}^\perp)^\perp$, Proposition \ref{prop:projmap} implies that in order to prove the claim it suffices to show the Borelianity of the sum (as vector subspaces of $\GG\cong \R^n$) of the maps $V^\perp$ and $W^\perp$. Let $e_1,\ldots,e_n$ be a basis of $\mathbb{R}^n$, that as recalled above is underlying vector space of $\mathbb{G}$, and define
$$\zeta_i(x):=\pi_{V^\perp(x)}[e_i]\qquad \mathrm{and} \qquad \zeta_{i+n}(x):=\pi_{W^\perp(x)}[e_i]\qquad \text{for any $i=1,\ldots,n$}.$$
The vector fields $\zeta_1,\ldots,\zeta_{2n}:\mathbb{G}\to\mathbb{G}$ 
are Borel thanks to Proposition \ref{prop:projmap}. In addition by construction we know that the vector fields $\{\zeta_1,\ldots, \zeta_n\}$ span $V(x)^\perp$ while the $\{\zeta_{n+1},\ldots,\zeta_{2n}\}$ span $W(x)^\perp$ for any $x\in\mathbb{G}$. This implies in particular that  $\{\zeta_1(x),\ldots,\zeta_{2n}(x)\}$ span $V^\perp(x)+ W^\perp(x)$ for any $x\in \mathbb{G}$. 

Now, from the $\zeta_i$s we construct some other vector fields $\omega_1,\ldots,\omega_{2n}$ that still span $V^\perp+W^\perp$ defined in the following inductive way. As a first step we define  $$\omega_1(x):=\begin{cases}\zeta_1(x)/\lvert \zeta_1(x)\rvert& \mathrm{if }\,\zeta_1(x)\neq 0\mathrm{,}\\0& \mathrm{otherwise.}\end{cases}$$
Notice that the vector field $\omega_1(x)$ is trivially seen to be Borel.
As a second step, suppose that we already defined the vector fields $\omega_1,\ldots,\omega_{k-1}$. Define 
$\tilde{\omega}_k(x):= \zeta_k(x)-\sum_{i=1}^{k-1}\langle \zeta_k(x),\omega_i(x)\rangle\omega_i(x)$ and note that $\tilde{\omega}_k$ is a Borel vector field. Finally, we define $\omega_k$ as:
$$\omega_k(x)=\begin{cases}\tilde{\omega}_k(x)/\lvert \tilde{\omega}_k(x)\rvert& \mathrm{if }\,\tilde{\omega}_k(x)\neq 0\mathrm{,}\\0& \mathrm{otherwise.}\end{cases}$$
Notice that $\omega_k$ is a Borel vector field as well.
Thanks its very definition, we see that for any fixed $x\in \mathbb{G}$ there are only $\mathrm{dim}(V^\perp(x)+W^\perp(x))$ non-null elements of $\{\omega_1(x),\ldots,\omega_{2n}(x)\}$ and those that are not null form an orthonormal basis of $V^\perp(x)+W^\perp(x)$.
In particular, for any $x\in\mathbb{G}$ the map $\pi_{V(x)^\perp+W^\perp(x)}$ is easily seen to be represented as:
$$\pi_{V(x)^\perp+W^\perp(x)}=\sum_{i=1}^{2n}\omega_i(x)\otimes\omega_i(x),$$
which is a Borel matrix field. This concludes the proof by Proposition \ref{prop:projmap}.
\end{proof}

\begin{lemma}
\label{e-minmap}
Let $\GG$ be a Carnot group. Let $\mu$ be a measure on $\GG$ and 
let $\G$ be a family of Borel maps from 
$\GG$ to $\Gr(\GG)$ which is closed under
countable intersection, in the sense
that for every countable family $\{V_i\} \subset\G$
the map $V$ defined by $V(x):=\cap_i V_i(x)$ 
for every $x\in\GG$ belongs to $\G$.

Then $\G$ admits an element $V$ which is $\mu$-minimal,  
in the sense that every other $V'\in\G$ satisfies 
$V(x) \subset V'(x)$ for $\mu$-almost every~$x$.
Moreover this $\mu$-minimal element is unique modulo 
equivalence $\mu$-almost everywhere.
\end{lemma}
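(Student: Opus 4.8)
The plan is to construct $V$ as a countable intersection of elements of $\G$ chosen to make the ``size'' of the distribution as small as possible. The natural way to measure size here is the integral of the (homogeneous, or even just Euclidean) dimension of $V(x)$ against $\mu$; to keep things finite one should work with a probability measure equivalent to $\mu$, or localize, but the cleanest route is: since $V(x)\in\Gr(\GG)$ always has Euclidean dimension an integer between $1$ and $n$, the function $x\mapsto \dim_{eu}V(x)$ is a bounded Borel function (Borel because $V$ is a Borel map into $\Gr(\GG)\subset\Gr_{eu}(\GG)$, by Proposition \ref{prop:projmap}). First I would fix a finite measure $\wdt\mu$ with $\wdt\mu\ll\mu\ll\wdt\mu$ (e.g. $\wdt\mu = g\mu$ for a suitable positive $g\in L^1(\mu)$ if $\mu$ is not already finite) and set
\[
m := \inf\Big\{ \int_\GG \dim_{eu} V'(x)\, d\wdt\mu(x) \ :\ V'\in\G \Big\}\ \in [0,n\,\wdt\mu(\GG)].
\]

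Next I would take a minimizing sequence $V_i\in\G$ with $\int \dim_{eu} V_i\, d\wdt\mu \to m$, and define $V(x):=\bigcap_i V_i(x)$. By the closure hypothesis on $\G$ this $V$ belongs to $\G$, and since $V(x)\subseteq V_i(x)$ pointwise we have $\dim_{eu}V(x)\le \dim_{eu}V_i(x)$ for every $i$ and every $x$, hence $\int \dim_{eu}V\,d\wdt\mu \le m$; combined with $V\in\G$ this forces $\int\dim_{eu}V\,d\wdt\mu = m$. I then claim $V$ is $\mu$-minimal. Given any $V'\in\G$, consider $V''(x):=V(x)\cap V'(x)$, which lies in $\G$ (closure under countable — a fortiori finite — intersection). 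Pointwise $\dim_{eu}V''(x)\le\dim_{eu}V(x)$, with strict inequality exactly on the set $A:=\{x: V(x)\not\subseteq V'(x)\}$. If $\wdt\mu(A)>0$ then $\int\dim_{eu}V''\,d\wdt\mu < m$, contradicting the definition of $m$ since $V''\in\G$. Hence $\wdt\mu(A)=0$, i.e. $V(x)\subseteq V'(x)$ for $\wdt\mu$-a.e. $x$, and since $\mu$ and $\wdt\mu$ are mutually absolutely continuous, for $\mu$-a.e. $x$ as well. Uniqueness modulo $\mu$-a.e. equivalence is then immediate: if $V$ and $\wdt V$ are both $\mu$-minimal, applying minimality in both directions gives $V(x)\subseteq\wdt V(x)$ and $\wdt V(x)\subseteq V(x)$ for $\mu$-a.e. $x$.

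A couple of routine points need to be checked along the way. One must verify that $x\mapsto\dim_{eu}V(x)$ is $\mu$-measurable for each $V\in\G$: this follows because $\dim_{eu}V(x)=\operatorname{trace}\pi_V(x)$ and $\pi_V$ is Borel by Proposition \ref{prop:projmap}, so $\dim_{eu}V$ is a Borel function, hence integrable against the finite measure $\wdt\mu$ and with values in $\{0,1,\dots,n\}$. One also needs that $V''=V\cap V'$ is again a Borel distribution of homogeneous subgroups with values in $\Gr(\GG)$; Borelianity is Proposition \ref{prop:Borelintersez}, and the intersection of two homogeneous subgroups is a closed subgroup invariant under all $\delta_\lambda$, hence homogeneous and (by Proposition \ref{prop:hom1}) again an element of $\Gr(\GG)$ — note its Hausdorff dimension may drop, but it still lies in $\bigcup_h\Gr(h)=\Gr(\GG)$, so $V''$ genuinely maps into $\Gr(\GG)$ and the closure hypothesis applies.

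I do not expect any serious obstacle: the argument is the standard ``minimize a monotone integer-valued functional over a lattice-closed family'' trick, and every ingredient (Borel measurability of projections, Borelianity of intersections, closure of $\Gr(\GG)$ under intersection) has been prepared in the preceding propositions. The only mild subtlety is handling the possibly non-finite $\mu$, which is disposed of by passing to an equivalent finite measure $\wdt\mu$; since minimality and equivalence are stated $\mu$-almost everywhere, this change is harmless. If one prefers to avoid introducing $\wdt\mu$, an equivalent formulation exhausts $\GG$ by an increasing sequence of sets of finite $\mu$-measure and runs the same minimization on each, then takes a diagonal intersection — but the $\wdt\mu$ route is shorter and I would use it.
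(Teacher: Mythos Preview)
Your proposal is correct and is precisely the standard argument: the paper does not give its own proof but simply refers to \cite[Lemma~2.4]{AlbMar}, whose proof is exactly this minimization of $\int \dim V'(x)\,d\wdt\mu(x)$ over $V'\in\G$ followed by the intersection trick with $V\cap V'$. Two minor remarks: the checks that $V\cap V'$ is Borel and $\Gr(\GG)$-valued are not needed here, since membership in $\G$ is guaranteed directly by the closure-under-intersection hypothesis on $\G$; and the paper implicitly allows the value $\{0\}$ (see e.g.\ Proposition~\ref{s-basic}(ii)), so the range of $\dim_{eu}V(x)$ is really $\{0,1,\dots,n\}$ rather than $\{1,\dots,n\}$, but this is irrelevant to the argument.
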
 

\begin{proof}
The proof of the lemma is identical to its Euclidean counterpart, see \cite[Lemma 2.4]{AlbMar}.
\end{proof}

\begin{definizione}
Let $\mu$ be a measure on $\GG$, let $\F$ be a family of 
Borel vector fields on $\GG$ and let $\G$ be the class 
of all Borel maps $V:\GG\to \Gr(\GG)$ such that for 
every $\tau\in \F$ there holds
\[
\mathfrak{S}(\tau(x)) \subseteq  V(x)
\quad\text{for $\mu$-almost every~$x$,}
\]
where $\mathfrak{S}(\tau(x))$ denotes the     homogeneous subgroup generated by $\tau(x)$, see Definition \ref{definitiongenerazione}.
Since $\G$ is closed under 
countable intersection, see Proposition \ref{prop:Borelintersez}, 
by Lemma~\ref{e-minmap} it admits 
a $\mu$-minimal element which is unique 
modulo equivalence $\mu$-almost everywhere.
We call \emph{any} of these minimal elements the
\emph{$\mu$-essential span} of $\F$.
\end{definizione}

\begin{lemma}\label{lemmamisurabilitatangenti}
Suppose that $\{\mu_t\}_{t\in I}$ is a family of Radon measures satisfying the hypothesis  (a) and (b) of Definition \ref{s-measint} and for which for almost every $t$ there exists a Lipschitz curve $\gamma_t$ such that $\mu_t=\Haus^1\trace\im(\gamma_t)$. Then, the map $(x,t)\mapsto \Tan(\gamma_t,x)\in \mathrm{Gr}(\mathbb{G})$ is Borel.
\end{lemma}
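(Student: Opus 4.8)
The plan is to reduce the joint measurability statement to the separate measurability of two ingredients and to exploit the algebraic structure that, by Lemmas \ref{lemma.monti1} and \ref{lemma.monti2}, the map $D\gamma_t$ (and hence $\Tan(\gamma_t,\cdot)$) is completely determined by the canonical coordinates $h_t$ of $\gamma_t$. First I would recall that, for a Lipschitz curve $\gamma:K\to\GG$, the $1$-dimensional homogeneous subgroup $\Tan(\gamma,x)$ is the $1$-parameter subgroup generated by the vector $\mathfrak{v}_\gamma(x)=(h_1(t),\dots,h_{n_1}(t),0,\dots,0)$ for $t\in\gamma^{-1}(x)$ (when this is nonzero), and $\{0\}$ otherwise. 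Since for a $1$-parameter subgroup $\exp(\R v)$ the subgroup depends only on the direction $v/|v|$, the assignment $x\mapsto\Tan(\gamma,x)$ factors through the normalized horizontal derivative. By Proposition \ref{prop:projmap}, proving that $(x,t)\mapsto\Tan(\gamma_t,x)\in\Gr(\GG)$ is Borel is equivalent to proving that for any fixed $v,w\in\GG$ the scalar map $(x,t)\mapsto\langle v,\pi_{\Tan(\gamma_t,x)}(x)[w]\rangle$ is Borel; and since $\pi_{\Tan(\gamma_t,x)}$ is the rank-one projection $u\otimes u$ onto the unit vector $u$ spanning $\Tan(\gamma_t,x)$ (interpreting the zero subgroup as the zero projection), it suffices to show that the unit vector field $(x,t)\mapsto u(x,t)$ generating $\Tan(\gamma_t,x)$ is Borel.

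The next step is to produce a Borel selection of canonical coordinates that depends measurably on $t$. By the hypothesis and Lemma \ref{lemma.monti1}, for a.e. $t$ there is $h_t\in L^\infty(K_t,V_1)$ with $\gamma_t'(s)=\mathscr C(\gamma_t(s))[h_t(s)]$ for a.e.\ $s$; equivalently, $h_t(s)=\pi_1(\delta_{1/r}(\gamma_t(s)^{-1}*\gamma_t(s+r)))$ as $r\to0$ along $s+r\in K_t$ by Lemma \ref{lemma.monti2}. I would use the standard device of replacing the pointwise limit by a countable lim sup over rational radii: set
\[
\hat h_t(s):=\lim_{j\to\infty}\;\pi_1\!\big(\delta_{j}(\gamma_t(s)^{-1}*\gamma_t(s+1/j))\big)
\]
wherever the limit exists and $0$ otherwise. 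Assuming, as one may by the measurable-dependence hypothesis (a) for $\{\mu_t\}$, that $(s,t)\mapsto\gamma_t(s)$ can be chosen jointly Borel (this can be arranged, e.g.\ by working with the Borel parametrization underlying the decomposition and extending $\gamma_t$ by the appropriate constant outside $K_t$), each of the finitely many group operations, dilations and projections $\pi_1$ is continuous, so $(s,t)\mapsto\pi_1(\delta_j(\gamma_t(s)^{-1}*\gamma_t(s+1/j)))$ is Borel for each $j$, hence $(s,t)\mapsto\hat h_t(s)$ is Borel as a pointwise limit. Now push this forward by $\gamma_t$: define, for $x\in\im(\gamma_t)$, the multimap $(x,t)\mapsto\{\hat h_t(s):s\in\gamma_t^{-1}(x)\}\subseteq V_1$, which by Lemma \ref{lemma.monti2} agrees $\Haus^1\trace\im(\gamma_t)$-a.e.\ with $\{D\gamma_t(s):s\in\gamma_t^{-1}(x)\}\cap V_1$ up to the zero-measure exceptional set. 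Exactly as in the proof of Lemma \ref{campovgamma}, using that $\gamma_t$ maps $\Leb^1$-null sets to $\Haus^1$-null sets and that the counterimage multimap has closed (finite) values, this multimap is measurable in $(x,t)$, and the Kuratowski–Ryll-Nardzewski theorem (\cite[Theorem 5.2.1]{Sriva}) yields a Borel selection $(x,t)\mapsto v(x,t)\in V_1$ with $v(x,t)=\mathfrak v_{\gamma_t}(x)\neq 0$ off a $\Haus^1$-null (hence $\int_I\Haus^1\trace\im(\gamma_t)$-null) set.

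Finally I would set $u(x,t):=v(x,t)/|v(x,t)|$ where $v(x,t)\neq0$ and $u(x,t):=0$ otherwise; this is Borel, and $\Tan(\gamma_t,x)=\exp(\R\,v(x,t))=\exp(\R\,u(x,t))$ for a.e.\ $(x,t)$, with the convention $\Tan=\{0\}$ when $u=0$. Then $\pi_{\Tan(\gamma_t,x)}(x)=u(x,t)\otimes u(x,t)$ is a Borel matrix field on $\GG\times I$, so by the last clause of Proposition \ref{prop:projmap} (applied with $\GG\times I$ in place of $\GG$, which is harmless since that clause is a pointwise statement about matrix entries) the map $(x,t)\mapsto\Tan(\gamma_t,x)$ is Borel. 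The main obstacle I anticipate is the bookkeeping that makes $(s,t)\mapsto\gamma_t(s)$ genuinely jointly Borel — the decomposition theorems quoted (Theorem \ref{smirnov}, via \cite{PaoliniStepanov}) give the curves only for a.e.\ $t$ and a priori only measurably in $t$ — together with the careful handling of the countably many null sets (in $s$ for each $t$, and in $t$) so that the a.e.\ identification of $\hat h_t$ with the canonical coordinates, and of the pushed-forward multimap with $\mathfrak v_{\gamma_t}$, survives the integration against $dt$; the rest is a routine application of measurable-selection machinery already used in the proof of Lemma \ref{campovgamma}.
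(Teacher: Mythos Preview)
Your approach differs substantially from the paper's, and the difference matters. The paper does not attempt to parametrize the curves jointly in $t$; instead it works directly with the measures $\mu_t$, adapting \cite[Lemma~6.10]{AlbMar} to show that the assignment $(x,t)\mapsto\nu_{t,x}$ is Borel, where $\nu_{t,x}$ is the ($1$-dimensional) tangent measure of $\mu_t$ at $x$ obtained as a weak* limit of rescalings of $\mu_t$. The point is that these blow-up quantities are expressed purely in terms of $\mu_t$, and hypothesis~(a) of Definition~\ref{s-measint} gives exactly the joint measurability of $(x,t)\mapsto\mu_t$ needed to make each approximating quantity Borel in both variables. One then observes that for almost every $t$ and $\Haus^1\trace\im(\gamma_t)$-a.e.\ $x$ the tangent measure is $\Haus^1\trace\mathfrak{S}(\mathfrak v_{\gamma_t}(x))$, so the tangent subgroup is read off from $\nu_{t,x}$.

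The gap in your proposal is precisely the one you flag as ``the main obstacle'': the joint Borel measurability of $(s,t)\mapsto\gamma_t(s)$ is not part of the hypothesis and is not implied by hypothesis~(a) for the measures. The curves $\gamma_t$ are given only existentially (``for almost every $t$ there exists a Lipschitz curve\dots''), with no measurable selection in $t$ assumed; your parenthetical (``the Borel parametrization underlying the decomposition'') refers to nothing in the statement. Recovering a jointly Borel parametrization from the measurable family of measures is possible in principle (via a measurable-selection argument to produce canonical arc-length parametrizations of the supports), but this is real work, comparable in effort to the whole lemma, and you have not carried it out. The paper's route through tangent measures is chosen exactly to sidestep this: since $\Tan(\gamma_t,x)$ depends only on $\im(\gamma_t)=\supp\mu_t$ and not on the parametrization, one can and should compute it from $\mu_t$ alone.
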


\begin{proof}
The proof of the lemma can be achieved adapting the argument used to prove \cite[Lemma 6.10]{AlbMar}. With such argument one proves that the map $(x,t)\mapsto \nu_{t,x}$ is Borel, where $\nu_{t,x}$ is the $1$-dimensional tangent measure of $\mu_t$ at $x$, see \cite{antonellimerloexistencedensity,antonellimerlomarstrandmattila,MerloAntonelli,Kirchheimarea}. For almost every $t\in I$ and $\Haus^1\trace \im(\gamma_t)$-almost every $x\in\GG$,
the tangent measure $\nu_{t,x}$ is seen easily to be $\Haus^1\trace\mathfrak{S}(\mathfrak{v}_{\gamma_t}(x))$, where $\mathfrak{v}_{\gamma_t}(x)$ is the vector field introduced in Lemma \ref{campovgamma}. This concludes the proof. 
\end{proof}

\begin{definizione}[Decomposability bundle]
\label{Dec-bundle}
Let $\GG$ be a Carnot group. Given a measure $\mu$ on $\GG$ we denote by
$\F_{\mu}$ the class of all families of measures
$\{\mu_t : t\in I\}$ where $I$ is a measured space
endowed with a probability measure $dt$ and:
\begin{itemizeb}
\item[(\hypertarget{a}{a})]
each $\mu_t$ is the restriction of $\Haus^1$ to a $1$-Lipschitz curve $\gamma_t$ in $\GG$;
\item[(b)]
the map $t\mapsto \mu_t$ satisfies the assumptions
(a) and (b) in \S\ref{s-measint};
\item[(c)]
the measure $\int_I \mu_t \, dt$ is 
absolutely continuous with respect to $\mu$.
\end{itemizeb}
We denote by $\G_{\mu}$ the class of all Borel maps
$V:\GG\to \Gr(\GG)$ such that for every 
$\{\mu_t=\Haus^1\trace \im(\gamma_t): t\in I\}\in\F_{\mu}$ 
there holds
\begin{equation}\label{test}
\mathfrak{S}(\mathfrak{v}_{\gamma_t}(x))=\Tan(\gamma_t,x) \subset V(x)
\quad\text{for $\mu_t$-almost every~$x$ and almost every~$t\in I$.}
 \end{equation}
Since $\G_{\mu}$ is closed under countable intersection, by Lemma~\ref{e-minmap} it admits a $\mu$-minimal element.
We call \emph{any} of these minimal elements the
\emph{decomposability bundle of $\mu$}, 
and denote it by $x\mapsto V(\mu,x)$.
\end{definizione}

\begin{osservazione}\label{rkfbeit}
If we substitute to (\hypertarget{a}{a}) the assumption
\begin{itemize}
    \item[(a*)]each $\mu_t$ is an absolutely continuous with respect to the restriction of $\Haus^1$ to a Lipschitz curve $\gamma_t$ in $\GG$,
\end{itemize}
the definition of decomposability bundle does not change. If we denote with $V^*(\mu,\cdot)$ the decomposability bundle that arises from the assumptions (a*), (b) and (c), it is clear that 
$V^*(\mu,x)\subseteq V(\mu,x)\text{ for $\mu$-almost every $x\in\GG$.}$
Therefore, we only need to prove the converse inclusion, i.e. that for any family of measures $\mu_t$ satisfying (a*), (b) and (c) we have that
\begin{equation}
\mathfrak{S}(\mathfrak{v}_{\gamma_t}(x))=\Tan(\gamma_t,x) \subset V(\mu,x)
\quad\text{for $\mu_t$-almost every~$x$ and almost every~$t\in I$.}
\label{eq:finale2}
\end{equation}

In order to see this, let $\gamma: K\to\GG$ be a Lipschitz curve and suppose $\mu$ is a finite measure on $\GG$ such that $\mu\ll\Haus^1\trace\im(\gamma)$. The Radon-Nikodym's decomposition theorem implies that there exists a $\rho\in L^1(\Haus^1\trace \im(\gamma))$ such that $\mu=\rho\Haus^1\trace\im(\gamma)$. Let $A\subseteq \GG$ be any Borel set and note that the map $t\mapsto \Haus^1(A\cap \{x:\rho(x)\geq t\})$ monotone. Hence, the measures $\nu_t:=\Haus^1(A\cap \{x:\rho(x)\geq t\})$ satisfy the hypothesis (a) and (b) of Definition \ref{s-measint} and thus their integral $\tilde\mu:=\int_0^\infty \nu_t e^{-t}dt$ is well defined. It is an easy task to check that the measures $\mu$ and $\tilde{\mu}$ are mutually absolutely continuous.

Thus, let $\{\mu_t\}_{t\in I}$ be a family of measures satisfying the hypothesis (a*), (b) and (c). For any $t\in I$ and any $s\in[0,\infty)$ we denote by $\nu_{s,t}$ the measure $\nu_{s,t}:=\Haus^1\trace \im(\gamma_t)\cap \{x:\rho_t(x)\geq s\}$. It is immediate to see that the measures $(s,t)\mapsto \nu_{s,t}$ satisfy item (a) and a standard argument shows that they also satisfy item (b). In addition, the above discussions proves that the measures $\int\int \nu_{s,t}e^{-s}dtds$ and $\int\mu_tdt$ are mutually absolutely continuous
and thus the $\nu_{s,t}$ satisfy also (c). By definition of $V(\mu,x)$ this implies that 
\begin{equation}
\mathfrak{S}(\mathfrak{v}_{\gamma_t}(x))=\Tan(\gamma_t,x) \subset V(\mu,x)
\quad\text{for $\nu_{s,t}$-almost every~$x$ and almost every~$(s,t)\in I$.}
\label{eq:finale1}
\end{equation}
On the other hand, again by the above discussion we know that $\int \nu_{s,t}e^{-s}ds$ and $\mu_t$ are mutually absolutely continuous, and thus from \eqref{eq:finale1} we infer that \eqref{eq:finale2} holds. This shows that $V^*(\mu,\cdot)=V(\mu,\cdot)$.
\end{osservazione}

\begin{lemma}
\label{prop:V=carnot}
Let $\mu$ be a Radon measure on $\mathbb{G}$. Then, $V(\mu,x)\in \mathrm{Gr}_\mathfrak{C}(\GG)$ for $\mu$-almost every $x\in\GG$. In other words $V(\mu,x)$ coincides with the closed subgroup of $\GG$ generated by $V_1\cap V(\mu,x)$ for $\mu$-almost every $x\in \GG$.
\end{lemma}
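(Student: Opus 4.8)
The plan is to show that $V(\mu,x)$ is a Carnot subgroup for $\mu$-a.e.\ $x$ by exhibiting it as the $\mu$-essential span of a family of horizontal vector fields, and then invoking Proposition~\ref{generato}. The key observation is the following: for \emph{any} $\{\mu_t = \Haus^1\trace\im(\gamma_t): t\in I\}\in\F_\mu$, Lemma~\ref{lemma.monti2} (via Remark~\ref{ossnice}) tells us that $D\gamma_t(s) \in V_1$ for a.e.\ $s$, hence the vector field $\mathfrak{v}_{\gamma_t}$ produced by Lemma~\ref{campovgamma} takes values in $V_1$; consequently $\Tan(\gamma_t,x) = \mathfrak{S}(\mathfrak{v}_{\gamma_t}(x))$ is always a \emph{horizontal} $1$-dimensional subgroup (a $1$-parameter subgroup generated by an element of $V_1$).

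First I would define $\F$ to be the family of all Borel vector fields $\tau:\GG\to V_1$ of the form $\tau = \mathfrak{v}_{\gamma_t}$ arising from curves in some family in $\F_\mu$ — more precisely, one can enumerate, using measurable selection as in Lemma~\ref{lemmamisurabilitatangenti} and the measurability of $\mathfrak{v}_\gamma$ in Lemma~\ref{campovgamma}, the relevant collection of Borel fields $\GG\to V_1$. I would then let $W$ be the $\mu$-essential span of $\F$ in the sense of the definition following Lemma~\ref{e-minmap}. By construction $W(x)\supseteq\mathfrak{S}(\tau(x))=\Tan(\gamma_t,x)$ for $\mu_t$-a.e.\ $x$ and a.e.\ $t$, for every family in $\F_\mu$, so $W\in\G_\mu$; hence $V(\mu,x)\subseteq W(x)$ for $\mu$-a.e.\ $x$ by minimality of the decomposability bundle. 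Conversely, since each $\tau\in\F$ is of the form $\mathfrak{v}_{\gamma_t}$ for a curve in a family of $\F_\mu$, the defining condition \eqref{test} forces $\mathfrak{S}(\tau(x))\subseteq V(\mu,x)$ for $\mu$-a.e.\ $x$, so $W(x)\subseteq V(\mu,x)$ for $\mu$-a.e.\ $x$ by minimality of the essential span. Thus $V(\mu,x)=W(x)$ $\mu$-a.e.

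The point is now that the $\mu$-essential span of a family of \emph{horizontal} vector fields is automatically a Carnot subgroup. Indeed, fix $x$ outside the relevant null set. Since $W(x)\in\Gr(\GG)$, by Proposition~\ref{prop:hom1} it is a vector subspace, and $W(x)\cap V_1$ is finite-dimensional; pick generators $v_1,\dots,v_N$ of $W(x)\cap V_1$ as in Proposition~\ref{generato}. On the other hand, every $\tau\in\F$ has $\mathfrak{S}(\tau(x))\subseteq W(x)$ with $\tau(x)\in V_1$, so $\tau(x)\in W(x)\cap V_1$. A measurable-selection / countable-density argument (exactly as one shows $V(\mu,\cdot)$ is generated by countably many fields, cf.\ the proof of Lemma~\ref{e-minmap}) shows that the essential span is in fact generated by the values of countably many such $\tau$, all lying in $V_1$; and since $W$ is the \emph{minimal} element of $\G$ containing their generated subgroups, $W(x)$ must equal $\mathfrak{S}(\{\tau(x):\tau\in\F\})$, a Carnot subgroup by definition. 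Hence $W(x)=\mathfrak{S}(W(x)\cap V_1)$, i.e.\ $V(\mu,x)\in\Gr_\mathfrak{C}(\GG)$.

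The main obstacle I expect is the bookkeeping around measurability and the reduction to countably many vector fields: one must be careful that the essential span of $\F$ is \emph{equal} to (not merely contained in) the subgroup generated fiberwise by the $\tau(x)$'s, which requires checking that $x\mapsto\mathfrak{S}(\{\tau_i(x):i\in\N\})$ is itself a Borel map into $\Gr(\GG)$ landing in $\Gr_\mathfrak{C}(\GG)$ — this uses Proposition~\ref{prop:Borelintersez}, Proposition~\ref{generato}, and the Borel selection of generators. Once that Borelianity is in hand, minimality pins down $W(x)$ as the Carnot subgroup generated by $W(x)\cap V_1$, and the conclusion follows. The genuinely new content over the Euclidean case is precisely this horizontality of $\mathfrak{v}_{\gamma_t}$ coming from Lemma~\ref{lemma.monti2}, which is what makes Carnot subgroups — rather than arbitrary homogeneous subgroups — appear.
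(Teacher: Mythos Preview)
Your core insight—that $\mathfrak{v}_{\gamma_t}$ takes values in $V_1$ by Lemma~\ref{lemma.monti2}, so the tangents feeding into $V(\mu,\cdot)$ are always horizontal—is exactly right and is the heart of the matter. But the route through the $\mu$-essential span of the family $\F=\{\mathfrak{v}_{\gamma_t}\}$ has a genuine gap, not just a bookkeeping issue. The essential span guarantees, for each $\tau\in\F$, that $\mathfrak{S}(\tau(x))\subseteq W(x)$ for \emph{$\mu$-almost every} $x$; but membership in $\G_\mu$ requires $\Tan(\gamma_t,x)\subseteq W(x)$ for \emph{$\mu_t$-almost every} $x$ and a.e.~$t$. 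These are different conditions: $\mu\trace\im(\gamma_t)$ need not be absolutely continuous with respect to $\mu_t=\Haus^1\trace\im(\gamma_t)$, and since $\F$ is uncountable the exceptional $\mu$-null sets (one per $\tau$) cannot simply be unioned. The same obstruction appears symmetrically in your reverse inclusion: condition~\eqref{test} gives $\Tan(\gamma_t,x)\subseteq V(\mu,x)$ only $\mu_t$-a.e., not $\mu$-a.e., so $V(\mu,\cdot)$ is not obviously a competitor for the essential span of $\F$.

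The paper's argument sidesteps all of this by working in the opposite direction: rather than building $V(\mu,\cdot)$ up from horizontal pieces, it \emph{truncates} $V(\mu,\cdot)$ to its Carnot part. One sets $W(x):=\mathfrak{S}(V_1\cap V(\mu,x))$ directly. Since $\Tan(\gamma_t,x)\subseteq V_1$ (your observation) and $\Tan(\gamma_t,x)\subseteq V(\mu,x)$ for $\mu_t$-a.e.~$x$ (by the very definition of $V(\mu,\cdot)\in\G_\mu$), one gets $\Tan(\gamma_t,x)\subseteq V_1\cap V(\mu,x)\subseteq W(x)$ for $\mu_t$-a.e.~$x$ immediately—no passage between $\mu$-a.e.\ and $\mu_t$-a.e.\ is needed. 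Thus $W\in\G_\mu$, whence $V(\mu,x)\subseteq W(x)$ by minimality; the reverse inclusion $W(x)=\mathfrak{S}(V_1\cap V(\mu,x))\subseteq V(\mu,x)$ is trivial. The only real work is checking that $x\mapsto\mathfrak{S}(V_1\cap V(\mu,x))$ is Borel, which follows because a Carnot subgroup is uniquely determined by its intersection with $V_1$, and $x\mapsto V_1\cap V(\mu,x)$ is Borel by Proposition~\ref{prop:Borelintersez}. This is both shorter and avoids the measurability quagmire you anticipated.
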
 

\begin{proof}
Since $\Tan(\gamma_t,x)\subseteq V_1$ for $\mu_t$-almost every $x$, by definition of $V(\mu, x)$ we have that:
$$\Tan(\gamma_t,x)\subseteq V(\mu, x)\cap V_1 \quad\text{for $\mu_t$-almost every~$x$ and almost every~$t\in I$.}$$
Furthermore since $\mathfrak{S}(V_1\cap V(\mu, x))$, the homogeneous subgroup generated by $V_1\cap V(\mu, x)$ is contained in $V(\mu, x)$ for every $x\in \GG$, we just need to show that the map $x\mapsto \mathfrak{S}(V_1\cap V(\mu, x))$ is Borel and thus it is a competitor in the definition of $V(\mu, x)$.

The map $x\mapsto \HH V(x):=V_1\cap V(\mu,x)$ is Borel measurable thanks to Proposition \ref{prop:Borelintersez} and hence, since every element $W$ of $\Gr_\mathfrak{C} (\GG)$ is uniquely determined by $W\cap V_1$  and $\mathfrak{S}(V_1\cap V(\mu, x)) \in \Gr_\mathfrak{C} (\GG)$, we infer that for any closed set $C\subseteq \Gr (\GG)$ we have:
\begin{equation}
\begin{split}
        &\mathfrak{S}(V_1\cap V(\mu, x))^{-1}(C)=\mathfrak{S}(V_1\cap V(\mu, x))^{-1}(C\cap \Gr_\mathfrak{C} (\GG))\\
        =&\HH V^{-1}(\{V_1\cap W\in \Gr(V_1):W\in C\cap \Gr_\mathfrak{C}(\GG)\})
        =\HH V^{-1}(\{V_1\cap W\in \Gr(V_1):W\in C\}).
\end{split}
\label{eq:nummm20}
\end{equation}
Since $C$ is closed, the set $\{V_1\cap W\in \Gr(V_1):W\in C\}$ is easily proved to be closed. Finally, thanks to the Borelianity of $\HH V$ and \eqref{eq:nummm20} we thus infer that $\mathfrak{S}(V_1\cap V(\mu, x))^{-1}(C)$ is Borel as well and the proof of the proposition is concluded.
\end{proof}

\begin{definizione}
Let us fix a Radon measure $\mu$ on $\GG$. For any element $F\in \F_{\mu}$ we consider the family of all Borel maps $V:\GG\to\Gr_\mathfrak{C}(\GG)$ for which \eqref{test} holds. Since this class by Proposition \ref{prop:Borelintersez} is closed by countable intersection, by Lemma \ref{e-minmap} it admits a $\mu$-minimal element $\mathfrak{V}(\mu,F,\cdot)$ that is unique modulo equivalence $\mu$-almost everywhere.

Furthermore, since the elements of $F$ are Euclidean $1$-rectifiable measures, $F$ is also a decomposition of the measure $\mu$ in the Euclidean sense, see \cite[\S 2.6]{AlbMar}. Consider the family of all Borel maps $V:\GG\to\Gr_{eu}(\GG)$ for which \cite[Equation (2.2)]{AlbMar} holds and note that since this class is closed by countable intersection, by Lemma \ref{e-minmap} it admits a $\mu$-minimal element denoted by $\mathfrak{V}_{eu}(\mu,F,\cdot)$.
\end{definizione}

\begin{proposizione}\label{prop:id:fib}
For any Radon measure $\mu$ on $\GG$ and any $F=\{\Haus^1\trace \im(\gamma_t):t\in I\}\in\mathscr{F}_{\mu}$ for $\mu$-almost every $x\in \GG$ we have:
$$\mathfrak{V}_{eu}(\mu,F,x)=\mathscr{C}(x)[\mathfrak{V}(\mu,F,x)\cap V_1].$$
\end{proposizione}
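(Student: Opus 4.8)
The plan is to transfer the minimality problem that defines $\mathfrak{V}_{eu}(\mu,F,\cdot)$ into the one that defines $\mathfrak{V}(\mu,F,\cdot)$ by means of the pointwise linear map $\mathscr{C}(x)\colon\R^{n_1}\to\R^n$. Two facts will drive the argument. First, by Remark \ref{rk:expc} the $n_1\times n_1$ top block of $\mathscr{C}(x)$ is the identity, so $\mathscr{C}(x)$ is injective on the horizontal layer $V_1\cong\R^{n_1}$; in particular $\{v\in V_1:\mathscr{C}(x)v\in\mathscr{C}(x)[H]\}=H$ for every linear subspace $H\subseteq V_1$. Second, by Definition \ref{tg:curvesoso} and Lemma \ref{lemma.monti2} the tangent $\Tan(\gamma_t,x)$ is a line contained in $V_1$ and $\Tan_{eu}(\gamma_t,x)=\mathscr{C}(x)[\Tan(\gamma_t,x)]$ for $\mu_t$-a.e.\ $x$ and a.e.\ $t$. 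I will also use that $\mathfrak{S}(H)\cap V_1=H$ for every linear subspace $H\subseteq V_1$ (project onto the first layer in Definition \ref{definitiongenerazione}, or apply Proposition \ref{generato}), together with two measurability facts of the same type as those already in the paper: if $W\colon\GG\to\Gr_\mathfrak{C}(\GG)$ is Borel, then $x\mapsto\mathscr{C}(x)[W(x)\cap V_1]$ is a Borel map into $\Gr_{eu}(\GG)$ (by Proposition \ref{prop:Borelintersez} plus a routine linear-algebra argument exploiting the smoothness and the injectivity on $V_1$ of $\mathscr{C}$); and if $U\colon\GG\to\Gr_{eu}(\GG)$ is Borel, then $x\mapsto\{v\in V_1:\mathscr{C}(x)v\in U(x)\}=\ker\!\big(\Pi_{U(x)^\perp}\circ\mathscr{C}(x)\big)$ is Borel by Proposition \ref{prop:projmap}, hence so is $x\mapsto\mathfrak{S}\big(\{v\in V_1:\mathscr{C}(x)v\in U(x)\}\big)$ as a map into $\Gr_\mathfrak{C}(\GG)$, by the measurability argument in the proof of Lemma \ref{prop:V=carnot}.

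For the inclusion $\mathfrak{V}_{eu}(\mu,F,x)\subseteq\mathscr{C}(x)[\mathfrak{V}(\mu,F,x)\cap V_1]$ I would argue as follows. Since $\mathfrak{V}(\mu,F,\cdot)$ satisfies \eqref{test} and $\Tan(\gamma_t,x)\subseteq V_1$, one has $\Tan(\gamma_t,x)\subseteq\mathfrak{V}(\mu,F,x)\cap V_1$ for $\mu_t$-a.e.\ $x$ and a.e.\ $t$; applying $\mathscr{C}(x)$ and using $\Tan_{eu}(\gamma_t,x)=\mathscr{C}(x)[\Tan(\gamma_t,x)]$ gives $\Tan_{eu}(\gamma_t,x)\subseteq\mathscr{C}(x)[\mathfrak{V}(\mu,F,x)\cap V_1]$. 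Since the latter is a Borel map into $\Gr_{eu}(\GG)$, it is an admissible competitor in the definition of $\mathfrak{V}_{eu}(\mu,F,\cdot)$, and $\mu$-minimality yields the inclusion for $\mu$-a.e.\ $x$.

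For the reverse inclusion I would introduce the ``pullback'' map $\tilde W(x):=\mathfrak{S}\big(\{v\in V_1:\mathscr{C}(x)v\in\mathfrak{V}_{eu}(\mu,F,x)\}\big)$, which is Borel into $\Gr_\mathfrak{C}(\GG)$ by the facts above. Because $\mathfrak{V}_{eu}(\mu,F,\cdot)$ satisfies the Euclidean analogue of \eqref{test}, i.e.\ $\Tan_{eu}(\gamma_t,x)\subseteq\mathfrak{V}_{eu}(\mu,F,x)$ for $\mu_t$-a.e.\ $x$ and a.e.\ $t$, and $\Tan_{eu}(\gamma_t,x)=\mathscr{C}(x)[\Tan(\gamma_t,x)]$, one gets $\Tan(\gamma_t,x)\subseteq\{v\in V_1:\mathscr{C}(x)v\in\mathfrak{V}_{eu}(\mu,F,x)\}\subseteq\tilde W(x)$, so $\tilde W$ is admissible in the definition of $\mathfrak{V}(\mu,F,\cdot)$ and $\mu$-minimality gives $\mathfrak{V}(\mu,F,x)\subseteq\tilde W(x)$ for $\mu$-a.e.\ $x$. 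Intersecting with $V_1$ and using $\mathfrak{S}(H)\cap V_1=H$ then yields $\mathfrak{V}(\mu,F,x)\cap V_1\subseteq\{v\in V_1:\mathscr{C}(x)v\in\mathfrak{V}_{eu}(\mu,F,x)\}$, and applying $\mathscr{C}(x)$ gives $\mathscr{C}(x)[\mathfrak{V}(\mu,F,x)\cap V_1]\subseteq\mathfrak{V}_{eu}(\mu,F,x)$ for $\mu$-a.e.\ $x$. Together with the previous step this proves the claimed identity.

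The conceptual content is thus just the duality between pushing forward and pulling back subspaces along the fibrewise injective map $\mathscr{C}(x)|_{V_1}$; I expect the only point requiring care to be the Borel measurability of the two transfer maps, and especially the fact that taking fibrewise kernels and then fibrewise generated Carnot subgroups preserves Borel-ness. However, this is of exactly the same nature as Propositions \ref{prop:projmap}--\ref{prop:Borelintersez} and the proof of Lemma \ref{prop:V=carnot}, so no new idea should be needed.
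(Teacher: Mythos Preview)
Your proposal is correct and follows essentially the same strategy as the paper's own proof: prove each inclusion by showing that the transferred bundle is a Borel competitor in the relevant minimality problem. The only cosmetic difference is in the second step, where you pull back via $\{v\in V_1:\mathscr{C}(x)v\in\mathfrak{V}_{eu}(\mu,F,x)\}=\ker\bigl(\Pi_{\mathfrak{V}_{eu}(\mu,F,x)^\perp}\circ\mathscr{C}(x)\bigr)$, while the paper simply uses $\pi_1\bigl(\mathfrak{V}_{eu}(\mu,F,x)\bigr)$; since the top block of $\mathscr{C}(x)$ is the identity and (by the first inclusion) $\mathfrak{V}_{eu}(\mu,F,x)\subseteq\mathscr{C}(x)[V_1]$ for $\mu$-a.e.\ $x$, these two sets coincide.
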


\begin{proof}
Let us prove that $\mathfrak{V}_{eu}(\mu,F,x)\subseteq \mathscr{C}(x)[\mathfrak{V}(\mu,F,x)\cap V_1]$ for $\mu$-almost every $x\in\mathbb{G}$. By definition of $\mathfrak{V}(\mu,F,x)$ for almost every $t\in I$ and $\Haus^1\trace \im(\gamma_t)$-almost every $x\in\mathbb{G}$ we have:
\begin{equation}
    \Tan_{\mathrm{eu}}(\gamma_t,x)=\mathscr{C}(x)[\Tan(\gamma_t,x)]\subseteq  \mathscr{C}(x)[\mathfrak{V}(\mu,F,x)\cap V_1].
    \label{eq:incl1un}
\end{equation}
 The last part of Proposition \ref{prop:projmap} implies that the distribution of planes $\mathscr{C}(\cdot)[\mathfrak{V}(\mu,F,\cdot)\cap V_1]$ is Borel measurable and thanks to \eqref{eq:incl1un} it is a competitor in the definition of $\mathfrak{V}_{eu}(\mu,F,x)$. This shows in particular that $\mathfrak{V}_{eu}(\mu,F,x)\subseteq \mathscr{C}(x)[\mathfrak{V}(\mu,F,x)\cap V_1]$.
Concerning the converse inclusion,  by definition of $\mathfrak{V}_{eu}(\mu,F,x)$ we know that for almost every $t\in I$ and $\Haus^1\trace \im(\gamma_t)$-almost every $x\in\mathbb{G}$ we have
$$\Tan(\gamma_t,x)=\pi_1(\Tan_{\mathrm{eu}}(\gamma_t,x))\subseteq  \pi_1(\mathfrak{V}_{eu}(\mu,F,x)).$$
Let us note that the distribution of planes $\mathfrak{S}(\pi_1(\mathfrak{V}_{eu}(\mu,F,x)))$ is seen to be Borel with the same argument used to prove Lemma \ref{prop:V=carnot}. Thus, it is a competitor in the definition of $\mathfrak{V}(\mu,F,x)$  we infer that
$$\mathfrak{V}(\mu,F,x)\subseteq \mathfrak{S}(\pi_1(\mathfrak{V}_{eu}(\mu,F,x)))\qquad\text{for $\mu$-almost every }x\in \GG.$$
Since $\mathfrak{V}(\mu,F,x)$ is uniquely determined by its intersection with $V_1$, and the resulting distribution is still Borel measurable by Proposition \ref{prop:Borelintersez}, we infer that
$$\mathfrak{V}(\mu,F,x)\cap V_1\subseteq \mathfrak{S}(\pi_1(\mathfrak{V}_{eu}(\mu,F,x)))\cap V_1=\pi_1(\mathfrak{V}_{eu}(\mu,F,x))\qquad\text{for $\mu$-almost every }x\in \GG.$$
This concludes the proof of the proposition.
\end{proof}

\begin{proposizione}\label{s-generator}
Let $\mu$ be a Radon measure on $\GG$, then
\begin{itemizeb}
\item[(i)]
for every $F \in \F_{\mu}$ there holds
$\mathfrak{V}(\mu,F,x) \subseteq V(\mu,x)$ for $\mu$-almost every~$x$;
\item[(ii)]
there exists $G \in \F_{\mu}$
such that $\mathfrak{V}(\mu, G, x) = V(\mu, x)$ 
for $\mu$-almost every~$x$.
\end{itemizeb}
\end{proposizione}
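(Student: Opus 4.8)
The plan is to mirror the structure of \cite[Proposition 2.5]{AlbMar}, handling the two statements separately.

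\textbf{Proof of (i).} Fix $F=\{\mu_t=\Haus^1\trace\im(\gamma_t):t\in I\}\in\F_\mu$. By Lemma \ref{lemmamisurabilitatangenti} the map $(x,t)\mapsto\Tan(\gamma_t,x)$ is Borel, so by the definition of $V(\mu,\cdot)$ we have $\mathfrak{S}(\mathfrak{v}_{\gamma_t}(x))=\Tan(\gamma_t,x)\subseteq V(\mu,x)$ for $\mu_t$-a.e.\ $x$ and a.e.\ $t$. Hence the Borel map $x\mapsto V(\mu,x)$ is itself a competitor among the $\Gr_\mathfrak{C}(\GG)$-valued maps satisfying \eqref{test} relative to $F$ — note $V(\mu,x)\in\Gr_\mathfrak{C}(\GG)$ $\mu$-a.e.\ by Lemma \ref{prop:V=carnot}. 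By the $\mu$-minimality of $\mathfrak{V}(\mu,F,\cdot)$ among such competitors we conclude $\mathfrak{V}(\mu,F,x)\subseteq V(\mu,x)$ for $\mu$-a.e.\ $x$.

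\textbf{Proof of (ii).} The strategy is to take a countable family $\{F_k\}_{k\in\N}\subset\F_\mu$ that is ``cofinal'' in the sense that the essential span of the associated fibres exhausts $V(\mu,\cdot)$, and then glue the $F_k$ into a single element $G\in\F_\mu$. For the gluing, if $F_k=\{\mu^k_t:t\in I_k\}$ with $dt_k$ a probability measure on $I_k$, we set $I:=\bigsqcup_k I_k$ with the probability measure $\sum_k 2^{-k}dt_k$ (after renormalising each $\int_{I_k}\Mass(\mu^k_t)\,dt_k$, which we may arrange to be finite by scaling the curves, using that a reparametrised/rescaled $1$-Lipschitz curve can be split into finitely many pieces each of length $\le 1$ — or more simply by rescaling within condition (c), which only asks for absolute continuity); then $\int_I\mu_t\,dt\ll\mu$ is immediate from $\int_{I_k}\mu^k_t\,dt_k\ll\mu$ for each $k$, and conditions (a),(b) of \S\ref{s-measint} are preserved. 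Thus $G:=\{\mu_t:t\in I\}\in\F_\mu$, and by construction $\Tan(\gamma_t,x)\subseteq\mathfrak{V}(\mu,G,x)$ forces $\mathfrak{V}(\mu,F_k,x)\subseteq\mathfrak{V}(\mu,G,x)$ for every $k$, $\mu$-a.e. Combined with part (i), $\mathfrak{V}(\mu,G,x)\subseteq V(\mu,x)$ $\mu$-a.e., so it remains to produce the $F_k$ with $\sup_k\mathfrak{V}(\mu,F_k,x)\supseteq V(\mu,x)$ $\mu$-a.e.

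\textbf{Finding the exhausting sequence --- the main obstacle.} This is the heart of the matter. By definition $V(\mu,\cdot)$ is $\mu$-minimal among Borel maps satisfying \eqref{test} for \emph{all} $F\in\F_\mu$; one must show that a \emph{countable} subfamily already pins it down. The natural approach: let $W(x):=\overline{\bigcup_k\mathfrak{V}(\mu,F_k,x)}$ (more precisely the smallest Carnot subgroup containing all $\mathfrak{V}(\mu,F_k,x)\cap V_1$, which is Borel by Lemma \ref{prop:V=carnot} and Proposition \ref{prop:Borelintersez} applied to the sum of the horizontal layers) for a suitable countable $\{F_k\}$; one wants $W$ to satisfy \eqref{test} for every $F\in\F_\mu$, whence $V(\mu,x)\subseteq W(x)$ $\mu$-a.e.\ by minimality, giving equality by (i). The usual device (as in \cite{AlbMar}) is a maximality/exhaustion argument: among all $W$ of the form $\sup$ over a countable subfamily, pick one maximising $\int\dim_{\mathrm{hom}}(W(x)\cap V_1)\,d\mu(x)$ (a bounded integral since $\dim_{\mathrm{hom}}\le Q$ and $\mu$ is finite, or after exhausting $\GG$ by sets of finite measure); then for any $F\in\F_\mu$, adjoining $F$ to the subfamily cannot strictly increase the integral, which forces $\Tan(\gamma_t,x)\subseteq W(x)$ $\mu_t$-a.e.\ for a.e.\ $t$, i.e.\ $W$ satisfies \eqref{test} for that $F$. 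The delicate points to check carefully are: the measurability of $W$ as a $\Gr_\mathfrak{C}(\GG)$-valued map (here the identification of a Carnot subgroup with its horizontal layer, Lemma \ref{prop:V=carnot}, together with Borelianity of sums of subspaces inside the proof of Proposition \ref{prop:Borelintersez}, does the job); and that "adjoining $F$" indeed yields another admissible countable subfamily, so that the supremum of the integral is attained. These are exactly the Carnot-group analogues of the arguments in \cite[\S2]{AlbMar}, and go through verbatim once measurability is in place, so I expect this measurability bookkeeping — not any genuinely new idea — to be the only real obstacle.
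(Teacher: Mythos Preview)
Your proposal is correct and follows essentially the same approach as the paper, which simply defers to the Euclidean counterpart \cite[Proposition~2.8]{AlbMar} (you cite Proposition~2.5, presumably a numbering discrepancy). The argument you sketch for~(ii) --- gluing countably many families into one element of $\F_\mu$ and selecting an exhausting countable subfamily by maximising $\int \dim(W(x)\cap V_1)\,d\mu$ --- is precisely the Alberti--Marchese argument, and as you correctly note, the only adaptations needed in the Carnot setting are the measurability checks already supplied by Proposition~\ref{prop:Borelintersez} and Lemma~\ref{prop:V=carnot}.
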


\begin{proof}
The proof of the proposition is identical to its Euclidean counterpart, see \cite[Proposition 2.8]{AlbMar}. \end{proof}

\begin{proposizione}\label{s-basic}
Let $\mu$, $\mu'$ be measures on $\GG$.
Then, the following statements hold:
\begin{itemizeb}
\item[(i)]
{\textrm{[strong locality principle]}}
if $\mu' \ll \mu$ then $V(\mu', x) = V(\mu, x)$ 
for $\mu'$-almost every~$x$. More generally, 
if $1_E \, \mu' \ll\mu$ for some Borel set $E\subset\GG$,
then $V(\mu', x) = V(\mu, x)$ for $\mu'$-almost every~$x\in E$,
\item[(ii)] there exists a $G=\{\mu_t : t\in I\} \in \F_{\mu}$ such that for $\mu$-almost every~$x$ we have $\mathfrak{V}(\mu, G, x) = V(\mu, x)$ and:
$$\mu\trace\{x:V(\mu,x)\neq\{0\}\}\ll\int_I\mu_t dt.$$
\end{itemizeb}
\end{proposizione}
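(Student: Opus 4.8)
The plan is to adapt the Euclidean argument of Alberti--Marchese, which relies only on the abstract structure of the classes $\F_\mu$, $\G_\mu$, the minimality principle of Lemma~\ref{e-minmap}, Proposition~\ref{s-generator}, and the reformulation of the decomposability bundle in Remark~\ref{rkfbeit}.

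\emph{Part (i).} One inclusion is immediate: conditions (a), (b) of Definition~\ref{Dec-bundle} do not involve the reference measure, while (c) for $\mu'$ gives $\int_I\mu_t\,dt\ll\mu'\ll\mu$, i.e.\ (c) for $\mu$; hence $\F_{\mu'}\subseteq\F_\mu$, so $\G_\mu\subseteq\G_{\mu'}$ and $V(\mu,\cdot)$ is a competitor in the definition of $V(\mu',\cdot)$, whence $V(\mu',x)\subseteq V(\mu,x)$ for $\mu'$-a.e.\ $x$. For the reverse inclusion, assume $\mu'\ll\mu$, write $\mu'=\theta\mu$ with $\theta$ Borel and put $P:=\{\theta>0\}$, so $\mu'$ is concentrated on $P$ and $\mu\trace P$ and $\mu'$ are mutually absolutely continuous. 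By Proposition~\ref{s-generator}(ii) choose $G=\{\mu_t=\Haus^1\trace\im(\gamma_t):t\in I\}\in\F_\mu$ with $\mathfrak{V}(\mu,G,\cdot)=V(\mu,\cdot)$ $\mu$-a.e., and consider $G'':=\{\mu_t\trace P:t\in I\}$: it satisfies assumptions (a*), (b), (c) of Remark~\ref{rkfbeit} relative to $\mu'$, since each $\mu_t\trace P$ is absolutely continuous along the Lipschitz curve $\gamma_t$, the mass bound is inherited from $G$, and $\int_I\mu_t\trace P\,dt\ll\mu\trace P$ is absolutely continuous with respect to $\mu'$. As restriction leaves $\Tan(\gamma_t,\cdot)$ unchanged and $V^*(\mu',\cdot)=V(\mu',\cdot)$, the conclusion \eqref{eq:finale2} of Remark~\ref{rkfbeit} (applied with $\mu'$ in place of $\mu$) gives $\Tan(\gamma_t,x)\subset V(\mu',x)$ for $(\mu_t\trace P)$-a.e.\ $x$ and a.e.\ $t\in I$. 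Let $W:\GG\to\Gr_{\mathfrak{C}}(\GG)$ be the Borel map equal, on $P$, to a $\Gr_{\mathfrak{C}}(\GG)$-valued representative of $V(\mu',\cdot)$ (which exists by Lemma~\ref{prop:V=carnot}) and equal to $\GG$ on $\GG\setminus P$; splitting $\im(\gamma_t)$ along $P$ and its complement shows that $W$ satisfies \eqref{test} for $G$, so minimality yields $V(\mu,x)=\mathfrak{V}(\mu,G,x)\subseteq W(x)$ $\mu$-a.e., i.e.\ $V(\mu,x)\subseteq V(\mu',x)$ for $\mu'$-a.e.\ $x$. The stated general form then follows by applying the case $\mu'\ll\mu$ twice, to $1_E\mu'\ll\mu$ and to $1_E\mu'\ll\mu'$.

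\emph{Part (ii).} By Part (i) I may replace $\mu$ with a mutually absolutely continuous finite measure (this alters neither $\F_\mu$, $V(\mu,\cdot)$, $\mathfrak{V}(\mu,\cdot,\cdot)$, nor any ``$\ll$'' statement), hence assume $\mu(\GG)<\infty$. The main sub-step is: \emph{if $A\subseteq\{x:V(\mu,x)\neq\{0\}\}$ is Borel with $\mu(A)>0$, then there is $F_A\in\F_\mu$ with $0\neq\int_{I_A}\mu^A_t\,dt\ll\mu\trace A$.} Indeed, by Part (i) $V(\mu\trace A,\cdot)=V(\mu,\cdot)\neq\{0\}$ $\mu\trace A$-a.e., and Proposition~\ref{s-generator}(ii) for $\mu\trace A$ gives $F_A\in\F_{\mu\trace A}$ with $\mathfrak{V}(\mu\trace A,F_A,\cdot)=V(\mu,\cdot)$, which cannot be nontrivial on a positive-measure set if all $\mu^A_t$ vanish, since then \eqref{test} is vacuous; so $\int\mu^A_t\,dt$ is a nonzero measure, concentrated on $A$ because $\int\mu^A_t\,dt\ll\mu\trace A$ by (c), and $F_A\in\F_\mu$ since $\mu\trace A\ll\mu$. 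Now let $\mathcal{S}$ be the family of Borel $B\subseteq\{V(\mu,\cdot)\neq\{0\}\}$ for which some $F_B\in\F_\mu$ has $\mu\trace B\ll\int_{I_B}\mu^B_t\,dt$; reweighting and superposing families as in Definition~\ref{s-measint}, $\mathcal{S}$ is stable under countable unions, so it has an element $B_\infty$ of maximal $\mu$-measure. If $\mu(R)>0$ with $R:=\{V(\mu,\cdot)\neq\{0\}\}\setminus B_\infty$, apply the sub-step with $A=R$: since the nonzero measure $\int_{I_R}\mu^R_t\,dt$ is absolutely continuous with respect to $\mu\trace R$, the absolutely continuous part of $\mu\trace R$ with respect to $\int_{I_R}\mu^R_t\,dt$ is nonzero and has the form $\mu\trace R_{ac}$ with $R_{ac}\subseteq R$ and $\mu(R_{ac})>0$, so $B_\infty\cup R_{ac}\in\mathcal{S}$ contradicts maximality. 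Thus $\mu\trace\{V(\mu,\cdot)\neq\{0\}\}=\mu\trace B_\infty\ll\int_{I_{B_\infty}}\mu^{B_\infty}_t\,dt$. Superposing $F_{B_\infty}$ with $G_0\in\F_\mu$ realising $\mathfrak{V}(\mu,G_0,\cdot)=V(\mu,\cdot)$ gives the required $G$: it contains $G_0$, so $V(\mu,\cdot)=\mathfrak{V}(\mu,G_0,\cdot)\subseteq\mathfrak{V}(\mu,G,\cdot)$, while $\mathfrak{V}(\mu,G,\cdot)\subseteq V(\mu,\cdot)$ by Proposition~\ref{s-generator}(i); and $\int_{I_{B_\infty}}\mu^{B_\infty}_t\,dt$ is absolutely continuous with respect to $\int_I\mu_t\,dt$, whence $\mu\trace\{V(\mu,\cdot)\neq\{0\}\}\ll\int_I\mu_t\,dt$.

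\emph{Main obstacle.} The delicate step is the reverse inclusion in Part (i): turning a $\mu$-decomposition into a $\mu'$-decomposition is only possible after restricting the curves to $P=\{d\mu'/d\mu>0\}$, which produces measures that are merely absolutely continuous along Lipschitz curves rather than restrictions of $\Haus^1$ -- precisely the generality that Remark~\ref{rkfbeit} was set up to handle. Part (ii) is then a measure-theoretic exhaustion built on the sub-step, the only point requiring care being that the exhaustion really covers all of $\{x:V(\mu,x)\neq\{0\}\}$.
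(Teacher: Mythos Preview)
Your proof of Part~(i) is correct and follows the same line as the Euclidean argument the paper cites (Alberti--Marchese, Proposition~2.9): the inclusion $\F_{\mu'}\subseteq\F_\mu$ gives one direction, and for the other you restrict a generating family to $P=\{d\mu'/d\mu>0\}$, invoke Remark~\ref{rkfbeit} to stay within admissible decompositions, and use minimality.

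Your proof of Part~(ii) is also correct, but it takes a substantially longer route than the paper's. The paper observes that the \emph{same} family $G$ provided by Proposition~\ref{s-generator}(ii) already satisfies the absolute continuity conclusion. The argument is a two-line contradiction: letting $\nu:=\int_I\mu_t\,dt$ and $E$ a Borel set with $\mu\trace\{V(\mu,\cdot)\neq\{0\}\}\trace E\ll\nu$ and $\nu(\GG\setminus E)=0$, one has $\mu_t(\GG\setminus E)=0$ for a.e.\ $t$; hence the map $W$ equal to $\{0\}$ on $\GG\setminus E$ and to $\GG$ on $E$ satisfies \eqref{test} for the generating family $G$, forcing $V(\mu,x)=\mathfrak{V}(\mu,G,x)\subseteq W(x)=\{0\}$ for $\mu$-a.e.\ $x\in\GG\setminus E$, which contradicts $\mu(\{V(\mu,\cdot)\neq\{0\}\}\setminus E)>0$. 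Your exhaustion argument (sub-step, maximal class $\mathcal{S}$, superposition with $G_0$) reaches the same conclusion but with more machinery; its only advantage is that it makes the construction of $G$ somewhat more explicit, at the cost of having to manage the reweighting of countably many index spaces and the auxiliary maximality argument.
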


\begin{proof}
The proof of (i) is identical to its Euclidean counterpart, see \cite[Proposition 2.9]{AlbMar}.
In order to prove (ii) let $G\in\F_{\mu}$ be the family of measures given by Proposition \ref{s-generator} (ii). Consider the Radon Nikodym decompostion of $\mu':=\mu\trace \{x:V(\mu,x)\neq\{0\}\}$ with respect to $\nu:=\int_I\mu_t dt$ and let $E$ be a Borel set such that $\mu'\trace E\ll\nu$ and $\nu(\GG\setminus E)=0$. Observe that the choice of $E$ implies that $\mu_t(\GG\setminus E)=0$ for almost every $t\in I$. We need to prove that $\mu'(\GG\setminus E)=0$. Assume by contradiction that this is not the case, and observe that by point (i) the family $G':=\{\mu_t\trace(\GG\setminus E)\}\in\F_{\mu\trace(\GG\setminus E)}$ satisfies $$\{0\}=V(\mu'\trace(\GG\setminus E), G', x)=V(\mu'\trace(\GG\setminus E), x),$$
for $\mu'$-almost every $x\in \GG\setminus E$. This contradicts the fact that $V(\mu,x)\neq\{0\}$ for $\mu'$-almost every $x$.
\end{proof}

\begin{proposizione}\label{normalcurrdec}
Let $\mu$ be a Radon measure on $\GG$ and let $\mathbf{T}$ be an horizontal $1$-dimensional normal current, see Definition \ref{defin:hor:curr}. Then
\begin{equation}
    \mathfrak{S}(\pi_1(\tau(x)))\subseteq V(\mu,x)\qquad \text{for $\mu$-almost every $x\in\GG$},
    \label{eq.incl.tangent}
\end{equation}
where $\tau$ is the Radon-Nikodym derivative of $\mathbf{T}$ with respect to $\mu$.
\end{proposizione}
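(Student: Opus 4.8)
The plan is to reduce the statement to the case of a current of the special form produced by Smirnov's decomposition (Theorem~\ref{smirnov}), so that the containment \eqref{eq.incl.tangent} follows from the very definition of the decomposability bundle. First I would recall that by the Radon–Nikod\'ym discussion at the end of Section~\ref{s2}, any finite-mass current can be written as $\mathbf{T}=\tau\mu+\boldsymbol\nu$ with $\boldsymbol\nu\perp\mu$; since only the density $\tau$ of $\mathbf{T}$ with respect to $\mu$ enters \eqref{eq.incl.tangent}, and since $\mathfrak{S}(\pi_1(\tau(x)))=\{0\}$ wherever $\tau(x)=0$, it suffices to prove the inclusion on the set where $\tau\neq 0$, i.e.\ for $\mu':=\mu\trace\{\tau\neq 0\}$, and moreover (by the strong locality principle, Proposition~\ref{s-basic}(i)) it suffices to replace $\mu$ by the measure $\|\mathbf{T}\|=|\tau|\mu$ associated to $\mathbf{T}$. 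So without loss of generality we may assume $\mathbf{T}=\tau\mu$ with $|\tau(x)|=1$ for $\mu$-a.e.\ $x$ and prove $\mathfrak{S}(\pi_1(\tau(x)))\subseteq V(\mu,x)$ for $\mu$-a.e.\ $x$.

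Next I would remove the constraint $\partial\mathbf{T}=0$ needed to invoke Theorem~\ref{smirnov}. Since $\mathbf{T}$ is normal and $1$-dimensional, $\partial\mathbf{T}$ is a finite signed measure (a $0$-current of finite mass). By a standard trick one adds an extra coordinate: embed $\GG$ (or rather $\R^n$) into $\R^{n+1}$ and glue to $\mathbf{T}$ a vertical $1$-current whose boundary cancels $\partial\mathbf{T}$, producing a cycle $\widetilde{\mathbf{T}}$ in the larger space which is still normal, still horizontal with respect to the trivially extended distribution, and which coincides with $\mathbf{T}$ on the slice $\R^n\times\{0\}$. Alternatively — and this is cleaner given the machinery already set up — one localizes: it is enough to prove the inclusion $\mu$-a.e.\ on each ball, and one can first cut $\mathbf{T}$ by $\mathbf{T}\trace B$, whose boundary is controlled, and then close it up; the delicate point to check is that these operations do not destroy horizontality, which holds because they only modify $\mathbf{T}$ by pieces supported on fixed graphs/hyperplanes.

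Once we have a horizontal normal cycle $\mathbf{T}=\tau\mu$ with unit density, Theorem~\ref{smirnov} gives a decomposition $\mu=\int_\R \Haus^1\trace(\im(\gamma_t)\cap E_t)\,dt$ into (restrictions of $\Haus^1$ to) Lipschitz curves $\gamma_t$ in $\GG$, with $\tau(x)=\tau_{\gamma_t}(x)$ for $\Haus^1$-a.e.\ $x\in\im(\gamma_t)\cap E_t$ and a.e.\ $t$. Up to reparametrizing each $\gamma_t$ by arclength so that it is $1$-Lipschitz (rescaling $t$ accordingly, which only changes the measure $dt$), this family $F:=\{\Haus^1\trace(\im(\gamma_t)\cap E_t)\}$ belongs to $\F_\mu$ — indeed condition (c) holds because $\int_\R \Haus^1\trace(\im(\gamma_t)\cap E_t)\,dt=\mu\ll\mu$, condition (b) is part of the conclusion of Theorem~\ref{smirnov}, and condition (a)/(a*) is satisfied after arclength reparametrization (using Remark~\ref{rkfbeit} to allow the absolutely continuous version). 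By the defining property \eqref{test} of $V(\mu,\cdot)$ applied to this $F$, we get $\Tan(\gamma_t,x)\subseteq V(\mu,x)$ for $\Haus^1\trace\im(\gamma_t)$-a.e.\ $x$ and a.e.\ $t$. Finally, by Definition~\ref{tg:curvesoso} together with Lemma~\ref{lemma.monti2}, $\Tan(\gamma_t,x)=\mathfrak{S}(\mathfrak{v}_{\gamma_t}(x))$ and $\mathfrak{v}_{\gamma_t}(x)$ is, up to a nonzero scalar, $\tau_{\gamma_t}(x)$ with its vertical part $\pi_2\oplus\cdots\oplus\pi_{\mathfrak s}$ recorded as horizontal direction $\pi_1$; more precisely $\pi_1(\tau(x))=\pi_1(\mathscr{C}(x)[\mathfrak{v}_{\gamma_t}(x)])$ is a nonzero multiple of $\mathfrak{v}_{\gamma_t}(x)$, so $\mathfrak{S}(\pi_1(\tau(x)))=\mathfrak{S}(\mathfrak{v}_{\gamma_t}(x))\subseteq V(\mu,x)$. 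Since $\mu$-a.e.\ $x$ lies on $\im(\gamma_t)\cap E_t$ for a positive measure of $t$'s, and $\mu$ is the integral of these curve-measures, this yields \eqref{eq.incl.tangent}.

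The main obstacle I anticipate is the closing-up/localization step: Theorem~\ref{smirnov} as stated requires $\partial\mathbf{T}=0$ and unit density, so one must carefully reduce the general normal horizontal current to that case without losing horizontality, and keep track of the arclength reparametrization so that the resulting family genuinely lands in $\F_\mu$ (in particular that the reparametrized curves are $1$-Lipschitz and the new parameter measure is still finite). The identification $\mathfrak{S}(\pi_1(\tau(x)))=\Tan(\gamma_t,x)$ is then essentially bookkeeping via Lemmas~\ref{lemma.monti1}, \ref{lemma.monti2} and Definition~\ref{tg:curvesoso}.
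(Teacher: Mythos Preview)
Your approach is essentially the same as the paper's: apply Smirnov's decomposition (Theorem~\ref{smirnov}) to write the associated measure $\mu'$ of $\mathbf{T}$ as an integral of curve-measures, read off $\Tan(\gamma_t,x)\subseteq V(\mu',x)$ from the definition of the decomposability bundle, identify $\mathfrak S(\pi_1(\tau))$ with $\Tan(\gamma_t,\cdot)$ via Lemma~\ref{lemma.monti2} and Definition~\ref{correnticurve}, and then transfer back to $V(\mu,\cdot)$ via the locality principle (Proposition~\ref{s-basic}(i)); the paper simply performs the Radon--Nikod\'ym/locality step \emph{after} Smirnov rather than before. One remark: the paper invokes Theorem~\ref{smirnov} directly without discussing the hypothesis $\partial\mathbf T=0$ you worry about --- the underlying Paolini--Stepanov decomposition cited in its proof does not in fact need the cycle assumption, so your embedding/closing-up detour is unnecessary (and note that in your Step~1 the identity ``$\|\mathbf T\|=|\tau|\mu$'' should be $\|\mathbf T\|=|\tau|\mu+|\boldsymbol\nu|$, though the argument via locality is unaffected).
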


\begin{proof}
Let $\mathbf{T}=\tau'\mu'$ with $\lvert \tau^\prime(x)\rvert=1$ for $\mu$-almost every $x\in \GG$ and let $\{\boldsymbol\mu_t\}_{t\in I}$ be the family of rectifiable measure yielded by Theorem \ref{smirnov}. Thanks to Theorem \ref{smirnov}, we know that for any $t\in I$ we have $\boldsymbol\mu_t=\llbracket \gamma_t\rrbracket$ and for almost every $t\in I$ and $\Haus^1$-almost every $x\in \im(\gamma_t)$
\begin{equation}
    \tau'(x)=\tau_{\gamma_t}(x)=\lambda\mathscr{C}(x)[\mathfrak{v}_{\gamma_t}(x)]\qquad\text{for some $\lambda\neq 0$},
    \label{eq:tag}
\end{equation}
for $\Haus^1\trace \im(\gamma_t)$ almost every $x$ and almost every $t\in I$.
The last identity follows from the last lines of Definition \ref{correnticurve}. Projecting on $V_1$ identity \eqref{eq:tag}, we deduce that $\pi_1(\tau'(x))=\lambda\mathfrak{v}_{\gamma_t}(x)$ and hence
$$\mathfrak{S}(\pi_1(\tau'(x)))=\mathfrak{S}(\mathfrak{v}_{\gamma_t}(x))=\Tan(\gamma_t,x)\qquad\text{for almost every $t\in I$ and $\Haus^1$-almost every $x\in \im(\gamma_t)$}.$$
However, since $\mu'=\int_I\lVert\boldsymbol\mu_t\rVert dt$, this implies by definition of decomposability bundle that 
\begin{equation}
    \mathfrak{S}(\pi_1(\tau'(x)))\subseteq V(\mu',x)\qquad \text{for $\mu'$-almost every $x\in\GG$.}
    \label{num:dir:inc}
\end{equation}
Write $\mathbf{T}=\tau'\mu'=\tau\mu+\boldsymbol\mu_s'$, where $\mu$ and $\boldsymbol\mu_s'$ are mutually singular and where here the vector field $\tau$ is the Radon-Nikodym derivative of $\textbf{T}$ with respect to $\mu$ as in the statement of the proposition. Let $A$ be a Borel set, yielded by the Hahn decomposition theorem such that $\mu(A)=0$ and $\boldsymbol\mu_s^\prime(A^c)=0$ and denote by $\bar\tau$ a $\mu$-measurable representative $\tau$. Finally, let $E\subseteq A^c$ be the $\mu$-measurable where $\bar \tau\neq 0$. It is immediate to see that $\tau'\mu'\trace E=\tau\mu=(\tau/\lvert\tau\rvert)\lvert \tau\rvert\mu$ and hence by the uniqueness of the polar decomposition, we infer that $\tau'(x)=\tau(x)/\lvert\tau(x)\rvert$ on $\mu$-almost every $x\in E$.
 
Since $\mathbb{1}_E\mu\ll \mu'$, Proposition \ref{s-basic}(i) implies that $V(\mu,x)=V(\mu',x)$ for $\mu$-almost every $x\in E$. This, together with \eqref{num:dir:inc} implies that
$$\mathfrak{S}(\pi_1(\tau(x)))\subseteq V(\mu,x)\qquad \text{for $\mu$-almost every $x\in E$.}$$
This concludes the proof.
\end{proof}

\section{Integrals of Lipschitz fragments are pieces of horizontal normal currents}
\label{s4}

This section is devoted to the proof of Proposition \ref{p:normal}.
Proposition \ref{p:normal} shows that any vector-valued measure $\boldsymbol\mu$
which can be represented by integration of natural vector-valued measures associated to Lipschitz fragments can be closed to a horizontal normal current by adding to $\boldsymbol\mu$ another integral of Lipschitz fragments $\boldsymbol\sigma$ whose total variation can be taken singular with respect to any given Radon measure $\eta$. 
 The strategy of the proof partially follows that of \cite[Theorem 1.1]{AlbMar2}, but here the necessity to construct a \emph{horizontal} normal current introduces substantial additional difficulties. 
In our applications we are interested to use these integrals of curves to study some fixed Radon measure $\Phi$ which is absolutely continuous with respect to the total variation of $\boldsymbol\mu$. In particular, if we let $\sigma\perp \Phi+\lVert\boldsymbol\mu\rVert$  we still have $\Phi\ll \lVert\boldsymbol\sigma+\boldsymbol\mu\rVert$, which is only possible if we know that $\boldsymbol\sigma$ and $\boldsymbol\mu$ do not have cancellations on a set of positive $\Phi$-measure,  and still, when studying the local structure of the measure $\Phi$ the choice of $\boldsymbol\sigma$  is essentially negligible.

\begin{proposizione}\label{p:normal}
Let $(I, dt)$ be a $\sigma$-finite measure space, $\eta$ be a positive Radon measure and $t\mapsto \boldsymbol\mu_t$ be a family of vector-valued measures satisfying the hypothesis (a) and (b) of Definition \ref{s-measint} and such that for almost every  $t\in I$ there exists a 1-Lipschitz curve $\gamma_t:K_t\to\GG$ defined on a compact set $K_t$ of $\R$ such that $\boldsymbol\mu_t=\tau_t\Haus^1\trace \im(\gamma_t)$, where $\tau_t$ is a unit vector field such that $\tau_t\in \Tan_\mathrm{eu}(\im(\gamma_t),x)$ for $\Haus^1$-almost every $x\in \im(\gamma_t)$. Further, we let
\begin{equation}\label{e:measure_structure}
\boldsymbol\mu:=\int_I \boldsymbol\mu_t\,dt.
\end{equation}
Then, for every $\varepsilon_0>0$ there exists a normal 1-current $\mathbf{T}$ on $\GG\cong \R^n$ such that $\partial \mathbf{T}=0$, $\Mass(\mathbf{T})\leq 2\Mass(\boldsymbol\mu)+\varepsilon_0$ and $\mathbf{T}=\boldsymbol\mu+\boldsymbol\sigma$, where $\boldsymbol\sigma$ and $\eta$ are mutually singular and $\boldsymbol{\sigma}$ is an integration of horizontal Lipschitz fragments as in \eqref{e:measure_structure}.
\end{proposizione}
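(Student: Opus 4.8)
The plan is to close $\boldsymbol\mu$ up to a cycle by adjoining a \emph{mirror copy} of it. For almost every $t$ I will produce a horizontal $1$-Lipschitz fragment $\rho_t$ which shadows $\gamma_t$ closely, has image that is $\eta$-negligible and $\Haus^1$-essentially disjoint from $\im(\gamma_t)$, and is tied to $\gamma_t$ at the endpoints of $K_t$ (and of its complementary intervals) in such a way that $\partial\llbracket\rho_t\rrbracket=\partial\llbracket\gamma_t\rrbracket$. Then $\mathbf T_t:=\llbracket\gamma_t\rrbracket-\llbracket\rho_t\rrbracket$ is a cycle and $\mathbf T:=\int_I\mathbf T_t\,dt=\boldsymbol\mu+\boldsymbol\sigma$ with $\boldsymbol\sigma:=-\int_I\llbracket\rho_t\rrbracket\,dt$ an integration of horizontal fragments; the two copies are what produce the factor $2$, while the closeness of $\rho_t$ to $\gamma_t$ keeps $\boldsymbol\sigma$ both $\eta$-singular and harmless for the local structure of the measures studied later.

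\textbf{Steps.} \emph{(i) Set-up.} Rescaling $(I,dt)$, assume $dt$ is finite and $K_t\subseteq[0,1]$. By Lemma~\ref{lemma.monti1} each $\gamma_t$ is horizontal, so $\boldsymbol\mu_t=\llbracket\gamma_t\rrbracket$ and $\boldsymbol\mu$ are $\GG$-horizontal; writing $K_t=[a_t,b_t]\setminus\bigcup_i(c_i^t,d_i^t)$ gives $\partial\llbracket\gamma_t\rrbracket=\delta_{\gamma_t(b_t)}-\delta_{\gamma_t(a_t)}-\sum_i(\delta_{\gamma_t(d_i^t)}-\delta_{\gamma_t(c_i^t)})$, a (possibly infinite) sum of Diracs, so $\llbracket\gamma_t\rrbracket$ need not itself be normal. \emph{(ii) The shadow.} Build $\rho_t$ on a compact set $\widehat K_t$ with the same combinatorial gap structure as $K_t$: on each piece between two coarse anchors (the endpoints $\gamma_t(a_t),\gamma_t(b_t)$ and the endpoints of the finitely many gaps of length $>\lambda$) take a small left translate $\delta_\lambda(w)*\gamma_t$ of the corresponding arc of $\gamma_t$ — this preserves horizontality and unit speed — and splice it back to the anchor values by two short $d_c$-geodesics whose length is $\le\oldC{c:1}(\lambda\|w\|)^{1/\mathfrak s}$ by Lemma~\ref{lem:EstimateOnConjugate}; the boundary contributions of the (possibly infinitely many) short gaps are matched separately by short $d_c$-geodesics joining $\gamma_t$ to $\rho_t$, made summable by letting the deviation of $\rho_t$ shrink geometrically near them. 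With these choices $\partial\llbracket\rho_t\rrbracket=\partial\llbracket\gamma_t\rrbracket$, hence $\partial\mathbf T_t=0$, and $\Haus^1(\im\rho_t)\le\Haus^1(\im\gamma_t)+\varepsilon_0/(2\int_I dt)$. \emph{(iii) Coherence and $\eta$-singularity.} The translation applied to an arc of $\gamma_t$ is chosen to depend only on the relevant point of $\GG$, not on $t$, so that $\int_I\llbracket\rho_t\rrbracket\,dt$ agrees, up to the small splice contributions, with the image of $\boldsymbol\mu$ under one fixed horizontal deformation close to the identity; this keeps the cancellations of $\boldsymbol\mu=\int_I\llbracket\gamma_t\rrbracket\,dt$, giving $\Mass(\boldsymbol\sigma)\le\Mass(\boldsymbol\mu)+\varepsilon_0$ rather than just $\le\int_I\Mass(\boldsymbol\mu_t)\,dt+\varepsilon_0$. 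The free parameters ($w$, and the deviations) are then fixed generically: by a Fubini argument — $1$-rectifiable sets are $\Leb^n$-null and left translations preserve $\Leb^n$-null sets and the metric — $\bigcup_t(\im\rho_t\cup\text{splices})$ is $\eta$-negligible and $\Haus^1$-essentially disjoint from $\bigcup_t\im\gamma_t$, and all $t$-measurability required by Definition~\ref{s-measint} is checked. \emph{(iv) Conclusion.} $\partial\mathbf T=\int_I\partial\mathbf T_t\,dt=0$; $\boldsymbol\sigma$ is an integration of horizontal fragments with $\boldsymbol\sigma\perp\eta$; and since $\boldsymbol\mu$ and $\boldsymbol\sigma$ live on essentially disjoint sets, $\Mass(\mathbf T)=\Mass(\boldsymbol\mu)+\Mass(\boldsymbol\sigma)\le2\Mass(\boldsymbol\mu)+\varepsilon_0$.

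\textbf{Main obstacle.} The crux is carrying out (ii)–(iii) simultaneously. The shadow has to be a genuine \emph{horizontal} $1$-Lipschitz fragment, so one cannot perturb transversally ``for free'' as in $\R^n$ and is forced to work with left translations and $d_c$-geodesics together with the conjugation estimate of Lemma~\ref{lem:EstimateOnConjugate}; this must be reconciled with a domain $K_t$ possibly carrying infinitely many gaps which — contrary to the Euclidean situation, where a Lipschitz fragment essentially coincides with a curve on an interval — cannot be eliminated, so the anchoring corrections must be kept summable even for fragments with Cantor-type domains; and the whole construction must be sufficiently coherent in $t$ that no cancellation in $\boldsymbol\mu$ is destroyed, which is exactly what makes the bound $2\Mass(\boldsymbol\mu)+\varepsilon_0$, rather than $2\int_I\Mass(\boldsymbol\mu_t)\,dt+\varepsilon_0$, attainable. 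By comparison the $\eta$-singularity is a soft genericity argument, but it has to be interleaved with the mass bookkeeping.
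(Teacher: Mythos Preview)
Your plan has a real obstruction at the point you yourself flag, and the paper takes a different route that sidesteps it rather than resolving it.

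\textbf{The gap in (ii)--(iii).} When $K_t$ has infinitely many gaps, $\partial\llbracket\gamma_t\rrbracket$ has infinite mass; requiring $\partial\llbracket\rho_t\rrbracket=\partial\llbracket\gamma_t\rrbracket$ exactly forces $\rho_t$ to hit every gap endpoint $\gamma_t(c_i^t),\gamma_t(d_i^t)$. With your fixed translate $\delta_\lambda(w)*\gamma_t$ on a coarse piece, each small gap inside that piece needs two correction geodesics of length of order $(\lambda\|w\|)^{1/\mathfrak s}$ (Lemma~\ref{lem:EstimateOnConjugate}), and there can be infinitely many --- the sum diverges. Your fix (``let the deviation shrink geometrically near the short gaps'') forces the translation to vary along the curve; but a position-dependent left translation $x\mapsto V(x)*x$ does not send horizontal curves to horizontal curves, and a piecewise-constant one introduces new splice points whose number is again unbounded. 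The same variability also destroys the coherence in (iii): if the translations are not uniform across $t$, the cancellations in $\boldsymbol\mu=\int_I\llbracket\gamma_t\rrbracket\,dt$ are not transported, and you only recover $\Mass(\boldsymbol\sigma)\le\int_I\Mass(\boldsymbol\mu_t)\,dt+\varepsilon_0$, not the stated bound. So (ii) and (iii) genuinely pull in opposite directions, and the sketch does not reconcile them.

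\textbf{What the paper does instead.} The paper never closes each fragment to a cycle. It first approximates $\boldsymbol\mu$ in mass (Proposition~\ref{p:borel_selection}, using Proposition~\ref{estensionetame}) by an integral $\int_J\llbracket\bar\gamma_s\rrbracket\,ds$ of $1$-Lipschitz curves defined on \emph{finitely many} intervals, parametrised \emph{continuously} in $s$ after a Lusin cut. On each short piece $A\subset J$ the integral is replaced by $|A|\llbracket\bar\gamma_{t_0}\rrbracket$; the error is bounded in flat norm as $\mathbf R+\partial\mathbf S$, where $\mathbf S$ is a two-dimensional \emph{Euclidean} (not horizontal) normal current built from a linear homotopy and the cone construction, and $\mathbf R$ is a finite sum of horizontal geodesics and short leftover arcs. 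The resulting finite sum of full curves is then left-translated (Proposition~\ref{prop:singcurves}) to an $\eta$-singular piece $\mathbf Z_1$, again up to an error $\mathbf R+\partial\mathbf S$. Since the remainder $\mathbf R$ is once more an integration of horizontal fragments with strictly smaller mass, one iterates; summing the $\partial\mathbf S_i$ closes the boundary, and $\boldsymbol\sigma=-\sum_i\mathbf Z_i$. Two features bypass your obstacle: the fillings $\mathbf S$ may be non-horizontal because only their boundaries enter the final current, and the preliminary reduction to curves on finitely many intervals is precisely what removes the ``infinitely many gaps'' issue.
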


\begin{proposizione}\label{estensionetame}
Let $K$ be a compact subset of $\R$ and $\gamma:K\to\mathbb{G}$ be a $1$-Lipschitz curve. Then, for any $\varepsilon>0$ there exist finitely many closed intervals $\{I_j\}_{j=1,\ldots, N}$ and a $1$-Lipschitz curve $\tilde \gamma:\cup_{j=1}^NI_j\to \GG$ such that:
\begin{itemize}
    \item[(i)] $K\subseteq \bigcup\{I_j:j=1,\ldots, N\}$ and $\mathcal{L}^1(\bigcup\{I_j:j=1,\ldots, N\}\setminus K)<\varepsilon\mathcal{L}^1(K)$,
    \item[(ii)] $\tilde \gamma\lvert_K=\gamma$.
\end{itemize}
\end{proposizione}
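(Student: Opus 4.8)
The plan is to fill in the ``gaps'' of the compact set $K$ with horizontal curve segments that interpolate between consecutive connected components of $K$, while keeping the Lipschitz constant under control. First I would reduce to the case where $K$ is contained in a bounded interval, say $K\subset[0,1]$, and denote by $\{(a_i,b_i)\}_{i\in\N}$ the (at most countably many) connected components of the open complement $[\min K,\max K]\setminus K$, ordered so that $\sum_i(b_i-a_i)\le\Leb^1([\min K,\max K])-\Leb^1(K)$. Since $\sum_i(b_i-a_i)<\infty$, for the fixed $\eps$ there is an $N$ such that the union of the ``small'' gaps $(a_i,b_i)$ with $i>N$ has total length less than $\tfrac{\eps}{2}\Leb^1(K)$; these small gaps we simply discard, so that $\bigcup_{j=1}^N I_j$ will be the closure of $K$ together with the finitely many ``large'' gaps $(a_1,b_1),\dots,(a_M,b_M)$ with $M\le N$. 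The intervals $I_j$ are then the maximal intervals obtained by joining $\clos K$ with these finitely many gaps. The only nontrivial point is to extend $\gamma$ across each large gap $(a_i,b_i)$ as a $1$-Lipschitz curve (with respect to $d_c$).

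For the extension across a single gap $(a,b)$ with endpoints $\gamma(a),\gamma(b)\in\GG$, I would use the fact that $d_c$ is a \emph{geodesic} distance (Proposition \ref{cc.geodesic}): there is a sub-unit path joining $\gamma(a)$ to $\gamma(b)$ of length exactly $d_c(\gamma(a),\gamma(b))\le b-a$, because $\gamma$ is $1$-Lipschitz. Parametrising this geodesic on $[a,b]$ (proportionally to arc length, padding with a constant if $d_c(\gamma(a),\gamma(b))<b-a$) yields a $1$-Lipschitz, in fact horizontal, curve $\sigma_i:[a,b]\to\GG$ with $\sigma_i(a)=\gamma(a)$, $\sigma_i(b)=\gamma(b)$. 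Gluing $\gamma$ on $K$ with all the $\sigma_i$'s on the large gaps produces a map $\tilde\gamma$ on $\bigcup_{j=1}^N I_j$ which agrees with $\gamma$ on $K$, so (ii) holds, and (i) holds by the choice of $N$ since $\Leb^1(\bigcup_j I_j\setminus K)=\sum_{i>M}(b_i-a_i)<\eps\Leb^1(K)$ (in fact $<\tfrac{\eps}{2}\Leb^1(K)$).

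The step I expect to be the main obstacle is verifying that the glued map $\tilde\gamma$ is globally $1$-Lipschitz with respect to $d_c$, i.e. checking the Lipschitz estimate for a pair of points $t<s$ lying in different pieces of the decomposition (one in $K$, one in a glued gap, or in two different glued gaps). This requires a telescoping argument: writing the interval $[t,s]$ as a concatenation of subintervals each entirely contained in $K$ or in one glued gap, and using the triangle inequality for $d_c$ together with the $1$-Lipschitz bound on each piece (on $K$ it is the hypothesis on $\gamma$; on the gap pieces it is the geodesic construction). Since on each piece the bound $d_c(\tilde\gamma(t'),\tilde\gamma(s'))\le s'-t'$ holds and the pieces partition $[t,s]$, summing gives $d_c(\tilde\gamma(t),\tilde\gamma(s))\le s-t$. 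A minor additional point is to note that the concatenation of horizontal (sub-unit) paths is again a horizontal path, so $\tilde\gamma$ is Lipschitz as a map into $(\GG,d_c)$ by Lemma \ref{lemma.monti1}; equivalently one invokes Definition \ref{canonicalcoo} directly on the concatenated sub-unit parametrisation.
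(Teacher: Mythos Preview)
Your overall strategy coincides with the paper's: enumerate the complementary gaps $(a_i,b_i)$ of $K$ in $[\min K,\max K]$, keep only a tail of small total length, bridge each retained gap by a $d_c$-geodesic (using Proposition \ref{cc.geodesic} and $d_c(\gamma(a_i),\gamma(b_i))\le b_i-a_i$), and verify the global $1$-Lipschitz bound by the telescoping triangle-inequality argument you describe. That part is fine and matches the paper.

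However, you have swapped which gaps are filled and which are removed. You write that the small gaps are ``discarded'' and that $\bigcup_j I_j$ is $\clos K$ together with the \emph{large} gaps $(a_1,b_1),\dots,(a_M,b_M)$, and you then extend $\gamma$ across each large gap. This does not work on two counts. First, $K\cup\bigcup_{i\le M}(a_i,b_i)=[\min K,\max K]\setminus\bigcup_{i>M}(a_i,b_i)$ is in general \emph{not} a finite union of closed intervals, since there may be infinitely many small gaps left; so you do not obtain the finite family $\{I_j\}$ claimed in the statement. Second, under your described construction $\Leb^1\bigl(\bigcup_j I_j\setminus K\bigr)=\sum_{i\le M}(b_i-a_i)$, which need not be small; your closing formula $\sum_{i>M}(b_i-a_i)$ is inconsistent with what you actually built. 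The correction---which is exactly what the paper does---is to take $\bigcup_j I_j:=[\min K,\max K]\setminus\bigcup_{i\le M}(a_i,b_i)=K\cup\bigcup_{i>M}(a_i,b_i)$ (removing the finitely many large gaps leaves a finite union of closed intervals) and to extend $\gamma$ by geodesics across the \emph{small} gaps $(a_i,b_i)$, $i>M$. Then (i) holds with $\Leb^1\bigl(\bigcup_j I_j\setminus K\bigr)=\sum_{i>M}(b_i-a_i)<\eps\,\Leb^1(K)$, and your telescoping argument for the Lipschitz bound goes through verbatim.
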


\begin{proof}
Since $K$ is compact, we can find countably many disjoint open intervals $\{(a_j,b_j)\}_{j\in\N}$ such that:
$$K\cup \bigcup\{(a_j,b_j):j\in\N\}=[\min K,\max K].$$
Let $\varepsilon>0$ arbitrary and choose $N=N(\varepsilon)\in\N$ in such a way that $\sum_{j>N}(b_j-a_j)<\varepsilon$. Then:
$$[\min K,\max K]\setminus \bigcup_{j\leq N}(a_j,b_j)=K\cup \bigcup_{j>N}(a_j,b_j),$$
and in particular $K\cup \bigcup_{j>N}(a_j,b_j)$ is a finite union of closed intervals. Denote now by $\eta_j:[0,d(\gamma(a_j),\gamma(b_j))]\to \GG$ be the geodesic joining $\gamma(a_j)$ and $\gamma(b_j)$ and note that if we let:
\begin{equation}
    \tilde\gamma(t):=\begin{cases}
    \gamma(t) & \text{ if $t\in K$},\\
    \eta_j\Big(\frac{d(\gamma(a_j),\gamma(b_j))(t-a_j)}{b_j-a_j}\Big)& \text{ if $t\in (a_j,b_j)$},
    \end{cases}
\end{equation}
then $\tilde \gamma$ satisfies (ii). 

In order to conclude the proof, we just need to check that $\tilde{\gamma}$ is a $1$-Lipschitz curve. We check here only the most complicated case in which $s_i\in (a_{j_i},b_{j_i})$ for some $j_1\neq j_2$. Assume without loss of generality that $b_{j_1}<a_{j_2}$ and note that:
\begin{equation}
    \begin{split}
        d(\tilde \gamma(s_2),\tilde \gamma(s_1)) \leq& d( \tilde \gamma (b_{j_1}),\tilde \gamma(s_1))+d( \tilde \gamma (b_{j_1}),\tilde \gamma(a_{j_2}))+d(\tilde \gamma(a_{j_2}),\tilde \gamma(s_2))\\
        \leq & \frac{d(\gamma(a_{j_1}),\gamma(b_{j_1}))}{b_{j_1}-a_{j_1}}(b_{j_1}-s_1)+(a_{j_2}-b_{j_1})+\frac{d(\gamma(a_{j_2}),\gamma(b_{j_2}))}{b_{j_2}-a_{j_2}}(s_2-a_{j_2})\leq(s_2-s_1).
        \nonumber
    \end{split}
\end{equation}
This concludes the proof.
\end{proof}

\begin{definizione}[Strong graph metric]\label{stronghausmetric}
For any couple of Lipschitz curves $\gamma_i:\dom(\gamma_i)\to \mathbb{G}$ with $i=1,2$, we let their \emph{strong Hausdorff distance} to be
$$d_{\mathfrak s\Haus}(\gamma_1,\gamma_2):=d_{\Haus,\mathrm{eu}}(\mathrm{gr}(\gamma_1),\mathrm{gr}(\gamma_2))+\lVert\mathbb{1}_{\dom(\gamma_1)}-\mathbb{1}_{\dom(\gamma_2)}\rVert_{L^1},$$
where $\mathrm{gr}(\gamma_i):=\{(t,\gamma_i(t))\in\R^{n+1}:t\in\dom(\gamma_i)\}$ for $i=1,2$, and where $d_{\Haus,\mathrm{eu}}$ is the Hausdorff distance induced by the Euclidean metric on $\R^{n+1}$. 
\end{definizione}

\begin{osservazione}\label{rk:equivhaus}
Note that since the Euclidean metric on $\GG\cong\R^n$ and any left-invariant homogeneous distance on $\GG$ are locally H\"older equivalent, the topologies respectively induced by their Hausdorff distances on $\mathcal{X}_N$  on compact sets are equivalent as well. 
\end{osservazione}

\begin{definizione}\label{distanceoncurvesmaps}
Let $N\in \N$. In the following we denote by $\mathcal{X}_N$ the family of the curves $\gamma:\dom(\gamma)\to\GG$ where $\gamma$ is a $1$-Lipschitz curve and where $\mathrm{dom}(\gamma)$ is the domain of $\gamma$, which is a union of at most $N$ disjoint compact intervals. 
We endow $\mathcal{X}_N$ with the topology induced by the Hausdorff distance $d_{\mathfrak s\Haus}$ between the corresponding graphs, see Definition \ref{stronghausmetric}.
In addition, we let $\mathcal{X}:=\cup_{N\in\N} \mathcal{X}_N$ be the topological union
\footnote{The topological union is defined as follows: first take the disjoint union of the product topological spaces $\mathcal{X}_N\times\{N\}$ and quotient with the equivalence relation given by the inclusions $\mathcal{X}_N\subseteq \mathcal{X}_M$ provided $N\leq M$, see \cite{MR1039321}.} of the $\mathcal{X}_N$.
\end{definizione}

\begin{proposizione}\label{p:borel_selection}
Let $(I, dt)$ and $\{\boldsymbol\mu_t\}_{t\in I}$  be as in Proposition \ref{p:normal}. Then, for every $\varepsilon>0$ there exists an $N\in\N$, a finite measure space $(\bar{I},d\bar{t})$ and a Borel map $\eta_\varepsilon:\tilde{I}\to \mathscr{M}(\R^n,\R^n)$ such that 
  \begin{itemize}
    \item[(i)] for almost every $t\in \tilde{I}$ we have $\eta_\varepsilon(t)=\llbracket \Gamma(t)\rrbracket$ where the map $\Gamma:\tilde{I}\to \mathcal{X}_N$ is a Borel map with respect to the metric $d_{\mathfrak{s}\Haus}$ introduced in Definition \ref{distanceoncurvesmaps};
    \item[(ii)] $\Mass[\int_I \boldsymbol{\mu}_tdt-\int_{\tilde{I}} \eta_\varepsilon(t)dt]<\varepsilon$.
\end{itemize}
\end{proposizione}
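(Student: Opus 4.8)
The plan is to approximate the family $\{\boldsymbol\mu_t\}_{t\in I}$ by a \emph{measurable selection} of Lipschitz fragments whose domains consist of a uniformly bounded number of intervals, and to do this in two steps: first replace each $\gamma_t$ by a ``tame'' version with finitely many interval components using Proposition \ref{estensionetame}, and then invoke a measurable selection theorem to make the choice of tame approximant Borel in $t$.

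\begin{proof}
Fix $\varepsilon>0$. Since $\int_I \Mass(\boldsymbol\mu_t)\,dt<+\infty$ and $(I,dt)$ is $\sigma$-finite, we may first discard a set of small total mass and assume (up to replacing $I$ by a subset of finite measure and modifying $\varepsilon$) that $dt(I)<+\infty$ and that $t\mapsto \Mass(\boldsymbol\mu_t)=\Haus^1(\im(\gamma_t))$ is bounded; indeed, choosing $R$ large the set $I_R:=\{t:\Mass(\boldsymbol\mu_t)\le R\}$ satisfies $\int_{I\setminus I_R}\Mass(\boldsymbol\mu_t)\,dt<\varepsilon/4$, so it suffices to work on $I_R$ and take $\tilde I\subseteq I_R$.

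Next I apply Proposition \ref{estensionetame} for each $t$: with threshold $\varepsilon':=\varepsilon/(4(R+1)\,dt(I))$ it produces, for each $t$, a number $N(t)\in\N$ of intervals and a $1$-Lipschitz curve $\tilde\gamma_t\in\mathcal X_{N(t)}$ extending $\gamma_t$ with $\Leb^1(\dom(\tilde\gamma_t)\setminus K_t)<\varepsilon'\Leb^1(K_t)$. The key point is then that $N(t)$ can be taken \emph{bounded} on a subset of $I$ of almost full measure: the collection $\{t:N(t)\le N\}$ increases to $I$ as $N\to\infty$, so there is $N\in\N$ with $dt(\{t:N(t)>N\})$ as small as we like, and removing this exceptional set costs at most $R\cdot dt(\{t:N(t)>N\})<\varepsilon/4$ in mass. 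We thus obtain a subset $\tilde I\subseteq I$ and for each $t\in\tilde I$ a curve in $\mathcal X_N$; moreover $\Mass(\llbracket\tilde\gamma_t\rrbracket-\boldsymbol\mu_t)=\Haus^1(\im(\tilde\gamma_t)\setminus\im(\gamma_t))\le \mathrm{Lip}(\tilde\gamma_t)\cdot\Leb^1(\dom(\tilde\gamma_t)\setminus K_t)\le \varepsilon'\Leb^1(K_t)\le \varepsilon'(R+1)$, and integrating over $\tilde I$ this contributes at most $\varepsilon/4$; combined with the two discarded pieces, the total mass discrepancy between $\int_I\boldsymbol\mu_t\,dt$ and $\int_{\tilde I}\llbracket\tilde\gamma_t\rrbracket\,dt$ is below $\varepsilon$, which is (ii).

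It remains to arrange measurability, i.e.\ (i): to choose, for $dt$-a.e.\ $t\in\tilde I$, a tame curve $\Gamma(t)\in\mathcal X_N$ in a Borel way. Here I would set up a multifunction $t\rightrightarrows \mathcal S(t)\subseteq \mathcal X_N$ consisting of all $1$-Lipschitz $\sigma\in\mathcal X_N$ with $\sigma|_{K_t}=\gamma_t$ and $\Leb^1(\dom(\sigma)\setminus K_t)<\varepsilon'\Leb^1(K_t)$; the graph of this multifunction is Borel in $\tilde I\times\mathcal X_N$ (using that $t\mapsto\gamma_t$, hence $t\mapsto K_t$ and $t\mapsto\Haus^1(\im(\gamma_t))$, are Borel — which follows from hypothesis (a) of Definition \ref{s-measint} applied to $\boldsymbol\mu_t$ — together with continuity/lower semicontinuity of the relevant functionals in the metric $d_{\mathfrak s\Haus}$), the values $\mathcal S(t)$ are nonempty by the previous paragraph, and $\mathcal X_N$ is a Polish space. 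A Kuratowski–Ryll-Nardzewski type measurable selection theorem then yields a Borel map $\Gamma:\tilde I\to\mathcal X_N$ with $\Gamma(t)\in\mathcal S(t)$; setting $\eta_\varepsilon(t):=\llbracket\Gamma(t)\rrbracket$ gives (i), and Borelianity of $t\mapsto\eta_\varepsilon(t)\in\mathscr M(\R^n,\R^n)$ follows since the current $\llbracket\cdot\rrbracket$ depends continuously on the curve in the $d_{\mathfrak s\Haus}$-topology (Lemma \ref{lemma.monti1} and the area formula).

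The main obstacle, which I would be most careful about, is verifying Polishness of $(\mathcal X_N,d_{\mathfrak s\Haus})$ and the Borel measurability of the graph of the multifunction $\mathcal S$ — in particular showing that the map $t\mapsto\gamma_t$ (as a point of $\mathcal X_{N(t)}$) is itself Borel starting only from assumption (a) on $\boldsymbol\mu_t$, which requires relating the weak* measurability of the vector measures $\boldsymbol\mu_t$ to $d_{\mathfrak s\Haus}$-measurability of the underlying parametrized curves; once this is in place, the selection step is standard.
\end{proof}
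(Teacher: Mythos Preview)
Your overall strategy --- approximate each fragment by a ``tame'' curve in $\mathcal X_N$ via Proposition~\ref{estensionetame}, then invoke a measurable selection theorem --- is exactly the right one, and is also the route taken in the paper. However, the specific way you set up the multifunction leaves a genuine gap, and it is precisely the obstacle you flag in your last paragraph.

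Your multifunction is
\[
\mathcal S(t)=\bigl\{\sigma\in\mathcal X_N:\ \sigma|_{K_t}=\gamma_t,\ \Leb^1(\dom\sigma\setminus K_t)<\varepsilon'\Leb^1(K_t)\bigr\}.
\]
To verify that its graph is Borel you need the map $t\mapsto(K_t,\gamma_t)$ to be Borel, but the hypotheses of Proposition~\ref{p:normal} give you only Borel dependence of the \emph{measure} $\boldsymbol\mu_t$ on $t$ (condition (a) in Definition~\ref{s-measint}). The parametrized curve $\gamma_t$ is merely asserted to exist; it is not chosen measurably, and the same measure $\boldsymbol\mu_t$ can arise from many different parametrizations. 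So one cannot pass from weak* measurability of $t\mapsto\boldsymbol\mu_t$ to $d_{\mathfrak s\Haus}$-measurability of $t\mapsto\gamma_t$ without an additional argument, and none is provided. (Relatedly, your identification $\boldsymbol\mu_t=\llbracket\gamma_t\rrbracket$ in the mass estimate is not justified: $\gamma_t$ need not be injective, so $\llbracket\gamma_t\rrbracket$ may carry a nontrivial multiplicity over $\im(\gamma_t)$, while $\boldsymbol\mu_t=\tau_t\,\Haus^1\trace\im(\gamma_t)$ has density one.)

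The paper sidesteps both issues by formulating the selection problem \emph{entirely in terms of the measures} $\boldsymbol\mu_t$. First it introduces the Borel map
\[
\Psi_{\lambda,N}(t,\gamma_1,\ldots,\gamma_N,\boldsymbol\nu):=\bigl(\Mass(\boldsymbol\mu_t-\boldsymbol\nu),\ \Mass(\boldsymbol\nu-\textstyle\sum_j\llbracket\gamma_j\rrbracket)\bigr)
\]
on $\tilde I\times\mathcal{BL}_\lambda^N\times\mathscr M(B(0,R),\R^n)$, where $\mathcal{BL}_\lambda$ is the space of $(1{+}\lambda)$-bi-Lipschitz fragments. Borelianity of $\Psi_{\lambda,N}$ uses only the Borel dependence of $t\mapsto\boldsymbol\mu_t$, together with the facts that $\gamma\mapsto\llbracket\gamma\rrbracket$ is Borel (not continuous, as you claim) and that mass is a Borel functional. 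Non-emptiness of the fibres over $\Psi_{\lambda,N}^{-1}([0,\varepsilon]\times\{0\})$ comes from Kirchheim's area formula (a 1-rectifiable measure is, up to small mass, a finite sum of bi-Lipschitz fragments), and a von Neumann selection theorem (\cite[Theorem~5.5.2]{Sriva}) then yields a universally measurable choice $t\mapsto(\gamma_{1,t},\ldots,\gamma_{N,t})$. Only \emph{after} this selection --- when one already has a Borel family of parametrized bi-Lipschitz fragments --- does the paper apply Proposition~\ref{estensionetame} and a second selection to land in $\mathcal X_M$. The bi-Lipschitz step also fixes the multiplicity issue: for injective fragments, $\llbracket\gamma_{j,t}\rrbracket$ really is $\tau\,\Haus^1\trace\im(\gamma_{j,t})$ and the mass bounds go through.

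In short: your proof would be complete if you replaced the condition ``$\sigma|_{K_t}=\gamma_t$'' in $\mathcal S(t)$ by a condition of the form ``$\Mass(\llbracket\sigma\rrbracket-\boldsymbol\mu_t)$ is small'' (possibly after a preliminary decomposition into injective pieces), since then the graph of the multifunction is Borel using only what is actually assumed.
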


 \begin{proof}
 Throughout the proof, we identify without further comment $\mathbb{G}$ with its underlying vector space $\R^n$ as Lipschitz curves in $\GG$, as it has been previously remarked, are Lipschitz in $\mathbb{G}\cong\R^n$ endowed with the Euclidean metric. 
Without loss of generality, we can assume that the $\boldsymbol\mu_t$'s are supported on the closed ball $B(0,R)$ for some $R>0$ and that $I$ is $\R$ and $dt$ is the Lebesgue measure, see \cite[Remark 2.7 (iii)]{AlbMar}.  Thanks to the fact that the masses of the $\boldsymbol{\mu}_t$ are assumed to be summable, i.e. $\int_\R\Mass(\boldsymbol{\mu}_t)dt<\infty$, for every $\varepsilon>0$ there exists a compact interval $\tilde{I}$ such that $\int_{\R\setminus \tilde{I}}\Mass(\boldsymbol\mu_t)dt<\varepsilon/3$.

For any $\lambda>0$ in the following we denote with $\mathcal{BL}_\lambda$ the family of $(1+\lambda)$-bi-Lipschitz fragments endowed with $d_{\mathfrak{s}\Haus}$,\footnote{It is immediate to see that with such metric $\mathcal{BL}_\lambda$ is a complete metric space.} the metric of strong Hausdorff convergence of graphs, see Definition \ref{stronghausmetric}.

\smallskip

\textbf{Step 1.} In this first step, we show that for any fixed $\lambda>0$ and $\varepsilon>0$ there exists a finite measure space  $(\tilde{I}_\varepsilon,dt)$ and a Borel map
$\tilde\Xi:\tilde{I}_\varepsilon\to \M(B(0,R),\R^n)$ such that
\begin{itemize}
    \item[($\alpha$)] for almost every $t\in\tilde{I}_\varepsilon$ such that $\tilde\Xi(t)=\llbracket \Gamma(t)\rrbracket$, where $\Gamma:\tilde{I}_\varepsilon\to \mathcal{BL}_\lambda$ is Borel;
    \item[($\beta$)] $\Mass[\int_I\boldsymbol\mu_tdt-\int_{\tilde{I}_\varepsilon}\tilde\Xi(s)ds]\leq2\varepsilon/3$.
\end{itemize}
Let us fix an $N\in\N$ and let us define the map $\Psi:\tilde{I}\times \mathcal{BL}_\lambda^N\times \mathscr{M}(B(0,R),\R^n)\to \R\times \R$ as 
$$\Psi_{\lambda,N}(t,\gamma_1,\ldots,\gamma_N,\boldsymbol\nu):=(\Mass(\boldsymbol{\mu}_t-\boldsymbol\nu),\Mass(\boldsymbol\nu-\sum_{j=1}^N\llbracket \gamma_j\rrbracket)),$$ 
and we claim that $\Psi_{\lambda,N}$ is Borel. This is actually equivalent to prove that the maps 
$$\Psi_{1,\lambda,N}:(t,\boldsymbol\nu)\mapsto \Mass(\boldsymbol{\mu}_t-\boldsymbol\nu)\qquad\text{and}\qquad \Psi_{2,\lambda,N}:(\gamma_1,\ldots,\gamma_N,\boldsymbol\nu)\mapsto \Mass(\boldsymbol\nu-\sum_{j=1}^N\llbracket \gamma_j\rrbracket),$$
are Borel. 
Let us note that
\begin{itemize}
    \item[(a)] by assumption  $t\mapsto\boldsymbol\mu_t$ is a Borel map from $\tilde I$ to the space $\mathscr{M}(B(0,R),\R^n)$ endowed with the weak$^\ast$ topology, see Definition \ref{s-measint};
    \item[(b)] $\mathbb{E}:(\boldsymbol\nu_0,\boldsymbol\nu_1,\ldots\boldsymbol\nu_N)\mapsto \mathbb{M}(\boldsymbol\nu_0-\sum_{j=1}^N\boldsymbol\nu_j)$ is Borel since it is the supremum over  countably many maps $\phi\in \mathcal{C}_c(\R^n,\R^n)$ with $|\phi|\leq 1$ of the maps $(\boldsymbol\nu_0,\boldsymbol\nu_1,\ldots\boldsymbol\nu_N)\mapsto \scal{\boldsymbol\nu_0}{\phi}-\sum_{j=1}^N\scal{\boldsymbol\nu_j}{\phi}$ which are continuous with respect to the weak* topology. With the same argument one can also prove that the map $(\mu,\nu)\mapsto \mathbb{M}[\mu-\nu]$ is Borel as well;
    \item[(c)] the map $\mathbb{F}:\mathcal{BL}_\lambda\to \mathscr{M}(B(0,R),\R^n)$ defined as $\mathbb{F}(\gamma):=\llbracket \gamma\rrbracket$ is Borel. 
\end{itemize}
Items (a), (b) and (c) imply that the map $\Psi_{\lambda,N}=(\mathbb{M}(\mu-\nu),\mathbb{E}[\nu, \mathbb{F}[\gamma_1],\ldots,\mathbb{F}[\gamma_N]])$ is Borel, and hence $\Psi^{-1}_{\lambda,N}([0,\varepsilon]\times\{0\})$ is a Borel set. Furthermore, it is not difficult to prove using \cite[Lemma 4, Theorem 7]{Kirchheimarea} that for almost every $t\in\tilde{I}$ there exists an $N_{t,\varepsilon}\in\N$ and finitely many $\{\gamma_{t,j}\}_{j=1,\ldots,N_{t,\varepsilon}}\subseteq  \mathcal{BL}_\lambda$ such that $$\Mass[\boldsymbol\mu_t-\sum_{j=1}^{N_{t,\varepsilon}}\llbracket \gamma_{t,j}\rrbracket]\leq \varepsilon/3\Leb^1(\tilde{I}),\qquad \im(\gamma_{t,j_1})\cap \im(\gamma_{t,j_2})=\emptyset\qquad \text{and} \qquad\sum_{j=1}^{N_{t,\varepsilon}}\Mass[\llbracket \gamma_{t,j}\rrbracket]\leq \Mass[\boldsymbol\mu_t].$$
  This in particular shows that if we denote by $\pi_1$ the projection on $\tilde I$, we have that 
  $$\Leb^1(\tilde I\setminus \pi_1(\bigcup_{N\in\N} \Psi^{-1}_{\lambda,N}([0,\varepsilon/3\Leb^1(\tilde{I})]\times\{0\})))=0.\,\footnote{Recall that projection of Borel sets are Suslin and thus universally measurable.}$$
This shows in particular that there exists an $N\in\N$ such that $\Leb^1(\tilde{I}\setminus \pi_1(\Psi^{-1}_{\lambda,N}([0,\varepsilon/3\Leb^1(\tilde{I})]\times\{0\})))\leq \varepsilon/3$. So, defined $\tilde{I}_\varepsilon:=\pi_1(\Psi^{-1}_{\lambda,N}([0,\varepsilon/3\Leb^1(\tilde{I})]\times\{0\}))$, we have by \cite[Theorem 5.5.2]{Sriva} that there exists a universally measurable map $\Xi:\tilde{I}_\varepsilon\to \mathcal{BL}_\lambda^N\times \mathscr{M}(B(0,R),\R^n)$ defined as $\Xi(t):=(\gamma_{1,t},\ldots,\gamma_{N,t},\boldsymbol\nu_t)$ such that
\begin{itemize}
    \item[(a')] $\Mass[\boldsymbol\nu_t-\sum_{j=1}^N\llbracket \gamma_{j,t}\rrbracket]\leq \varepsilon/3\Leb^1(\tilde{I})$ and $\Mass[\boldsymbol\mu_t-\boldsymbol\nu_t]=0$;
    \item[(b')] for every $j=1,\ldots,N$ we have $\Xi_j:\tilde{I}_\varepsilon\to \mathscr{M}(B(0,R),\R^n)$ defined as $\Xi_j(t):=\llbracket\gamma_{j,t}\rrbracket$ is Borel.
\end{itemize}
It is immediate to see that items (a') and (b') together with \cite[Remark 2.7 (iii)]{AlbMar} imply items ($\alpha$) and ($\beta$).

\smallskip

\textbf{Step 2.} Let us move to the proof of the main statement. 
Let us define the map $\Psi:\tilde{I}_\varepsilon\times \mathcal{X}\times \mathscr{M}(B(0,R),\R^n)\to \R\times \R$  as 
$$\Psi(t,\gamma,\boldsymbol\nu):=\Mass(\tilde\Xi(t)-\mathbb{F}(\gamma)),$$ 
where $\mathbb{F}:\mathcal{X}\to \M(B(0,R),\R^n)$ is the map $\mathbb{F}(\gamma):=\llbracket\gamma\rrbracket$. Arguing as above, one can prove that $\Psi$ is Borel. In addition, thanks to Proposition \ref{estensionetame}, we know that for $t\in \tilde{I}_\varepsilon$ there exists an $M_{t,\varepsilon}\in\N$ and a $\gamma\in \mathcal{X}_{M_{t,\varepsilon}}$ such that
$$\Mass[\mathbb{F}(\gamma)-\tilde\Xi(t)]\leq \varepsilon/6\Leb^1(\tilde{I}_\varepsilon).$$
Hence, arguing as above it is possible to show that there exists an $M_{t,\varepsilon}\in\N$ and a universally measurable map $\eta_\varepsilon:\tilde{I}_\varepsilon\to \M(\R^n,\R^n)$ such that 
\begin{itemize}
    \item[($\alpha'$)] for almost every $t\in\tilde{I}_\varepsilon$ such that $\tilde\Xi(t)=\llbracket \Gamma(t)\rrbracket$, where $\Gamma:\tilde{I}_\varepsilon\to \mathcal{X}_{M_{t,\varepsilon}}$ is Borel;
    \item[($\beta'$)] $\Mass[\int_I\boldsymbol\mu_tdt-\int_{\tilde{I}_\varepsilon}\tilde\Xi(s)ds]\leq\varepsilon$.
\end{itemize}
This concludes the proof of the proposition.

\end{proof}

\begin{proposizione}\label{prop:singcurves}
Let $K$ be a compact subset of the real line of positive $\mathcal{L}^1$-measure, assume that $\gamma:K\to\mathbb{G}$ is a Lipschitz curve and let $\eta$ be a positive and finite Radon measure on $\GG$. Then, for every $\delta>0$ there exists a set of full measure of vectors $v\in B(0,\delta)$ such that $\eta$ and $\mathcal{H}^1\trace (v*\im(\gamma))$ are mutually singular.
\end{proposizione}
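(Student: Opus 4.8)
The plan is to run an averaging (Fubini) argument over the translation parameter $v$, exactly as in the Euclidean prototype. We keep the identification $\GG\cong(\R^n,*)$. The only real ingredient is that the image $\im(\gamma)=\gamma(K)$ of a Lipschitz fragment is $\mathcal{L}^n$-negligible, together with the fact that this negligibility is preserved by the group operations: indeed, since $|x-y|\le d_c(x,y)$ for all $x,y\in\GG$, the curve $\gamma$ is also Euclidean Lipschitz on $K$, so $\mathcal{H}^1_{\mathrm{eu}}(\im(\gamma))<\infty$ and therefore $\mathcal{L}^n(\im(\gamma))=0$ (recall $n\ge 2$; the case $n=1$, i.e.\ $\GG=\R$, being degenerate and irrelevant here). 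Moreover, by \eqref{opgr} and the Baker--Campbell--Hausdorff formula, the inversion $x\mapsto x^{-1}$ and the left translations $\tau_v$ are diffeomorphisms of $\R^n$ whose component functions are polynomials; in particular they are locally Lipschitz and hence send $\mathcal{L}^n$-null sets to $\mathcal{L}^n$-null sets. Finally $\im(\gamma)$ is compact, so $v*\im(\gamma)$ is a (compact, hence Borel) subset of $\GG$ for every $v$, and the set $\Sigma:=\{(v,x)\in\GG\times\GG:\ x\in v*\im(\gamma)\}=\{(v,x):\ v^{-1}*x\in\im(\gamma)\}$ is closed.

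I would then estimate the total average
\[
\Phi:=\int_{B(0,\delta)}\eta\big(v*\im(\gamma)\big)\,d\mathcal{L}^n(v)
=\int_{\GG}\mathcal{L}^n\Big(\big\{v\in B(0,\delta):\ v^{-1}*x\in\im(\gamma)\big\}\Big)\,d\eta(x),
\]
the exchange of integrals being justified by Tonelli's theorem applied to the non-negative, product-measurable function $\mathbb{1}_\Sigma$ on $\GG\times\GG$ with the finite product measure $\big(\mathcal{L}^n\trace B(0,\delta)\big)\otimes\eta$. For fixed $x\in\GG$ one has $v^{-1}*x\in\im(\gamma)$ if and only if $v\in x*\big(\im(\gamma)\big)^{-1}$, where $\big(\im(\gamma)\big)^{-1}:=\{y^{-1}:y\in\im(\gamma)\}$; by the remarks above this set is $\mathcal{L}^n$-negligible (an inversion-image of an $\mathcal{L}^n$-null set), and so is its left translate $x*\big(\im(\gamma)\big)^{-1}$. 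Hence the inner Lebesgue measure vanishes for every $x$, and $\Phi=0$.

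Since $v\mapsto\eta\big(v*\im(\gamma)\big)$ is non-negative and its integral over $B(0,\delta)$ is zero, we conclude that $\eta\big(v*\im(\gamma)\big)=0$ for $\mathcal{L}^n$-almost every $v\in B(0,\delta)$. For any such $v$, the Borel set $A:=v*\im(\gamma)$ satisfies $\eta(A)=0$ while $\mathcal{H}^1\trace(v*\im(\gamma))$ is concentrated on $A$ (its restriction to $\GG\setminus A$ is the zero measure); therefore $\eta$ and $\mathcal{H}^1\trace(v*\im(\gamma))$ are mutually singular, which is the assertion. The nearest thing to an obstacle is the observation that, although in a Carnot group the translations are not linear, they are still \emph{polynomial} diffeomorphisms and thus preserve $\mathcal{L}^n$-negligibility; granting this, the whole argument is just Tonelli's theorem.
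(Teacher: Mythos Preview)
Your argument is correct and considerably more direct than the paper's. You establish the stronger conclusion that $\eta(v*\im(\gamma))=0$ for $\mathcal{L}^n$-almost every $v\in B(0,\delta)$, from which mutual singularity is immediate, and you do so with a single application of Tonelli together with the observation that $\im(\gamma)$ is $\mathcal{L}^n$-null and that inversion and left translations (being polynomial diffeomorphisms in exponential coordinates) preserve $\mathcal{L}^n$-negligibility.

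By contrast, the paper first reduces to $\eta\perp\mathcal{L}^n$, then spends three technical steps: a measurability lemma for the map $v\mapsto\int_A u(v,v^{-1}*x)\,d\mathcal{H}^1\trace v*\im(\gamma)(x)$; a proof (via the area formula and a Fubini argument dual to yours) that the measure $A\mapsto\int_{B(0,\delta)}\int_A u(v,v^{-1}*x)\,d\mathcal{H}^1\trace v*\im(\gamma)(x)\,dv$ is absolutely continuous with respect to $\mathcal{L}^n$; and finally a Besicovitch differentiation argument to show that the Radon--Nikod\'ym density $\rho_v$ of $\eta\trace v*\im(\gamma)$ with respect to $\mathcal{H}^1\trace v*\im(\gamma)$ vanishes for almost every $v$. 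This yields only that the absolutely continuous part of $\eta\trace v*\im(\gamma)$ (with respect to $\mathcal{H}^1\trace v*\im(\gamma)$) is zero, whereas your argument shows the full restriction is zero. The paper's Step~2 does record as a byproduct that $\mathcal{H}^1(v*\im(\gamma)\cap w*\im(\gamma))=0$ for almost every $v$, but this is not used elsewhere. For the stated proposition your route is strictly simpler; the caveat about $n=1$ being excluded is shared by both proofs (the paper's reduction ``$\eta_a$ and $\mathcal{H}^1\trace\im(\gamma)$ are mutually singular'' also fails when $\GG=\R$).
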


\begin{proof} Without loss of generality we can assume that $\eta$ is finite by restricting $\eta$ to a ball that compactly contains $\cup_{v\in B(0,1)}v*\im (\gamma)$. We can further assume that $\eta$ is mutually singular with respect to $\Leb^n$. Indeed, if we write $\eta=\eta_a+\eta_s$ where $\eta_a\ll\Leb^n$ and $\eta_s\perp \Leb^n$, for every Lipschitz curve $\gamma$ we have that any $\lambda\ll \Haus^1\trace\im(\gamma)$ and $\eta_a$ are mutually singular. For the rest of the proof we will also consider $\delta$ to be fixed. 

\smallskip

\textbf{Step 1.} Let $u(v,x)$ be a non-negative Borel function on $B(0,\delta)\times \im(\gamma)$. Then, for any Borel set $A$, the map $$v\mapsto \int_Au(v,v^{-1}*x)d\Haus^1\trace v*\im(\gamma)(x)=:G_{A,u}(v)\qquad\text{is Borel.} $$
First of all, we prove that given a fixed Borel set $A$ for any Borel set $B$ we have that the map $v\mapsto \Haus^1(v*\im(\gamma)\cap v*A\cap B)=\Haus^1\trace\im(\gamma)\cap A(v^{-1}*B)$ is Borel. This however is an immediate consequence of the fact that the map $v\mapsto \Haus^1\trace v*(\im(\gamma)\cap A)$ is continuous in the space of Radon measures endowed with the weak* topology, see Definition \ref{s-measint}. 

Let us assume now that $u$ is of the form $u(v,x)=a\mathbb{1}_{B_1}(v)\mathbb{1}_{B_2}(x)$ where $a>0$ and $B_1$ and $B_2$ are Borel subsets of $B(0,\delta)$ and $\GG$ respectively and that $A$ is some fixed Borel set. Then
$$G_{B,u}(v)=a\mathbb{1}_{B_1}(v)\int_A\mathbb{1}_{B_2}(v^{-1}*x)d\Haus^1\trace v*\im(\gamma)(x)=a\mathbb{1}_{B_1}(v)\Haus^1(v*\im(\gamma)\cap v*B_2\cap A).$$
This shows by the discussion above that in this case $G_{A,u}$ is Borel measurable.

If now $u$ is supposed to be a general non-negative measurable function, we can find an increasing sequence of simple function $g_i$ such that for every $i\in\N$ there is an $N_{i}\in\N$ and $a_j>0$ Borel subsets $B_{1,j}$ and $B_{2,j}$ of $B(0,\delta)$ and $\im(\gamma)$ respectively for $j=1,\ldots,N$ such that 
$$g_i(v,x)=\sum_{j=1}^{N_i} a_j\mathbb{1}_{B_{1,j}}(v)\mathbb{1}_{B_{2,j}}(x),$$
and the functions $g_i$ pointwise converge to $u$. Let further $A$ be a fixed Borel set in $\GG$. It is immediate to see that the functions $G_{A,g_i}$ can be rewritten as follows
\begin{equation}
\begin{split}
     &\qquad\qquad\qquad\qquad G_{A,g_i}(v)=\int_A \sum_{j=1}^Na_j\mathbb{1}_{B_{1,j}}(v)\mathbb{1}_{B_{2,j}}(v^{-1}*x)d\Haus^1\trace v*\im(\gamma)(x)\\
    =&\int \sum_{j=1}^Na_j\mathbb{1}_{B_{1,j}}(v)\mathbb{1}_{B_{2,j}}(x)\mathbb{1}_{A}(v*x)d\Haus^1\trace \im(\gamma)(x)=\int\mathbb{1}_{A}(v*x) g_i(v,x)d\Haus^1\trace \im(\gamma)(x).
    \nonumber
\end{split}
\end{equation}
Since the sequence of the $g_i$'s is increasing, by the monotone convergence theorem this shows that for any $v\in B(0,\delta)$ we have the pointwise convergence $G_{A,u}(v)=\lim_{i\to\infty}G_{A,g_i}(v)$. However, thanks to the above discussion and the linearity of the integral and the previous paragraph, we know that the functions $G_{A,g_i}$ are Borel and since pointwise countable limit of Borel functions is Borel, we conclude the proof of the claim that $G_{A,u}$ itself is Borel.

\smallskip

\textbf{Step 2.} Suppose $u(v,x):B(0,\delta)\times \im(\gamma)\to \R$ is a non-negative Borel function in $L^1(\Leb^n\trace B(0,\delta)\otimes \Haus^1\trace\im(\gamma))$. We claim that the Radon measure $\mu_{\gamma,u}$ defined by
\begin{equation}
    \mu_{\gamma,u}(A):=\int_{B(0,\delta)}\int_A u(v,v^{-1}*x) d\Haus^1 \trace v*\im(\gamma)(x) dv \qquad\text{for any Borel set }A,
    \label{intabscontcurv}
\end{equation}
is absolutely continuous with respect to the Lebesgue measure. Note that the measure $\mu_{\gamma,u}$ is well defined thanks to Step 1.
Let us fix a Borel set $A\subseteq \mathbb{G}$ and note that since the curve $v*\gamma$ has the same Lipschitz constant of $\gamma$, we infer that 
\begin{equation}
    \begin{split}
        \int_A u(v,v^{-1}*x)d\Haus^1\trace v*\im(\gamma)(x)=&\int_K u(v,\gamma(t))\mathbb{1}_A(v*\gamma(t))\lvert D\gamma(t)\rvert d\Leb^1(t)\\
        \leq &\mathrm{Lip}(\gamma)\int_Ku(v,\gamma(t))\mathbb{1}_A(v*\gamma(t))d\Leb^1(t),
        \label{eq.gamma.const}
    \end{split}
\end{equation}
where above we used the area formula \cite[Theorem 4.4]{magnaniarea} and where we recall that $D\gamma(t)$ above denotes the Pansu derivative of $\gamma$ at $t$ and that $D(v*\gamma(t))=D(\gamma(t))$.
Thanks to \eqref{eq.gamma.const} we infer that
\begin{equation}
    \begin{split}
        \int_{B(0,\delta)}\int_Ku(v,\gamma(t))&\mathbb{1}_A(v*\gamma(t))d\Leb^1(t)d\Leb^n(v)\\
        \geq&\mathrm{Lip}(\gamma)^{-1}\int_{B(0,\delta)}\int_A u(v,v^{-1}*x)d\Haus^1\trace v*\im(\gamma)(x)d\Leb^n(v)
        =\mathrm{Lip}(\gamma)^{-1}\mu_{\gamma,u}(A).
        \label{eq:first fubini}
    \end{split}
\end{equation}
On the other hand, since we are dealing with positive measurable functions, Tonelli's theorem implies
\begin{equation}
\begin{split}
    \mathrm{Lip}(\gamma)^{-1}\mu_{\gamma,u}(A)\overset{\eqref{eq:first fubini}}{\leq} \int_{B(0,\delta)}\int_Ku(v,\gamma(t))&\mathbb{1}_A(v*\gamma(t))d\Leb^1(t)d\Leb^n(v)\\ =\int_K\int_{B(0,\delta)}u(v,\gamma(t))\mathbb{1}_A(v*\gamma(t))d\Leb^n(v)d\Leb^1(t)
    =&\int_K\int_{B(0,\delta)*\gamma(t)}u(w*\gamma(t)^{-1},\gamma(t))\mathbb{1}_A(w)d\Leb^n(w)d\Leb^1(t),
\end{split}
\end{equation}
where the last identity above comes from the fact that Carnot groups are unimodular. Summing up, we have shown that
$$\mu_{\gamma,u}(A)\leq \mathrm{Lip}(\gamma)\int_K\int_{B(0,\delta)*\gamma(t)\cap A}u(w*\gamma(t)^{-1},\gamma(t))d\Leb^n(w)d\Leb^1(t).$$
From the above inequality is immediate to see that if $A$ is Lebesgue-null then it is $\mu_{\gamma,u}$-null and this concludes the proof of the claim. With the choice $u\equiv 1$, this implies in particular that
\begin{itemize}
    \item[(\hypertarget{I}{I})] for any $w$ and for $\Leb^n$-almost every $v\in B(0,1)$ we have     $\Haus^1(v*\im(\gamma)\cap w*\im(\gamma))=0$.
\end{itemize}

\smallskip

\textbf{Step 3.} Thanks to Radon-Nikodym's decomposition theorem we can write
\begin{equation}
    \eta\trace v*\im(\gamma)=\rho_v\Haus^1\trace v*\im(\gamma)+\eta_v,
    \label{vRN}
\end{equation}
for any $v\in B(0,\delta)$, where $\rho_v\in L^1(\Haus^1\trace v*\im(\gamma))$ and $\eta_v$ is supported on $v*\im(\gamma)$ and singular with respect to $\Haus^1\trace v*\im(\gamma)$. Note that since $\eta$ is finite, then for any $v\in B(0,\delta)$ we have
\begin{equation}
    \eta(\GG)\geq \eta( v*\im(\gamma))\geq \int \rho_v(y)d\Haus^1\trace v*\im(\gamma)(y).
    \label{eq:bdneta}
\end{equation}
We now show that the function $(x,v)\mapsto \rho_v(v*x)$ belongs to $L^1(\Leb^n\otimes\Haus^1\trace \im(\gamma))$.
Thanks to the Euclidean Besicovitch's differentiation theorem we know that for every $v\in B(0,\delta)$ and $\Haus^1\trace \im(\gamma)$-almost every $x\in \GG$ we have
$$\rho_v(v*x)=\lim_{r\to 0}\frac{\eta(U(v*x,r))}{\Haus^1\trace v*\im(\gamma)(U(v*x,r))}=\lim_{\substack{i\to \infty\\i\in\N}}\frac{\eta(U(v*x,i^{-1}))}{\Haus^1\trace v*\im(\gamma)(U(v*x,i^{-1}))}=:\tilde{\rho}(v,x),$$
we remark that $U(y,s)$ above denote the Euclidean ball of centre $y$ and radius $s$.

For any $i\in\N$ the functions $(x,v)\mapsto \eta(U(v*x,i^{-1}))$ and $(x,v)\mapsto \Haus^1\trace v*\im(\gamma)(U(x,i^{-1}))$ are upper semicontinuous \emph{in both variables} $(v,x)$ thanks to Fatou's lemma. Therefore, the ratio $\eta(U(x,i^{-1}))/\Haus^1\trace v*\im(\gamma)(U(x,i^{-1}))$ is Borel measurable jointly in the variables $(x,v)$. Since the function $\tilde{\rho}(v,x)$ is the pointwise limit of countable many Borel functions, this shows that for any $v\in B(0,\delta)$ and for $\Haus^1\trace \im(\gamma)$-almost every $x\in \GG$, the function $\rho_v(v*x)$ coincides with the Borel function $\tilde{\rho}(v,x)$ for $\Leb^n\trace B(0,\delta)\otimes \Haus^1\trace\im(\gamma)$-almost every $(v,x)\in B(0,\delta)\times \im(\gamma)$. 

Let us prove that $ \rho_v(v*x)$ is in $L^1(\Leb^n\otimes\Haus^1\trace \im(\gamma))$. In order to see this, just note that
\begin{equation}
    \begin{split}
        &\int \rho_v(v*x)d\Leb^n\trace B(0,\delta)\otimes\Haus^1\trace \im(\gamma)(v,x)=\int \tilde{\rho}(v,x)d\Leb^n\trace B(0,\delta)\otimes\Haus^1\trace \im(\gamma)(v,x)\leq \Leb^n(B(0,\delta))\eta(\GG),
        \nonumber
    \end{split}
\end{equation}
where the last inequality comes from Tonelli's theorem and \eqref{eq:bdneta}.

Let $B$ be a Borel set and note that putting together all the pieces of information collected up to now, we infer that for $\Leb^n$-almost every $v\in B(0,\delta)$ we have
$$\nu_v(B):=\int_B\rho_v(y)d\Haus^1\trace v*\im(\gamma)(y)=\int_B\rho_v(v*x)d\Haus^1\trace \im(\gamma)(x)=\int_B\tilde{\rho}(v,x)d\Haus^1\trace \im(\gamma)(x).$$
The above identity thus proves that the map $v\mapsto \nu_v(B)$ coincides with a Borel map, see Step 1, on a set of full $\Leb^n\trace B(0,\delta)$-measure.

\smallskip

\textbf{Step 4.} An immediate consequence of the measurability of the function $v\mapsto \nu_v(\GG)$ is that for any $\varepsilon>0$ the set $F_\varepsilon$ of those $v\in B(0,\delta)$ for which $\nu_v(\GG)\geq \varepsilon$ is $\Leb^n$ measurable. Let us assume by contradiction that
\begin{itemize}
    \item[(\hypertarget{A}{A})] \textbf{there is $\varepsilon>0$ and a Borel set $F\subseteq F_\varepsilon$ of positive
   $\Leb^n$-measure.}
\end{itemize}
Since the map $v\mapsto \nu_v(B)$ is $\Leb^n$-measurable for any Borel set $B$, it is well defined the measure that acts as
  $$\daleth(B):=\int_{F}  \nu_v(B)d\Leb^n(v)\qquad \text{ on every Borel set $B$}.$$
It is immediate to see that $\daleth$ is finite and that $\daleth\ll\eta$. Therefore, thanks to Radon-Nykodym theorem, there exists a $\rho\in L^1(\eta)$ such that $\daleth=\rho\eta$. However, since it is immediately seen that $\daleth\leq \mu_{\gamma,\tilde{\rho}}$, by Step 2 we conclude that
$$\rho\eta\leq \mu_{\gamma,\tilde{\rho}}\ll \Leb^n,$$
which is in contradiction with the assumption that $\eta$ was mutually singular with respect to $\Leb^n$. This shows in particular that $\nu_v(\mathbb{G})=0$ for almost every $v\in B(0,\delta)$ and hence $\Haus^1\trace v*\im(\gamma)$ is singular with respect to $\eta$ for $\Leb^n$-almost every $v\in B(0,\delta)$.
\end{proof}

\begin{proof}[Proof of Proposition \ref{p:normal}]
We divide the proof in several steps. Throughout the proof we fix $0<\varepsilon<\Mass[\int_I\boldsymbol\mu_tdt]/10$. Without loss of generality, we can assume that the $\boldsymbol\mu_t$'s are supported on the closed ball $B(0,R)$ for some $R>0$ and that $I$ is $\R$ and $dt$ is the Lebesgue measure, see \cite[Remark 2.7 (iii)]{AlbMar}.  Thanks to the assumption that the masses of the $\boldsymbol{\mu}_t$ are summable, i.e. $\int_\R\Mass(\boldsymbol{\mu}_t)dt<\infty$, for every $\varepsilon>0$ there exists a compact interval $\tilde{I}$ such that $\int_{\R\setminus \tilde{I}}\Mass(\boldsymbol\mu_t)dt<\varepsilon/3$.

\medskip

{\textbf{Step 1}}
 Since the family of measures $\{\boldsymbol\mu_t\}_{t\in\tilde{I}}$  satisfies the hypothesis (a) and (b) of Definition \ref{s-measint} and hence Proposition \ref{p:borel_selection} and \cite[Remark 2.7 (iii)]{AlbMar} imply that there exists a compact interval $\tilde{I}$ and
a Borel map $\eta_\varepsilon:\tilde{I}\to \mathscr{M}(\R^n,\R^n)$ such that 
  \begin{itemize}
    \item for almost every $ t\in \tilde{I}$ we have $\eta_\varepsilon(\tilde{I})=\llbracket \Gamma( t)\rrbracket$ where the map $\Gamma:\tilde{I}\to \mathcal{X}$ is a Borel map with respect to the metric $d_{\mathfrak{s}\Haus}$ introduced in Definition \ref{stronghausmetric};
    \item $\Mass[\int_{\bar{I}} \boldsymbol{\mu}_{\bar t}d\bar{t}-\int_{\bar{I}} \eta_\varepsilon(\bar t)d\bar t]<\varepsilon/3$.
\end{itemize}
Thanks to Lusin's theorem and by the absolute continuity of the integral, guaranteed by property (b) of Definition \ref{s-measint},
we can find a closed subset $J\subset \bar{I}$ and an $N\in\N$ such that $\Leb^1(\bar{I}\setminus J)\leq \varepsilon/6$ and
\begin{itemize}
    \item the maps $\eta_\varepsilon:J\to \mathscr{M}(\R^n,\R^n)$, $\Gamma:J\to \mathcal{X}$ and $t\mapsto \Mass[\Gamma(t)]$ are continuous with respect to the metric $d$ inducting the weak* topology on $\mathscr{M}(\R^n,\R^n)$ and  $d_{\mathfrak{s}\Haus}$ respectively, see Definition \ref{distanceoncurvesmaps} and Remark \ref{rk:equivhaus}. 
    \item $\Gamma(J)\subseteq \mathcal{X}_N\cap \{\gamma\in \mathcal{X}:\dom(\gamma)\subseteq [0,N]\}$.
    \item denoting $\bar{\boldsymbol{\mu}}_t:=\eta_\varepsilon(t)$ and 
\begin{equation}\label{e_mubar}
\bar{\boldsymbol{\mu}}:=\int_J \bar{\boldsymbol{\mu}}_t\,dt,    
\end{equation}
the vector-valued measure $\boldsymbol\mu-\bar{\boldsymbol{\mu}}$ is an integral of (horizontal) Lipschitz curves in the sense of \eqref{e:measure_structure} and
it holds
\begin{equation}\label{e:massdifference}
\mathbb{M}(\boldsymbol\mu-\bar{\boldsymbol{\mu}})<5\varepsilon/6.
\end{equation}
\end{itemize}

{\textbf{Step 2}}
Throughout the rest of the proof, we let $\bar\gamma_t:=\Gamma(t)$ and we claim that  there exits $\delta=\delta(\varepsilon)$ such that for any $t\in J$ and any Borel set $A\subset J\cap(t-\delta,t+\delta)$ there exists $t_0\in A$ such that denoting $\mathbf{N}_A:=|A|\llbracket\bar\gamma_{t_0}\rrbracket$
and
$\mathbf{T}_A:=\int_A\llbracket\bar\gamma_s\rrbracket ds$
the following holds:
\begin{equation}\label{e:mass}
\Mass(\mathbf{N}_A)\leq\Mass(\mathbf{T}_A)
\end{equation}
and moreover
\begin{equation}\label{flat2}
\mathbf{N}_A-\mathbf{T}_A=\mathbf{R}+\partial \mathbf{S}, \qquad 
\mbox{with } \mathbb{M}(\mathbf{R})+\mathbb{M}(\mathbf{S})<\varepsilon|A|
\end{equation}
where $\mathbf{R}$ is an integration of Lipschitz curves defined on compact intervals and with values in $\GG$ and $\mathbf{S}$ is a $2$-dimensional (Euclidean) normal current.

\smallskip

In order to prove that \eqref{e:mass} and \eqref{flat2} hold, it suffices to show that, for every $0<d<1$ and for any $s,t\in J$ such that $d_{\mathfrak{s}\Haus}(\bar\gamma_s,\bar\gamma_t)\leq d$, we have
\begin{equation}\label{e:basic_flat}
\llbracket\bar\gamma_t\rrbracket-\llbracket
\bar\gamma_s\rrbracket=\mathbf{R}_1+\partial \mathbf{S}_1,\qquad 
\mbox{with } \mathbb{M}(\mathbf{R}_1) + \mathbb{M}(\mathbf{S}_1) \leq C(N)d^{\frac{1}{\mathfrak{s}}}    \end{equation}
where $\mathbf{R}_1$ is a finite sum of Lipschitz curves defined on compact intervals and with values in $\GG$, $\mathbf{S}$ is a $2$-dimensional (Euclidean) integral current and $C(N)$ is a constant depending only on $N$. 

Let us prove that \eqref{e:basic_flat} implies \eqref{e:mass} and \eqref{flat2}.
Thanks to the uniform continuity on $J$ of the map $t\mapsto\bar\gamma_t$ there is a $\delta=\delta(\varepsilon)>0$ such that for every $s,t_0\in J\cap (t-\delta,t+\delta) $ it holds $d_{\Haus}(\bar\gamma_s,\bar\gamma_{t_0})\leq (C(N)^{-1}\varepsilon)^\mathfrak{s}$. Therefore, for any $t\in J$ and any Borel subset $A$ of $J\cap (t-\delta,t+\delta)$ \eqref{e:mass} holds whenever $t_0\in A$ is such that $$\Mass (\llbracket\bar\gamma_{t_0}\rrbracket)\leq \fint_A\Mass(\llbracket\bar\gamma_{s}\rrbracket)ds.$$
On the other hand, \eqref{flat2} follows immediately integrating \eqref{e:basic_flat} on $A$.

\smallskip

Let us move to the proof of \eqref{e:basic_flat}. Observe that for every $z\in K_{s,t}:=\dom(\bar\gamma_s)\cap \dom(\bar\gamma_t)$ it holds 
\begin{equation}\label{e:distanza}
|\bar\gamma_s(z)-\bar\gamma_t(z)|\leq 2d,
\end{equation}
and consider the map 
$g:K_{s,t}\times[0,d]\to\GG$ given by
$$g(\sigma,\tau)=\left(1-\frac{\tau}{d}\right)\bar\gamma_s(\sigma)+\frac{\tau}{d}\bar{\gamma}_t(\sigma).$$
Let us observe that thanks to our choice of $J$ and since $s,t\in J$ we have that  $K_{s,t}$ has the form 
$K_{s,t}=\bigcup_{i=1}^{M}[a_i,b_i]$,
for some number $M\leq N^2$. Moreover it is easy to check that $g$ is $3$-Lipschitz, because it is $1$-Lipschitz in the variable $\sigma$, being a convex combination of $1$-Lipschitz maps, and it is $2$-Lipschitz in the variable $\tau$, due to \eqref{e:distanza}. 
Let us further denote
\begin{equation}
    \begin{split}
        &\qquad\qquad\qquad\qquad\qquad \mathbf{S}_0:=g_{\sharp}\llbracket K_{s,t}\times[0,d]\rrbracket:=g_\#\int e_1\wedge e_2 d\Leb^2\trace K_{s,t}\times [0,d],\\
        &\mathbf{R}_0:=\sum_{i=1}^{M}g(b_i,\cdot)_\sharp\llbracket[0,d]\rrbracket-\sum_{i=1}^{M}g(a_i,\cdot)_\sharp\llbracket[0,d]\rrbracket + \llbracket\bar\gamma_t\trace (\dom(\bar{\gamma}_t)\setminus K_{s,t})\rrbracket-\llbracket\bar\gamma_s\trace (\dom(\bar{\gamma}_s)\setminus K_{s,t})\rrbracket,
    \end{split}
\end{equation}
where $g_\#$ denotes the pushforward of currents through the maps $g$, see \cite[\S 7.4.2]{MR2427002}. The first identity follows from the fact that pushforward and boundary are two commuting operators, i.e.  $\partial g_\#T=g_\#\partial T$ for any $2$-dimensional  normal current $T$.
Note that since $\llbracket K_{s,t}\times [0,d]\rrbracket$ is a $2$-dimensional normal current in $\R^2$, the $2$-current $S_0$ is well defined thanks to \cite[Proposition 5.17]{AlbMar}.
Notice also that by definition of $\mathbf S_0$ and $\mathbf R_0$ we have  \begin{equation}\label{flatconzero}
    \llbracket\bar\gamma_t\rrbracket-\llbracket
\bar\gamma_s\rrbracket=\mathbf{R}_0+\partial \mathbf{S}_0.
\end{equation}
The proof of \eqref{e:basic_flat} is not still completed since $\mathbf{R}_0$ is not a sum of \emph{horizontal} Lipschitz curves.

Therefore, for every $i=1,\ldots, M$ let us denote $\psi_{a_i}$ a geodesic joining $g(a_i,0)$ to $g(a_i,d)$ and $\psi_{b_i}$ a geodesic joining $g(b_i,0)$ to $g(b_i,d)$ and define $\mathbf{R}^i:=\llbracket\psi_{a_i}\rrbracket-\llbracket\psi_{b_i}\rrbracket$. Since we are working inside the fixed compact set $B(0,R)$, there exists a constant $C(R,\GG)$ such that:
\begin{equation}\label{massaRi}
\Mass(\mathbf{R}^i)\leq C(R,\GG) d^{\frac{1}{\mathfrak{s}}},
\end{equation}
and this is a consequence of Remark \ref{rk.norm}.
Let $S_{a_i}$ and $S_{b_i}$ be 2-dimensional currents with boundary $g(a_i,\cdot)_\sharp\llbracket[0,d]\rrbracket-\llbracket\psi_{a_i}\rrbracket$ and $g(b_i,\cdot)_\sharp\llbracket[0,d]\rrbracket-\llbracket\psi_{b_i}\rrbracket$ respectively and
\begin{equation}
\Mass(\mathbf{S}_{a_i})+\Mass(\mathbf{S}_{b_i})\leq Cd^{\frac{1}{\mathfrak{s}}}.
    \label{massaSi}
\end{equation}

\begin{figure}[H]
\centering
   \adjustbox{trim={.05\width} {.175\height} {0.05\width} {.12\height},clip}{\includegraphics[scale=0.41, 
    ]{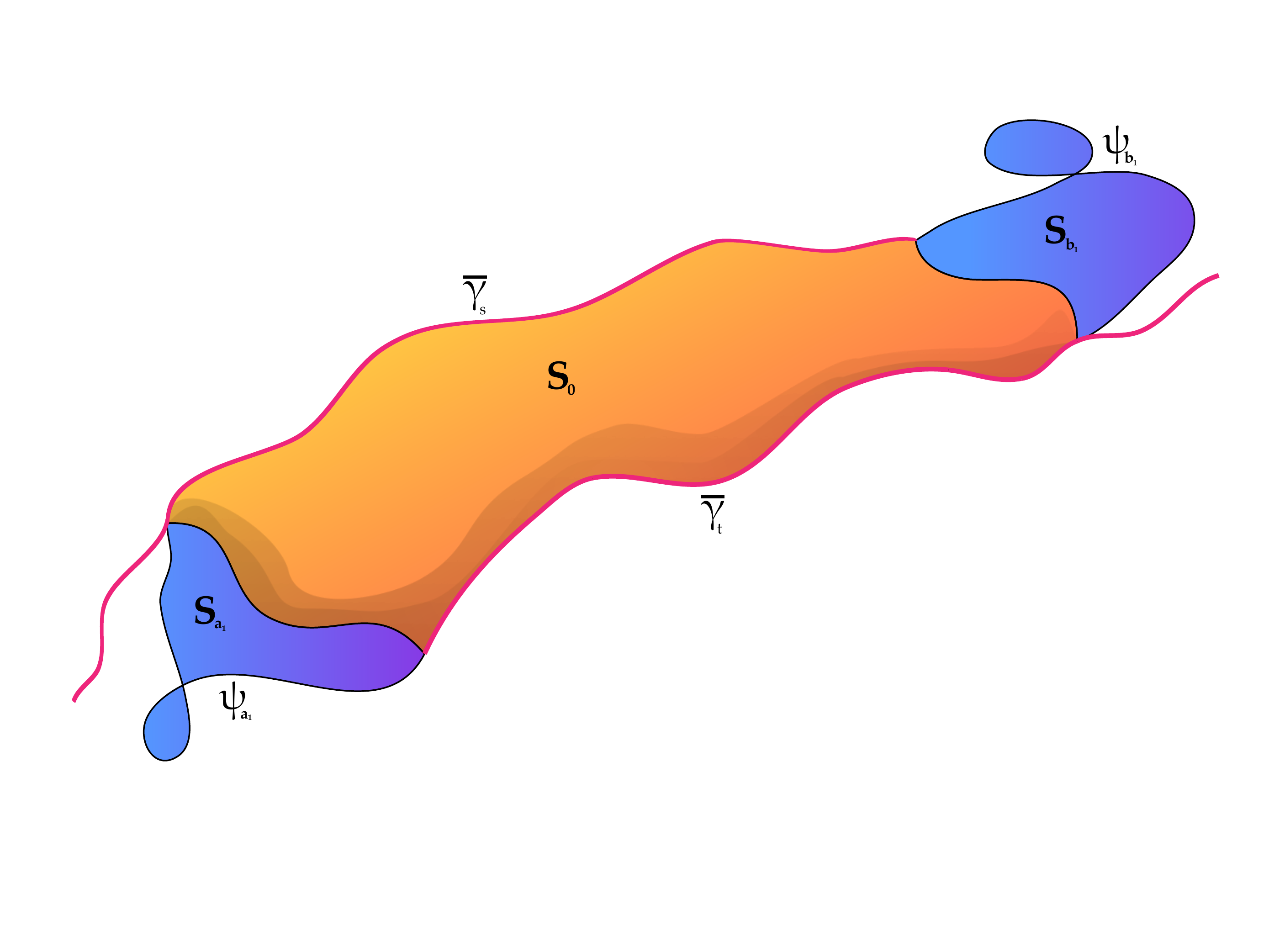}} 
\caption{The image shows the filling $2$-dimensional surfaces with horizontal boundaries attaching the curves $\bar\gamma_s$ to $\bar\gamma_t$.}
\end{figure}

The choice of such $\mathbf{S}_i$, with the above control on their mass, is achievable thanks to the classical cone construction, see for instance \cite[(26.26)]{Simon1983LecturesTheory}.  We further define $\mathbf{S}^i:=\mathbf{S}_{a_i}-\mathbf{S}_{b_i}$ and 
\begin{equation}\label{e:defR1S1}
    \mathbf{S}_1:=\mathbf{S}_0+\sum_{i=1}^{M}\mathbf{S}^i\qquad \text{and}\qquad  \mathbf{R}_1:=\mathbf{R}_0+\sum_{i=1}^{M}\mathbf{R}^i.
\end{equation}
It follows from \eqref{flatconzero} that $\llbracket\bar\gamma_t\rrbracket-\llbracket
\bar\gamma_s\rrbracket=\mathbf{R}_1+\partial \mathbf{S}_1$
and by construction $\mathbf{R}_1$ is a finite sum of Lipschitz curves defined on compact intervals and with values in $\GG$.
Thanks to the proof of \cite[Proposition 5.17]{AlbMar} we have that $$\Mass(\mathbf{S}_0)\leq \mathrm{Lip}(g)^2\Mass\llbracket K_{s,t}\times[0,d]\rrbracket\leq 9Nd,$$
and thus $\Mass(\mathbf{S}_1)\leq C(N)d^{\frac{1}{\mathfrak s}}$.
Moreover 
$$\Mass(\mathbf{R}_1)\leq\Mass(\llbracket\bar\gamma_t\trace (\dom(\bar{\gamma}_t)\setminus K_{s,t}))\rrbracket+\Mass(\llbracket\bar\gamma_t\trace (\dom(\bar{\gamma}_s)\setminus K_{s,t}))\rrbracket+\sum_{i=1}^{M}\Mass(\mathbf{R}_{a_i})+\sum_{i=1}^{M}\Mass(\mathbf{R}_{b_i})\leq C(N)d^{\frac{1}{\mathfrak s}},$$
where we used \eqref{massaRi}, \eqref{massaSi} and the fact that $\bar\gamma_s$ and $\gamma_t$ are 1-Lipschitz and $(\dom(\bar{\gamma}_t)\setminus K_{s,t})$ and $(\dom(\bar{\gamma}_s)\setminus K_{s,t})$ have length bounded by $Cd$ due to the fact that we have chosen $d_{\mathfrak{s}\Haus}(\bar\gamma_s,\bar\gamma_t)\leq d$. This concludes the proof of \eqref{e:basic_flat} and hence of the Step 2.

\medskip

{\textbf{Step 3}}
Let us take $\delta$ as in Step 2 and finitely many disjoint Borel sets $A_j$ of diameter less than $\delta$ such that $\bigcup_j A_j=J$. Denote $\mathbf N_{A_j}$ the corresponding currents given in Step 2 and $\mathbf{T}_1:=\sum_j \mathbf N_{A_j}$. Notice that by \eqref{e:massdifference} and \eqref{e:mass} it holds
\begin{equation}\label{e:massdifference2}
\Mass(\mathbf{T}_1)\leq\Mass(\boldsymbol\mu)+2\varepsilon  
\end{equation}
and by \eqref{flat2} 
\begin{equation}\label{flat3}
\boldsymbol\mu-\mathbf{T}_1=\mathbf{R}_2+\partial \mathbf{S}_2, \qquad 
\mbox{with } \mathbb{M}( \mathbf R_2)+\mathbb{M}(\mathbf S_2)<C\varepsilon,
\end{equation}
where $\mathbf R_2$ is an integration of Lipschitz fragments as in \eqref{e:measure_structure} and $ 
\mathbf S_2$ is a $2$-dimensional (Euclidean) normal current.

 We claim that there exists a normal 1-current $\mathbf Z$, which is a finite sum of Lipschitz curves defined on compact intervals and with values in $\GG$, such that $\|\mathbf Z\|$ and $\eta$ are mutually singular, 
 \begin{equation}\label{masszeta}
   \mathbb{M}(\mathbf Z)\leq\mathbb{M}(\mathbf T_1)  
 \end{equation}
 and moreover 
 \begin{equation}\label{flatzeta}
   \mathbf T_1-\mathbf Z=\mathbf R_3+\partial \mathbf S_3, \qquad \mbox{with } \mathbb{M}(\mathbf R_3)+\mathbb{M}(\mathbf S_3)<C\varepsilon,
 \end{equation}
 and $\mathbf R_3$ is a finite sum of Lipschitz curves defined on compact intervals and with values in $\GG$.

\smallskip

Since $\mathbf T_1$ is a finite sum of Lipschitz curves defined on compact intervals, it is sufficient to prove the claimed conclusion for $\mathbf T_1=\llbracket\gamma\rrbracket$ being a single curve with $\dom(\gamma)=[0,1]$. To this purpose we apply Proposition \ref{prop:singcurves} with $\delta=\epsilon^{\mathfrak{s}}$ and we take $Z$ to be the current $\mathbf Z:=\llbracket v*\gamma\rrbracket$ for any $v\in B(0,\delta)$ as in the conclusion of the proposition.
Defining $\mathbf R_3$ and $\mathbf S_3$ as in the previous step (see \eqref{e:defR1S1}) we get \eqref{flatzeta} with the same argument employed above.

Putting together \eqref{e:massdifference2}, \eqref{flat3}, \eqref{masszeta}, \eqref{flatzeta} we get currents $\bar {\mathbf R}:=\mathbf R_2+\mathbf R_3$ and $\bar{\mathbf S}:=\mathbf S_2+\mathbf S_3$ such that:
 \begin{equation}\label{massmu}
   \mathbb{M}(\mathbf Z)\leq\mathbb{M}(\boldsymbol\mu)+2\varepsilon,  
 \end{equation}
 and
 \begin{equation}\label{flatmu}
   \boldsymbol\mu-\mathbf Z=\bar {\mathbf R}+\partial \bar{\mathbf S}, \qquad \mbox{with } \mathbb{M}(\bar{\mathbf R})+\mathbb{M}(\bar{\mathbf S})<C\varepsilon,
 \end{equation}
where $\bar {\mathbf R}$ is an integration of Lipschitz fragments as in \eqref{e:measure_structure} and $\bar{\mathbf S}$ is a $2$-dimensional (Euclidean) normal current.

\medskip

{\textbf{Step 4}}
We obtain the current $\mathbf{T}$ by iterating on the previous steps, as follows.

Applying Step 1, 2 and 3 we obtain currents $\mathbf Z_1$, $\bar{ \mathbf R}_1$, $\bar{\mathbf{S}}_1$ such that \eqref{massmu} and \eqref{flatmu} hold with $\varepsilon=\varepsilon_0/4$. In particular, $Z_1$ and $\eta$ are mutually singular. 

By construction of $\mathbf R_1$, we can apply step 1,2 and 3 to $\boldsymbol\mu_1:=\mathbf R_1$ with $\varepsilon:=\varepsilon_0/8$ thus obtaining currents $\mathbf Z_2$, $\bar{\mathbf{R}}_2$, $\bar{\mathbf{S}}_2$ such that:
 \begin{equation}\label{massmu2}
   \mathbb{M}(Z_2)\leq\mathbb{M}(\boldsymbol\mu_1)+\varepsilon_0/4\overset{\eqref{massmu}}{\leq} \Mass(\boldsymbol\mu)+3\epsilon_0/4,  
 \end{equation}
 and
 \begin{equation}\label{flatmu2}
  (\boldsymbol\mu-\mathbf Z_1-\partial \bar{\mathbf{S}}_1)-\mathbf Z_2 =\boldsymbol\mu_1-\mathbf Z_2=\bar{\mathbf{R}}_2+\partial \bar{\mathbf{S}}_2, \qquad \mbox{with } \mathbb{M}(\bar{\mathbf{R}}_2)+\mathbb{M}(\bar{\mathbf{S}}_2)<C\varepsilon_0/8,
 \end{equation}
One can rewrite \eqref{flatmu2} in the following more appealing way:
 \begin{equation}\nonumber
  \boldsymbol\mu-(\mathbf Z_1+\mathbf Z_2)=\bar{\mathbf{R}}_2+\partial (\bar{\mathbf{S}}_1+\bar{\mathbf{S}}_2), \qquad \mbox{with } \mathbb{M}(\bar{\mathbf{R}}_2)<C\varepsilon_0/8\text{ and }\mathbb{M}(\bar{\mathbf{S}}_1+\bar{\mathbf{S}}_2)<3C\varepsilon_0/8,
 \end{equation}
and where $\bar{\mathbf{R}}_2$ is still an integration of horizontal fragments. Note also that $\mathbf Z_1+\mathbf Z_2$ and $\eta$ are mutually singular.

Iterating the procedure, since $(\Mass(\bar {\mathbf S}_i))_i$ and $(\Mass(\mathbf Z_i))_i$ are summable sequences and $\Mass(\bar{\mathbf R}_i)$ is infinitesimal, we have that the currents
$$\boldsymbol\sigma=:-\sum_{i=1}^\infty \mathbf Z_i\qquad \text{and}\qquad  \mathbf S:=\sum_{i=1}^\infty \mathbf S_i,$$
are well defined. We observe that $\boldsymbol\sigma$ and $\eta$ are mutually singular, that
$$\Mass( \boldsymbol \sigma)\leq \Mass(\boldsymbol\mu)+\varepsilon_0,$$
that $\sigma$ is an integration of horizontal fragments and that defined
$$\mathbf{T}:=\boldsymbol\mu+\boldsymbol \sigma=\mathbf S,$$
we have that $\partial \mathbf{T}=0$ and $\Mass(\mathbf{T})\leq 2\Mass(\boldsymbol\mu)+\varepsilon_0$.
\end{proof}

\section{Auxiliary decomposability bundle}

In this section we relate the decomposability bundle to the existence of suitable horizontal normal currents, which is the key step in the proof of the main theorem; compare with \cite[Section 6]{AlbMar}.

\begin{definizione}[Auxiliary decomposability bundle]\label{def:auxdecbundle}
Let $\mu$ be a Radon measure on $\GG$. For any $x\in\supp(\mu)$ we denote with $N(\mu,x)$ the set of all vectors of $v\in \mathscr{C}(x)[V_1]$ for which there exists a horizontal $1$-dimensional normal current $\mathbf{T}$ with $\partial \mathbf{T}=0$ such that:
$$\limsup_{r\to 0}\frac{\Mass( (\mathbf{T}-v\mu)\trace U(x,r))}{\mu(U(x,r))}=0,$$
where we recall that $U(x,r)$ denotes the closed Euclidean ball centred at $x$ of radius $r$.
\end{definizione}

\begin{lemma}\label{Nemisurabile}
For any Radon measure $\mu$, the map $x\mapsto N(\mu,x)$ is universally measurable.
\end{lemma}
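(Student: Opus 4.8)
The plan is to realise $x\mapsto N(\mu,x)$ as a universally measurable closed‑valued multifunction and then invoke Proposition~\ref{prop:projmap}. First one notes that $N(\mu,x)$ is actually a linear subspace of $\mathscr{C}(x)[V_1]$: the zero current shows $0\in N(\mu,x)$; if $v_1,v_2\in N(\mu,x)$ are witnessed by horizontal normal currents $\mathbf T_1,\mathbf T_2$ with zero boundary, then $\mathbf T_1+\mathbf T_2$ is again such a current, its Radon–Nikodym density lies $\mu$‑a.e.\ in the linear space $H\GG(\cdot)$, and subadditivity of mass under restriction gives $v_1+v_2\in N(\mu,x)$; scalar multiples are handled by rescaling the current. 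Hence it suffices to prove that the multifunction $x\mapsto N(\mu,x)$, defined on the Borel set $\supp(\mu)$ and extended by $\{0\}$ elsewhere, is \emph{universally measurable}, i.e.\ that $\{x:N(\mu,x)\cap A\neq\emptyset\}$ is universally measurable for every open $A\subseteq\R^n$. From a Castaing representation of such a multifunction one obtains, on each Borel piece $\{\dim N(\mu,\cdot)=k\}$, universally measurable vector fields spanning $N(\mu,\cdot)$, so $x\mapsto\Pi_{N(\mu,x)}$ is universally measurable and Proposition~\ref{prop:projmap} (whose equivalences hold verbatim with ``Borel'' replaced by ``universally measurable'', since $W\mapsto\Pi_W$ is a homeomorphism) yields the claim; the core of the work is therefore to prove the multifunction measurability, which I will do by a projection/Souslin argument.

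Let $\mathcal N\subseteq\mathscr{M}(\R^n,\R^n)$ be the set of $1$‑dimensional normal currents that are horizontal and have vanishing boundary. Both conditions ``$\partial\mathbf T=0$'' and ``$\mathbf T$ horizontal'' are intersections of countably many weak$^*$‑closed conditions — the latter because $(H\GG)^\perp$ is a smooth subbundle, so horizontality of $\mathbf T=\tau\mu$ is equivalent to $\scal{\mathbf T}{\omega}=0$ for all compactly supported smooth $1$‑forms $\omega$ annihilating $H\GG$, and a countable family of such $\omega$ suffices by separability. Thus $\mathcal N$ is a weak$^*$‑closed, hence standard Borel, subspace of $\mathscr{M}(\R^n,\R^n)$. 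Consider
\[
\tilde E:=\Big\{(x,v,\mathbf T)\in\supp(\mu)\times\R^n\times\mathcal N:\ v\in\mathscr{C}(x)[V_1],\ \limsup_{r\to0^+}\frac{\Mass\big((\mathbf T-v\mu)\trace U(x,r)\big)}{\mu(U(x,r))}=0\Big\},
\]
so that $N(\mu,x)=\{v:(x,v,\mathbf T)\in\tilde E\text{ for some }\mathbf T\}$. The condition $v\in\mathscr{C}(x)[V_1]$ is closed. For $x\in\supp(\mu)$ one has $\mu(U(x,r))>0$ for every $r>0$, and since $r\mapsto\Mass\big((\mathbf T-v\mu)\trace U(x,r)\big)$ and $r\mapsto\mu(U(x,r))$ are nondecreasing and one‑sidedly continuous, the vanishing‑$\limsup$ condition is equivalent to
\[
\forall p\in\N\ \exists q\in\N\ \forall r\in(0,1/q)\cap\mathbb Q\colon\quad \Mass\big((\mathbf T-v\mu)\trace U(x,r)\big)\le\tfrac1p\,\mu(U(x,r)),
\]
a countable Boolean combination. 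Hence $\tilde E$ is Borel provided that, for fixed rational $r$, the maps $x\mapsto\mu(U(x,r))$ and $(x,v,\mathbf T)\mapsto\Mass\big((\mathbf T-v\mu)\trace U(x,r)\big)=\lvert\mathbf T-v\mu\rvert(U(x,r))$ are Borel; the first is semicontinuous, and the second reduces, since $(x,v,\mathbf T)\mapsto\mathbf T-v\mu$ is weak$^*$‑continuous, to the joint Borel measurability of $(\boldsymbol\lambda,x)\mapsto\lvert\boldsymbol\lambda\rvert(U(x,r))$ on $\mathscr{M}(\R^n,\R^n)\times\R^n$.

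This last point is the main technical obstacle, since $(\boldsymbol\lambda,x)\mapsto\lvert\boldsymbol\lambda\rvert(U(x,r))$ is only separately semicontinuous. It is handled by the standard device of writing, for open balls,
\[
\lvert\boldsymbol\lambda\rvert(U)=\sup\big\{\scal{\boldsymbol\lambda}{\psi}:\ \psi\in\mathcal D,\ \lVert\psi\rVert_\infty\le1,\ \supp\psi\subseteq U\big\}
\]
for one fixed countable family $\mathcal D\subseteq C_c(\R^n,\R^n)$ chosen dense enough that it still recovers the total variation on \emph{every} ball (given a competitor one approximates it uniformly and then shrinks its support slightly inside $U$). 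For each fixed $\psi\in\mathcal D$ the map $(\boldsymbol\lambda,x)\mapsto\scal{\boldsymbol\lambda}{\psi}\,\mathbb 1_{\{\supp\psi\subseteq U(x,r)\}}$ is Borel — weak$^*$‑continuous in $\boldsymbol\lambda$, and $\{x:\supp\psi\subseteq U(x,r)\}$ is open — so the countable supremum is Borel; the closed‑ball case follows by intersecting over rational radii $>r$. (This is the analogue of the measurability bookkeeping carried out in \cite[\S 6]{AlbMar}.) Granting this, $\tilde E$ is Borel, hence its projection $E:=\{(x,v):v\in N(\mu,x)\}$ to $\supp(\mu)\times\R^n$ is an analytic, hence universally measurable, subset of this standard Borel space. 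Consequently, for every open $A\subseteq\R^n$ the set $\{x\in\supp(\mu):N(\mu,x)\cap A\neq\emptyset\}$ is the projection of the analytic set $E\cap(\supp(\mu)\times A)$ and is therefore universally measurable; combined with the reduction of the first paragraph, this proves that $x\mapsto N(\mu,x)$ is universally measurable.
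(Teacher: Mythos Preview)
Your argument is correct and is precisely the projection/Souslin approach of \cite[Lemma~6.9]{AlbMar} to which the paper defers; you have simply made explicit the Borel bookkeeping (joint measurability of $(\boldsymbol\lambda,x)\mapsto\lvert\boldsymbol\lambda\rvert(U(x,r))$ via a fixed countable family of test functions, and the reduction of the $\limsup$ to rational radii through right continuity), together with the one extra constraint $v\in\mathscr{C}(x)[V_1]$ that the paper singles out as the only new ingredient. There is no substantive difference in strategy.
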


\begin{proof}
The proof can be achieved following the argument used to prove \cite[Lemma 6.9]{AlbMar}. The only difference is that here the vector $v$ is forced to lie in the smooth distribution of $n_1$-dimensional planes $\mathscr{C}(x)[V_1]$.
\end{proof}

\begin{proposizione}\label{normaleimpliesinN}
For any Radon measure $\mu$ on $\GG$ and any $1$-dimensional horizontal normal current $\mathbf{T}$, if we denote by $\tau$ the Radon-Nikodym derivative of $\mathbf{T}$ with respect to $\mu$, we have
$$\tau(x)\in N(\mu,x)\qquad\text{ for any $\mu$-almost every $x\in \GG$.}$$
\end{proposizione}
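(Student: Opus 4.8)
The plan is to \emph{close up} the current $\mathbf{T}$ to a horizontal normal \emph{cycle} $\mathbf{S}$ whose difference from $\mathbf{T}$ is singular with respect to $\mu$, by invoking Proposition \ref{p:normal}, and then to read off $\tau(x)\in N(\mu,x)$ from elementary Lebesgue‑differentiation estimates valid at $\mu$‑a.e.\ $x$. As a preliminary remark, note that $\tau(x)\in\mathscr{C}(x)[V_1]=H\GG(x)$ for $\mu$‑a.e.\ $x$: writing $\mathbf{T}=\tau'\mu'$ with $|\tau'(y)|=1$ and $\tau'(y)\in H\GG(y)$ for $\mu'$‑a.e.\ $y$ (Definition \ref{defin:hor:curr}), the same polar‑decomposition argument used at the end of the proof of Proposition \ref{normalcurrdec} shows that wherever $\tau(x)\neq0$ the vector $\tau(x)$ is a positive multiple of $\tau'(x)$, hence belongs to the linear subspace $H\GG(x)$; and $0\in H\GG(x)$ trivially, so $\tau(x)\in\mathscr{C}(x)[V_1]$ for $\mu$‑a.e.\ $x$.

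Next I would express $\mathbf{T}$ as an integration of horizontal Lipschitz fragments so that Proposition \ref{p:normal} applies. Arguing as in the proof of Theorem \ref{smirnov}: by \cite[Theorem 3.1]{PaoliniStepanov} the normal current $\mathbf{T}$ (even with $\partial\mathbf{T}\neq0$) can be written as $\mathbf{T}=\int_I\llbracket\gamma_t\rrbracket\,dt$ with $\Mass(\mathbf{T})=\int_I\Mass(\llbracket\gamma_t\rrbracket)\,dt$, and since $\mathbf{T}$ is horizontal, \cite[Proposition 1.3.3]{tesimonti} forces almost all $\gamma_t$ to be horizontal. Rewriting $\llbracket\gamma_t\rrbracket=\tau_{\gamma_t}\rho_t\Haus^1\trace\im(\gamma_t)$ and using the Cavalieri formula $\rho_t\Haus^1\trace\im(\gamma_t)=\int_0^\infty\Haus^1\trace(\im(\gamma_t)\cap\{\rho_t\geq\lambda\})\,d\lambda$ exactly as in Theorem \ref{smirnov} — restricting each curve to the (compact, up to a null set) preimage of $\{\rho_t\ge\lambda\}$ and reparametrising by arclength — realises $\mathbf{T}$ as an integration $\int_{\tilde I}\boldsymbol\mu_s\,ds$ of the type required by Proposition \ref{p:normal}. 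Fixing any $\varepsilon_0>0$ and applying Proposition \ref{p:normal} with $\eta:=\mu$, we obtain a normal $1$‑current $\mathbf{S}$ with $\partial\mathbf{S}=0$, $\Mass(\mathbf{S})\le 2\Mass(\mathbf{T})+\varepsilon_0<\infty$, and $\mathbf{S}=\mathbf{T}+\boldsymbol\sigma$, where $\boldsymbol\sigma$ is an integration of horizontal Lipschitz fragments with $\boldsymbol\sigma\perp\mu$. In particular $\mathbf{S}$ is a horizontal normal current with zero boundary, i.e.\ an admissible competitor in Definition \ref{def:auxdecbundle}.

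Finally I would conclude by Lebesgue differentiation. Writing $\mathbf{T}=\tau\mu+\boldsymbol\nu$ with $\boldsymbol\nu\perp\mu$, we have $\mathbf{S}-\tau(x)\mu=(\tau-\tau(x))\mu+\boldsymbol\nu+\boldsymbol\sigma$, hence
\[
\Mass\bigl((\mathbf{S}-\tau(x)\mu)\trace U(x,r)\bigr)\le\int_{U(x,r)}|\tau(y)-\tau(x)|\,d\mu(y)+|\boldsymbol\nu|(U(x,r))+|\boldsymbol\sigma|(U(x,r)).
\]
Since $\tau\in L^1(\mu)$ and Euclidean balls in $\R^n$ satisfy the Besicovitch covering property, for $\mu$‑a.e.\ $x$ the point $x$ is a Lebesgue point of $\tau$, so the first term is $o(\mu(U(x,r)))$; and since $\boldsymbol\nu\perp\mu$ and $\boldsymbol\sigma\perp\mu$, the last two terms are $o(\mu(U(x,r)))$ for $\mu$‑a.e.\ $x$ by the Besicovitch differentiation theorem. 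Therefore $\limsup_{r\to0}\Mass((\mathbf{S}-\tau(x)\mu)\trace U(x,r))/\mu(U(x,r))=0$ for $\mu$‑a.e.\ $x$, and combined with $\tau(x)\in\mathscr{C}(x)[V_1]$ this gives $\tau(x)\in N(\mu,x)$ for $\mu$‑a.e.\ $x$. The main obstacle here is the construction carried out in Step 2 — producing a genuinely \emph{horizontal} normal cycle differing from $\mathbf{T}$ only by a $\mu$‑singular error, since the classical Euclidean closing‑up procedure does not preserve horizontality; this is precisely what Proposition \ref{p:normal} supplies, and the remaining work is the bookkeeping needed to cast $\mathbf{T}$ (possibly with nonzero boundary) as an integration of horizontal Lipschitz fragments of the exact form required there.
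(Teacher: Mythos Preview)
Your argument is correct, but it does substantially more work than the paper. The paper's proof is a one-liner: write $\mathbf{T}=\tau\mu+\boldsymbol\sigma$ with $\boldsymbol\sigma\perp\mu$, and apply the Euclidean Lebesgue--Besicovitch differentiation theorem to see that
\[
\limsup_{r\to0}\frac{\Mass\bigl((\mathbf{T}-\tau(x)\mu)\trace U(x,r)\bigr)}{\mu(U(x,r))}
\le \limsup_{r\to0}\fint_{U(x,r)}|\tau-\tau(x)|\,d\mu+\limsup_{r\to0}\frac{|\boldsymbol\sigma|(U(x,r))}{\mu(U(x,r))}=0
\]
for $\mu$-a.e.\ $x$, using $\mathbf{T}$ \emph{itself} as the witness in Definition~\ref{def:auxdecbundle}. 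This tacitly assumes $\partial\mathbf{T}=0$, since the definition of $N(\mu,x)$ asks for a boundaryless horizontal current; and indeed every invocation of the proposition in the paper (in the proof of Theorem~\ref{theoAu=Dec}) is for a current produced by Proposition~\ref{costruzionecampi} or Proposition~\ref{p:normal}, hence already a cycle.

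What you do differently is to first close up $\mathbf{T}$ to a cycle: you decompose $\mathbf{T}$ into horizontal fragments via Smirnov/Paolini--Stepanov and then invoke Proposition~\ref{p:normal} with $\eta=\mu$ to obtain $\mathbf{S}=\mathbf{T}+\boldsymbol\sigma$ with $\partial\mathbf{S}=0$ and $\boldsymbol\sigma\perp\mu$, after which the same differentiation estimate goes through with $\mathbf{S}$ in place of $\mathbf{T}$. This genuinely proves the statement as written (for $\mathbf{T}$ with possibly nonzero boundary), at the cost of importing the heavy machinery of Section~\ref{s4}. For the paper's applications the short argument suffices; your route buys a generality that is not subsequently used, but it is a nice observation that Proposition~\ref{p:normal} lets one drop the cycle hypothesis for free.
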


    \begin{proof}
    Indeed, let $\mathbf{T}=\tau\mu+\boldsymbol\sigma$, where $\boldsymbol\sigma$ and $\mu$ are mutually singular. Then by Lebesgue-Besicovich differentiation theorem we have
    \begin{equation}
    \begin{split}
         \limsup_{r\to 0}\frac{\Mass[(\mathbf{T}-\tau(x)\mu)\trace U(x,r)]}{\mu(U(x,r))}\leq \limsup_{r\to 0}\fint_{U(x,r)}\lvert \tau(y)-\tau(x)\rvert d\mu(y)+\limsup_{r\to 0}\frac{\boldsymbol\sigma(U(x,r))}{\mu(U(x,r))}
        =0,
        \nonumber
    \end{split}
    \end{equation}
    for $\mu$-almost every $x\in \GG$, which in turn implies that $\tau(x)\in N(\mu,x)$.
     \end{proof}

\begin{lemma}\label{lemmaapproxtau}
Let $\mu$ be a Radon measure on $\GG$ and suppose $\tau$ is an $L^1(\mu)$ vector field such that $\tau(x)\in N(\mu,x)$ for $\mu$-almost every $x\in\GG$. Then, for any $\varepsilon_0>0$ there exists an horizontal normal current $\mathbf{T}$ on $\GG$ such that:
\begin{itemize}
    \item[(i)] $\lVert \tilde\tau-\tau\rVert_{L^1(\mu)}\leq \lVert\tau\rVert_{L^1(\mu)}/2$ where $\tilde\tau$ is the Radon-Nikodym derivative of $\mathbf{T}$ with respect to $\mu$;
    \item[(ii)] $\partial \mathbf{T}=0$ and $\Mass(\mathbf{T})\leq 2(1+2\varepsilon_0) \lVert \tau\rVert_{L^1(\mu)}$.
\end{itemize}
\end{lemma}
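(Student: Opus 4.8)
The plan is to combine the \emph{infinitesimal} currents furnished by the definition of $N(\mu,\cdot)$ with Proposition~\ref{p:normal}. Heuristically: for $\mu$-a.e.\ $x$ there is a closed horizontal normal current $\mathbf T^{(x)}$ whose Radon--Nikod\'ym density with respect to $\mu$ is, near $x$, close to the constant $\tau(x)$; one glues small pieces of these currents, restricted to a fine disjoint ball cover on which $\tau$ is almost constant, into a single integration of horizontal Lipschitz fragments $\boldsymbol\mu$ whose density with respect to $\mu$ is $\approx\tau$ and whose mass is $\approx\|\tau\|_{L^1(\mu)}$, and then closes $\boldsymbol\mu$ via Proposition~\ref{p:normal} (with $\eta:=\mu$) to a closed horizontal normal current $\mathbf T=\boldsymbol\mu+\boldsymbol\sigma$ with $\boldsymbol\sigma\perp\mu$. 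The factor $2$ in the mass bound of Proposition~\ref{p:normal} is exactly the factor $2$ in~(ii). We may assume $\|\tau\|_{L^1(\mu)}>0$ (otherwise $\mathbf T=0$ works) and, since the conclusion for small $\varepsilon_0$ implies it for larger ones, that $\varepsilon_0\le 1/2$; fix a small auxiliary parameter $\delta>0$ (concretely $\delta:=\varepsilon_0/10$), whose final bookkeeping is trivial.

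First I set up the cover. By the Besicovitch differentiation theorem $\mu$-a.e.\ point is a Lebesgue point of $\tau$, so the set $G$ of Lebesgue points $x\in\supp\mu$ of $\tau$ with $\tau(x)\neq 0$ has $\mu(\{\tau\neq0\}\setminus G)=0$. Fix $x\in G$. By Definition~\ref{def:auxdecbundle} there is a horizontal $1$-dimensional normal current $\mathbf T^{(x)}$ with $\partial\mathbf T^{(x)}=0$ and $\Mass\big((\mathbf T^{(x)}-\tau(x)\mu)\trace U(x,r)\big)=o(\mu(U(x,r)))$ as $r\to0$; combining this with $\mu(U(x,r))^{-1}\int_{U(x,r)}|\tau|\,d\mu\to|\tau(x)|>0$, there is $r_0(x)\in(0,1)$ such that for all $0<r<r_0(x)$
\[
\int_{U(x,r)}|\tau-\tau(x)|\,d\mu\ \le\ \delta\int_{U(x,r)}|\tau|\,d\mu
\qquad\text{and}\qquad
\Mass\big((\mathbf T^{(x)}-\tau(x)\mu)\trace U(x,r)\big)\ \le\ \delta\int_{U(x,r)}|\tau|\,d\mu.
\]
The closed balls $\{U(x,r):x\in G,\ 0<r<r_0(x)\}$ form a fine cover of $G$, so by the Besicovitch--Vitali covering theorem for the Radon measure $\mu$ there is a countable \emph{disjoint} subfamily $U_i:=U(x_i,r_i)$, with $x_i\in G$ and $r_i<r_0(x_i)$, with $\mu\big(G\setminus\bigcup_iU_i\big)=0$.

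Now set $\boldsymbol\mu:=\sum_i\mathbf T^{(x_i)}\trace U_i$ and let $\sigma_i$ be the Radon--Nikod\'ym density of $\mathbf T^{(x_i)}$ with respect to $\mu$, so $\int_{U_i}|\sigma_i-\tau(x_i)|\,d\mu\le\Mass((\mathbf T^{(x_i)}-\tau(x_i)\mu)\trace U_i)\le\delta\int_{U_i}|\tau|\,d\mu$. Since the $U_i$ are disjoint, the Radon--Nikod\'ym density of $\boldsymbol\mu$ with respect to $\mu$ is $\tilde\tau:=\sum_i\sigma_i\mathbb{1}_{U_i}$, and, using $\tau\equiv0$ off $\{\tau\neq0\}$ together with $\mu(\{\tau\neq0\}\setminus\bigcup_iU_i)=0$, one finds $\int|\tilde\tau-\tau|\,d\mu=\sum_i\int_{U_i}|\sigma_i-\tau|\,d\mu\le\sum_i\int_{U_i}\big(|\sigma_i-\tau(x_i)|+|\tau-\tau(x_i)|\big)\,d\mu\le2\delta\|\tau\|_{L^1(\mu)}$. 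Likewise $|\tau(x_i)|\mu(U_i)\le(1+\delta)\int_{U_i}|\tau|\,d\mu$, whence $\Mass(\mathbf T^{(x_i)}\trace U_i)\le\delta\int_{U_i}|\tau|\,d\mu+|\tau(x_i)|\mu(U_i)\le(1+2\delta)\int_{U_i}|\tau|\,d\mu$ and $\Mass(\boldsymbol\mu)\le(1+2\delta)\|\tau\|_{L^1(\mu)}<\infty$. By Smirnov's theorem (Theorem~\ref{smirnov}, in the form~\eqref{e:smirnov1}--\eqref{e:smirnov2} and the remark following it) each closed horizontal normal current $\mathbf T^{(x_i)}$ is an integration of horizontal $d_c$-Lipschitz curves; restricting them to the \emph{closed} balls $U_i$ (whose preimages under the curves are compact), reparametrising by arc length, and applying the standard Cavalieri/inner-regularity reduction realises each $\mathbf T^{(x_i)}\trace U_i$ — and hence $\boldsymbol\mu$, up to a mass error absorbed into $\delta$ — in the form~\eqref{e:measure_structure} required by Proposition~\ref{p:normal}, over the $\sigma$-finite index set $I=\bigsqcup_iI_i$ with $\int_I\Mass(\boldsymbol\mu_t)\,dt=\Mass(\boldsymbol\mu)$. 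Applying Proposition~\ref{p:normal} to $\boldsymbol\mu$ with $\eta:=\mu$ and parameter $\varepsilon_0\|\tau\|_{L^1(\mu)}$ produces a normal $1$-current $\mathbf T=\boldsymbol\mu+\boldsymbol\sigma$ with $\partial\mathbf T=0$, $\boldsymbol\sigma$ an integration of horizontal fragments and $\boldsymbol\sigma\perp\mu$ (so $\mathbf T$ is horizontal and its Radon--Nikod\'ym density with respect to $\mu$ is $\tilde\tau$), and $\Mass(\mathbf T)\le2\Mass(\boldsymbol\mu)+\varepsilon_0\|\tau\|_{L^1(\mu)}\le\big(2(1+2\delta)+\varepsilon_0\big)\|\tau\|_{L^1(\mu)}\le2(1+2\varepsilon_0)\|\tau\|_{L^1(\mu)}$ for $\delta$ small in terms of $\varepsilon_0$; and $\|\tilde\tau-\tau\|_{L^1(\mu)}\le2\delta\|\tau\|_{L^1(\mu)}\le\tfrac12\|\tau\|_{L^1(\mu)}$. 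This gives (i) and (ii).

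The routine parts are the Besicovitch--Vitali covering and the arithmetic with the relative errors, normalised so that the constants fit the statement. The genuine obstacle is the last step: verifying that $\boldsymbol\mu$ meets the hypotheses of Proposition~\ref{p:normal}, i.e.\ upgrading the curves produced by Smirnov's theorem — which are $d_c$-Lipschitz but parametrised on $[0,1]$ and carry multiplicities — to honest horizontal $1$-Lipschitz fragments on \emph{compact} domains whose associated measures are genuine restrictions of $\Haus^1$, and checking the measurability condition (a) of Definition~\ref{s-measint} for the glued family over $I=\bigsqcup_iI_i$. This is precisely where the $E_{t,s}$-refinement of Theorem~\ref{smirnov}, arc-length reparametrisation, and the inner regularity of $\Haus^1$ on $1$-rectifiable sets are used; this is the analogue, via Proposition~\ref{p:normal}, of the corresponding step in \cite[Section~6]{AlbMar}.
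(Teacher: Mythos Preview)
Your proof is correct and follows essentially the same strategy as the paper's: cover by disjoint Euclidean balls on which both $\tau$ is nearly constant and the current from the definition of $N(\mu,\cdot)$ is close to $\tau(x_i)\mu$, decompose each restricted current via Smirnov into horizontal Lipschitz fragments, and close up the resulting integration via Proposition~\ref{p:normal} with $\eta=\mu$. The only cosmetic differences are that you normalize the local errors relative to $\int_{U_i}|\tau|\,d\mu$ rather than to $\mu(U_i)$ (which spares you the paper's preliminary reduction to compactly supported $\mu$ and the final choice $\varepsilon\lesssim\|\tau\|_{L^1}/\Mass(\mu)$), and that you keep all countably many balls rather than truncating to finitely many before invoking Proposition~\ref{p:normal}.
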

    
\begin{proof}Let us prove the statement when $\mu$ has compact support. 
If $\tau(x)=0$ for $\mu$-almost every $x\in\GG$ there is nothing to prove and hence we may assume that $\tau$ is non-trivial. Let $\varepsilon>0$ to be chosen later and thanks to Lebesgue's differentiation theorem, see for instance \cite[Corollary 2.9.9]{Federer1996GeometricTheory}, for $\mu$-almost every $x\in\GG$ there exists an $r_0(x)>0$ such that for any $0<s<r_0(x)$ we have
\begin{equation}
    \fint_{U(x,s)}\lvert \tau(y)-\tau(x)\rvert d\mu(y)\leq \varepsilon.
    \nonumber
\end{equation}
Therefore, thanks to Besicovitch's covering theorem and the fact that $\tau(x)\in N(\mu,x)$ for $\mu$-almost every $x\in\GG$,
we can find countably many closed and disjoint Euclidean balls $\{U(x_i,r_i)\}_{i\in\N}$ such that:
\begin{enumerate}
    \item $\mu(\mathbb{G}\setminus \bigcup_iU(x_i,r_i))=0$ and $\mu(\partial U(x_i,r_i))=0$,
    \item for any $i\in\N$ we have
    $\fint_{U(x_i,r_i)}\lvert \tau(y)-\tau(x_i)\rvert d\mu(y)\leq \varepsilon$ and we can find a $1$-dimensional horizontal normal current $\mathbf T_i=\tau_i\mu_i$ such that $\partial \mathbf T_i=0$ and
    \begin{equation}
            \Mass\big( (\mathbf T_i-\tau(x_i)\mu)\trace U(x_i,r_i)\big)\leq \varepsilon\mu(U(x_i,r_i)).
        \label{e:mass:auxiliarybundle}
    \end{equation}
\end{enumerate}
    Thanks to Theorem \ref{smirnov}, for any $i\in\N$ we can find a family of vector-valued measures $t\mapsto \boldsymbol\eta_t^i$ satisfying the hypothesis (a) and (b) of Definition \ref{s-measint} and such that for any $i\in\N$ and for almost every $t\in I$ there exists a Lipschitz curve $\gamma_t^i:[0,1]\to\GG$ such that $\boldsymbol\eta_t^i=\llbracket \gamma_t^i\rrbracket=\tau_{\gamma_t^i}\rho_{t,i}\Haus^1\trace \im(\gamma_t^i)$ and
\begin{align*}
    \scal{\mathbf T_i}{\omega}=&\int_I \scal{\llbracket \gamma_t^i\rrbracket}{\omega}\,dt\qquad \text{for every smooth and compactly supported $1$-form $\omega$},
\end{align*}
In addition $\tau_i=\tau_{\gamma^i_t}$ for $\rho_{t,i}\Haus^1\trace \im(\gamma^i_t)$-almost every $x\in \R^n$ and almost every $t\in I$.

Let $N=N(\varepsilon)$ be so big that $\mu(\mathbb{G}\setminus \bigcup_{i=1}^NU(x_i,r_i))\leq\varepsilon\Mass(\mu) $ define on $[0,N]$ and let
$$\boldsymbol\eta_t:=\boldsymbol\eta_{\{t\}}^{\lfloor t\rfloor+1}\trace B(x_{\lfloor t\rfloor},r_{\lfloor t\rfloor}) \qquad \text{for any $t\in[0,N]$}.$$
It is immediate to see that the map $t\mapsto \boldsymbol\eta_t$ is Borel.
Denoted $\boldsymbol\nu:=\int_0^N \boldsymbol\eta_t dt$
we see that $\boldsymbol\nu\llcorner B(x_i,r_i)=\mathbf T_i\llcorner B(x_i,r_i)$ for any $i=1,\ldots,N$ and that the hypothesis of Proposition \ref{p:normal} are satisfied by the family of measures $\boldsymbol \eta_t$. 
This implies that we can find a $1$-dimensional horizontal normal current $\mathbf{T}$ such that $\partial \mathbf{T}=0$, $\mathbb{M}(\mathbf{T})\leq (2+\varepsilon)\sum_{i=1}^N\Mass(\mathbf T_i\trace B(x_i,r_i))$ and $\mathbf{T}=\boldsymbol\nu+\boldsymbol\sigma$ where $\lVert \boldsymbol\nu\rVert+\mu$ and $\lVert \boldsymbol\sigma\rVert$ are mutually singular.
Thanks to the choice of $\boldsymbol\sigma$ and to Radon-Nykodym's decomposition theorem we can write $\mathbf{T}$ as
\begin{equation}
    \mathbf{T}=\sum_{i\in\N}\frac{d  \mathbf T_i}{d\mu}\mu\trace U(x_i,r_i)+\sum_{i\in\N}\boldsymbol\sigma_i+\boldsymbol\sigma,
    \label{expressionforT}
\end{equation}
where the $\boldsymbol\sigma_i$'s are vector valued measures supported on $U(x_i,r_i)$ singular with respect to $\mu\trace U(x_i,r_i)$. Hence, if we write $\mathbf{T}$ as $\mathbf{T}=\tilde{\tau}\mu+\boldsymbol{\tilde{\sigma}}$, where $\mu$ and (the total variation of) $\boldsymbol{\tilde{\sigma}}$ are mutually singular, then 
$$\tilde\tau(y)=\frac{d \mathbf T_i}{d\mu}(y)\qquad\text{for $\mu$-almost every $y\in U(x_i,r_i)$}.$$
Hence
\begin{equation}
    \begin{split}
        \int \lvert \tilde{\tau}(y)-\tau(y)\rvert d\mu(y)=&\sum_{i=1}^\infty\int_{U(x_i,r_i)} \lvert \tilde{\tau}(y)-\tau(y)\rvert d\mu(y) \leq \sum_{i=1}^\infty\int_{U(x_i,r_i)}\lvert \tilde{\tau}(y)-\tau(x_i)\rvert d\mu(y)+\varepsilon\Mass(\mu)\\
        =&\sum_{i=1}^\infty\int_{U(x_i,r_i)}\Big\lvert \frac{d\mathbf T_i}{d\mu}(y)-\tau(x_i)\Big\rvert d\mu(y)+\varepsilon\Mass(\mu).
        \label{estimateL1}
    \end{split}
\end{equation}
Inequality \eqref{e:mass:auxiliarybundle} can be rewritten thanks to \eqref{expressionforT} as
\begin{equation}
  \varepsilon\mu(U(x_i,r_i))\geq  \Mass\big( (\mathbf T_i-\tau(x_i)\mu)\trace U(x_i,r_i)\big)=\int_{U(x_i,r_i)}\Big\lvert \frac{d\mathbf T_i}{d\mu}(y)-\tau(x_i)\Big\rvert d\mu(z)+\Mass(\boldsymbol\sigma_i)+\Mass(\boldsymbol\sigma\trace U(x_i,r_i)).
  \label{e:estimatemassinUi}
\end{equation}
Putting together \eqref{estimateL1} and \eqref{e:estimatemassinUi} we conclude that
$$\int \lvert\tilde{\tau}(y)-\tau(y)\rvert d\mu(y)\leq 2\varepsilon \Mass(\mu).$$
In addition, \eqref{e:mass:auxiliarybundle} also implies that
\begin{equation}
    \begin{split}
        \Mass(\mathbf{T})\leq& (2+\varepsilon)\sum_{i\in\N}\Mass(\mathbf T_i\trace U(x_i,r_i))\leq(2+\varepsilon)\sum_{i\in\N}(\lvert \tau(x_i)\rvert+\varepsilon)\mu(U(x_i,r_i))\\
        \leq& (2+\varepsilon)\Big(\sum_{i\in\N}\int_{U(x_i,r_i)}\lvert \tau(y)\rvert d\mu(y)+ 2\varepsilon\Mass(\mu)\Big) =(2+\varepsilon)(\lVert \tau\rVert_{L^1(\mu)}+2\varepsilon\Mass(\mu)).
    \end{split}
\end{equation}
The claim of the proposition follows by choosing $\varepsilon\leq \lVert \tau\rVert_{L^1(\mu)}\varepsilon_0/4\Mass(\mu)$.
\end{proof}

\begin{proposizione}\label{costruzionecampi}
Let $\mu$ be a Radon measure on $\GG$ and suppose $\tau$ is an $L^1(\mu)$ vector field such that $\tau(x)\in N(\mu,x)$ for $\mu$-almost every $x\in\GG$. Then there exists an horizontal normal current $\mathbf{T}$ on $\GG$ such that:
\begin{itemize}
    \item[(i)] the Radon-Nykodym derivative of $\mathbf{T}$ with respect to $\mu$ coincides $\mu$-almost everywhere with $\tau$, that is $\mathbf{T}=\tau\mu+\mapsto\sigma$ where $\mapsto \sigma$ and $\mu$ are mutually singular;
    \item[(ii)] $\partial \mathbf{T}=0$ and $\Mass(\mathbf{T})\leq 4\lVert \tau\rVert_{L^1(\mu)}$.
\end{itemize}
\end{proposizione}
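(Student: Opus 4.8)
The plan is to prove the proposition by iterating Lemma~\ref{lemmaapproxtau}. The two facts that make the iteration run are Proposition~\ref{normaleimpliesinN}, which tells us that the Radon--Nikodym density of any horizontal normal current lies in $N(\mu,\cdot)$ $\mu$-almost everywhere, and the observation that, for every $x$, the set $N(\mu,x)$ is a linear subspace of $\mathscr{C}(x)[V_1]=H\GG(x)$. Indeed, if $v_1,v_2\in N(\mu,x)$ are witnessed by horizontal normal currents $\mathbf T^1,\mathbf T^2$ with $\partial\mathbf T^j=0$, then for any $\lambda_1,\lambda_2\in\R$ the current $\lambda_1\mathbf T^1+\lambda_2\mathbf T^2$ is again horizontal normal with vanishing boundary, and since
\[
\Mass\bigl((\lambda_1\mathbf T^1+\lambda_2\mathbf T^2-(\lambda_1 v_1+\lambda_2 v_2)\mu)\trace U(x,r)\bigr)\le |\lambda_1|\,\Mass\bigl((\mathbf T^1-v_1\mu)\trace U(x,r)\bigr)+|\lambda_2|\,\Mass\bigl((\mathbf T^2-v_2\mu)\trace U(x,r)\bigr),
\]
dividing by $\mu(U(x,r))$ and letting $r\to0$ shows $\lambda_1 v_1+\lambda_2 v_2\in N(\mu,x)$.

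With this in hand I would set up the iteration. Put $\tau^{(0)}:=\tau$; if $\lVert\tau\rVert_{L^1(\mu)}=0$ there is nothing to prove, so assume $\tau$ is nontrivial. For $k\ge 1$, supposing inductively that $\tau^{(k-1)}\in L^1(\mu)$ with $\tau^{(k-1)}(x)\in N(\mu,x)$ for $\mu$-a.e.\ $x$, apply Lemma~\ref{lemmaapproxtau} to $\tau^{(k-1)}$ (with the free parameter $\varepsilon_0=\varepsilon_k$) to obtain a horizontal normal current $\mathbf T_k$ with $\partial\mathbf T_k=0$, whose Radon--Nikodym density $\tilde\tau_k$ with respect to $\mu$ satisfies $\lVert\tilde\tau_k-\tau^{(k-1)}\rVert_{L^1(\mu)}\le\tfrac12\lVert\tau^{(k-1)}\rVert_{L^1(\mu)}$ and $\Mass(\mathbf T_k)\le 2(1+2\varepsilon_k)\lVert\tau^{(k-1)}\rVert_{L^1(\mu)}$. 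Set $\tau^{(k)}:=\tau^{(k-1)}-\tilde\tau_k\in L^1(\mu)$. By Proposition~\ref{normaleimpliesinN} we have $\tilde\tau_k(x)\in N(\mu,x)$ $\mu$-a.e., hence, by linearity of the fibres, $\tau^{(k)}(x)\in N(\mu,x)$ $\mu$-a.e.; moreover $\lVert\tau^{(k)}\rVert_{L^1(\mu)}\le 2^{-k}\lVert\tau\rVert_{L^1(\mu)}$, so the induction continues. Choosing the $\varepsilon_k$ summable and small enough, I would then set $\mathbf T:=\sum_{k\ge1}\mathbf T_k$; the series converges in mass because $\sum_k\Mass(\mathbf T_k)\le 2\sum_k(1+2\varepsilon_k)2^{-(k-1)}\lVert\tau\rVert_{L^1(\mu)}$, which the geometric factor $\sum_k 2^{-(k-1)}=2$ keeps within $4\lVert\tau\rVert_{L^1(\mu)}$ once the $\varepsilon_k$-contribution is absorbed (alternatively, inspecting the proof of Lemma~\ref{lemmaapproxtau} one sees that the $L^1$-error can in fact be made as small as $\varepsilon_k\lVert\tau^{(k-1)}\rVert_{L^1(\mu)}$, which makes the constant explicit and strictly below $4$).

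It remains to check that $\mathbf T$ is as required. Since each $\partial\mathbf T_k=0$ and $\partial$ is continuous for mass convergence (hence for weak$^*$ convergence), we get $\partial\mathbf T=0$, so $\mathbf T$ is a normal $1$-current with $\Mass(\mathbf T)\le\sum_k\Mass(\mathbf T_k)\le 4\lVert\tau\rVert_{L^1(\mu)}$; it is horizontal because horizontality is stable under mass-convergent sums (realising every $\mathbf T_k$ as a density against the finite measure $\sum_k\lvert\mathbf T_k\rvert$, the density of $\mathbf T$ still takes values in the fibres $H\GG(x)$, which are linear subspaces). Writing $\mathbf T_k=\tilde\tau_k\mu+\boldsymbol\sigma_k$ with $\boldsymbol\sigma_k\perp\mu$ and using the telescoping identity $\sum_{k=1}^N\tilde\tau_k=\tau-\tau^{(N)}\to\tau$ in $L^1(\mu)$, we get $\mathbf T=\tau\mu+\boldsymbol\sigma$ with $\boldsymbol\sigma:=\sum_k\boldsymbol\sigma_k$; this series converges in mass ($\sum_k\Mass(\boldsymbol\sigma_k)\le\sum_k\Mass(\mathbf T_k)<\infty$) and $\boldsymbol\sigma\perp\mu$, each $\boldsymbol\sigma_k$ being concentrated on a $\mu$-null Borel set. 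Hence the Radon--Nikodym density of $\mathbf T$ with respect to $\mu$ is $\tau$, which is (i), and $\Mass(\mathbf T)\le 4\lVert\tau\rVert_{L^1(\mu)}$ together with $\partial\mathbf T=0$ is (ii).

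The construction is essentially formal once Lemma~\ref{lemmaapproxtau} and Proposition~\ref{normaleimpliesinN} are available; the genuine points of care are the linearity of the fibres $N(\mu,x)$ \textemdash{} without it the residuals $\tau^{(k)}$ would not be legitimate inputs for Lemma~\ref{lemmaapproxtau} \textemdash{} and the mass bookkeeping, where the geometric series of residuals (with ratio $\tfrac12$ fixed by the lemma) is precisely what yields the constant $4$.
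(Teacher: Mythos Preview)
Your proposal is correct and follows essentially the same approach as the paper, which simply defers to \cite[Proposition~6.3]{AlbMar} with Lemma~\ref{lemmaapproxtau} substituted for their Lemma~6.11: iterate the approximation lemma on the residual field $\tau^{(k)}=\tau^{(k-1)}-\tilde\tau_k$, using linearity of the fibres $N(\mu,x)$ together with Proposition~\ref{normaleimpliesinN} to keep the residuals admissible, and sum the resulting currents. Your explicit verification that $N(\mu,x)$ is a vector subspace and your handling of the mass bookkeeping (including the remark that the $L^1$-error in Lemma~\ref{lemmaapproxtau} can be taken as $\varepsilon_k\lVert\tau^{(k-1)}\rVert$ to secure the constant~$4$) fill in exactly what the reference would supply.
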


\begin{proof}
The proof of the proposition follows verbatim that of \cite[Proposition 6.3]{AlbMar} where we replace \cite[Lemma 6.11]{AlbMar} with Lemma \ref{lemmaapproxtau}.
\end{proof}

\begin{teorema}\label{theoAu=Dec}
Let $\mu$ be a Radon measure on $\GG$. Then, for $\mu$-almost every $x\in \GG$ we have $V_1\cap V(\mu,x)=\pi_1(N(\mu,x))$. 
\end{teorema}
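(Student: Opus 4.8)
We prove the two inclusions of $V_1\cap V(\mu,x)=\pi_1(N(\mu,x))$ separately, in each case transferring information between horizontal normal currents and the decomposability bundle. After a standard localisation (partition $\GG$ into balls and use that $N(\mu,\cdot)$ is defined through a purely local $\limsup$ and that $V(\mu,\cdot)$ satisfies the locality principle of Proposition~\ref{s-basic}(i)) we may assume $\mu$ finite. We record two elementary facts used throughout: for $\mu$-a.e.\ $x$ the fibre $N(\mu,x)$ is a \emph{linear} subspace of $\mathscr{C}(x)[V_1]$ --- if $v_1,v_2\in N(\mu,x)$ are realised by horizontal normal currents $\mathbf{T}_1,\mathbf{T}_2$ with $\bd\mathbf{T}_i=0$, then $a\mathbf{T}_1+b\mathbf{T}_2$ is again horizontal with vanishing boundary and realises $av_1+bv_2$ --- and $V_1\cap V(\mu,x)$ is a linear subspace by Proposition~\ref{prop:hom1}. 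Moreover, since $\mathscr{C}(x)$ restricted to $V_1$ is injective with $\pi_1\circ\mathscr{C}(x)=\mathrm{id}_{V_1}$, the identity to be proved is equivalent to $\mathscr{C}(x)[V_1\cap V(\mu,x)]=N(\mu,x)$ for $\mu$-a.e.\ $x$.

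\textbf{The inclusion $\pi_1(N(\mu,x))\subseteq V_1\cap V(\mu,x)$.} By Lemma~\ref{Nemisurabile} the map $x\mapsto N(\mu,x)$ is universally measurable, so projecting a countable dense subset of $\R^n$ onto the fibres produces countably many bounded, universally measurable vector fields $\tau_j$ with $\tau_j(x)\in N(\mu,x)$ for $\mu$-a.e.\ $x$ and $\Span\{\tau_j(x):j\in\N\}=N(\mu,x)$. Each $\tau_j\in L^1(\mu)$, so Proposition~\ref{costruzionecampi} gives a horizontal normal current $\mathbf{T}_j$ with $\bd\mathbf{T}_j=0$ whose Radon--Nikod\'ym derivative with respect to $\mu$ is $\tau_j$; Proposition~\ref{normalcurrdec} then yields $\mathfrak{S}(\pi_1(\tau_j(x)))\subseteq V(\mu,x)$, hence $\pi_1(\tau_j(x))\in V_1\cap V(\mu,x)$, for $\mu$-a.e.\ $x$. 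Intersecting the countably many full-measure sets and using linearity of $\pi_1$ and of $V_1\cap V(\mu,x)$ gives $\pi_1(N(\mu,x))=\Span\{\pi_1(\tau_j(x))\}\subseteq V_1\cap V(\mu,x)$.

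\textbf{The inclusion $\mathscr{C}(x)[V_1\cap V(\mu,x)]\subseteq N(\mu,x)$.} On $\{x:V(\mu,x)=\{0\}\}$ this is trivial, so we may assume $\mu$ is concentrated on $\{V(\mu,\cdot)\neq\{0\}\}$; by Proposition~\ref{s-basic}(ii) there is $G=\{\mu_t=\Haus^1\trace\im(\gamma_t):t\in I\}\in\F_{\mu}$ with $\mathfrak{V}(\mu,G,x)=V(\mu,x)$ for $\mu$-a.e.\ $x$ and $\mu\ll\int_I\mu_t\,dt$, and by Proposition~\ref{prop:id:fib} we have $\mathfrak{V}_{eu}(\mu,G,x)=\mathscr{C}(x)[V_1\cap V(\mu,x)]$ for $\mu$-a.e.\ $x$. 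Since $N(\mu,x)$ is a subspace it therefore suffices to produce, for a countable family of horizontal normal currents $\mathbf{T}_k$ with $\bd\mathbf{T}_k=0$, Radon--Nikod\'ym derivatives $\tilde\tau_k$ (with respect to $\mu$) whose span equals $\mathfrak{V}_{eu}(\mu,G,\cdot)$ $\mu$-a.e.; by Proposition~\ref{normaleimpliesinN} each $\tilde\tau_k(x)$ then lies in $N(\mu,x)$. To build the $\mathbf{T}_k$ we recall that $\mathfrak{V}_{eu}(\mu,G,\cdot)$ is the $\mu$-essential span of the directed unit-tangent fields $x\mapsto\tau_{\gamma_t}(x)$ of the \emph{horizontal} fragments $\gamma_t$; as in \cite[Section~6]{AlbMar} we realise it through countably many sub-superpositions, obtained by partitioning $I$ and each fragment according to ever finer partitions of the fibre unit spheres into small caps, so that each resulting family $\{\boldsymbol\mu^{(k)}_t=\tau_{\gamma_t}\Haus^1\trace\im(\gamma_t)\}_{t\in I_k}$ consists of fragments whose tangents are almost aligned. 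Each such family satisfies the hypotheses of Proposition~\ref{p:normal}, which, applied with $\eta:=\mu$, closes $\boldsymbol\mu^{(k)}:=\int_{I_k}\boldsymbol\mu^{(k)}_t\,dt$ to a horizontal normal current $\mathbf{T}_k=\boldsymbol\mu^{(k)}+\boldsymbol\sigma_k$ with $\bd\mathbf{T}_k=0$ and $\boldsymbol\sigma_k\perp\mu$; hence $\tilde\tau_k=\tfrac{d\boldsymbol\mu^{(k)}}{d\mu}$, which (by almost-alignment, so that no cancellation occurs within $\boldsymbol\mu^{(k)}$, and since $\boldsymbol\sigma_k$ is $\mu$-singular) is at $\mu$-a.e.\ point a non-zero vector pointing, up to the chosen cap size, along the $k$-th direction. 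Letting the caps shrink, these vectors fill $\mathfrak{V}_{eu}(\mu,G,x)$ densely, and closedness of $N(\mu,x)$ gives $\mathfrak{V}_{eu}(\mu,G,x)\subseteq N(\mu,x)$.

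\textbf{Main obstacle.} The delicate point --- and the reason the second inclusion is substantially harder than in the Euclidean setting --- is the construction of the currents $\mathbf{T}_k$: one must choose the directional decomposition of $G$ and the exhausting sequence of sub-superpositions so that, simultaneously for $\mu$-a.e.\ $x$, (i) no cancellation destroys the contribution of the fragments through $x$, (ii) the Radon--Nikod\'ym derivative $\tilde\tau_k$ is actually non-zero at $x$ --- which requires a Besicovitch-type comparison between $\mu$ and the superposed measures $\Haus^1\trace\im(\gamma_t)$, bridging the mismatch between ``$\mu$-a.e.'' and ``$\mu_t$-a.e.\ $x$, a.e.\ $t$'' --- and (iii) the recovered directions span the whole fibre. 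This is precisely where Proposition~\ref{p:normal} does the essential work: it guarantees that the closing term $\boldsymbol\sigma_k$ can be taken \emph{singular} with respect to the prescribed $\eta$ and itself an integral of \emph{horizontal} fragments, so that horizontality of $\mathbf{T}_k$ is preserved and $\boldsymbol\sigma_k$ cannot cancel $\boldsymbol\mu^{(k)}$ on a set of positive $\mu$-measure; the remaining bookkeeping runs parallel to \cite[Section~6]{AlbMar}.
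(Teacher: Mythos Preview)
Your first inclusion $\pi_1(N(\mu,x))\subseteq V_1\cap V(\mu,x)$ is correct and matches the paper: you work directly with countably many spanning selectors $\tau_j$ while the paper argues by contradiction with one measurable selector, but both feed through Proposition~\ref{costruzionecampi} and Proposition~\ref{normalcurrdec} in the same way.

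For the reverse inclusion your direct argument has a real gap at precisely the step you yourself flag as the main obstacle. You build currents $\mathbf{T}_k$ indexed by caps $C_k$ and assert that the Radon--Nikod\'ym derivatives $\tilde\tau_k$ span $\mathfrak{V}_{eu}(\mu,G,x)$ for $\mu$-a.e.\ $x$. You are right that Proposition~\ref{p:normal} handles your item (i): the closing piece $\boldsymbol\sigma_k$ is horizontal and $\mu$-singular, so it cannot cancel $\boldsymbol\mu^{(k)}$. But your items (ii) and (iii) --- non-vanishing of $\tilde\tau_k$ at the right points and spanning of the fibre --- are not ``remaining bookkeeping'', and the appeal to \cite[Section~6]{AlbMar} is misplaced: there, as in the present paper, the argument is run by contradiction precisely to avoid ever proving a spanning statement. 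The difficulty is genuine: the global absolute continuity $\mu\ll\int_I\mu_t\,dt$ does not prevent each cap-restricted sub-integral $\int_{I_k}\mu_t\,dt$ from being $\mu$-singular at many points, and a direction lying in the $\mu$-essential span $\mathfrak{V}_{eu}(\mu,G,x)$ need not be realised as the tangent of any fragment of $G$ actually passing through $x$. The paper circumvents this entirely by showing instead that (a Borel modification of) $x\mapsto\mathfrak{S}(\pi_1(N(\mu,x)))$ lies in $\mathscr{G}_\mu$, whereupon minimality of $V(\mu,\cdot)$ gives the inclusion for free: assuming some family $\{\mu_t\}\in\F_\mu$ has $\mathfrak{v}_{\gamma_t}(x)\notin\pi_1(N(\mu,x))$ on a positive set, one picks a \emph{single} cone $\tilde C$ separating $\mathfrak{v}_{\gamma_t}(x)$ from $\pi_1(N(\mu,x))$ there, applies Proposition~\ref{p:normal} once (with $\eta=\mu$) to the fragments pointing into $\tilde C$, and Proposition~\ref{normaleimpliesinN} then places the resulting density $\bar\tau$ in $N(\mu,x)$ with $\pi_1(\bar\tau(x))\in\mathrm{int}(\tilde C)$ on a $\mu$-positive set --- a contradiction. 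This uses exactly your ingredients but needs only one current and no spanning claim.
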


\begin{proof}
Let us first prove the inclusion $\pi_1(N(\mu,x))\subseteq V_1\cap V(\mu,x)$. Assume by contradiction that the inclusion does not hold on a set of positive $\mu$-measure. Then, by \cite[Theorem 5.2.1]{Sriva} we can find a bounded Borel vector field $\tau:\GG\cong\R^n\to V_1$ such that $\pi_1(\tau(x))\in \pi_1(N(\mu,x))\setminus V_1\cap V(\mu,x)$ on a set of positive $\mu$-measure. Note that here the measurability of $N(\mu,x)$ provided by Lemma \ref{Nemisurabile} is required.
Thanks to Proposition \ref{costruzionecampi} there exists an horizontal $1$-dimensional normal current $\mathbf{T}$ such that $\mathbf{T}=\tau\mu+\boldsymbol\sigma$ where $\boldsymbol\sigma$ and $\mu$ are mutually singular. Thanks to Proposition \ref{normalcurrdec}, we know that $\mathfrak{S}(\pi_1(\tau(x)))\subseteq  V(\mu,x)\cap V_1$ for $\mu$-almost every $x\in\GG$ which is in contradiction with the choice of $\tau$.

Let us prove the converse inclusion. First of all we modify the map $x\mapsto N(\mu,x)$ in a $\mu$-negligible set to make it Borel.
By definition of the decomposability bundle, it suffices to prove that the map $x\mapsto N(\mu,x)$ belongs to the class $\mathscr{G}_\mu$, see Definition \ref{Dec-bundle}, or in other words, given a measure a family of measures $\mu_t:=\Haus^1\trace \im(\gamma_t)$ such that $\{\mu_t\}_{t\in I}\in \mathscr{F}_\mu$, we must show that
\begin{equation}
    \Tan(\gamma_t,x)=\mathfrak{S}(\mathfrak{v}_{\gamma_t}(x))=\mathfrak{S}(\tau_{\gamma_t}(x))\subseteq \pi_1(N(\mu,x))\qquad \text{for $\mu_t$-almost any $x\in\GG$ and almost every $t\in I$,}
    \label{eq:tan:inc}
\end{equation}
where we recall that $\Tan(\gamma_t,x)$ was introduced in Definition \ref{tg:curvesoso} and $\mathfrak{v}_\gamma$ in Lemma \ref{campovgamma}.
Let us assume by contradiction that $N(\mu,\cdot)\not\in \mathscr{G}_\mu$ and thus that there is a family of measures $\{\mu_t\}_{t\in I}\in\mathscr{F}_\mu$ as above such that \eqref{eq:tan:inc} fails. 

Denoted with $\delta_t$ the Dirac delta at $t$, it can be easily seen that  $t\mapsto \delta_t\otimes \mu_t$ is a Borel function. This can be shown for instance by proving that $\Lambda>0$ and any Borel set $A\subseteq \mathbb{G}$, the function $t\mapsto \delta_t\otimes \mu_t((-\infty,\Lambda]\times A)$ is Borel.
Further, denote by $\mu''$ the measure on $I\times\GG$ 
given by $\mu'' := \int_I (\delta_t \otimes \mu_t) \, dt$, that is well defined thanks to the above discussion and Definition \ref{s-measint} and by $F$ to 
be the set of all $(t,x)\in I\times\R^n$ 
such that $\mathfrak{v}_{\gamma}(x)$,  exists non-zero and is not contained 
in $N(\mu,x)$. Thanks to Lemma \ref{lemmamisurabilitatangenti}, the set $F$ is Borel and hence, the fact that \eqref{eq:tan:inc} does not hold 
is equivalent to saying that $F$ has positive $\mu''$-measure.

Now, let $\Theta$ be a family of cones $C=C(e,\alpha)\subseteq V_1$ with $e$ ranging in a given countable dense subset of
the unit sphere in $V_1$ and $\alpha$ ranging in a given 
countable dense subset of $(0,1)$. For for 
every $C\in\Theta$ we denote by $F_C$ the subset of 
$(t,x) \in F$ such that $\mathfrak{v}_{\gamma_t}(x)$ and $\pi_1(N(\mu,x))$
are separated by $C$, that is,
$\mathfrak{v}_{\gamma_t}(x) \in \Int(C)$ and $\pi_1(N(\mu,x)) \cap C=\{0\}$. Note that the set $F_C$ is Borel since $F$ and of the maps $(t,x)\mapsto\mathfrak{v}_{\gamma_t}(x)$ 
and $x\mapsto N(\mu,x)$ are Borel measurable.
Then the sets $F_C$ with $C\in\Theta$ form a countable
cover of $F$, and since $\mu''(F)>0$ there exists one $\tilde C\in\Theta$ such that $\mu''(F_{\tilde{C}})>0$. We then let $\tilde{I}$ be the projection on $I$ of the set $F_{\tilde{C}}$ and let $\mathcal{G}_{\tilde{C},t}:=\{x\in\GG:(t,x)\in F_{\tilde{C}}\}$, which is Borel for every $t$. It is immediate to see that $\{\mu_t\trace \mathcal{G}_{\tilde{C},t}\}_{t\in\tilde{I}}$ belongs to $\mathscr{F}_\mu$.

The map of vector valued measures $ t\mapsto \mathfrak{v}_{\gamma_t}\mu_t\trace\mathcal{G}_{\tilde{C},t}$
is Borel. 
This is due to Proposition \ref{campovgamma} and the fact that the map $t\mapsto \mu_t\trace\mathcal{G}_{\tilde{C},t}$ is Borel. This implies by Proposition \ref{p:normal} that we can find a $1$-dimensional horizontal normal current $\mathbf{T}$ such that 
$$\mathbf{T}=\int_{\tilde{I}}\mathscr{C}(x)[\mathfrak{v}_{\gamma_t}]\mu_t\trace \mathcal{G}_{\tilde{C},t} dt+\boldsymbol\sigma,$$
where $\boldsymbol\sigma$ is mutually singular with respect to $\mu$. The vector valued measure $\int_{\tilde{I}}\mathscr{C}(x)[\mathfrak{v}_{\gamma_t}]\mu_t\trace \mathcal{G}_{\tilde{C},t} dt$ is not trivial since the vector fields $\mathfrak v_{\gamma_t}\in \tilde C$ for  $\mu_t\trace \mathcal{G}_{\tilde{C},t}$-almost every $x$ and almost every $t\in \tilde{I}$ and thus there cannot be cancellations. 
It is immediate to see that  $\lVert \int_{\tilde{I}}\mathscr{C}(x)[\mathfrak{v}_{\gamma_t}]\mu_t\trace \mathcal{G}_{\tilde{C},t}dt\rVert \ll \mu$. Hence $\lVert \int_{\tilde{I}}\mathscr{C}(x)[\mathfrak{v}_{\gamma_t}]\mu_t\trace \mathcal{G}_{\tilde{C},t}dt\rVert=\rho \mu$ where $\rho\in L^1(\mu)$ and hence on $\{\rho>0\}$ the measures $\mu$ and $\lVert \int_{\tilde{I}}\mathscr{C}(x)[\mathfrak{v}_{\gamma_t}]\mu_t\trace \mathcal{G}_{\tilde{C},t}dt\rVert$ are mutually absolutely continuous.  

Writing $\mathbf{T}=\bar\tau\nu$, since 
\begin{equation}
\mathfrak{v}_{\gamma_t}(x)\in \mathrm{int}(\tilde{C})\qquad\qquad\text{ for any $t\in\tilde{I}$ and $\mu_t$-almost every $x\in \mathcal{G}_{\tilde{C},t}$},
    \label{eq:taucono}
\end{equation}
we infer that $\bar\tau(x)\in\mathscr{C}(x)[ \mathrm{int}(\tilde C)]$ for $\int_{\tilde{I}}\mu_t\trace \mathcal{G}_{\tilde{C},t} dt$-almost every $x\in \GG$.  This implies in particular that 
$$\pi_1(\bar\tau(x))\in  \mathrm{int}(\tilde C)\qquad\text{on a set of $\mu$-positive measure.}$$
However, this is in contradiction with
Proposition \ref{normaleimpliesinN} which implies that $\pi_1( \bar\tau(x))\in \pi_1(N(\mu,x))$ for $\mu$-almost every $x\in \GG$.
\end{proof}

\section{Differentiability along the decomposability bundle}
\label{s3n}
This section is devoted to the proof of Theorem \ref{t:differentiability_along_bundle}. \emph{Throughout the rest of this section and if not otherwise specified, $\mathbb{H}$  will always be a fixed Carnot group endowed with a homogeneous and left invariant distance.}

    
\subsection{Construction of vector fields of universal differentiability.}
    
This subsection is devoted to the proof of the following

\begin{lemma}\label{LEMMAvector fieldS}
Let $\mu$ be a Radon measure on $\GG$. Then, there are $n_1$ Borel maps $\zeta_1,\dots,\zeta_{n_1}:\GG\to\GG$ such that:
\begin{itemize}
    \item[(i)]$V(\mu,x)=\mathfrak{S}(\{\zeta_1(x),\dots,\zeta_{n_1}(x)\})$ for $\mu$-almost every $x\in K$,
    \item[(ii)]every $f\in\mathrm{Lip}(\GG,\HH)$ has derivative along $\zeta_i(x)$ for every $i=1,\ldots,n_1$ and for $\mu$-almost every $x\in \GG$.
\end{itemize} 
\end{lemma}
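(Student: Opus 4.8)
\emph{Plan.} Conditions (i) and (ii) are proved by essentially independent arguments: (i) is a measurable selection statement, whereas (ii) is where the normal current machinery of the previous sections enters. For (i) I would fix an orthonormal basis $e_1,\dots,e_{n_1}$ of $V_1$ and set $\zeta_i(x):=\pi_{V_1\cap V(\mu,x)}[e_i]$, the Euclidean orthogonal projection of $e_i$ onto the linear subspace $V_1\cap V(\mu,x)\subseteq V_1$. The map $x\mapsto V_1\cap V(\mu,x)$ is Borel by Proposition~\ref{prop:Borelintersez}, hence each $\zeta_i$ is Borel by Proposition~\ref{prop:projmap}, and by construction $\Span\{\zeta_1(x),\dots,\zeta_{n_1}(x)\}=V_1\cap V(\mu,x)$ for every $x$. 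Since $V(\mu,x)\in\Gr_\mathfrak{C}(\GG)$ for $\mu$-almost every $x$ by Lemma~\ref{prop:V=carnot}, Proposition~\ref{generato} gives $\mathfrak S(\{\zeta_1(x),\dots,\zeta_{n_1}(x)\})=V(\mu,x)$ for $\mu$-almost every $x$, which is (i).

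The key to (ii) is the following elementary fact about curves: \emph{if $\sigma:B\to\GG$ is a $1$-Lipschitz curve and $f\in\Lip(\GG,\HH)$, then for $\Haus^1\trace\im(\sigma)$-almost every $x$ the map $f$ is Pansu differentiable at $x$ along the $1$-parameter subgroup $\Tan(\sigma,x)=\langle\mathfrak v_\sigma(x)\rangle$, in the sense of Definition~\ref{def:Pansu_diff_sub}}. Indeed $f\circ\sigma:B\to\HH$ is Lipschitz, so by Lemma~\ref{lemma.monti2} applied with $\HH$ in place of $\GG$ its Pansu derivative $D(f\circ\sigma)(s)$ exists for $\Leb^1$-almost every $s\in B$ and lies in the first layer of $\HH$; likewise $D\sigma(s)=\mathfrak v_\sigma(\sigma(s))$ exists for $\Leb^1$-almost every $s$, and the very definition of $D\sigma$ gives $d_c\big(\sigma(s+h),\sigma(s)*\delta_h(D\sigma(s))\big)=o(|h|)$, whence $d_{\HH}\big(f(\sigma(s+h)),f(\sigma(s)*\delta_h(D\sigma(s)))\big)\le\Lip(f)\,d_c\big(\sigma(s+h),\sigma(s)*\delta_h(D\sigma(s))\big)=o(|h|)$ by Lipschitzness of $f$. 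Combining the two limits, and using that $D(f\circ\sigma)(s)$ lies in the first layer of $\HH$ — so that the map sending $\delta_h(D\sigma(s))$ to the $h$-dilate in $\HH$ of $D(f\circ\sigma)(s)$ is a homogeneous homomorphism of $\langle D\sigma(s)\rangle$ into $\HH$ — one obtains exactly Pansu differentiability of $f$ at $\sigma(s)$ along $\langle D\sigma(s)\rangle$. The exceptional parameters form a $\Leb^1$-null set, whose image under $\sigma$ is $\Haus^1$-null, so the claim holds $\Haus^1\trace\im(\sigma)$-almost everywhere, with $\langle D\sigma(s)\rangle=\langle\mathfrak v_\sigma(x)\rangle=\Tan(\sigma,x)$ for such $x$ by Definition~\ref{tg:curvesoso}.

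Now fix $i\in\{1,\dots,n_1\}$ and $f\in\Lip(\GG,\HH)$ (and, by a standard exhaustion, assume $\mu$ compactly supported). By Theorem~\ref{theoAu=Dec}, $\zeta_i(x)\in V_1\cap V(\mu,x)=\pi_1(N(\mu,x))$ for $\mu$-almost every $x$; since every $v\in N(\mu,x)\subseteq H\GG(x)=\mathscr C(x)[V_1]$ satisfies $v=\mathscr C(x)[\pi_1(v)]$ (Remark~\ref{rk:expc}), we get $\mathscr C(x)[\zeta_i(x)]\in N(\mu,x)$ for $\mu$-almost every $x$, and $x\mapsto\mathscr C(x)[\zeta_i(x)]$ is a bounded Borel, hence $L^1(\mu)$, vector field. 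By Proposition~\ref{costruzionecampi} there is a horizontal normal current $\mathbf T_i$ with $\partial\mathbf T_i=0$ and Radon--Nikod\'ym density $\mathscr C(\cdot)[\zeta_i(\cdot)]$ with respect to $\mu$, i.e.\ $\mathbf T_i=\mathscr C(\cdot)[\zeta_i(\cdot)]\mu+\boldsymbol\sigma_i$ with $\boldsymbol\sigma_i\perp\mu$. Write $\mathbf T_i=\tau'\mu'$ with $|\tau'|=1$ $\mu'$-almost everywhere and $\mu':=\lVert\mathbf T_i\rVert$; on $\{\zeta_i\neq0\}$ one has $\mu\trace\{\zeta_i\neq0\}\ll\mu'$ and $\tau'(x)=\mathscr C(x)[\zeta_i(x)]/|\mathscr C(x)[\zeta_i(x)]|$, so $\langle\pi_1(\tau'(x))\rangle=\langle\zeta_i(x)\rangle$. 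Applying Theorem~\ref{smirnov} to $\mathbf T_i$ we obtain $\mu'=\int_{\tilde I}\rho_s\,\Haus^1\trace\im(\gamma_s)\,ds$ with $\gamma_s$ $1$-Lipschitz and $\tau'(x)=\mathscr C(x)[\mathfrak v_{\gamma_s}(x)]$ up to a positive scalar for $\Haus^1\trace\im(\gamma_s)$-almost every $x$ and almost every $s$; hence $\langle\pi_1(\tau'(x))\rangle=\langle\mathfrak v_{\gamma_s}(x)\rangle$ for such $x,s$. Let $M:=\{x: f\text{ is not Pansu differentiable at }x\text{ along }\langle\pi_1(\tau'(x))\rangle\}$. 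This set does not depend on $s$, and by the curve fact of the previous paragraph applied to each $\gamma_s$ (combined with $\langle\pi_1(\tau'(x))\rangle=\langle\mathfrak v_{\gamma_s}(x)\rangle$) we have $\Haus^1\trace\im(\gamma_s)(M)=0$ for almost every $s$; therefore $\mu'(M)=\int_{\tilde I}\rho_s\,\Haus^1\trace\im(\gamma_s)(M)\,ds=0$ and, since $\mu\trace\{\zeta_i\neq0\}\ll\mu'$, also $\mu\big(M\cap\{\zeta_i\neq0\}\big)=0$. On $\{\zeta_i=0\}$ differentiability along $\zeta_i(x)=0$ is trivial. Intersecting the resulting full-measure sets over $i=1,\dots,n_1$ gives (ii). The measurability checks (that $(s,x)\mapsto\mathfrak v_{\gamma_s}(x)$ is Borel, that $x\mapsto N(\mu,x)$ can be taken Borel, and that $M$ is measurable) are carried out exactly as in Lemmas~\ref{campovgamma}, \ref{lemmamisurabilitatangenti} and \ref{Nemisurabile}.

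The main obstacle is the final passage from ``differentiable $\Haus^1$-almost everywhere on each decomposing curve'' to ``differentiable $\mu$-almost everywhere'': a priori $\mu$ and $\Haus^1\trace\im(\gamma_s)$ may be mutually singular, so this cannot come from the curve fact alone and genuinely requires realizing the direction field $\mathscr C(\cdot)[\zeta_i(\cdot)]$ as the density of a horizontal normal current (Proposition~\ref{costruzionecampi}, via $V_1\cap V(\mu,x)=\pi_1(N(\mu,x))$) and then decomposing that current by Smirnov's theorem. What makes the argument close is that the Radon--Nikod\'ym density $\tau'$ of $\mathbf T_i$ with respect to $\mu$ is a \emph{single} fixed vector field, so the bad set $M$ is independent of the decomposing parameter $s$, and the disintegration $\mu'=\int\rho_s\,\Haus^1\trace\im(\gamma_s)\,ds$ annihilates it at once.
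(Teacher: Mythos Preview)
Your proof is correct and follows essentially the same route as the paper: define $\zeta_i$ by projecting the basis vectors $e_i$ onto $V_1\cap V(\mu,x)$, invoke Theorem~\ref{theoAu=Dec} to place $\mathscr C(x)[\zeta_i(x)]$ in $N(\mu,x)$, build the horizontal normal current $\mathbf T_i$ via Proposition~\ref{costruzionecampi}, decompose it by Smirnov, and transfer the curvewise Pansu differentiability to $\mu$ through the disintegration $\mu'=\int\rho_s\,\Haus^1\trace\im(\gamma_s)\,ds$. Your write-up is in fact a bit more careful than the paper's in two places: you spell out the ``curve fact'' explicitly rather than calling it elementary, and you correctly feed $\mathscr C(\cdot)[\zeta_i(\cdot)]$ (not $\zeta_i$ itself) into Proposition~\ref{costruzionecampi}, which is what the hypothesis $\tau(x)\in N(\mu,x)\subseteq\mathscr C(x)[V_1]$ actually requires.
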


\begin{proof}
For any $i=1,\ldots,n_1$ define
$$\zeta_i(x):=\begin{cases}\frac{\pi_{V(\mu,\cdot)}(x)[e_i]}{\lvert\pi_{V(\mu,\cdot)}(x)[e_i]\rvert}& \text{if } \pi_{V(\mu,\cdot)}(x)[e_i]\neq 0,\\
0& \text{otherwise}.\end{cases}\qquad\text{and}\qquad\xi_i(x):=\mathscr{C}(x)[\zeta_i(x)].$$
where the $e_i$s are the vectors of Definition \ref{campii} and the map $\pi_{V(\mu,\cdot)}$ is the projection map associated to $V(\mu,\cdot)$ yielded by Proposition \ref{prop:projmap}. Note that $\mu$-almost every $x\in \GG$ the vectors  $\zeta_i(x)$ are contained $V(\mu,\cdot)\cap V_1$ and since $e_1,\ldots,e_{n_1}$ are othronormal, the Borel vector fields $\zeta_i$
 span $V(\mu,\cdot)\cap V_1$ at $\mu$-almost every $x\in\GG$. Furthermore, by Remark \ref{rk:expc} for $\mu$-almost every $x\in\GG$ on the one hand we have that the vector fields $\xi_1,\ldots,\xi_{n_1}$ span the vector space $\mathscr{C}(x)[V(\mu,x)\cap V_1]$  and we also have that the identity $\pi_1[\xi_i(x)]=\zeta_i(x)$ holds at every $x\in\GG$.

 For any $i=1,\ldots,n_1$ let $T_i=\tau_i\mu_i$ be the $1$-dimensional horizontal normal currents yielded by Proposition \ref{costruzionecampi} with the choice $\tau=\zeta_i$. Thanks to Theorem \ref{smirnov} for any $i$ we can find a family of vector-valued measures $t\mapsto \mu_t^i$ satisfying the hypothesis (a) and (b) of Definition \ref{s-measint} and such that $T_i$ can be written as $T_i=\int_I \mu_t^i\, dt$. Thanks to Theorem \ref{smirnov} we infer that for any $i$ and every $t$ there exists a Lipschitz curve $\gamma_t^i:[0,1]\to \GG$ such that $\mu_t^i=\llbracket\gamma_t^i\rrbracket$ and $\tau_{\gamma_t^i}=\tau_i$ for $\rho_{t,i}\Haus^1\trace \im(\gamma_t^i)$-almost every $x\in \GG$. It is elementary to see that every Lipschitz map $f\in\mathrm{Lip}(\GG,\HH)$ is Pansu differentiable along $\pi_1(\tau_i)$ for $\Haus^1\trace\im(\gamma_t^i)$-almost every $x\in \R^n$ and almost every $t\in I$. This implies in particular that every $f\in\mathrm{Lip}(\GG,\HH)$ is Pansu differentiable along $\pi_1(\tau_i)$ for $\int \rho_{t,i}\Haus^1\trace \im(\gamma_t^i)$-almost every $x\in \R^n$. However, since by Theorem \ref{smirnov} we have that $\mu_i=\int\rho_{t,i} \Haus^1\trace \im(\gamma_t^i)$ and that $\pi_1(\tau_i)=\pi_1(\xi_i)=\zeta_i$ for $\mu$-almost every $x\in \R^n$ we conclude that every $f\in\mathrm{Lip}(\GG,\HH) $ is Pansu differentiable along $\zeta_i$ for $\mu$-almost every $x\in\R^n$. Thanks to Proposition \ref{generato} and the fact that $\zeta_1,\ldots\zeta_{n_1}$ span $V(\mu,\cdot)\cap V_1$ $\mu$-almost everywhere, the proof of the lemma is achieved.
\end{proof}

\subsection{Partial and total derivatives}

In this section we relate the existence of partial derivatives to the Pansu differentiability along the  decomposability bundle. Since the group operation is not commutative, we cannot follow the proof of the Euclidean counterpart, see \cite[Section 3]{AlbMar}

\begin{definizione}
A function $f:\GG\to \HH$ is said to have \emph{derivative} $Df(x,\zeta)$ \emph{at} $x$ \emph{along} $\zeta\in\GG$ if the following limit exists:
$$Df(x,\zeta)=\lim_{r\to 0}\delta_{1/r}(f(x)^{-1}*f(x\delta_r(\zeta)))\in\GG.$$
Furthermore, $f$ is said to be \emph{differentiable at the point} $x\in\mathbb{G}$ \emph{along} $\zeta\in\GG$, if $Df(x,\zeta)$ and $Df(x,\zeta^{-1})$ exist and $Df(x,\zeta)^{-1}=Df(x,\zeta^{-1})$. 
\end{definizione}

\begin{osservazione}\label{oss:hom}
Note that if $Df(x,\zeta)$ exists, then for any $\lambda>0$ the derivative $D(f,\delta_\lambda(\zeta))$ of $f$ along $\delta_{\lambda}(\zeta)$ at $x$ exists and:
$$Df(x,\delta_\lambda(\zeta))=\delta_\lambda Df(x,\zeta).$$
Indeed:
$$\lim_{t\to 0}\delta_{1/r}(f(x)^{-1}f(x\delta_{\lambda r}(\zeta)))=\delta_{\lambda}\bigg(\lim_{r\to 0}\delta_{1/\lambda r}(f(x)^{-1}f(x\delta_{\lambda r}(\zeta)))\bigg)=\delta_\lambda(Df(x,\zeta)).$$
\end{osservazione}

\begin{proposizione}\label{prop:diff_impl_direct}
Let $\mu$ be a Radon measure on $\GG$ and $\zeta:\GG\to\GG$ be a Borel map such that any Lipschitz map $f:\GG\to \HH$ is differentiable $\mu$-almost everywhere along $\zeta(\cdot)$. Finally let $B$ be any $\mu$-positive Borel subset of $\supp(\mu)$. 
Then, for $\mu$-almost every $x\in B$, there exists a $t(x)>0$ and a map $x(\cdot): (-t(x),t(x))\to B$ such that:
\begin{equation}
\lim_{t\to 0}\frac{d_c(x(t),x*\delta_t(\zeta(x)))}{t}=0.
\label{eq:mer2}
\end{equation}
\end{proposizione}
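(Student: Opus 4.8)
The plan is to reduce everything to one geometric fact about $B$: for $\mu$-almost every $x\in B$,
\[
\lim_{t\to 0}\frac{\dist\big(x*\delta_t(\zeta(x)),B\big)}{|t|}=0 .
\]
Granting this, and assuming $B$ is closed (which one arranges at the very end via inner regularity of $\mu$, writing $B$ as a $\mu$-null set plus a countable union of compacta $C_k\subseteq B$ and running the argument with each $C_k$ in place of $B$; the resulting curves then take values in $B$), the conclusion is immediate: since $(\GG,d_c)$ is proper (Remark \ref{rk.norm}) and $B$ is closed, for $t\neq0$ one may pick $x(t)\in B$ realizing $\dist(x*\delta_t(\zeta(x)),B)$, set $x(0):=x$ and $t(x):=1$, and then $d_c(x(t),x*\delta_t(\zeta(x)))/t\to0$, which is \eqref{eq:mer2}.

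To prove the displayed limit I would test the hypothesis against $g:=\dist(\cdot,B)$, which is $1$-Lipschitz for $d_c$, nonnegative, and vanishes on $B$. Since the hypothesis only supplies differentiability of maps into the fixed Carnot group $\HH$, I first lift $g$ to $\HH$: fixing a unit vector $e$ in the first layer of $\HH$ and putting $f(x):=\exp(g(x)e)$, the vanishing of all iterated brackets of $e$ with itself gives $f(a)^{-1}*f(b)=\exp((g(b)-g(a))e)$, hence $d_\HH(f(a),f(b))=c\,|g(a)-g(b)|$ for some $c>0$, so $f$ is Lipschitz from $\GG$ to $\HH$. By hypothesis $f$ is then differentiable along $\zeta$ at $\mu$-almost every point, and it is at such points $x\in B$ that I argue.

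Fix such an $x\in B$. Using $g(x)=0$, for $r>0$ one computes $\delta_{1/r}(f(x)^{-1}*f(x*\delta_r(\zeta(x))))=\exp(r^{-1}g(x*\delta_r(\zeta(x)))\,e)$, so existence of $Df(x,\zeta(x))$ forces the scalar limit $a:=\lim_{r\to0^+}r^{-1}g(x*\delta_r(\zeta(x)))$ to exist, with $a\ge0$ since $g\ge0$; likewise $Df(x,\zeta(x)^{-1})=\exp(be)$ with $b:=\lim_{r\to0^+}r^{-1}g(x*\delta_r(\zeta(x)^{-1}))\ge0$. The defining requirement $Df(x,\zeta(x))^{-1}=Df(x,\zeta(x)^{-1})$ reads $\exp(-ae)=\exp(be)$, i.e.\ $b=-a$; together with $a,b\ge0$ this forces $a=b=0$. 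Finally, since $\delta_{-1}(v)=v^{-1}$ one has $\delta_{-|t|}(\zeta(x))=\delta_{|t|}(\zeta(x)^{-1})$, so the two one-sided conclusions $a=0$ and $b=0$ assemble into the displayed two-sided limit.

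The argument is soft; the one point requiring genuine care is that the \emph{full} notion of differentiability along $\zeta$ — in particular the condition $Df(x,\zeta)^{-1}=Df(x,\zeta^{-1})$ linking the two one-sided derivatives — must be used. A one-sided directional derivative of $\dist(\cdot,B)$ at a point of $B$ can perfectly well vanish while the opposite one is strictly positive; only the symmetric condition lets the positivity trick ($g\ge0\Rightarrow a,b\ge0$, then $b=-a$) close off both sides at once, which is exactly what the two-sided statement \eqref{eq:mer2} demands. The remaining ingredients — the reduction to closed $B$, properness of $(\GG,d_c)$, and existence of nearest points — are routine, and no measurability of $t\mapsto x(t)$ is needed for the statement as phrased.
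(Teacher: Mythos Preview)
Your proof is correct and follows essentially the same approach as the paper's. The only cosmetic differences are that the paper reduces to $\HH=\R$ at the outset (you instead lift the real-valued function into $\HH$ via $t\mapsto\exp(te)$, which is equivalent) and that the paper uses the variant $g(x)=\inf\{r>0:\mu(B(x,r)\cap B)>0\}$ directly for Borel $B$ rather than first reducing to closed $B$ and then taking $g=\dist(\cdot,B)$; in both cases the crux is exactly your observation that nonnegativity of $g$ together with $g(x)=0$ and the symmetry condition $Df(x,\zeta)^{-1}=Df(x,\zeta^{-1})$ force both one-sided directional derivatives to vanish.
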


\begin{proof}
Since in any Carnot group there is an isometrically embedded copy of $\R$ if we prove the claim for $\HH=\R$, the result follows in full generality. The first step of the proof is to show that the function
$g(x):=\inf\{r>0:\mu(B(x,r)\cap B)>0\}$ is a non-negative $1$-Lipschitz function. Let $x,y\in \GG$ and note that $B(y,r)\subseteq B(x,r+d(x,y))$. Therefore, for any $\varepsilon>0$ we have:
\begin{equation}
\mu(B(x,g(y)+d(x,y)+\varepsilon)\cap B)\geq \mu(B(y,g(y)+\varepsilon)\cap B)>0.
\label{eq:mer1}
\end{equation}
Inequality \eqref{eq:mer1} implies that $g(x)\leq g(y)+d(x,y)$ and thus, interchaning $x$ and $y$, the claim is proved.

Suppose by contradiction that there is a $\mu$-positive compact set $K\subseteq B$ for which \eqref{eq:mer2} fails $\mu$-almost everywhere on $K$. This means that for $\mu$-almost every $x\in K$ there is an infinitesimal sequence $s_i(x)$ and a $\lambda(x)>0$ such that:
\begin{equation}
\mathrm{dist}(x*\delta_{s_i(x)}(\zeta(x)),B)\geq \lambda(x) s_i(x)\qquad \text{for any }i\in\mathbb{N}.
\label{eq:mer5}
\end{equation}

First of all, let us show that $g(z)=0$ for $\mu$-almost every $z\in K$. If this is not the case, we can find a $\mu$-positive subcompact $\tilde K$ such that $g>0$ on $\tilde K$. Then, by definition of $g$, for any $z\in \tilde K$ we can find a $r(z)>0$ such that $\mu(\mathrm{int} \big(B(x,r(x)\big)\cap B)=0$. However, since $\tilde K$ is compact, we can find a finitely many $z_i\in K$, with $i=1,\ldots,N$ such that 
$$\tilde{K}\subseteq \bigcup_{i=1}^N B(z_i,r_i).$$
This implies that
$$\mu(\tilde{K})=\mu(\tilde{K}\cap B)\leq \sum_{i=1}^N\mu(B(z_i,r(z_i))\cap B)=0,$$
which is in contradiction with the fact that $\tilde{K}$ had positive measure. This concludes the proof of the fact that $g(z)=0$ for $\mu$-almost every $z\in K$.
In order to discuss why \eqref{eq:mer5} is false, we shall fix a $z\in K$ where $g(z)=0$ and note that \eqref{eq:mer5} implies that
\begin{equation}
\limsup_{r\to 0}\frac{\mathrm{dist}\big(z*\delta_{r}(\zeta(z)),B\big)}{\lvert r\rvert}\geq \eta(z),
\label{eq:mer3}
\end{equation}
for some positive $\eta(z)>0$ depending on $z$. We can also assume without loss of generality that $g$ is differentiable along $\zeta(z)$ at $z$. This implies that 
$$Dg(z,\zeta(z))=\lim_{r\to 0}\frac{g(z*\delta_r(\zeta(z)))-g(z)}{\lvert r\rvert}=\limsup_{r\to 0}\frac{g(z*\delta_r(\zeta(z)))}{\lvert r\rvert}\geq \eta(z).$$ This, together with the fact that $g$ is non-negative implies that $g$ cannot be differentiable along $\zeta(z)$ at $z$, since the identity $Dg(z,\zeta(z))=-Dg(z,\zeta(z)^{-1})$ cannot be satisfied even if both $Dg(z,\zeta(z))$ and $Dg(z,\zeta(z)^{-1})$ existed. The Borel regularity of the measure $\mu$ yields the desired conclusion.
\end{proof}

\begin{proposizione}\label{composition}
Suppose $\mu$ is a Radon measure on $\GG$ and assume $\zeta_1,\zeta_2:\GG\to \GG$ are two Borel vector fields such that every $f\in\mathrm{Lip}(\GG,\HH)$ is differentiable along both $\zeta_1(x)$ and $\zeta_2(x)$ for $\mu$-almost every $x\in \GG$. Then, $\mu$-almost everywhere, every $f$ is differentiable along $\zeta_{i_1}(x)^{\beta_1}\zeta_{i_2}(x)^{\beta_2}$, where $i_j\in\{1,2\}$ and $\beta_j\in\{\pm1\}$ as $j=1,2$. Furthermore, we have:
\begin{equation}
    Df(x,\zeta_{i_1}(x)^{\beta_1}\zeta_{i_2}(x)^{\beta_2})=Df(x,\zeta_{i_1}(x))^{\beta_1}Df(x,\zeta_{i_2}(x))^{\beta_2}.
    \label{hom:condition}
\end{equation}
\end{proposizione}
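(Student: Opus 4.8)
The plan is to fix $f\in\mathrm{Lip}(\GG,\HH)$ and first reduce the statement. It suffices to show that for $\mu$-a.e.\ $x$ the limit defining $Df(x,\zeta_{i_1}(x)^{\beta_1}\zeta_{i_2}(x)^{\beta_2})$ exists and equals the right-hand side of \eqref{hom:condition}: indeed $(\zeta_{i_1}(x)^{\beta_1}\zeta_{i_2}(x)^{\beta_2})^{-1}=\zeta_{i_2}(x)^{-\beta_2}\zeta_{i_1}(x)^{-\beta_1}$ is again of the admissible form, so applying \eqref{hom:condition} to this inverse product together with $Df(x,\zeta_i(x)^{-1})=Df(x,\zeta_i(x))^{-1}$ (valid by the assumed differentiability along $\zeta_i$) yields $Df(x,w^{-1})=Df(x,w)^{-1}$ for $w=\zeta_{i_1}(x)^{\beta_1}\zeta_{i_2}(x)^{\beta_2}$, i.e.\ differentiability along $w$. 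After relabelling and, where needed, replacing $\zeta_{i_j}$ by $\zeta_{i_j}^{-1}$, I may assume $\beta_1=\beta_2=1$; write $\zeta:=\zeta_{i_1}$, $\eta:=\zeta_{i_2}$ (the case $i_1=i_2$ is identical) and treat only the limit $t\to0^+$, the case $t\to0^-$ being the former one for the inverse product.

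The computation starts from an algebraic identity: since $\delta_{1/t}$ is a group automorphism of $\HH$ and $\delta_t(\zeta(x)\eta(x))=\delta_t(\zeta(x))\delta_t(\eta(x))$, for $t>0$
\[
\delta_{1/t}\big(f(x)^{-1}f(x\delta_t(\zeta(x))\delta_t(\eta(x)))\big)=\delta_{1/t}\big(f(x)^{-1}f(x\delta_t(\zeta(x)))\big)\cdot\delta_{1/t}\big(f(y_t)^{-1}f(y_t\delta_t(\eta(x)))\big),\qquad y_t:=x\delta_t(\zeta(x)).
\]
By hypothesis the first factor tends to $Df(x,\zeta(x))$ for $\mu$-a.e.\ $x$, so everything reduces to proving that the second factor tends to $Df(x,\eta(x))$ for $\mu$-a.e.\ $x$. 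This is the heart of the matter, and the difficulty is that $y_t$ is a moving base point, lying on a curve that need not meet the "good set" of $f$ in a set of positive measure, while in a non-commutative group one cannot freely move base points of directional increments.

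To handle the second factor I would choose a Borel set $B$ with $\mu(\GG\setminus B)=0$ on which $f$ is differentiable along $\zeta(\cdot)$ and along $\eta(\cdot)$, then pass to subsets of positive measure (working along an exhausting sequence of such sets, so as to recover a $\mu$-full conclusion at the end) using Lusin's theorem to make $\eta|_B$ and $x\mapsto Df(x,\eta(x))|_B$ continuous, and Egorov's theorem to make the convergence $\delta_{1/r}(f(y)^{-1}f(y\delta_r(\eta(y))))\to Df(y,\eta(y))$ uniform for $y\in B$. Proposition~\ref{prop:diff_impl_direct}, applied with this $B$ and the field $\zeta$, produces for $\mu$-a.e.\ $x\in B$ a map $x(\cdot)\colon(-t(x),t(x))\to B$ with $d_c(x(t),y_t)=o(t)$; put $\tilde y_t:=x(t)\in B$. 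Then $\tilde y_t\to x$ and $d_c(\tilde y_t,x)=O(t)$, so $\eta(\tilde y_t)\to\eta(x)$ and $Df(\tilde y_t,\eta(\tilde y_t))\to Df(x,\eta(x))$, while $f(\tilde y_t)=f(x)w_t$ and $f(\tilde y_t\delta_t(\eta(\tilde y_t)))=f(x)w_t'$ with $\|w_t\|_{\HH},\|w_t'\|_{\HH}=O(t)$, hence $\delta_{1/t}\big(f(\tilde y_t)^{-1}f(\tilde y_t\delta_t(\eta(\tilde y_t)))\big)=\delta_{1/t}(w_t^{-1}w_t')$. Comparing this with the second factor, one writes $f(y_t)=f(x)w_tp_t$, $f(y_t\delta_t(\eta(x)))=f(x)w_t'q_t$ with $\|p_t\|_{\HH},\|q_t\|_{\HH}=o(t)$ (by Lipschitzness of $f$, since $d_c(y_t,\tilde y_t)=o(t)$ and, via left-invariance and the conjugation identity $d_c(ag,bg)=\|g^{-1}(a^{-1}b)g\|$ with $g=\delta_t(\eta(x))$ of norm $O(t)$, also $d_c(y_t\delta_t(\eta(x)),\tilde y_t\delta_t(\eta(\tilde y_t)))=o(t)$, using in addition $d_c(\delta_t(\eta(x)),\delta_t(\eta(\tilde y_t)))=t\,d_c(\eta(x),\eta(\tilde y_t))=o(t)$). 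After the $f(x)$-factors cancel, the "difference" of the two increments is a product of terms of $\HH$-norm $o(t)$ with conjugates of $o(t)$-elements by elements of $\HH$-norm $O(t)$; since by the Baker--Campbell--Hausdorff formula the commutator of an element of $\HH$-norm $o(t)$ with one of $\HH$-norm $O(t)$ has $\HH$-norm $o(t)$ (a refinement of Lemma~\ref{lem:EstimateOnConjugate} in which the conjugating element is itself small rather than merely bounded), that difference has $\HH$-norm $o(t)$ and becomes $o(1)$ after $\delta_{1/t}$. Thus the second factor has the same limit as $\delta_{1/t}(w_t^{-1}w_t')$, which by the uniform convergence on $B$ and the continuity of $Df(\cdot,\eta(\cdot))|_B$ equals $\lim_{t\to0^+}Df(\tilde y_t,\eta(\tilde y_t))=Df(x,\eta(x))$. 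This proves \eqref{hom:condition}, and the differentiability statement then follows as in the first paragraph.

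The step I expect to be the main obstacle is precisely this base-point replacement. In the Euclidean setting the analogous fact — that translating the base point of a directional increment by a small amount perturbs it by a comparable amount — is immediate and can, moreover, be averaged away by a Fubini argument over a whole family of lines; here Proposition~\ref{prop:diff_impl_direct} yields only a single curve through $\mu$-a.e.\ point, and the non-commutativity forces control of conjugations, which in a step-$\mathfrak s$ group are only H\"older continuous. The argument goes through only because the $f(x)$-contributions cancel and the surviving conjugations are by elements that vanish (of order $t$) or tend to $0$, so that no H\"older loss is incurred at the scale $t$ at which one divides by $\delta_{1/t}$. Making this bookkeeping precise, and checking the measurability needed to invoke Lusin, Egorov and Proposition~\ref{prop:diff_impl_direct}, is the bulk of the work.
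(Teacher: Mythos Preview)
Your proof is correct and follows essentially the same strategy as the paper: both split the incremental ratio of the product into two factors, use Lusin and Egorov to pass to a set $K_1$ (your $B$) on which the relevant maps are continuous and the directional convergence is uniform, invoke Proposition~\ref{prop:diff_impl_direct} to replace the moving base point $y_t=x\delta_t(\zeta(x))$ by a point $x(t)\in K_1$ at distance $o(t)$, and then control the resulting error terms via the Lipschitz property together with the conjugation estimate (your ``refinement'' of Lemma~\ref{lem:EstimateOnConjugate} is exactly what the paper obtains by rescaling, cf.\ its equation \eqref{eq:n2}). The only cosmetic differences are that the paper treats the two-sided limit directly rather than reducing to $t\to0^+$, and organizes the error terms as a product (I)*(II)*(III)*(IV) rather than via your $w_t,w_t',p_t,q_t$ bookkeeping.
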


\begin{proof}Without loss of generality we can assume that the measure $\mu$ is supported on a compact set $K$. Therefore, thanks to
Severini-Egoroff's theorem and Lusin's theorem we can find a compact set $K_1$ such that:
\begin{itemize}
    \item[(i)]$\mu(K\setminus K_1)\leq \varepsilon\mu(K)$,
    \item[(\hypertarget{unif}{ii})] the incremental ratios
    $Rf(x,\zeta_i(x);t):=\delta_{1/t}(f(x)^{-1}f(x*\zeta_i(x)))$
    converge uniformly to $Df(x,\zeta_i)$ on $K_1$ as $t$ goes to $0$ for $i=1,2$,
    \item[(\hypertarget{continuity}{iii})]the maps $\zeta_i(\cdot)$ and $Df(x,\zeta_i(x)^\beta)$ are continuous on $K_1$ for any $i=1,2$ and $\beta\in\{\pm1\}$.
\end{itemize}
Let $\beta_1,\beta_2\in\{\pm 1\}$ and $i_1,i_2\in\{1,2\}$ and note that:
\begin{equation}
Rf(x,\zeta_{i_1}(x)^{\beta_1}\zeta_{i_2}(x)^{\beta_2};t)=Rf(x,\zeta_{i_1}(x)^{\beta_1};t)*Rf(x*\delta_t(\zeta_{i_1}(x)),\zeta_{i_2}(x)^{\beta_2};t).
\label{eq:n5}
\end{equation}
By (\hyperlink{unif}{ii}) we immediately infer that $\lim_{t\to0}Rf(x,\zeta_{i_1}^{\beta_1};t)=Df(x,\zeta_{i_1}(x))$. This implies in particular that in order to conclude the proof of the proposition, we just need to show that:
\begin{equation}
    \lim_{r\to 0}Rf(x*\delta_t(\zeta_{i_1}(x))^{\beta_1},\zeta_{i_2}(x)^{\beta_2};t)=Df(x,\zeta_{i_2}(x)^{\beta_2}).
    \label{eq:finalew}
\end{equation}
Thanks to Proposition \ref{prop:diff_impl_direct}, for $\mu$-almost every $x\in K_1$ we can find a map $x(t)$ taking values in $K_1$ for which:
\begin{equation}
\lim_{t\to 0}\frac{d_c(x(t),x\delta_t(\zeta_{i_1}(x)^{\beta_1}))}{t}=0.
    \label{eq:n3}
\end{equation}
With the aid of the map $x(t)$, we can rewrite $Rf(x\delta_t(\zeta_{i_1}(x))^{\beta_1},\zeta_{i_2}(x)^{\beta_2};t)$ as follows
\begin{equation}
\begin{split}
   &\qquad\qquad\qquad\qquad\qquad\qquad\qquad\qquad Rf(x*\delta_t(\zeta_{i_1}(x))^{\beta_1},\zeta_{i_2}(x)^{\beta_2};t)\\
    =&\underbrace{\delta_{1/t}\Big(f\big(x*\delta_t(\zeta_{i_1}(x)^{\beta_1})\big)^{-1}f(x(t))\Big)}_{\text{(I)}}* \underbrace{Rf(x(t),\zeta_{i_2}(x)^{\beta_2};t)}_{\text{(II)}}* \underbrace{\delta_{1/t}\Big(f\big(x(t)*\delta_t(\zeta_{i_2}(x)^{\beta_1}\big)^{-1}f(x*\delta_{t}(\zeta_{i_1}(x)^{\beta_1}*\zeta_{i_2}(x)^{\beta_2})\Big)}_{\text{(III)}}.
    \label{ingrassia}
\end{split}
\end{equation}
Let us separately estimate the norm of the terms (I), (II) and (III). Thanks to the Lipschitzianity of $f$, we deduce that:
\begin{equation}
    \frac{\lim_{t\to 0}\lVert \text{(I)}\rVert}{\text{Lip}(f)}\leq \lim_{t\to 0} \frac{d_c(x*\delta_t(\zeta_{i_1}(x)^{\beta_1}),x(t))}{t}=0.\label{eq:n1}
\end{equation}
Furthermore, thanks to \eqref{eq:n3} we infer that:
\begin{equation}
    \begin{split}
         \frac{\lim_{t\to 0}\lVert \text{(III)}\rVert}{\text{Lip}(f)}\leq& \lim_{t\to 0} \frac{d\big(x(t)*\delta_t(\zeta_{i_2}(x)^{\beta_2}),x*\delta_{t}(\zeta_{i_1}(x)^{\beta_1}*\zeta_{i_2}(x)^{\beta_2})\big)}{t}\\
     =&\lim_{t\to 0} \lVert \zeta_{i_2}(x)^{-\beta_2}\delta_{1/t}\big(x(t)^{-1}*x*\delta_t(\zeta_{i_1}(x)^{\beta_1})\big)*\zeta_{i_2}(x)^{\beta_2}\rVert=0.
     \label{eq:n2}
    \end{split}
\end{equation}
Finally, we can rewrite (II) in the following convenient way:
\begin{equation}
        \text{(II)}=Rf(x(t),\zeta_{i_2}(x(t))^{\beta_2};t)*\underbrace{\delta_{1/t}(f(x(t)*\delta_t(\zeta_{i_2}(x(t))^{\beta_2})^{-1}*f(x(t)*\delta_t(\zeta_{i_2}(x)^{\beta_2}))}_{\text{(IV)}}.
    \label{ciccio}
\end{equation}
Thanks to (\hyperlink{unif}{ii}) and the fact that $x(t)\in K_1$, for any $\varepsilon>0$ there exists a $t_\varepsilon>0$ such that:
$$\lVert Df(x(t),\zeta_{i_2}(x(t))^{\beta_2})^{-1} Rf(x(t),\zeta_{i_2}(x(t))^{\beta_2};t)\rVert\leq \varepsilon,$$
for any $\lvert t\rvert\leq t_\varepsilon$. Finally, the Lipschitzianity of $f$ and (\hyperlink{continuity}{iii}) imply that:
\begin{equation}
    \lim_{t\to 0}\lVert \text{(IV)}\rVert\leq \text{Lip}(f)\lim_{t\to 0}\lVert\zeta_{i_2}(x(t))^{-\beta_2}\zeta_{i_2}(x)^{\beta_2}\rVert=0.\label{pippofranco}
\end{equation}
    Putting together the information we gathered, we infer that:
    \begin{equation}
    \begin{split}
         &\qquad\qquad\qquad \qquad\limsup_{t\to 0}\lVert Df(x,\zeta_{i_2}(x)^{\beta_2})^{-1}*Rf(x*\delta_t(\zeta_{i_1}(x)),\zeta_{i_2}(x)^{\beta_2};t)\rVert\\
        &\underset{\eqref{ingrassia},\eqref{ciccio}}{=}\limsup_{t\to 0}\lVert Df(x,\zeta_{i_2}(x)^{\beta_2})^{-1}*\mathrm{(I)}*Df(x,\zeta_{i_2}(x)^{\beta_2})*Df(x,\zeta_{i_2}(x)^{\beta_2})^{-1}*Rf(x(t),\zeta_{i_2}(x(t))^{\beta_2};t)*\mathrm{(IV)}*\mathrm{(III)}\rVert\\
        &\qquad\qquad\qquad\qquad\underset{\eqref{eq:n1},\eqref{eq:n2},\eqref{pippofranco}}{=}\limsup_{t\to 0}\lVert Df(x,\zeta_{i_2}(x)^{\beta_2})^{-1}*Rf(x(t),\zeta_{i_2}(x(t))^{\beta_2};t)\rVert\\
        &\leq\limsup_{t\to 0}\lVert Df(x,\zeta_{i_2}(x)^{\beta_2})^{-1}*Df(x(t),\zeta_{i_2}(x(t))^{\beta_2})\rVert+\lVert Df(x(t),\zeta_{i_2}(x(t))^{\beta_2})^{-1}*Rf(x(t),\zeta_{i_2}(x(t))^{\beta_2};t)\rVert\leq \epsilon,
            \nonumber
    \end{split}
    \end{equation}
where in the last identity we also used Lemma \ref{lem:EstimateOnConjugate} and where the last inequality above comes from (\hyperlink{continuity}{iii}). The arbitrariness of $\varepsilon$ concludes the proof.
\end{proof}

\begin{teorema}\label{th:main2.1}
Suppose $\mathscr{D}$ is a finite family of Borel maps $\zeta:\GG\to \GG$ such that any $f\in \mathrm{Lip}(\GG,\HH)$ is differentiable at $\mu$-almost every $x\in\GG$ along $\zeta(x)$. Then, every Lipschitz map $f\in \mathrm{Lip}(\GG,\HH)$ is Pansu differentiable with respect to the subgroup $\mathfrak{S}(\{\zeta(x):\zeta\in\mathscr{D}\})$ for $\mu$-almost any $x\in\GG$ .
\end{teorema}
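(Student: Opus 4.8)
The plan is to build, at $\mu$-almost every $x$, the candidate Pansu differential $L_x$ on the subgroup $W(x):=\mathfrak{S}(\{\zeta(x):\zeta\in\mathscr{D}\})$ out of the one-dimensional directional derivatives along the generators $\zeta\in\mathscr{D}$, and then to upgrade this pointwise directional data to the uniform-in-direction estimate required by Definition~\ref{def:Pansu_diff_sub} via a blow-up argument centred at $x$. Fix $f\in\Lip(\GG,\HH)$ and write $\mathscr{D}=\{\zeta_1,\dots,\zeta_k\}$. Call a \emph{rational word} a finite formal product $w=\delta_{r_1}(\zeta_{i_1})*\cdots*\delta_{r_N}(\zeta_{i_N})$ with $N\in\N$, $i_m\in\{1,\dots,k\}$ and $r_m\in\mathbb{Q}$; there are countably many such words, each $x\mapsto w(x)$ is a Borel map $\GG\to\GG$, and by Remark~\ref{oss:hom} every Lipschitz map is differentiable $\mu$-a.e.\ along each $\delta_r(\zeta_i)(\cdot)$. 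Applying Proposition~\ref{composition} inductively on the word length — at the $m$-th step to the pair of Borel fields $\delta_{r_1}(\zeta_{i_1})*\cdots*\delta_{r_{m-1}}(\zeta_{i_{m-1}})$ and $\delta_{r_m}(\zeta_{i_m})$, along both of which every Lipschitz map has just been shown to be differentiable $\mu$-a.e.\ — and taking the union over the countably many rational words of the (finitely many per word) exceptional sets, I would obtain a $\mu$-null set $N_f$ such that for every $x\notin N_f$ and every rational word $w$, $f$ is differentiable at $x$ along $w(x)$ with
\[
Df(x,w(x))=\delta_{r_1}\big(Df(x,\zeta_{i_1}(x))\big)*\cdots*\delta_{r_N}\big(Df(x,\zeta_{i_N}(x))\big).
\]
What makes the restriction to rational parameters harmless is that $Df(x,v)$ is an intrinsic function of the element $v\in\GG$ and not of the word representing it, so the right-hand side above depends only on $w(x)$; this is exactly what lets one pass from the uncountable family of all formal products to the countable subfamily of rational words without spoiling $\mu$-negligibility of the exceptional set.

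Then I would assemble $L_x$. Set $D_x:=\{w(x):w\text{ a rational word}\}$, a countable subgroup of $W(x)$ which, by the definition of $\mathfrak{S}$ in Definition~\ref{definitiongenerazione} together with joint continuity of the group operation and of $(\lambda,v)\mapsto\delta_\lambda v$, is dense in $W(x)$. For $x\notin N_f$ define $L_x\colon D_x\to\HH$ by $L_x(w(x)):=Df(x,w(x))$; the displayed formula shows this is well posed, multiplicative, and homogeneous for rational scalars. Since $\|Df(x,c)\|_{\HH}=\lim_{r\to0^+}r^{-1}d_{\HH}(f(x),f(x\delta_r(c)))\le\Lip(f)\|c\|_{\GG}$ for every $c$, combining with $L_x(b)^{-1}*L_x(a)=L_x(b^{-1}a)=Df(x,b^{-1}a)$ yields, for $a,b\in D_x$,
\[
d_{\HH}\big(L_x(a),L_x(b)\big)=\|Df(x,b^{-1}a)\|_{\HH}\le\Lip(f)\,\|b^{-1}a\|_{\GG}=\Lip(f)\,d_c(a,b).
\]
Hence $L_x$ is $\Lip(f)$-Lipschitz on the dense set $D_x$ and extends uniquely to a $\Lip(f)$-Lipschitz map $L_x\colon W(x)\to\HH$, which by continuity of the group operations and of the dilations is again a homogeneous homomorphism.

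The remaining, and only genuinely delicate, step is to verify that for $\mu$-a.e.\ $x$ this $L_x$ is the Pansu differential, i.e.\ $d_{\HH}\big(f(x)^{-1}*f(xh),L_x(h)\big)=o(\|h\|_{\GG})$ as $h\to0$ inside $W(x)$; here I would argue by contradiction and blow up at a fixed $x\notin N_f$. If the estimate fails, there are $c_0>0$ and $h_j\to0$ in $W(x)$ with $d_{\HH}\big(f(x)^{-1}*f(xh_j),L_x(h_j)\big)\ge c_0\|h_j\|_{\GG}$. Put $r_j:=\|h_j\|_{\GG}$, $v_j:=\delta_{1/r_j}(h_j)\in W(x)$ with $\|v_j\|_{\GG}=1$, and set $g_j(v):=\delta_{1/r_j}\big(f(x)^{-1}*f(x\delta_{r_j}(v))\big)$. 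Left invariance and homogeneity of $d_c$ and $d_{\HH}$, together with the Lipschitz bound on $f$, make each $g_j$ a $\Lip(f)$-Lipschitz map with $g_j(0)=0$; since $W(x)$ is a closed subgroup of $\GG$, the set $W(x)\cap\barB(0,1)$ is compact, so Arzel\`{a}--Ascoli gives a subsequence along which $g_j\to g_\infty$ uniformly on $W(x)\cap\barB(0,1)$. For every $c'\in D_x$ one has $g_j(c')=\delta_{1/r_j}\big(f(x)^{-1}*f(x\delta_{r_j}(c'))\big)\to Df(x,c')=L_x(c')$, because $r_j\to0^+$ and $f$ is differentiable at $x$ along $c'$; since $D_x\cap B(0,1)$ is dense in $W(x)\cap\barB(0,1)$ (approximate $w$ by $\delta_s(w)$ as $s\to1^-$, then by elements of $D_x$), the limit $g_\infty$ coincides with $L_x$ on $W(x)\cap\barB(0,1)$. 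But, using $h_j=\delta_{r_j}(v_j)$ and homogeneity of $L_x$,
\[
d_{\HH}\big(g_j(v_j),L_x(v_j)\big)=\frac{1}{r_j}\,d_{\HH}\big(f(x)^{-1}*f(xh_j),L_x(h_j)\big)\ge c_0,
\]
while $d_{\HH}\big(g_j(v_j),L_x(v_j)\big)=d_{\HH}\big(g_j(v_j),g_\infty(v_j)\big)\le\sup_{W(x)\cap\barB(0,1)}d_{\HH}(g_j,g_\infty)\to0$, a contradiction. Therefore $f$ is Pansu differentiable with respect to $W(x)=\mathfrak{S}(\{\zeta(x):\zeta\in\mathscr{D}\})$ at every $x\notin N_f$, and $f\in\Lip(\GG,\HH)$ being arbitrary this proves the theorem.

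The hard part will be precisely this last step: directional differentiability along a prescribed set of directions is not a formal consequence for ordinary Lipschitz functions, and the blow-up closes the gap only because the rescalings $g_j$ are uniformly Lipschitz (so a subsequential limit exists) and the candidate differential $L_x$, being a continuous homogeneous homomorphism, is already pinned down on the dense subgroup $D_x$, forcing every blow-up limit to coincide with $L_x$ on all of $W(x)$. The subsidiary points to watch are the measure-theoretic bookkeeping in the first step — legitimate only because $Df(x,\cdot)$ is intrinsic to the group element — and the routine Borel-measurability checks needed to invoke Proposition~\ref{composition} iteratively on the composite vector fields.
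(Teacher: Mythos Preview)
Your proof is correct and shares the same skeleton as the paper's: the first two steps — the inductive application of Proposition~\ref{composition} together with Remark~\ref{oss:hom} to the countable family of rational words, and the construction of the candidate $L_x$ by Lipschitz extension from the dense subgroup $D_x$ — are essentially identical to what the paper does.

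The genuine difference is in the last step. The paper argues directly, via a Cauchy/triangle-inequality estimate, that for each fixed $w\in W(x)$ the limit $Df(x,w)$ exists and that $w\mapsto Df(x,w)$ is a continuous homogeneous homomorphism; in other words it establishes the G\^{a}teaux-type conclusion (directional differentiability along every $w\in W(x)$, with the derivatives assembling into a homomorphism). You instead run a blow-up/Arzel\`{a}--Ascoli argument and obtain the Fr\'{e}chet-type, uniform-in-direction estimate of Definition~\ref{def:Pansu_diff_sub} directly. For Lipschitz maps these two conclusions are equivalent — precisely by the compactness argument you wrote out — so your Step~3 can be read as making explicit the upgrade that the paper leaves implicit. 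What your route buys is a clean match with the natural (uniform) reading of Definition~\ref{def:Pansu_diff_sub} and a self-contained justification of why ``directional plus homomorphism'' suffices; what the paper's route buys is a shorter, more elementary computation that never leaves the level of one-parameter limits.
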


\begin{proof}
Let $v:\GG\to\GG$ be a map for which there exists an $N\in\N$, $\rho_i\in\mathbb{Q}$ and $v_i\in\mathscr{D}$ with $i=1,\ldots,N$ such that
\begin{equation}
    v(x)=\delta_{\rho_1}(v_1(x))*\cdots*\delta_{\rho_N}(v_N(x)).
    \label{eq:vf}
\end{equation}
Let $\tilde{\mathfrak{S}}$ be the countable family of maps that satisfy identity \eqref{eq:vf} for some choice of $N$, $\rho_i$ and $v_i$ and let
$$\tilde{\mathfrak{S}}(x):=\{w\in\GG:\text{there exists a }v\in\tilde{\mathfrak{S}}\text{ such that }v(x)=w\}.$$
Proposition \ref{composition} and Remark \ref{oss:hom} immediately imply that for $\mu$-almost every $x\in \GG$ every Lipschitz map is differentiable along $v(x)$ whenever $v\in\tilde{\mathfrak{S}}$ and
\begin{equation}
    Df(x,u(x)v(x))=Df(x,u(x))Df(x,v(x))\qquad\text{for $\mu$-almost every $x\in\GG$ and any $u,v\in\tilde{\mathfrak{S}}$}.
    \label{id:composiz}
\end{equation}
In particular this can be rephrased as follows. For $\mu$-almost every $x\in \GG$, every Lipschitz map is differentiable along any $v\in\tilde{\mathfrak{S}}(x)$ and
$ Df(x,u*v)=Df(x,u)Df(x,v)$ for $\mu$-almost every $x\in\GG$ and any $u,v\in\tilde{\mathfrak{S}}(x)$.

The next step in the proof is to show that for $\mu$-almost every $x\in\GG$ and any $w\in\mathrm{cl}(\tilde{\mathfrak{S}}(x))$ every Lipschitz function is differentiable along $w$ at $x$.
Thanks to the choice of $w$ there exists a Cauchy sequence $\{w_i\}_{i\in\N}\subseteq \tilde{\mathfrak{S}}(x)$ such that for any $k\in\N$ there exists an $M\in \N$ such that for any $i,j\geq M$ we have $d_\GG(w_j,w_i)\leq 1/k$. Since $w_i^{-1}w_j\in\tilde{\mathfrak{S}}(x)$, thanks to \eqref{id:composiz} we infer that 
\begin{equation}
\begin{split}
     d_\HH(Df(x,w_i),Df(x,w_j))=&d_\HH(Df(x,w_i^{-1}*w_j),0)=\lim_{t\to 0} \frac{d_\HH\big(f(x)^{-1}*f(x*\delta_t(w_i^{-1}*w_j)),0\big)}{t}\\
     \leq& \mathrm{Lip}(f)d_\GG(w_j,w_i)\leq \mathrm{Lip}(f)/k.
     \label{cauchyder}
\end{split}
\end{equation}
for any $i,j\geq M$. The bound \eqref{cauchyder} shows that the sequence $\{Df(x,w_i)\}_{i\in\N}$ is Cauchy in $\HH$ and thus there exists an element of $\HH$, that we denote by $\mathfrak{d}f(x,w)$, such that $\lim_{i\to\infty}Df(x,v_i)=\mathfrak{d}f(x,v)$. However, for any $i\in\N$ we have
\begin{equation}
    \begin{split}
        &\qquad\qquad\qquad\limsup_{r\to 0}\frac{\lVert\delta_t(\mathfrak{d}f(x,w))^{-1}*f(x)^{-1}*f(x*\delta_t(w))\rVert_\HH}{\lvert t\rvert}\\
        \leq& \lVert \mathfrak{d}f(x,w_i)^{-1}*\mathfrak{d}f(x,w)\rVert_\HH+\limsup_{r\to 0}\frac{\lVert\delta_t(\mathfrak{d}f(x,w_i))^{-1}*f(x)^{-1}*f(x*\delta_t(w_i))\rVert_\HH}{\lvert t\rvert}\\
        &\qquad\qquad\qquad\qquad\qquad\quad\,\,+\limsup_{r\to 0}\frac{\lVert f(x*\delta_t(w_i))^{-1}*f(x*\delta_t(w)) \rVert_\HH}{\lvert t\rvert}\\
        \leq &\lVert \mathfrak{d}f(x,w_i)^{-1}*\mathfrak{d}f(x,w)\rVert_\HH+\mathrm{Lip}(f)d_\GG(w_i,w).
    \end{split}
\end{equation}
The arbitrariness of $i$ implies that
$$\mathfrak{d}f(x,w)=\lim_{r\to 0}\delta_{1/r}(f(x)^{-1}* f(x*\delta_r(w))=Df(x,w),$$
and this shows that $f$ is differentiable at $x$ along $w$. Note in particular that the above computations also prove that the function $Df(x,\cdot):\mathrm{cl}(\tilde{\mathfrak{S}}(x))\to\HH$ is continuous.

Since it can be easily seen that $\mathfrak{S}(\{v(x):v\in\mathscr{D}\})=\mathrm{cl}(\tilde{\mathfrak{S}}(x))$ for any $x\in\GG$, the only thing left to prove is that the map $v\mapsto Df(x,v)$ is a homogeneous homomorphism on $\mathrm{cl}(\tilde{\mathfrak{S}}(x))$. 
To do to this, let $v,w\in \mathrm{cl}(\tilde{\mathfrak{S}}(x))$ and let $v_i,w_i\subseteq \tilde{\mathfrak{S}}(x)$ be two sequence converging to $v$ and $w$ respectively. 
Since the sequence $v_i*w_i\in \text{cl}(\tilde{\mathfrak{S}}(x))$ converges to $v*w$, by the continuity of $Df(x,\cdot)$, we infer:
$$Df(x,v*w)=\lim_{i\to\infty}Df(x,v_i*w_i)=\lim_{i\to\infty}Df(x,v_i)*Df(x,w_i)=Df(x,v)*Df(x,w).$$
This concludes the proof, since the homogeneity of $Df(x,\cdot)$ is guaranteed by Remark \ref{oss:hom}.
\end{proof}

\begin{teorema}\label{th.differentiability}
Let $\mu$ be a Radon measure on $\GG$. Then, for any Carnot group $\HH$ and for $\mu$-almost every $x\in\GG$ every Lipschitz map $f\in\mathrm{Lip}(\GG,\HH)$ is differentiable along the subgroup $V(\mu,x)\in\Gr_\mathfrak{C}(\GG)$, the decomposability bundle of $\mu$ defined in Definition \ref{Dec-bundle}, for $\mu$-almost every $x\in \GG$.
\end{teorema}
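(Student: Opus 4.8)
The plan is to obtain the statement by assembling the two results of the present section, Lemma~\ref{LEMMAvector fieldS} and Theorem~\ref{th:main2.1}. Fix once and for all a Carnot group $\HH$ equipped with a homogeneous left-invariant distance. Applying Lemma~\ref{LEMMAvector fieldS} to $\mu$ produces $n_1$ Borel maps $\zeta_1,\dots,\zeta_{n_1}\colon\GG\to\GG$, with values $\mu$-a.e.\ in $V_1\cap V(\mu,\cdot)$, such that $V(\mu,x)=\mathfrak{S}(\{\zeta_1(x),\dots,\zeta_{n_1}(x)\})$ for $\mu$-a.e.\ $x$, and such that every $f\in\Lip(\GG,\HH)$ possesses the derivative $Df(x,\zeta_i(x))$ for each $i$ and $\mu$-a.e.\ $x$. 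A point worth stressing is that the maps $\zeta_i$ depend only on $\mu$ — they are built from the projections $\pi_{V(\mu,\cdot)}$ and from the horizontal normal currents furnished by Proposition~\ref{costruzionecampi} — so $\HH$ enters only through the (universal) differentiability statement, the corresponding exceptional $\mu$-null set being allowed to depend on $\HH$.

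Next I would verify that the finite family $\mathscr{D}:=\{\zeta_1,\dots,\zeta_{n_1}\}$ meets the hypothesis of Theorem~\ref{th:main2.1}, namely that each $f\in\Lip(\GG,\HH)$ is \emph{differentiable} along $\zeta_i(x)$ for $\mu$-a.e.\ $x$, in the two-sided sense that $Df(x,\zeta_i(x))$ and $Df(x,\zeta_i(x)^{-1})$ exist and are mutual inverses. This is built into the construction of Lemma~\ref{LEMMAvector fieldS}: the derivative along $\zeta_i$ is produced by restricting $f$ to the Lipschitz curves $\gamma^i_t$ of the Smirnov decomposition (Theorem~\ref{smirnov}) of $\mathbf T_i$, whose tangent is a positive multiple of $\zeta_i$; since $f\circ\gamma^i_t$ is then a Lipschitz curve in $\HH$ it has a two-sided Pansu derivative $\Leb^1$-a.e.\ (the analogue in $\HH$ of Lemma~\ref{lemma.monti2}), and comparing the increments of $f$ along the curve, from both sides, with those of $x\mapsto x*\delta_r(\zeta_i(x))$ — using $d_\GG(\gamma^i_t(s_0\pm r),x*\delta_{\pm r}(D\gamma^i_t(s_0)))=o(r)$ together with the Lipschitz bound on $f$ — yields two-sided differentiability along $\zeta_i(x)$ for $\Haus^1\trace\im(\gamma^i_t)$-a.e.\ $x$ and a.e.\ $t$, hence for $\mu$-a.e.\ $x$ through the representation $\mu_i=\int\rho_{t,i}\,\Haus^1\trace\im(\gamma^i_t)\,dt$. (Alternatively, this two-sided statement can simply be read off from the proof of Lemma~\ref{LEMMAvector fieldS}.)

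Once this is in place, Theorem~\ref{th:main2.1} applied to $\mathscr{D}$ gives that every $f\in\Lip(\GG,\HH)$ is Pansu differentiable with respect to the homogeneous subgroup $\mathfrak{S}(\{\zeta_1(x),\dots,\zeta_{n_1}(x)\})$ for $\mu$-a.e.\ $x$; by property~(i) of Lemma~\ref{LEMMAvector fieldS} this subgroup equals $V(\mu,x)$ $\mu$-a.e., and by Lemma~\ref{prop:V=carnot} it is a Carnot subgroup, so $V(\mu,x)\in\Gr_\mathfrak{C}(\GG)$. With $\HH$ fixed, the countably many exceptional sets (one per $\zeta_i$ and one from Theorem~\ref{th:main2.1}) combine into a single $\mu$-null set outside of which the conclusion holds for every $f\in\Lip(\GG,\HH)$ simultaneously, which is exactly the assertion.

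The real difficulties have been discharged already: Lemma~\ref{LEMMAvector fieldS} rests on manufacturing ``vector fields of universal differentiability'' spanning $V(\mu,\cdot)\cap V_1$ out of the horizontal normal currents of Proposition~\ref{costruzionecampi}, themselves built on the current-construction of Section~\ref{s4}; and Theorem~\ref{th:main2.1} rests on the noncommutative bookkeeping of Proposition~\ref{composition} needed to promote directional derivatives along a generating family to Pansu differentiability along the subgroup it generates, including the passage to the closure $\mathrm{cl}(\tilde{\mathfrak S}(x))$. The only points genuinely local to this proof that require care are the quantifier order — $\HH$ must be allowed to influence the exceptional null set while leaving the $\zeta_i$ untouched — and the upgrade from one-sided to two-sided derivatives sketched above.
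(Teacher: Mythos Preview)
Your proposal is correct and follows essentially the same route as the paper, which simply invokes Lemma~\ref{LEMMAvector fieldS} to produce the generating fields $\zeta_1,\dots,\zeta_{n_1}$ and then Theorem~\ref{th:main2.1} to upgrade directional derivatives to Pansu differentiability along $V(\mu,x)$. Your extra care in checking the two-sided differentiability hypothesis of Theorem~\ref{th:main2.1} is warranted, since the statement of Lemma~\ref{LEMMAvector fieldS} only says ``has derivative along'' while its proof does in fact deliver the two-sided version via the curves $\gamma^i_t$.
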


\begin{proof}
The Theorem follows immediately from Lemma \ref{LEMMAvector fieldS}, which guarantees that every Lipschitz function admits directional derivatives along a family of Borel vector fields $\zeta_1,\dots,\zeta_{n_1}:\GG\to\GG$ generating the decomposability bundle $V(\mu,x)$ at $\mu$-almost every  $x$, and Theorem \ref{th:main2.1} guarantees that these directional derivatives give rise to the Pansu differentiability with respect to the decomposability bundle.
\end{proof}

\begin{osservazione}
Here below we list some observation on Theorem \ref{th.differentiability} and its proof.
\begin{enumerate}
    \item With few modifications to the proofs, the statement of Theorem \ref{th.differentiability} can be strengthened to the following form.
    \begin{itemize}
        \item[]\emph{Let $\mu$ be a Radon measure on $\GG$ and $B\subseteq \GG$ be a Borel set. Then, for any \textbf{homogeneous} group $\HH$ and for $\mu$-almost every $x\in\GG$ every Lipschitz map $f\in\mathrm{Lip}(B,\HH)$ is differentiable along  $V(\mu,x)$, for $\mu$-almost every $x\in B$, i.e.
        $$\lim_{B\ni y\to x} \frac{\lVert df(x)[x^{-1}y]^{-1}f(x)^{-1}f(y)\rVert_\mathbb{H}}{d_c(x,y)}=0,$$
        for some homogeneous homomorphism $df(x):V\to \HH$.
        }
    \end{itemize}
    where here $\mathrm{Lip}(B,\HH)$ denotes the family of Lipschitz maps $f:B\subseteq \GG\to\HH$. This is a non-trivial extension as it is well known that maps between general Carnot groups do not enjoy any extension property, see for instance  \cite[Theorem 1]{MR3494181}.
    \item 
At this stage it is not clear whether the decomposability bundle constructed here is sharp in the sense that on the directions $v$ on $\mathbb{G}$ not contained in $V(\mu,x)$ there are Lipschitz function $f:\GG\to \R$ which are non-differentiable along $v$ at $x$, compare with \cite[Theorem 1.1(ii)]{AlbMar}. It seems however plausible that the same techniques employed in \cite{AlbMar} might yield the existence of a Lipschitz function $f\in\mathrm{Lip}(\GG,\R)$ such that $f$ is non differentiable along any $v\in \GG\setminus (V(\mu,x)\cup \textrm{exp}( V_2\oplus\ldots\oplus V_s))$ for $\mu$-almost every $x\in\GG$. This will be subject to further investigation.

Finally, it is a simple observation to note, see for instance \cite[Remark 1.2]{julia2021lipschitz}, that there are measures $\mu$ for which $V(\mu,x)$ is the largest subspace of differentiability for Lipschitz functions, in the following sense: if $V:\GG\to\Gr(\GG)$ is a Borel map such that for every Carnot group $\HH$ every $f\in\Lip(\GG,\HH)$ is differentiable $\mu$-a.e. along $V(x)$, then we have $V(x)\subseteq V(\mu,x)$ for $\mu$-almost every $x\in\GG$. 
\end{enumerate}
\end{osservazione}

\section{The reverse of Pansu's theorem}
\label{s7}

\subsection{Decompositions of a measure satisfying Pansu's Theorem}

\begin{definizione}\label{pansuproperty}
We say that a Carnot group $\GG$ endowed with a Radon measure $\mu$ has the \emph{Pansu property} with respect to a Carnot group $\mathbb{H}$ if for any Lipschitz function $f:\GG\to \HH$ and for $\mu$-almost every $x_0\in \GG$ there exists a homogeneous homomorphism $\text{d}f(x_0):\GG\to \HH$ such that
\begin{equation}
    \limsup_{x\to x_0}\frac{d_\mathbb{H}(f(x),f(x_0)*\text{d}f(x_0)[x_0^{-1}x])}{d_c(x,x_0)}=0.
    \label{def:panprop}
\end{equation}
\end{definizione}

\begin{osservazione}\label{rk:homreali}
If $D:\GG\to\R$ is a group homomorphism thanks to \cite[Proposition 2.5]{step2} for any $g\in\GG$ we have $D[g]=D[\pi_1(g)]$. 
\end{osservazione}

\begin{osservazione}\label{rk:pansuvsreale}
Suppose $\mu$ is a Radon measure on $\mathbb{G}$ with the Pansu property with respect to some Carnot group $\mathbb{H}$.
Let $g:\mathbb{G}\to \R$ be a Lipschitz map and $e$ be an element of the first layer $V_1$ of $\mathbb{G}$. It is easily seen that the map $f:\mathbb{G}\to \mathbb{H}$ defined as 
$f(x):=\delta_{g(x)}(e)$,
is Lipschitz and 
\begin{equation}
\begin{split}
       0=\limsup_{x\to x_0}\frac{\lVert\text{d}f(x_0)[x_0^{-1}x]^{-1}f(x_0)^{-1}f(x)\rVert_\mathbb{H}}{d_c(x,x_0)}=\limsup_{x\to x_0}\frac{\lVert\text{d}f(x_0)[x_0^{-1}x]^{-1}\delta_{g(x)-g(x_0)}(e)\rVert_\mathbb{H}}{d_c(x,x_0)}.
\end{split}
\end{equation}
This shows in particular that for any $v\in\GG$ we have
$$\lim_{r\to 0}\frac{g(x_0\delta_r(v))-g(x_0)}{r} e=\mathrm{d}f(x_0)[v].$$
Therefore, the image of the homogeneous homomorphism $\mathrm{d}f(x_0)$ is contained in the $1$-parameter subgroup generated by $e$. This immediately shows together with Remark \ref{rk:homreali} that $\mathrm{d}f(x_0)=\delta_{\langle L(x_0),\pi_1(x_0^{-1}x)\rangle}( e) $, where $L(x_0)$ is a suitable element of $V_1$. It is thus immediate to see that defined $\mathrm{d}g(x_0):=\langle L(x_0),\pi_1(x_0^{-1}x)\rangle$ we have
\begin{equation}
    \limsup_{x\to x_0}\frac{\lvert g(x)-g(x_0)-\mathrm{d}g(x_0)[x_0^{-1}x]\rvert}{d_c(x,x_0)}=0.
    \nonumber
\end{equation}
This shows that in order to prove Theorem \ref{t:main} it is sufficient to restrict ourselves to the case where the real line and that the definition of the Pansu property with real-valued functions is the weakest possible.
\end{osservazione}

\begin{proposizione}\label{prop:lipdiffspace}
Suppose the Carnot group $\GG$ endowed with the measure $\mu$ has the Pansu property. Then, $(\GG,d,\mu)$ is a Lipschitz differentiability space with the global chart $\pi_1:\GG\to V_1$.
\end{proposizione}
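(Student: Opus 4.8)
The plan is to show directly that the triple $(\GG, d_c, \mu)$ satisfies the definition of a Lipschitz differentiability space (in the sense of Keith--Bate) with the single chart $(\GG, \pi_1)$, where $\pi_1 : \GG \to V_1 \cong \R^{n_1}$ is the projection onto the horizontal layer. Recall that, by Remark~\ref{rk:pansuvsreale}, the Pansu property for $\mu$ already gives us, for every $f \in \Lip(\GG, \R)$, a full-measure set on which there is a linear form $\mathrm{d}f(x_0) : \GG \to \R$ with $\mathrm{d}f(x_0)[g] = \langle L(x_0), \pi_1(g)\rangle$ for some $L(x_0) \in V_1$, and
\[
\limsup_{x \to x_0} \frac{|f(x) - f(x_0) - \langle L(x_0), \pi_1(x_0^{-1}x)\rangle|}{d_c(x, x_0)} = 0.
\]
So the candidate ``chart derivative'' of $f$ with respect to $\pi_1$ at $x_0$ is precisely $L(x_0) \in \R^{n_1}$.

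First I would verify that $\pi_1 : \GG \to V_1$ is itself Lipschitz with respect to $d_c$: this is immediate from Remark~\ref{rk.norm} (or directly from $|\pi_1(x) - \pi_1(y)| \le |x - y| \le d_c(x,y)$ on bounded sets, combined with the homogeneity $\|\pi_1(\delta_\lambda g)\| = \lambda\|\pi_1(g)\|$, which upgrades this to a genuine global Lipschitz bound after noting $\pi_1$ is a group homomorphism onto $V_1$, hence $\pi_1(x_0^{-1}x) = \pi_1(x) - \pi_1(x_0)$ is what controls the increment). Then I would check the two defining properties of a chart: (i) for every $f \in \Lip(\GG,\R)$, at $\mu$-a.e.\ $x_0$ there is a \emph{unique} $L(x_0) \in \R^{n_1}$ such that $f(x) = f(x_0) + \langle L(x_0), \pi_1(x_0^{-1}x)\rangle + o(d_c(x,x_0))$ as $x \to x_0$; and (ii) the chart is non-degenerate, i.e.\ the $\mu$-essential dimension of $\pi_1$ is exactly $n_1$ on all of $\GG$ (so that the single chart covers $\GG$ with the full dimension $n_1 = \dim V_1$). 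Existence in (i) is exactly the content of Remark~\ref{rk:pansuvsreale}; Borel measurability of $x_0 \mapsto L(x_0)$ follows by writing $L(x_0)$ componentwise as an $\limsup$ of difference quotients along the horizontal directions $e_i$ (using that $d_c(x_0, x_0\delta_r(e_i)) = |r|$), which is a Borel function of $x_0$. For non-degeneracy in (ii), one takes the $n_1$ coordinate functions $f_i(x) := \pi_1(x)_i = x_i$, which are Lipschitz; their chart derivatives are the constant vectors $e_i$, which are linearly independent, so $\pi_1$ cannot be ``compressed'' onto a lower-dimensional chart on any set of positive $\mu$-measure.

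The main obstacle is the \textbf{uniqueness} clause in (i) — showing that the blow-up $L(x_0)$ is the \emph{only} vector in $\R^{n_1}$ with the stated first-order approximation property, at $\mu$-a.e.\ $x_0$. Equivalently: if $w \in \R^{n_1}$ satisfies $\langle w, \pi_1(x_0^{-1}x)\rangle = o(d_c(x,x_0))$ as $x \to x_0$ along points of $\GG$, then $w = 0$. For a Lebesgue-generic measure this follows from the existence of curves through $x_0$ in every horizontal direction; for a general Radon measure $\mu$ it requires that the measure ``sees enough horizontal directions'' at $\mu$-a.e.\ point. This is precisely where the Pansu property is used a second time, via the decomposability bundle: under the Pansu property, Theorem~\ref{th.differentiability} and the forthcoming Proposition~\ref{prop:decomposizionei} (referenced in the introduction) force $V(\mu,x) = \GG$ for $\mu$-a.e.\ $x$, hence in particular $V_1 \cap V(\mu,x) = V_1$, so by the construction in Section~\ref{s-decomp} there pass, through $\mu$-a.e.\ $x_0$, fragments of Lipschitz curves whose horizontal tangents span all of $V_1$. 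Testing the approximation $\langle w, \pi_1(x_0^{-1}x)\rangle = o(d_c(x,x_0))$ against the increments along these curves (where $\pi_1(x_0^{-1}\gamma(t)) \approx t\,\mathfrak v_\gamma(x_0)$ and $d_c(x_0,\gamma(t)) \approx |t|$, by Remark~\ref{ossnice} and Lemma~\ref{lemma.monti2}) yields $\langle w, v\rangle = 0$ for a spanning set of $v \in V_1$, hence $w = 0$. I would carry out the argument in this order: (a) $\pi_1$ Lipschitz; (b) existence and measurability of $L(\cdot)$ from Remark~\ref{rk:pansuvsreale}; (c) uniqueness of $L(x_0)$ via the spanning family of horizontal curves through $\mu$-a.e.\ point; (d) non-degeneracy of the chart via the coordinate functions; (e) conclude that $(\GG,d_c,\mu)$ is a Lipschitz differentiability space with the single global chart $\pi_1$.
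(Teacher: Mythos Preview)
Your overall strategy is sound, but step~(c) --- the uniqueness of $L(x_0)$ --- contains a genuine gap, and your proposed fix is circular.

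You state correctly that uniqueness amounts to showing: if $\langle w, \pi_1(x_0^{-1}x)\rangle = o(d_c(x,x_0))$ as $x \to x_0$ \emph{along points of $\GG$}, then $w=0$. But you then claim this depends on the measure $\mu$ ``seeing enough horizontal directions'' and invoke Proposition~\ref{prop:decomposizionei}. This is wrong on two counts. First, it is circular: Proposition~\ref{prop:decomposizionei} is proved via Proposition~\ref{prop:decBate}, whose proof \emph{begins} by citing Proposition~\ref{prop:lipdiffspace} in order to apply Bate's structure theorem. Second --- and this is what dissolves the difficulty --- the uniqueness claim has nothing to do with $\mu$: the $\limsup$ in Definition~\ref{pansuproperty} (and hence in the chart-differentiability condition) is taken over all $x\to x_0$ in $\GG$, not merely over $x\in\supp(\mu)$. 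You may therefore simply test with $x = x_0 * \delta_t(e_i)$, for which $\pi_1(x_0^{-1}x)=t e_i$ and $d_c(x_0,x)=|t|\,\|e_i\|$, obtaining $\langle w,e_i\rangle=0$ for each $i$ and hence $w=0$. This is precisely the computation carried out in Remark~\ref{rkstruct}, which the paper places immediately after the proposition.

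The paper's own proof is correspondingly short: it rewrites the Pansu differentiability condition using $\mathrm{d}f(x_0)[g] = \mathrm{d}f(x_0)[\pi_1(g)]$ (Remark~\ref{rk:homreali}) together with $\pi_1(x_0^{-1}x)=\pi_1(x)-\pi_1(x_0)$, matching the chart-differentiability format directly; uniqueness and non-degeneracy are left implicit because they follow from the existence of horizontal one-parameter subgroups through every point (Remark~\ref{rkstruct}). Your steps (a), (b), (d) are fine and essentially agree with the paper; only (c) should be replaced by the one-line argument above.
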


\begin{proof}
Thanks to Remark \ref{rk:homreali}, for any Lipschitz function $f:\GG\to\R$ and $\mu$-almost any $x\in\GG$ we have:
\begin{equation}
    \begin{split}
        0=\limsup_{x\to x_0}\frac{\lvert f(x)-f(x_0)-\mathrm{d}f(x_0)[x_0^{-1}x]\rvert}{d_c(x,x_0)}=&\limsup_{x\to x_0}\frac{\lvert f(x)-f(x_0)-\mathrm{d}f(x_0)[\pi_1(x_0^{-1}x)]\rvert}{d_c(x,x_0)}\\
        =&\limsup_{x\to x_0}\frac{\lvert f(x)-f(x_0)-\mathrm{d}f(x_0)[\pi_1(x)-\pi_1(x_0)]\rvert}{d_c(x,x_0)}.
        \nonumber
    \end{split}
\end{equation}
The above computation shows that the hypothesis of the axioms of Lipschitz differentiability space are satisfied by $(\GG,d,\mu)$ with the global chart $\pi_1:\GG\to V_1$.
\end{proof}

\begin{osservazione}\label{rkstruct}
The chart $(\pi_1,\GG)$ is $1$-structured in the sense of \cite[Definition 3.6]{Bate}. Indeed, if for any $x\in \GG$ and any $R>0$, we let $x_i:=x*\delta_Re_i$, then:
$$\max_{1\leq i\leq n_1}\frac{\lvert \langle\pi_1(x_i)-\pi_1(x), e_i\rangle\rvert}{d_c(x_i,x)}=\max_{1\leq i\leq n_1}\frac{\lvert \langle x_1+Re_i-x_1, e_i\rangle\rvert}{d_c(\delta_Re_i,0)}=1.$$
\end{osservazione}

\begin{definizione}
Let $e_j\in \mathbb{S}^{n_1-1}$ and $\sigma_j\in (0,1)$ for any $1\leq j\leq n_1$. We say that the cones $C(e_j,\sigma_j)$ are \emph{independent} if for any choice of $v_j\in C(e_j,\sigma_j)\setminus\{0\}$, the $v_i$ are linearly independent.
\end{definizione}

\begin{osservazione}\label{rk:conistaccati}
Note that if the cones $C(e_j,\sigma_j)$ for $j=1,\ldots,n_1$ are independent, since they are by construction closed sets, there exists a $\Xi>0$ such that for any $\lambda_1,\ldots,\lambda_{n_1}\in\R\setminus \{0\}$ we have: $$\lVert \sum_{i=1}^{n_1}\lambda_i v_i\rVert>\Xi\max_{1\leq i \leq n_1}\lVert \lambda_i v_i\rVert.$$
\end{osservazione}

\begin{proposizione}\label{prop:decBate}
Assume $\mu$ is a Radon measure on $\GG$ with the Pansu property. Then, there are countably many disjoint Borel subsets $U_i$ that cover $\mu$-almost all $\GG$ such that for any $e\in \mathbb{S}^{n_1-1}$ and any $i\in\N$ there is a family of measures $t\mapsto \mu_t^i$ satisfying the hypothesis (a) and (b) of Definition \ref{s-measint} and such that:
\begin{itemize}
\item[(\hypertarget{ibate}{i})] for almost every $t\in I$ there exists a bi-Lipschitz curve $\gamma_t^i$ defined on a compact set $K_t^i$ of $\R$ such that $\mu_t^i\ll\Haus^1\trace \im(\gamma_t^i)$ and:
$$\mu\trace U_i=\int \mu_t^i\,dt.$$
    \item[(\hypertarget{iibate}{ii})]for almost every $t\in I$ and almost every $s\in K_t^i$ we have $D\gamma_{t}^i(s)=(\pi_1\circ \gamma_t^i)^\prime(s)\in C(e,\varepsilon)$.
\end{itemize}
\end{proposizione}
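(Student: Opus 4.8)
The plan is to combine the Lipschitz differentiability space structure established in Proposition \ref{prop:lipdiffspace} and Remark \ref{rkstruct} with Bate's characterisation of such spaces via Lipschitz curve fragments, and then to upgrade the curves produced by Bate's theorem so that their horizontal projections lie in a prescribed cone $C(e,\varepsilon)$. First I would invoke \cite[Theorem 3.3 and Theorem 6.6]{Bate} (or the relevant combination of results therein): since $(\GG,d_c,\mu)$ is a Lipschitz differentiability space and $(\pi_1,\GG)$ is a $1$-structured chart, for $\mu$-a.e.\ $x$ the measure $\mu$ admits, locally, an \emph{Alberti representation} in the $\pi_1$-direction $e$ with arbitrarily small cone error; more precisely, after partitioning $\GG$ into countably many Borel pieces $U_i$, on each $U_i$ one gets a family $t\mapsto \mu_t^i$ satisfying (a), (b) of Definition \ref{s-measint}, with $\mu\trace U_i = \int \mu_t^i\,dt$, where each $\mu_t^i$ is carried by a Lipschitz curve fragment $\gamma_t^i$ whose metric derivative is $\mu_t^i$-a.e.\ in the cone (in the chart direction) around $e$. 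The key point supplied by Bate's theory is precisely that the chart being $1$-structured allows the Alberti representations to be taken with the velocity direction $(\pi_1\circ\gamma_t^i)'$ confined to an arbitrarily thin cone about any prescribed unit vector $e\in \mathbb S^{n_1-1}$, which is the content of item (\hyperlink{iibate}{ii}).

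\textbf{From Bate curves to bi-Lipschitz fragments in $\GG$.} The curves produced by Bate's machinery are a priori Lipschitz fragments for the metric $d_c$; I would next argue that, after a further decomposition and restriction, one may assume each $\gamma_t^i$ is bi-Lipschitz. For this one uses the cone condition: once $(\pi_1\circ\gamma_t^i)'(s)\in C(e,\varepsilon)$ for a.e.\ $s$, the horizontal projection $\pi_1\circ\gamma_t^i$ is itself bi-Lipschitz onto its image (a cone of directions bounded away from the orthogonal complement of $e$ forces a quantitative lower bound $|\pi_1(\gamma_t^i(s))-\pi_1(\gamma_t^i(s'))|\ge c\,|s-s'|$ on the fragment, possibly after removing from $K_t^i$ a further null set and reparametrising by arc length in the standard way — compare the injectivity remark after Definition \ref{C-curves}); since $|\pi_1(x)-\pi_1(y)|\le |x-y|\le d_c(x,y)$, this gives the two-sided bound $c|s-s'|\le d_c(\gamma_t^i(s),\gamma_t^i(s'))\le L|s-s'|$, i.e.\ $\gamma_t^i$ is bi-Lipschitz. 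The identification $D\gamma_t^i(s)=(\pi_1\circ\gamma_t^i)'(s)$ in (\hyperlink{iibate}{ii}) is then exactly Lemma \ref{lemma.monti2} together with Remark \ref{ossnice}, since a $C$-type cone condition on the horizontal projection is precisely what makes the first-layer component of $D\gamma_t^i$ agree with the Euclidean derivative of $\pi_1\circ\gamma_t^i$. The absolute continuity $\mu_t^i\ll\Haus^1\trace\im(\gamma_t^i)$ in (\hyperlink{ibate}{i}) is built into the definition of an Alberti representation; the bi-Lipschitz property ensures $\Haus^1\trace\im(\gamma_t^i)$ and $\Haus^1_{eu}\trace\im(\gamma_t^i)$ (hence also $\Leb^1$ pushed forward) are all mutually absolutely continuous, cf.\ Proposition \ref{identificazionehaus1}.

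\textbf{Measurability and the $t$-dependence.} I would then check that the resulting family $t\mapsto\mu_t^i$ genuinely satisfies hypotheses (a) and (b) of Definition \ref{s-measint}: property (b) (summable masses) comes from $\int\mu_t^i\,dt=\mu\trace U_i$ being a finite measure on a bounded piece, and property (a) (Borel dependence on $t$) is guaranteed by the measurable-selection apparatus already used repeatedly in the paper — one may invoke \cite[Theorem 5.2.1]{Sriva} and the Borel structure on the space of curve fragments $\mathcal X_N$ from Definition \ref{distanceoncurvesmaps}, exactly as in the proof of Proposition \ref{p:borel_selection}. Finally, the freedom to choose the cone opening: since $(\pi_1,\GG)$ is $1$-structured with the explicit constant $1$ computed in Remark \ref{rkstruct}, Bate's theorem applies with the direction $e$ \emph{and} opening $\varepsilon$ both prescribed, so running the above for each fixed $e\in\mathbb S^{n_1-1}$ (the $U_i$ may be chosen independently of $e$, or one refines further) yields the full statement.

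\textbf{Main obstacle.} The delicate step is the passage from an abstract Alberti representation (a measure-theoretic decomposition into fragments of arbitrarily small but nonzero cone error in the chart direction) to fragments that are genuinely \emph{bi-Lipschitz in $\GG$} and whose $\GG$-Pansu derivative $D\gamma_t^i$ coincides with the horizontal velocity inside the cone: one must be careful that the cone condition is imposed on $(\pi_1\circ\gamma_t^i)'$ as an honest pointwise condition (after discarding null sets of parameters), not merely in an averaged sense, so that Lemma \ref{lemma.monti2} and the bi-Lipschitz estimate genuinely apply; handling the interaction between the metric-derivative bounds coming from Bate's theory (which see only $d_c$) and the Euclidean/horizontal structure (which is what the cone $C(e,\varepsilon)\subset V_1$ refers to) is where the real work lies, and it is exactly the point where the $1$-structuredness of the chart, established in Remark \ref{rkstruct}, is essential.
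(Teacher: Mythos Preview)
Your strategy is essentially the paper's: invoke Proposition~\ref{prop:lipdiffspace} and Remark~\ref{rkstruct} to place $(\GG,d_c,\mu)$ in Bate's framework, apply \cite[Theorem~6.6]{Bate} to obtain initial Alberti representations in $n_1$ independent cones, and then refine to a representation in the prescribed cone $C(e,\varepsilon)$ via \cite[Theorem~9.5]{Bate}.

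The one point you correctly flag as the ``main obstacle'' but do not actually resolve is precisely what the paper makes explicit. The passage from Theorem~6.6 to Theorem~9.5 in \cite{Bate} requires that the initial representations have \emph{positive $\pi_1$-speed} in Bate's sense (not merely that the chart is $1$-structured). The paper verifies this via the elementary inequality
\[
\frac{\lvert (\gamma_{i,t}^j)'(s)\rvert}{\sup_{x\in B(0,R)}\lVert \mathscr{C}(x)\rVert}
= \frac{\lvert \mathscr{C}(\gamma_{i,t}^j(s))[D\gamma_{i,t}^j(s)]\rvert}{\sup_{x\in B(0,R)}\lVert \mathscr{C}(x)\rVert}
\le \lvert D\gamma_{i,t}^j(s)\rvert
= (\pi_1\circ\gamma_{i,t}^j)'(s),
\]
using Lemmas~\ref{lemma.monti1} and~\ref{lemma.monti2} and Remark~\ref{ossnice}; this is exactly the bridge between the $d_c$-metric derivative and the horizontal structure that you describe as ``where the real work lies''. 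Two minor remarks: the paper obtains bi-Lipschitzness directly as part of the output of \cite[Theorem~6.6]{Bate}, so your separate derivation from the cone condition is unnecessary (and on a genuine \emph{fragment} the pointwise-a.e.\ derivative cone condition does not by itself yield the global two-sided estimate without further information on the parametrisation); and the paper works on balls $B(0,R)$ and lets $R\to\infty$ to keep $\sup\lVert\mathscr{C}(x)\rVert$ finite.
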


\begin{proof}
Proposition \ref{prop:lipdiffspace} shows that $(\GG,d_c,\mu)$ is a Lipschitz differentiability space and thus thanks to \cite[Theorem 6.6]{Bate} for any fixed $R\in\N$ there are countably many disjoint Borel sets $U_i$ that cover $\mu$-almost all of $B(0,R)$ such that for every $i\in\N$ there are $n_1$ independent cones $C_i^j$ and families of measures $\mathcal{A}_i^j=\{\mu_{i,t}^j:t\in I\}$ satisfying the hypothesis (a) and (b) of Definition \ref{s-measint} such that: 
\begin{itemize}
\item[($\alpha$)] for almost every $t\in I$ there exists a bi-Lipschitz curve $\gamma_{i,t}^j:K_{i,t}^j\to\GG$ defined on a compact set $K_{i,t}^j$ of $\R$ such that $\mu_{i,t}^j\ll\Haus^1\trace \im(\gamma_{i,t}^j)$ and:
\begin{equation}
    \mu\trace U_i=\int \mu_{i,t}^j\,dt.
    \label{identitdec}
\end{equation}
    \item[($\beta$)]for almost every $t\in I$ and almost every $s\in K_{i,t}^j$ we have $(\pi_1\circ\gamma_{i,t}^j(s))'\in C_i^j$.
\end{itemize}
Let us observe that since the $U_i$s are disjoint and contained in $B(0,R)$, thanks to \eqref{identitdec} we have that for any $i\in\N$, any $j=1,\ldots,n_1$ for almost every $t$ we have $\im(\gamma_{i,t}^j)\subseteq B(0,R)$.
Let us fix an $j\in\{1,\ldots,n_1\}$ and a $i\in\N$ and a let $t\in I$ be such that ($\alpha$) and ($\beta$) hold. 
Then, thanks to Lemmas \ref{lemma.monti1} and \ref{lemma.monti2}, we infer that:
\begin{equation}
    \frac{\lvert (\gamma_{i,t}^j)^\prime(s)\rvert}{\sup_{x\in B(0,R)}\lVert \mathscr{C}(x)\rVert}=\frac{\big\lvert\mathscr{C}(\gamma_{i,t}^j(s))[D\gamma_{i,t}^j(s)]\big\rvert}{\sup_{x\in B(0,R)}\lVert \mathscr{C}(x)\rVert}\leq\lvert D\gamma_{i,t}^j(s)\rvert=(\pi_1\circ\gamma_{i,t}^j(s))',
\end{equation}
where the last identity comes from Remark \ref{ossnice}.
In the language of \cite{Bate}, this means that the decompositions $\mathcal{A}_j^i$ have $\pi_1$-speed bigger than $\sup_{x\in B(0,R)}\lVert \mathscr{C}(x)\rVert^{-1}$. Therefore, the arbitrariness of $R$, Remarks \ref{rkstruct} and \ref{rk:conistaccati} together \cite[Theorem 9.5]{Bate} conclude the proof of the proposition.
\end{proof}

\begin{proposizione}\label{prop:decomposizionei}
Assume $\mu$ is a Radon measure on $\GG$ with the Pansu property. Then:
\begin{itemize}
    \item[(i)] $V(\mu,x)=\mathbb{G}$ for $\mu$-almost every $x\in\mathbb{G}$,
    \item[(ii)] there are countably many disjoint Borel sets $\{U_i\}_{i\in\N}$ of $\GG$ that cover $\mu$-almost all $\GG$ and such that for  any $j=1,\ldots,n_1$ we can find a $1$-dimensional horizontal normal current $\mathbf{T}_{i,j}=\tau_{i,j}\eta_{i,j}$ with $\partial \mathbf{T}_{i,j}=0$ and such that $\mu\trace U_i\ll\eta_{i,j}$ and 
    $$\tau_{i,j}(x)=\mathscr{C}(x)[e_j]\qquad\text{for $\mu$-almost every $x\in U_i$,}$$
    where as usual $\{e_1,\ldots,e_{n_1}\}$ denotes an orthonormal basis of $V_1$.
\end{itemize}
\end{proposizione}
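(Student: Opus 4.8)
The plan is to prove the two statements of Proposition~\ref{prop:decomposizionei} in order, using the decomposition produced by Proposition~\ref{prop:decBate} as the main input.

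\textbf{Proof of (i).} First I would recall that by Lemma~\ref{prop:V=carnot} the decomposability bundle $V(\mu,x)$ is $\mu$-a.e.\ a Carnot subgroup, hence it is uniquely determined by its horizontal part $V(\mu,x)\cap V_1$. So it suffices to show $V(\mu,x)\cap V_1 = V_1$ for $\mu$-a.e.\ $x$, i.e.\ that $V_1$ itself is a competitor in the definition of the decomposability bundle and is in fact attained. Fix a unit vector $e\in V_1$. Proposition~\ref{prop:decBate} produces, for each $i$, a family $t\mapsto\mu^i_t$ with $\mu^i_t\ll\Haus^1\trace\im(\gamma^i_t)$, with $\mu\trace U_i=\int\mu^i_t\,dt$, and with $D\gamma^i_t(s)=(\pi_1\circ\gamma^i_t)'(s)\in C(e,\varepsilon)$ for a.e.\ $s$. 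By Remark~\ref{rkfbeit} (the equivalence of the decomposability bundle defined with (a) and with the weaker (a*) allowing absolute continuity) these families, after the usual rescaling, belong to $\F_\mu$ up to normalising constants; hence by the defining property of $V(\mu,\cdot)$ we get $\mathfrak{S}(\mathfrak{v}_{\gamma^i_t}(x))=\Tan(\gamma^i_t,x)\subseteq V(\mu,x)$ for $\mu^i_t$-a.e.\ $x$ and a.e.\ $t$. Since $\mathfrak{v}_{\gamma^i_t}(x)\in C(e,\varepsilon)\setminus\{0\}$, this forces $V(\mu,x)\cap V_1$ to contain a nonzero vector in the cone $C(e,\varepsilon)$ for $\mu$-a.e.\ $x\in U_i$, hence for $\mu$-a.e.\ $x\in\GG$. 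Now let $\varepsilon\to 0$ along a countable sequence and let $e$ range over a countable dense subset of the unit sphere of $V_1$: for $\mu$-a.e.\ $x$ the (closed) subspace $V(\mu,x)\cap V_1$ contains vectors arbitrarily close to every direction of the unit sphere of $V_1$, hence equals $V_1$. By Lemma~\ref{prop:V=carnot}, $V(\mu,x)=\mathfrak{S}(V_1)=\GG$ for $\mu$-a.e.\ $x$.

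\textbf{Proof of (ii).} I would argue coordinate direction by coordinate direction. Fix $j\in\{1,\dots,n_1\}$ and apply Proposition~\ref{prop:decBate} with $e=e_j$ and some fixed small $\varepsilon$, obtaining the Borel sets $U_i$ and families $t\mapsto\mu^i_t$. For each $i$, consider the vector-valued measure $\boldsymbol\mu^{i,j}:=\int_I \mathscr{C}(\cdot)[\mathfrak{v}_{\gamma^i_t}]\,\mu^i_t\,dt$ (more precisely the natural horizontal vector-valued measure $\tau^i_t\Haus^1\trace\im(\gamma^i_t)$ attached to each fragment, as in the hypotheses of Proposition~\ref{p:normal}); by construction its total variation is $\ll\mu\trace U_i$ and, since the tangent directions $\mathfrak{v}_{\gamma^i_t}$ all lie in the fixed cone $C(e_j,\varepsilon)$, there are no cancellations, so $\mu\trace U_i\ll\lVert\boldsymbol\mu^{i,j}\rVert$. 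Apply Proposition~\ref{p:normal} with $\eta$ equal to a measure singular to $\mu$ (any fixed choice works, e.g.\ $\eta$ singular with respect to $\mu\trace U_i$): this closes $\boldsymbol\mu^{i,j}$ to a horizontal normal current $\mathbf{T}_{i,j}=\boldsymbol\mu^{i,j}+\boldsymbol\sigma$ with $\partial\mathbf{T}_{i,j}=0$, where $\boldsymbol\sigma$ can be taken singular with respect to $\mu\trace U_i$, so the Radon--Nikodym density of $\mathbf{T}_{i,j}$ with respect to $\eta_{i,j}:=\lVert\mathbf{T}_{i,j}\rVert$ (or with respect to $\mu\trace U_i$) is, on $U_i$, a positive multiple of $\mathscr{C}(x)[\mathfrak{v}_{\gamma^i_t}(x)]$, hence points into the cone $\mathscr{C}(x)[C(e_j,\varepsilon)]$. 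Finally I would let $\varepsilon\to0$: refining the $U_i$ (intersecting the countably many decompositions obtained for a sequence $\varepsilon_k\downarrow 0$ and $j=1,\dots,n_1$ into a common countable Borel partition) and passing to a limit as in the proof of (i), one obtains on each cell of the refined partition, for each $j$, a horizontal normal current whose Radon--Nikodym density with respect to $\mu$ equals exactly $\mathscr{C}(x)[e_j]$ $\mu$-a.e. The passage to the limit uses that $\mu\trace U_i\ll\eta_{i,j}$ is preserved and that the densities converge in $L^1(\mu)$, which can be arranged by choosing the $\varepsilon_k$-decompositions so that the total variations stay comparable to $\mu\trace U_i$.

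\textbf{Main obstacle.} The delicate point is the last limiting argument in (ii): extracting, from the family of currents with densities pointing into shrinking cones around $\mathscr{C}(x)[e_j]$, a single current with density \emph{exactly} $\mathscr{C}(x)[e_j]$ on a Borel set of full measure, while keeping $\partial\mathbf{T}=0$, horizontality, and the absolute continuity $\mu\trace U_i\ll\eta_{i,j}$. This is exactly the kind of convergence/gluing argument that is packaged (for a single direction at a time) by Proposition~\ref{costruzionecampi}: indeed, once one knows $\mathscr{C}(x)[e_j]\in N(\mu,x)$ $\mu$-a.e.\ on $U_i$ — which follows from part (i) together with Theorem~\ref{theoAu=Dec}, since $e_j\in V_1=V_1\cap V(\mu,x)=\pi_1(N(\mu,x))$ so some lift of $e_j$ lies in $N(\mu,x)$, and by Remark~\ref{rk:expc} that lift is forced to be $\mathscr{C}(x)[e_j]$ — one can simply invoke Proposition~\ref{costruzionecampi} with $\tau(x)=\mathscr{C}(x)[e_j]\mathbb{1}_{U_i}(x)$ to get the desired $\mathbf{T}_{i,j}$ directly, bypassing the hand-made limit. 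I would therefore structure (ii) as: (1) deduce $\mathscr{C}(\cdot)[e_j]\in N(\mu,\cdot)$ $\mu$-a.e.\ from (i), Theorem~\ref{theoAu=Dec} and Remark~\ref{rk:expc}; (2) apply Proposition~\ref{costruzionecampi} on each $U_i$ (taking $U_i$ to be, say, a fixed countable Borel partition of $\GG$ into sets of finite $\mu$-measure, which is all that is needed) to produce $\mathbf{T}_{i,j}$ with $\partial\mathbf{T}_{i,j}=0$ and Radon--Nikodym density $\mathscr{C}(x)[e_j]$; (3) note $\mu\trace U_i\ll\eta_{i,j}:=\lVert\mathbf{T}_{i,j}\rVert$ because the density is nowhere zero on $U_i$.
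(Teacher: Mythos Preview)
Your proposal is correct and, once you take the route you identify in the ``Main obstacle'' paragraph, it coincides with the paper's proof: part~(i) follows from Proposition~\ref{prop:decBate}, Remark~\ref{rkfbeit} and Lemma~\ref{prop:V=carnot} (the paper concludes via Proposition~\ref{generato} using $n_1$ independent directions rather than your density-of-directions argument, but both work), and part~(ii) follows from Theorem~\ref{theoAu=Dec} together with Proposition~\ref{costruzionecampi} exactly as you outline in steps (1)--(3). Your initial limiting construction for~(ii) with shrinking cones is indeed unnecessary, and you correctly identify the clean bypass.
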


\begin{proof}
In order to prove items (i) and (ii), let us note that Propositions \ref{generato}, \ref{prop:V=carnot}, \ref{prop:decBate}  together with Remark \ref{rkfbeit}
imply that $V(\mu\trace U_i,x)=\GG$ for $\mu$-almost every $x\in\GG$ and any $i$. Proposition \ref{costruzionecampi} and Theorem \ref{theoAu=Dec}
immediately imply the existence of the currents $\mathbf{T}_{i,j}$.
\end{proof}

We are now ready to prove the main result, which states that the existence of $n_1$ independent representations for a Radon measure $\mu$ in a Carnot group $\mathbb{G}$ implies that $\mu$ is diffuse. This is the analogue of \cite[Corollary 1.12]{DPR} and the proof follows the same overall strategy of \cite[Theorem 1.1]{DPR}, which was in turn inspired by the strong constancy lemma of Allard \cite{All76}. As explained in the introduction, we have however to adapt the proof to the ``hypoelliptic setting''. As an additional difficulties, we note that in this context we can not rely on a Besicovicth covering theorem and some classical Lebesgue point arguments need to be adapted. For the sake of readability we report these proofs in the appendix.

\begin{proposizione}\label{propofond}
Suppose $\mu$ is a Radon measure on $\mathbb{G}$ satisfying item (ii) of Proposition \ref{prop:decomposizionei}. Then $\mu\ll\mathcal{L}^n$. 
\end{proposizione}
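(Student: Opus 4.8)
The plan is to mimic the PDE argument of \cite{DPR}, but carried out for the hypoelliptic second-order operator naturally associated with the horizontal vector fields, rather than for an elliptic one. Fix $i\in\N$ and write $\eta:=\eta_{i,i}$ on $U:=U_i$; by item (ii) of Proposition~\ref{prop:decomposizionei} we have $n_1$ horizontal normal $1$-currents $\mathbf T_{i,j}=\mathscr C(\cdot)[e_j]\,\eta_{i,j}$ with $\partial\mathbf T_{i,j}=0$ and $\mu\trace U\ll\eta_{i,j}$. First I would translate the conditions $\partial\mathbf T_{i,j}=0$ into the language of Remark~\ref{remark:rappresentazione}: for every smooth compactly supported scalar test function $\varphi$ one has $\int X_j\varphi\,d\eta_{i,j}=0$, i.e. $X_j^{*}\eta_{i,j}=0$ in the sense of distributions, where $X_j^{*}=-X_j$ since the $X_j$ are divergence-free with respect to Lebesgue measure (Carnot groups are unimodular). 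Writing $\eta_{i,j}=\rho_{i,j}\,\mu\trace U+(\eta_{i,j})_s$ with $(\eta_{i,j})_s\perp\mu$ and $\rho_{i,j}>0$ $\mu$-a.e.\ on $U$, and combining the $n_1$ relations with the absolute continuity $\mu\trace U\ll\eta_{i,j}$, one gets that the vector-valued measure $\sigma:=\sum_{j}X_j(\eta_{i,j})$ vanishes; the heart of the matter is to extract from this a closed second-order equation satisfied by (a localisation of) $\mu\trace U$.

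The key step is the following ``strong constancy'' mechanism, exactly as in \cite{All76,DPR}. Set $\lambda:=\mu\trace U$ (after a further Borel subdivision we may assume all the densities $\rho_{i,j}$ are comparable to $1$ on $U$, at the cost of replacing $\eta_{i,j}$ by a mutually absolutely continuous measure and $\mathbf T_{i,j}$ by $\rho_{i,j}^{-1}\mathbf T_{i,j}$, which is still horizontal but no longer normal — so instead I would keep the singular parts and argue that they do not interfere, using that a single fixed Radon measure is essentially negligible as discussed before Proposition~\ref{p:normal}). Testing the $n_1$ boundary-free conditions against $X_k\psi$ for $k=1,\dots,n_1$ and summing, one obtains that $\lambda$ solves, in the distributional sense and up to an error measure singular with respect to $\lambda$, the sub-Laplace-type equation $\sum_{j=1}^{n_1}X_j^{2}\lambda = \mathcal R$, with $\mathcal R$ a first-order term that is itself controlled by $\lambda$. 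At this point one invokes hypoellipticity: by H\"ormander's theorem $\mathcal L:=\sum_j X_j^2$ is hypoelliptic, and more is true — for second-order operators defined by the first-layer vector fields of a Carnot group the relevant notion can be characterised algebraically (this is precisely the point flagged in the Introduction and proved via the bracket-generating condition). Consequently any distributional solution of $\mathcal L\lambda = $ (lower order, $\lambda$-dominated) is, by the standard hypoelliptic a priori estimates combined with a bootstrap, forced to have a locally integrable density: $\lambda\ll\Leb^n$ on $U$.

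The technical route I would actually follow to make the last sentence rigorous is the mollification/Lebesgue-point scheme of \cite{DPR}: mollify $\lambda$ at scale $\varepsilon$ with a group-convolution kernel $\lambda_\varepsilon:=\lambda*\phi_\varepsilon$, use the $n_1$ identities to show $\mathcal L\lambda_\varepsilon$ stays bounded in $L^1_{loc}$ (the main cancellation being that the transported vector fields $\mathscr C(\cdot)[e_j]$ match the $X_j$ exactly), deduce uniform local bounds on $\lambda_\varepsilon$ through the hypoelliptic estimate $\|\lambda_\varepsilon\|_{L^1}\lesssim \|\mathcal L\lambda_\varepsilon\|_{L^1}+\|\lambda_\varepsilon\|_{\text{very negative norm}}$, and pass to the limit to get $\lambda\in L^1_{loc}$, hence $\mu\trace U\ll\Leb^n$. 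Summing over $i$ (the $U_i$ cover $\mu$-a.e.\ $\GG$) yields $\mu\ll\Leb^n$. The main obstacle, and the only genuinely new point compared with \cite{DPR}, is precisely the replacement of the elliptic ``wave cone'' machinery by its hypoelliptic analogue: one must (a) identify the correct second-order operator, (b) verify that the $n_1$ directional conditions of Proposition~\ref{prop:decomposizionei} close up into an equation for that operator modulo $\lambda$-singular and $\lambda$-dominated remainders, and (c) quote/prove the algebraic characterisation of hypoellipticity for such operators that makes the a priori estimate available — this last being Proposition~\ref{propofond}'s raison d'\^etre as stated in the Introduction. The covering-theorem-free Lebesgue-point lemmas needed along the way (since Besicovitch fails for $d_c$) are the ones deferred to the appendix.
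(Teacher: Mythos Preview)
Your outline captures the spirit of the argument (second-order horizontal operator, strong-constancy \`a la \cite{All76,DPR}, group mollification, appendix Lebesgue-point lemmas), but there is a genuine gap at the step where you claim to obtain a closed equation $\sum_j X_j^2\lambda=\mathcal R$ for $\lambda=\mu\trace U$. The $n_1$ identities you have are $X_j\eta_{i,j}=0$ for \emph{different} measures $\eta_{i,j}$; testing against $X_j\psi$ and summing yields $\sum_j X_j^2\eta_{i,j}=0$, not $\sum_j X_j^2\lambda$. You recognise the obstruction yourself when you note that rescaling to $\rho_{i,j}^{-1}\mathbf T_{i,j}$ destroys normality, but you do not supply a replacement mechanism. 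Simply invoking hypoellipticity does not help either: hypoellipticity upgrades regularity of solutions but does not by itself force a measure solving a PDE with measure right-hand side to be absolutely continuous.

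The paper closes this gap by a \emph{blow-up/freezing} argument rather than a global PDE for $\mu$. It packages the currents into a single matrix-valued measure $\mathbb T=\mathfrak T\,\Xi$ with $\mathfrak B\mathbb T=0$, and works at a Lebesgue point $x_0$ of $\mathfrak T$ (with respect to $\Xi$, along the weak-doubling sequence of scales from the appendix) where $\mathfrak T(x_0)=P_0$ is diagonal with nonzero entries --- this last condition holds precisely $\mu$-a.e., which is what eventually kills $\mu^s$. At blow-up scale the polar is approximately the constant $P_0$, so the total variation $\lVert\boldsymbol\nu_k\rVert$ of the rescaled current satisfies $\mathfrak B[P_0\lVert\boldsymbol\nu_k\rVert]\approx 0$; applying $-\nabla_\GG^T P_0$ turns this into a genuine sub-Laplacian $\sum_j\kappa_j^2X_j^2$ acting on $\lVert\boldsymbol\nu_k\rVert$, with error going to zero. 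The analysis then proceeds not via abstract hypoelliptic estimates but via the explicit homogeneous fundamental solution of the sub-Laplacian and Folland's Calder\'on--Zygmund theory on $\GG$: one term is a singular integral of the vanishing error (hence $\to 0$ in $L^{1,\infty}$), the other is a Riesz-type potential of an $L^1$-bounded sequence (hence $L^1_{\mathrm{loc}}$-precompact), and positivity of $\chi u_k$ upgrades the $L^{1,\infty}$ convergence to $L^1$. This yields $L^1$ convergence of the mollified blow-ups to the limit $\nu$, contradicting $\nu$ having a singular part. The freezing step is exactly what lets the $n_1$ distinct measures $\eta_{i,j}$ collapse into a single scalar equation, and it is the piece your sketch is missing.
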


An immediate consequence of the above proposition is our main result

\begin{teorema}
Let $\mathbb{G}, \mathbb{H}$ be Carnot groups. Suppose further that $\mu$ is a Radon measure on $\mathbb{G}$ with the Pansu property with respect to $\HH$. Then $\mu\ll \mathcal{L}^n$.
\end{teorema}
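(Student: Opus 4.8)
The plan is to deduce the statement from the structural results of this section, the only genuinely new step being a reduction from $\HH$-valued to real-valued Lipschitz maps. First I would invoke Remark~\ref{rk:pansuvsreale}: given an arbitrary scalar Lipschitz map $g\colon\GG\to\R$, composing it with $x\mapsto\delta_{g(x)}(e)$ for a fixed nonzero horizontal vector $e$ of $\HH$ produces an $\HH$-valued Lipschitz map, and Pansu differentiability of the latter forces differentiability of $g$; hence, if $\mu$ has the Pansu property with respect to $\HH$ it also has the Pansu property with respect to $\R$ (Definition~\ref{pansuproperty}), which is the hypothesis under which all the results of the present section were stated. From now on $\mu$ is assumed to have the Pansu property with respect to $\R$.

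Next I would run the chain of already-prepared implications. By Proposition~\ref{prop:lipdiffspace}, $(\GG,d_c,\mu)$ is a Lipschitz differentiability space with the global chart $\pi_1\colon\GG\to V_1$, which by Remark~\ref{rkstruct} is $1$-structured. Applying Bate's decomposition theorems \cite[Theorems~6.6 and~9.5]{Bate} on an exhaustion of $\GG$ by balls $B(0,R)$---and using the Euclidean speed bounds of Lemmas~\ref{lemma.monti1} and~\ref{lemma.monti2} together with Remark~\ref{ossnice}, as well as Remark~\ref{rk:conistaccati} on independent cones---I obtain the $n_1$ independent representations of $\mu$ of Proposition~\ref{prop:decBate}. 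Feeding these into Remark~\ref{rkfbeit}, Propositions~\ref{generato}, \ref{prop:V=carnot} and~\ref{costruzionecampi}, and Theorem~\ref{theoAu=Dec}, I arrive at Proposition~\ref{prop:decomposizionei}: $V(\mu,x)=\GG$ for $\mu$-almost every $x$, and there are countably many disjoint Borel sets $\{U_i\}_{i\in\N}$ covering $\mu$-almost all of $\GG$, together with horizontal $1$-dimensional normal currents $\mathbf{T}_{i,j}=\tau_{i,j}\eta_{i,j}$ with $\partial\mathbf{T}_{i,j}=0$, such that $\mu\trace U_i\ll\eta_{i,j}$ and $\tau_{i,j}(x)=\mathscr{C}(x)[e_j]$ for $\mu$-almost every $x\in U_i$ and every $j=1,\dots,n_1$.

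In particular $\mu$ satisfies hypothesis~(ii) of Proposition~\ref{prop:decomposizionei}, so Proposition~\ref{propofond} gives $\mu\ll\mathcal{L}^n$, as claimed.

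The only steps carrying real content sit inside the cited propositions, and I expect the two main obstacles to be those pointed out in the introduction. The first is building the horizontal normal currents $\mathbf{T}_{i,j}$ out of mere Lipschitz fragments: since the Euclidean fact that a fragment coincides, locally a.e., with a genuine curve having the same derivative fails in Carnot groups, this is handled by the closing-up construction of Section~\ref{s4} (Proposition~\ref{p:normal}), completing a horizontal fragment to a boundaryless horizontal normal current by adding an integral of horizontal fragments that is singular with respect to the given measure. The second is Proposition~\ref{propofond}, i.e.\ the passage from these currents to $\mu\ll\mathcal{L}^n$: here the elliptic wave-cone argument of \cite{DPR} must be replaced by a hypoelliptic one, using that the relevant second-order operator associated with the horizontal vector fields satisfies H\"ormander's condition, together with Lebesgue-point and covering arguments adapted to the absence of a Besicovitch theorem in this metric setting.
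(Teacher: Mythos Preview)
Your argument is correct and follows exactly the paper's route: reduce to the real-valued Pansu property via Remark~\ref{rk:pansuvsreale}, invoke Proposition~\ref{prop:decomposizionei}, and conclude with Proposition~\ref{propofond}. The paper states this in one line, whereas you additionally unpack the internal structure of Proposition~\ref{prop:decomposizionei} and flag where the nontrivial work lies, but the logical skeleton is identical.
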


\begin{proof}
The claim follows immediately from Propositions \ref{propofond}, \ref{prop:decomposizionei} and Remark \ref{rk:pansuvsreale}.
\end{proof}

\begin{proof}[Proof of Proposition \ref{propofond}]
It is clear that if for any $i\in\N$ we prove that $\mu\llcorner U_i$ is absolutely continuous with respect to $\mathcal{L}^n$ the conclusion follows as the $U_i$ are disjoint. Therefore, in the following we assume without loss of generality that 
\begin{itemize}
    \item[] for  any $j=1,\ldots,n_1$ we can find a $1$-dimensional horizontal normal current $\mathbf{T}_{j}=\tau_{j}\eta_{j}$ with $\partial \mathbf{T}_{j}=0$ and such that $\mu\ll\eta_{j}$ and 
    $\tau_{j}(x)=\mathscr{C}(x)[e_j]\text{ for $\mu$-almost every $x\in \GG$.}$
\end{itemize}

Thanks to Remark \eqref{remark:rappresentazione}, we can think to $\mathbf{T}_j$ as a vector-valued measure $\mathbf{T}_j\in\mathcal{M}(\mathbb{G},\R^{n_1})$ acting by duality with the scalar product of $\R^{n_1}$ on the smooth function $\omega\in C^\infty(\mathbb{G},\R^{n_1})$ and the boundary operator $\partial$ on these measures acts as shown in \eqref{rappre1} and \eqref{rappre2}. Thus, the currents $\mathbf{T}_1,\ldots,\mathbf{T}_{n_1}$ above can be written in this notation as $\mathbf{T}_j=e_j\eta_j$ for any $j=1,\ldots,n_1$.

Throughout the proof, we define on the measures $\boldsymbol\nu=(\boldsymbol\nu_1,\ldots,\boldsymbol\nu_{n_1})\in\mathcal{M}(\mathbb{G},\R^{n_1\times n_1})$, i.e. the Radon measures taking values in $\R^{n_1\times n_1}$, the differential operator 
$\mathfrak{B}\boldsymbol\nu:=(\partial \boldsymbol\nu_1,\ldots,\partial \boldsymbol\nu_{n_1})$, or more precisely for any test function $\varphi:=(\varphi_1,\ldots,\varphi_{n_1})\in C^\infty(\mathbb{G},\R^{n_1}\times\R^{n_1})$  
$$\langle \mathfrak{B}\boldsymbol\nu,\varphi\rangle=(\langle \boldsymbol \nu_1,d_H\varphi_1\rangle,\ldots,\langle \boldsymbol\nu_{n_1},d_H\varphi_{n_1}\rangle).$$
In the above notations, defined $\mathbb{T}:=(\mathbf T_1,\ldots,\mathbf T_{n_1})$ we have that $\mathfrak{B}\mathbb T=0$.
We can write $\mathbb{T}$ as
$$\mathbb{T}=(\tau_1\eta_1,\ldots, \tau_{n_1}\eta_{n_1})=\mathfrak{T}\Xi=\mathfrak{T}\Xi^a+\mathfrak{T}\Xi^s,$$
where $\mathfrak{T}\in\R^{n_1\times n_1}$ is a Borel map in $L^1(\Xi)$ such that $\lvert \mathfrak{T}\rvert=1$\footnote{Given an $n_1\times n_1$ matrix $A$ the norm $\lvert A\rvert$ is computed as follows. Denoted by $a_1,\ldots,a_{n_1}\in\R^{n_1}$ the columns of $A$ we let $\lvert A\rvert^2:=n_1^{-1}\sum_{i=1}^{n_1}\lvert a_i\rvert^2$, where $\lvert a_i\rvert$ denotes the usual Euclidean norm of the vectors $a_i$.} for $\Xi$-almost every $x\in\mathbb{G}$, $\Xi^a\ll\mathcal{L}^n$ and $\Xi^s$ is mutually singular with respect to $\mathcal{L}^n$. Note that since for any $j=1,\ldots,n_1$ we have $\Xi\geq \eta_j$ and this shows in particular that $\mu\ll\Xi$. 

\medskip

Let us assume by contradiction that there exists an $x_0\in\supp (\Xi^s)$ for which there exists an infinitesimal sequence $r_k$ such that
\begin{enumerate}
\item[(i)]$\displaystyle
\lim_{k\to\infty}\fint_{B(x_0,r_k)} \left| \mathfrak{T}(x)-\mathfrak{T}(x_0)\right|d \Xi(x) = 0 ;
$
\item[(\hypertarget{hlrpproofii}{ii})] $\displaystyle \limsup_{k\to\infty}\frac{\Xi(B(x_0,r_k/5))}{\Xi(B(x_0,r_k))} \geq \frac{1}{j_0}$ for some $j_0 \in \N$;
\item[(iii)] $\displaystyle\lim_{k\to\infty}\frac{\Xi^a(B(x_0, r_k ))}{\Xi^s(B(x_0,r_k))} = 0$;
\item[(iv)] $ P_0:=\mathfrak{T}(x_0)=\mathrm{diag}(\kappa_1,\ldots,\kappa_{n_1})$ for some $\kappa_1,\ldots,\kappa_{n_1}\in\R\setminus \{0\}$.
\end{enumerate}
First of all, let us show that the hypothesis (i) to (iii) are satisfied on a set of full $\Xi^s$-full measure. It follows immediately from Proposition~\ref{p:differentiation} that (i) holds $\Xi$-almost everywhere. Further, it is showed in the proof of  Proposition \ref{p:differentiation} that (ii) holds $\Xi$-almost everywhere. Finally, thanks to Proposition \ref{p:differentiation} we know that 
\begin{equation}
    \lim_{k\to\infty}\frac{\Xi^a(B(x_0, r_k ))}{\Xi(B(x_0,r_k))} = 0,
    \label{eq:limite0}
\end{equation}
 for $\Xi$-almost every $x\in \mathbb{G}$, and thus $\Xi^s$-almost everywhere,
with the choice $f:=\chi_{\supp(\Xi^a)}$. In the following, we will prove that this is in contradiction with (iv) at every such $x_0$.
Define the normalized blow-up sequence
\begin{equation}
   \boldsymbol \nu_k : = \frac{1}{\Xi(B(x_0,r_k))}  T_{x_0,r_k}\mathbb{T},  \qquad\text{for any } k \in \N.
\label{definizionenuk}
\end{equation}
and note that $\lVert \boldsymbol \nu_k\rVert(B(0,1) )=1$ and $\lVert\boldsymbol \nu_k\rVert(B(0,1/5) ) \geq j_0^{-1} > 0$.
Up to the extraction of a subsequence, by (ii) we can assume that 
\begin{equation}\label{eq:limitnu}
\lVert \boldsymbol \nu_k\rVert\rightharpoonup\nu \quad \text{in $\mathcal{M}_+(B(0,1))$}
\end{equation}
with $\nu(B(0,1) )\leq 1$ and $\nu(B(0,1/5) ) \geq j_0^{-1}$.
The vector fields $X_i$ are left-invariant, and thus their (formal) adjoints coincide with $-X_j$, and this implies that
\begin{equation}
\begin{split}
      \mathfrak{B}(\boldsymbol \mu_1,\ldots,\boldsymbol \mu_{n_1})=-(\sum_{i=1}^{n_1}X_i(\boldsymbol \mu_1^i),\ldots,\sum_{i=1}^{n_1}X_i(\boldsymbol \mu_{n_1}^i)),
      \label{espressionepermu}
\end{split}
\end{equation}
where $\boldsymbol\mu_j^i$ denotes the $i$th entry of $\mu_j$.
In addition, since the vector fields $X_j$ are homogeneous, we also have
$  X_j\varphi(\delta_{1/r}(x^{-1}y)) = rX_j(\varphi(\delta_{1/r}(x^{-1}\cdot)))(y)$ for all $j=1,\ldots,n_1$, all smooth functions $\varphi$ and all $r > 0$. This, together with an elementary computation shows in particular that $\mathfrak{B}[\boldsymbol \nu_k]=0$ for any $k\in\N$. 

Then $\mathfrak{B}[P_0\lVert \boldsymbol\nu_k\rVert]=-P_0^T[\nabla_\mathbb{G}\lVert \boldsymbol\nu_k\rVert]$
where $\nabla_\mathbb{G}:=(X_1,\ldots,X_{n_1})$.
On the other hand 
\begin{equation}
    \mathfrak{B}[P_0\lVert \boldsymbol\nu_k\rVert]=\mathfrak{B}[P_0\lVert \boldsymbol\nu_k\rVert-\boldsymbol\nu_k]+\mathfrak{B}\boldsymbol\nu_k=\mathfrak{B}[P_0\lVert \boldsymbol\nu_k\rVert-\boldsymbol\nu_k].
\end{equation}
Let $\Phi$ be a smooth positive function supported on $B(0,1)$ such that $\int \Phi d\mathcal{L}^n=1$ and $\Phi(\cdot)=\Phi(\cdot^{-1})=\Phi(-\cdot)$. Let $\{\varepsilon_k\}_{k\in\N}$ be an infinitesimal sequence of positive real numbers to be fixed later, let 
$\Phi_{\varepsilon_k}(\cdot):=\varepsilon_k^{-\mathcal{Q}}\Phi(\delta_{1/\varepsilon_k}(\cdot))$ and define
\begin{align*}
  u_k &:= \Phi_{\varepsilon_k} * \lVert \boldsymbol\nu_k\rVert \in C^\infty(B(0,1)), 
\\  
  V_k &:= \Phi_{\varepsilon_k} * \bigl[ P_0 \lVert \boldsymbol\nu_k\rVert - \boldsymbol\nu_k \bigr] \in C^\infty(B(0,1),\R^{n_1\times n_1}),
\end{align*}
where here $*$ denotes the convolution with respect to the group law of $\mathbb{G}$, i.e. $f*g:=\int f(xy^{-1})g(y)d\mathcal{L}^n(y)$. It will be clear from the context when $*$ denotes a convolution and when it denotes the group law of $\mathbb{G}$.
Then, if we let $\chi\in\mathcal{C}^\infty(\mathbb{G},[0,1])$ be such that $\chi=1$ on $B(0,1/2)$ and $\chi=0$ on $B(0,3/4)^c$, we infer from the above discussions that
\begin{equation}
    \begin{split}
        &-P_0^T[\nabla_\mathbb{G}(\chi u_k)]=\mathfrak{B}[P_0\chi u_k]=-u_kP_0^T[\nabla_\mathbb{G}\chi]-\chi P_0^T[\nabla_\mathbb{G}u_k]=-u_kP_0^T[\nabla_\mathbb{G}\chi]-\chi P_0^T[\nabla_\mathbb{G}(\Phi_{\varepsilon_k}*\lVert\boldsymbol\nu_k\rVert)]\\
        =&-u_kP_0^T[\nabla_\mathbb{G}\chi]-\chi P_0^T[\Phi_{\varepsilon_k}*\nabla_\mathbb{G}\lVert \boldsymbol\nu_k\rVert]=-u_kP_0^T[\nabla_\mathbb{G}\chi]-\chi \Phi_{\varepsilon_k}*P_0^T[\nabla_\mathbb{G}\lVert \boldsymbol\nu_k\rVert]\\
        =&-u_kP_0^T[\nabla_\mathbb{G}\chi]+\chi \Phi_{\varepsilon_k}*\mathfrak{B}[P_0\lVert \boldsymbol\nu_k\rVert]=-u_kP_0^T[\nabla_\mathbb{G}\chi]+\chi \Phi_{\varepsilon_k}*\mathfrak{B}[P_0\lVert \boldsymbol\nu_k\rVert-\boldsymbol\nu_k]+\chi \Phi_{\varepsilon_k}*\mathfrak{B}[\boldsymbol\nu_k]\\
        =&-u_kP_0^T[\nabla_\mathbb{G}\chi]+\chi \Phi_{\varepsilon_k}*\mathfrak{B}[P_0\lVert \boldsymbol\nu_k\rVert-\boldsymbol\nu_k].
        \label{eq:identityoperator}
    \end{split}
\end{equation}
Thanks to \eqref{espressionepermu} and to the fact that for any $i=\{1,\ldots,n_1\}$ we have $\langle X \psi_1,\varphi\rangle=-\langle \psi_1, X \varphi\rangle$ and $X(\psi_1*\psi_2)=\psi_1*X\psi_2$ for any distribution $\psi_1,\psi_2$ and any test function $\varphi$. It is possible to prove that 
$$\Phi_{\varepsilon_k}*\mathfrak{B}[P_0\lVert \boldsymbol\nu_k\rVert-\boldsymbol\nu_k]=\mathfrak{B}[\Phi_{\varepsilon_k}*(P_0\lVert \boldsymbol\nu_k\rVert-\boldsymbol\nu_k)],$$
and hence \eqref{eq:identityoperator} can be rewritten as
\begin{equation}
    -P_0^T[\nabla_\mathbb{G}(\chi u_k)]=-u_kP_0^T[\nabla_\mathbb{G}\chi]+\chi \mathfrak{B}[V_k]=-u_kP_0^T[\nabla_\mathbb{G}\chi]-V_k\mathfrak{B}[\chi] +\mathfrak{B}[\chi V_k].
    \label{eq:identitadifferenziale}
\end{equation}
Define $R_k:=-u_kP_0^T[\nabla_\mathbb{G}\chi]-V_k\mathfrak{B}[\chi]$ and 
let us apply to both sides of \eqref{eq:identitadifferenziale}
the differential operator $-\nabla^T_\mathbb{G} P_0$, to obtain 
$$\nabla^T_\mathbb{G} P_0P_0^T\nabla_\mathbb{G}[\chi u_k]=-\nabla^T_\mathbb{G}P_0[\mathfrak{B}[\chi V_k]+R_k]=-\nabla^T_\mathbb{G}P_0[\mathfrak{B}[\chi V_k]]-\nabla^T_\mathbb{G}P_0[R_k].$$
The matrix $\Gamma:=P_0^TP_0=\mathrm{diag}(\kappa_1^2,\ldots,\kappa_{n_1}^2)$ is positively definite and  diagonal.
 Therefore, the operator $\nabla^T_\mathbb{G} P_0P_0^T\nabla_\mathbb{G}$ can thus be rewritten as
$$\mathfrak{D}:=\nabla^T_\mathbb{G} P_0P_0^T\nabla_\mathbb{G}=\sum_{i=1}^{n_1}\kappa_i^2 X_i^2=\sum_{i=1}^{n_1}(\lvert\kappa_i\rvert X_i)^2.$$
It is well known that, see for instance \cite[Proposition 5.3.2]{equivmetr} and \cite[Proposition 5.3.11]{equivmetr}, since $\mathfrak{D}$ is a sub-Laplacian, that $\mathfrak{D}$ admits a fundamental solution $K_0$  such that  $K_0\in\mathcal{C}^{\infty}(\mathbb{G}\setminus \{0\})$ and that $K_0(x)=K_0(-x)$. Let us first prove that 
\begin{equation}
    0\leq \chi u_k=-\nabla^T_\mathbb{G}P_0[\mathfrak{B}[\chi V_k]]*K_0-\nabla^T_\mathbb{G}P_0[R_k]*K_0=\mathcal{L}_1[\chi V_k]*K_0+\mathcal{L}_2[R_k]*K_0=:f_k+g_k,
    \label{equationfondamentaleinvertita}
\end{equation}
 where we note that the convolutions above must me intended in the distributional sense and that they are well defined since both $-\nabla^T_\mathbb{G}P_0[\mathfrak{B}[\chi V_k]]$ and $-\nabla^T_\mathbb{G}P_0[R_k]$ have compact support. 

Let us check that the sequence $\chi V_k$ converges to $0$ in $L^1(B(0,1/2))$. Indeed thanks to the choice of $\chi$ we have
\begin{equation}
\begin{split}
      &\int  \chi(y)\lvert V_k(y)\rvert d\mathcal{L}^n(y)\leq  \int_{B(0,3/4)}  \lvert V_k(y)\rvert d\mathcal{L}^n(y)\\
      =&\int_{B(0,3/4)} \lvert \Phi_{\varepsilon_k}*( P_0\lVert \boldsymbol\nu_k\rVert-\boldsymbol\nu_k)\rvert(y) d\mathcal{L}^n(y)\leq  \int_{B(0,1)} \lvert P_0-\mathfrak{T}_k(y)\rvert d\lVert \boldsymbol\nu_k\rVert( y).
      \label{convV_ka0}
\end{split}
\end{equation}
where $\boldsymbol\nu_k=\mathfrak{T}_k\lVert \boldsymbol\nu_k\rVert$. Note that the arbitrariness of $\chi$ imply that even $\sqrt{\chi}V_k$ converges to $0$ in $L^1(B(0,3/4))$. On the other hand, recalling the definition of $\boldsymbol\nu_k$ in \eqref{definizionenuk}, the \eqref{convV_ka0} boils down to
\begin{equation}
\begin{split}
    \lim_{k\to \infty}\int  \chi(y)\lvert V_k(y)\rvert d\mathcal{L}^n(y)\leq& \lim_{k\to\infty}\frac{\int_{B(0,1)} \lvert P_0-\mathfrak{T}(x_0\delta_{r_k}(y))\rvert dT_{x_0,r_k}\Xi(y)}{\Xi(B(x_0,r_k))}\\
    \leq &\lim_{k\to\infty}\frac{\int_{B(x,r_k)}\lvert P_0-\mathfrak{T}(z)\rvert d\Xi(z)}{\Xi(B(x_0,r_k))}=0,
        \nonumber
\end{split}
\end{equation}
which shows that $\chi V_k\to 0$ in $L^1(\mathbb{G})$.
Let us now give a uniform upper bound on the $L^1(\mathbb{G},\R^{n_1})$ norm of the functions $R_k$. It is easy to see that 
\begin{equation}
  \begin{split}
      &\int \lvert R_k\rvert d\mathcal{L}^n \leq  \int_{B(0,3/4)} u_k\lvert P_0^T[\nabla_\mathbb{G}\chi]\rvert+\int_{B(0,3/4)} \lvert V_k\rvert \lvert \mathfrak{B}[\chi]\rvert d\mathcal{L}^n\\
      \leq\lVert P_0^T[\nabla_\mathbb{G}\chi] \rVert_\infty \int_{B(0,3/4)}& u_kd\mathcal{L}^n+ \lVert\mathfrak{B}[\chi]\rVert_\infty\int_{B(0,3/4)}\lvert V_k\rvert d\mathcal{L}^n\leq \lVert P_0^T[\nabla_\mathbb{G}\chi] \rVert_\infty + \lVert\mathfrak{B}[\chi]\rVert_\infty\int_{B(0,3/4)}\lvert V_k\rvert d\mathcal{L}^n.
      \nonumber
  \end{split}
\end{equation}
Since we showed above that $V_k\to 0$ in $L^1(B(0,3/4))$, this shows that 
\begin{equation}
    \sup_{k\in\N} \lVert R_k\rVert_{L^1(\mathbb{G},\R^{n_1})}\leq \lVert P_0^T[\nabla_\mathbb{G}\chi] \rVert_\infty + \lVert\mathfrak{B}[\chi]\rVert_\infty.
    \label{eq:boundL1onRk}
\end{equation}
Let us prove that the sequence $\mathcal{L}_2[R_k]*K_0$ is precompact in $L^1_{\mathrm{loc}}(\mathbb{G},\R^{n_1})$. As a first step, let us write the action of  $\mathcal{L}_1[R_k]*K_0$ on test functions in a more explicit way
\begin{equation}
    \langle \mathcal{L}_1[R_k]*K_0,\varphi\rangle= \langle \mathcal{L}_1[R_k],\varphi*K_0^\vee\rangle=\langle \mathcal{L}_1[R_k],\varphi*K_0\rangle=\sum_{i,j=1}^{n_1}\langle R_k^j*(X_iK_0)^\vee,\varphi\rangle=-\sum_{i,j=1}^{n_1}\langle R_k^j*(X_iK_0),\varphi\rangle
\end{equation}
where we denoted by $R_k^j$ the $j$th component of the vector $R_k$, and we used repeatedly the fact that $K_0=K_0^\vee$, where $\Psi^\vee$ denotes the distribution that acts as $\langle\Psi^\vee,\varphi\rangle =\langle\Psi, \varphi(-\cdot)\rangle$. Since $X_iK_0$ is an $(1-Q)$-homogeneous distribution, we can represent its action on test functions by integration. Let us prove that for any $\rho>0$ the functions $R_k^j*(X_iK_0)$ are equicontinuous in $L^1(B(0,\rho))$.
For any smooth function $u$ with compact support in $B(0,1)$ we have 
\begin{equation}
    \begin{split}
  \lVert u*(X_iK_0)(\cdot*h)-u*(X_iK_0)(\cdot)\rVert_{L^1(B(0,\rho))}    =&\int_{B(0,\rho)} \lvert u*(X_iK_0)(y*h)-u*(X_iK_0)(y)\rvert d\mathcal{L}^n(y) \\
  \leq &\int_{B(0,\rho)} \Big\lvert \int_{B(0,\rho)} u(z)\Big((X_iK_0)(z^{-1}y)-(X_iK_0)(z^{-1}yh)\Big) d\mathcal{L}^n(z)\Big\rvert d\mathcal{L}^n(y)\\
  \leq& \int_{B(0,\rho)} \int_{B(0,\rho)} \lvert u(z)\rvert \lvert (X_iK_0)(z^{-1}y)-(X_iK_0)(z^{-1}yh)\rvert d\mathcal{L}^n(z) d\mathcal{L}^n(y)\\
  \leq& \int_{B(0,\rho)} \lvert u(z)\rvert\Big(\int_{B(0,\rho)}  \lvert (X_iK_0)(z^{-1}y)-(X_iK_0)(z^{-1}yh)\rvert d\mathcal{L}^n(y)\Big)d\mathcal{L}^n(z) 
    \end{split}
\end{equation}
Let us study the inner integral above. Let $\eta:=\lVert h\rVert$ and note that
\begin{equation}
    \begin{split}
        &\qquad\qquad\qquad\qquad\qquad\int_{B(0,\rho)}  \lvert (X_iK_0)(z^{-1}y)-(X_iK_0)(z^{-1}yh)\rvert d\mathcal{L}^n(y)\\
        \leq &  \underbrace{\int_{\lVert z^{-1}y\rVert\leq 2\eta } \lvert (X_iK_0)(z^{-1}y)-(X_iK_0)(z^{-1}yh)\rvert d\mathcal{L}^n(y)}_{\textrm{(I)}}+\underbrace{\int_{\lVert z^{-1}y\rVert> 2\eta,\lVert y\rVert\leq \rho} \lvert (X_iK_0)(z^{-1}y)-(X_iK_0)(z^{-1}yh)\rvert d\mathcal{L}^n(y)}_{\textrm{(II)}}
        \nonumber
    \end{split}
\end{equation}
In order to estimate (I) it suffices to recall that $X_iK_0$ is $(1-Q)$-homogeneous
\begin{equation}
    \lvert \mathrm{(I)}\rvert\leq \int_{\lVert z^{-1}y\rVert\leq 2\eta} \lvert (X_iK_0)(z^{-1}y)\rvert d\mathcal{L}^n(y)+\int_{\lVert z^{-1}yh\rVert\leq 3\eta} \lvert (X_iK_0)(z^{-1}yh)\rvert d\mathcal{L}^n(y)
    \leq 20\eta\sup_{\lVert p\rVert=1}\lvert X_iK_0(p)\rvert,
    \nonumber
\end{equation}
On the other hand, smooth functions are locally Lipschitz with respect to the intrinsic distance on $\mathbb{G}$ and thanks to the fact that $X_iK_0$ is $(1-Q)$-homogeneous it is not hard to see that there exists a constant $C>0$ such that $\lvert f(a)-f(ah)\rvert\leq C\lVert a\rVert^{-Q}\eta$. 

This allows us to prove 
\begin{equation}
    \begin{split}
      \lvert \mathrm{(II)}\rvert\leq C\eta\int_{2\eta<\lVert z^{-1}y\rVert<\rho+\lVert y\rVert} \lVert z^{-1}y\rVert^{-Q} d\mathcal{L}^{n}(y)\leq C\eta(\rho+\lVert z\rVert-2\eta).
    \end{split}
\end{equation}
Summing up what we have shown above, we infer that 
$$\lVert u*(X_iK_0)(\cdot*h)-u*(X_iK_0)(\cdot)\rVert_{L^1(B(0,\rho))} \leq (20\sup_{\lVert p\rVert=1}\lvert X_iK_0(p)\rvert+2C\rho)\lVert h\rVert\lVert u\rVert_{L^1(B(0,\rho))}.$$
The above bound, together with \eqref{eq:boundL1onRk} conclude the proof of the fact, by choosing suitably diagonal subsequences, that the sequence $g_k$ is precompact by Kolmogorov-Riesz-Frechet theorem.

As a first step, let us show that $f_k$ converges to $0$ in the weak $L^1$ space $L^{1,\infty}$ and as distribution they converge to $0$ in the weak* topology. 
We can rewrite $f_k$ in the following way
$$\langle f_k,\varphi\rangle=\langle\mathcal{L}_1[\chi V_k]*K_0,\varphi\rangle=\langle\chi V_k,\mathcal{L}^*_1[\varphi*K_0]\rangle, $$
where $\mathcal{L}^*_1$ is the adjoint operator of $\mathcal{L}^1$. In addition, since $\chi V_k$ converge to $0$ in $L^1$, and thus it is immediate that $\lim_{k\to 0}\langle\chi V_k,\mathcal{L}^*_1[\varphi*K_0]\rangle=0$ for any test function $\varphi$. Hence
$\lim_{k\to \infty}\langle f_k,\varphi\rangle=0$,
and thus by definition of weak* convergence of distributions we conclude that $f_k\overset{*}{\rightharpoonup}0$. In addition we can further rewrite the action of $f_k$ on text functions as
\begin{equation}
    \begin{split}
        \langle \mathcal{L}_1[\chi V_k]*K_0,\varphi\rangle=\langle \mathcal{L}_1[\chi V_k],\varphi*K_0^\vee\rangle=\langle-\nabla^T_\mathbb{G}P_0\mathfrak{B}(\chi V_k),\varphi*K_0\rangle=\sum_{i=1}^{n_1}\sum_{j=1}^{n_1}\kappa_j\langle (\chi V_k)^j*(X_iX_jK_0), \varphi\rangle
        \nonumber
    \end{split}
\end{equation}
where $(\chi V_k)^j$ denotes the $j$th entry of the vector valued function $\chi V_k$.
It is easily checked that the distribution $X_iX_j K_0$ is $-Q$ homogeneous and it coincides with a smooth function away from $0$. In the notations of \cite[p.164]{folland75}, the distribution $X_iX_j K_0$ is said to be a \emph{Kernel of type $0$} and by \cite[Proposition 1.8]{folland75} there is a constant $C>0$ such that  
$$X_iX_jK_0=C\delta_0+PV(X_iX_jK_0),$$
where the distribution $PV(X_iX_jK_0)$ acts on test functions $\varphi$ as 
$$\langle PV(X_iX_jK_0),u\rangle=\lim_{\epsilon\to 0}\int_{\lVert x\rVert\geq \epsilon} X_iX_jK_0(x)u(x) dx.$$
In order to see that such distribution is well defined we refer to \cite[p.166]{folland75}. 
In addition, \cite[Proposition 1.9]{folland75} tells us that the operator $T:u\mapsto u*X_iX_jK_0$ is bounded in $L^p(\mathbb{G},\R^{n_1})$ for any $1<p<\infty$ and thus the operator 
$\tilde{\mathcal{L}}:u\mapsto \mathcal{L}_1[u]*K_0$,
is, or more precisely extends to a, bounded in $L^p(\mathbb{G},\R^{n_1})$ for any $1<p<\infty$. To be precise \cite[Proposition 1.9]{folland75} gives us a little more. Indeed, defined $T_\varepsilon[u]=u*(X_iX_jK_0)^\varepsilon+Cu$ where $(X_iX_jK_0)^\varepsilon$ is the function coinciding with $X_iX_jK_0$ on $B(0,\varepsilon)^c$ and $0$ otherwise, we have that $T_\varepsilon$ are \emph{uniformly} bounded in $L^p$ for any $1<p<\infty$ and 
\begin{equation}
\lim_{\varepsilon\to 0}\lVert T_\varepsilon[u]-T[u]\rVert_{L^p(\mathbb{G})}=0\qquad \text{for any test function $u$}.
\label{existenceoflimitLp}
\end{equation}

Let us now show that the operator $u\mapsto u*X_iX_j K_0$ is of weak $(1,1)$-type. The above discussion shows that 
$$T[u]-Cu=\lim_{\epsilon\to 0} T_\epsilon[u]-Cu=\lim_{\epsilon\to 0}\int_{\lVert w\rVert\geq \epsilon}u(\cdot*w^{-1})X_iX_jK_0(w)d\mathcal{L}^n(w),$$
where the limits above have to been understood in the $L^p$ sense. This in particular implies that the operator $$u\mapsto\lim_{\epsilon\to 0}\int_{\lVert w\rVert\geq \epsilon}u(\cdot*w^{-1})X_iX_jK_0(w)d\mathcal{L}^n(w),$$
defines an operator bounded on $L^p$. 
In addition, let us note that it is possible to show that for any $\varepsilon>0$, there exists a constant $C>0$ such that for any $\lVert z^{-1}w\rVert \geq \varepsilon$ we have
$$\lvert X_iX_j K_0(w^{-1}z )- X_iX_j K_0(w^{-1}\bar z)\rvert \leq C\lVert w^{-1} z\rVert^{-(Q+1)}\lVert z^{-1}\bar z\rVert\qquad \text{for any $\lVert z^{-1}\bar z\rVert \leq \varepsilon/4$,}$$
which implies that there exists a constant $A>0$ such that 
$$\int_{\lVert w^{-1}z\rVert \geq \varepsilon}\lvert X_iX_j K_0(w^{-1} z)- X_iX_j K_0(w^{-1} \bar z)\rvert\leq A\qquad \text{for any $\lVert z^{-1}\bar z\rVert \leq \varepsilon/4$,}$$
which, thanks to the fact that the topologies induced by the Euclidean metric and the sub-Riemannian one are the same together with the argument employed in the proof of  \cite[Chapter 1, \S 5 Theorem 3 ]{stein}, allows us to conclude that the operator $T[u]-u$ is of weak $(1,1)$-type. Thus clearly $T[u]$ is of weak $(1,1)$-type as well. This together with the fact that $\chi V_k$ converge to $0$ in $L^1(\mathbb{G})$ implies that
\begin{equation}
    \lim_{k\to \infty}\lVert f_k\rVert_{L^{1,\infty}}=0.
    \label{equazionea0}
\end{equation}
Thanks to \eqref{equationfondamentaleinvertita} we know that $f_k+g_k\geq 0$ and hence we must have that $f_k^{-}:=\max{0,-f_k}\leq \lvert g_k\rvert$. However, since $g_k$ is precompact in $L^1_{loc}(\mathbb{G})$, the functions $\lvert g_k\rvert$ are locally uniformly integrable, namely for any $R>0$ and any $\varepsilon>0$ there exists a $\delta>0$ such that for any Borel set $E\subseteq B(0,R)$ such that $\mathcal{L}^n(E)<\delta$ then $\int_E f_k^-\leq \int_E\lvert g_k\rvert<\varepsilon$. 
Let us pick a test function $\varphi$ and note that 
 \begin{equation}
\begin{split}
       \lim_{k\to \infty} \int \varphi \lvert f_k\rvert d\mathcal{L}^n\leq & \lim_{k\to\infty}\int \varphi f_k d\mathcal{L}^n+2\int  f_k^- d\mathcal{L}^n=\langle \chi V_k,\mathcal{L}^*(\varphi*K_0)\rangle+2\int  f_k^- d\mathcal{L}^n \\
     \leq &\lim_{k\to\infty}\langle \sqrt{\chi} V_k,\sqrt{\chi}\mathcal{L}^*_1(\varphi*K_0)\rangle+2\int_{\{\lvert f_k\rvert>\delta\}} \varphi f_k^-d\mathcal{L}^n+\delta\lVert \varphi\rVert_{L^1(\mathbb{G})}\\
     \leq& \lim_{k\to\infty}\lVert \sqrt{\chi} V_k\rVert_{L^1(\mathbb{G})} \lVert \sqrt{\chi}\mathcal{L}^*_1(\varphi*K_0)\rVert_{L^\infty(\mathbb{G})} +2\mathcal{L}^n(\{\lvert f_k\rvert>\delta\})\lVert\varphi\rVert+2\delta\lVert \varphi\rVert_{L^1(\mathbb{G})}= \delta\lVert \varphi\rVert_{L^1(\mathbb{G})},
     \nonumber
\end{split}
\end{equation}
where $\mathcal{L}_1^*$ is the adjoint of the operator $\mathcal{L}^1$, and where the last 
identity comes from the fact that $\sqrt{\chi}V_k$ is converging to $0$ in $L^1(\mathbb{G})$ and \eqref{equazionea0}. Finally thanks to the arbitrariness of $\delta$ we conclude that $\lim_{k\to \infty} \int \varphi \lvert f_k\rvert d\mathcal{L}^n=0$. This show in particular that the sequence $f_k$ is converging to $0$ in $L^1_{loc}(\mathbb{G})$ (see \cite[Lemma 2.2]{DPR} for the same type of  argument).

Since $g_k$ is precompact in $L^1_{loc}(\mathbb{G})$, thanks to the above discussion we know that the sequence $\chi u_k=f_k+g_k$ is also precompact in $L^1_{loc}(\mathbb{G})$ and thus there exists a $v\in L^1(\mathbb{G})$ supported on $B(0,3/4)$ such that $\chi u_k\to v$
in $L^1(\mathbb{G})$.

Let us show that $\Phi_{\varepsilon_k}*[\lVert\boldsymbol\nu_k\rVert-\lVert\boldsymbol\nu_k\rVert ^s]$ converges to $0$ in $L^1(B(0,1))$. By definition, we have 
\begin{equation}
    \begin{split}
\lim_{k\to \infty}\lVert u_k-\Phi_{\varepsilon_k}*\lVert\boldsymbol\nu_k\rVert ^s\rVert_{L^1(B(0,1/2))}=&\lim_{k\to \infty}\int_{B(0,1/2)} \Phi_{\varepsilon_k}*[\lVert\boldsymbol\nu_k\rVert-\lVert\boldsymbol\nu_k\rVert ^s]d\mathcal{L}^n\leq  \lim_{k\to \infty}\lVert\boldsymbol\nu_k\rVert^a(B(0,1))\\
=&\lim_{k\to \infty}\frac{\Xi^a(B(x_0,r_k))}{\Xi(B(x_0,r_k))}\overset{\eqref{equazionea0}}{=} 0, 
\label{eq.stabilesingolare}
    \end{split}
\end{equation}
and this implies in particular that the sequence $\Phi_{\varepsilon_k}*\lVert\boldsymbol\nu_k\rVert ^s$ is precompact in $L^1_{loc}(\mathbb{G})$. In addition, we also have that $\Phi_{\varepsilon_k}*\lVert \boldsymbol\nu_k\rVert^s\rightharpoonup \nu$. Indeed, for any test function $\varphi$ supported in $B(0,1/2)$ we have
\begin{equation}
  \lim_{k\to\infty} \langle \Phi_{\varepsilon_k}*\lVert \boldsymbol\nu_k\rVert^s, \varphi \rangle\overset{\eqref{eq.stabilesingolare} }{=}\lim_{k\to\infty} \langle \Phi_{\varepsilon_k}*\lVert \boldsymbol\nu_k\rVert, \varphi \rangle= \lim_{k\to\infty}\langle \lVert \boldsymbol\nu_k\rVert, \Phi_{\varepsilon_k}*\varphi \rangle=\lim_{k\to\infty}\int (\Phi_{\varepsilon_k}*\varphi) d\lVert \boldsymbol\nu_k\rVert=\langle \nu,\varphi\rangle ,
\end{equation}
where the last identity comes from the fact that the sequence of functions $\Phi_{\varepsilon_k}*\varphi$ converges uniformly to $\varphi$. The above chain of identities also proves that 
$$\langle v,\varphi\rangle=\lim_{k\to\infty}\langle \chi u_k,\varphi\rangle=\lim_{k\to\infty} \langle \Phi_{\varepsilon_k}*\lVert \boldsymbol\nu_k\rVert, \varphi \rangle=\langle \nu,\varphi\rangle,$$
which means that on $B(0,1/2)$ the measure $\nu$ is (represented by) the $L^1(B(0,1/2))$ function $v$. 

It is now the moment to choose the sequence $\{\varepsilon_k\}_{k\in\N}$. Thanks to the lower semicontinuity of the total variation we know that 
$$\lvert \lVert\boldsymbol\nu_k\rVert^s -\nu\rvert(B(0,1/2))\leq \liminf_{\varepsilon\to 0}\lvert \Phi_\varepsilon*\lVert\boldsymbol\nu_k\rVert^s -\nu\rvert(B(0,1/2)),$$
this means that for any $k\in\N$ we can choose an $\varepsilon_k$ such that 
\begin{equation}
    \lvert \lVert\boldsymbol\nu_k\rVert^s -\nu\rvert(B(0,1/2))\leq \lvert \Phi_{\varepsilon_k}*\lVert\boldsymbol\nu_k\rVert^s -\nu\rvert +k^{-1}.
    \label{eq.simtalsc}
\end{equation}
Let $E$ be a Borel set of $\mathbb{G}$ such that $\mathcal{L}^n(E)=0$, $\Xi^s(\mathbb{G}\setminus E)=0$. Thanks to \eqref{eq:limite0} and to (\hyperlink{hlrpproofii}{ii}) we know that if $k$ is sufficiently big, we have
\begin{equation}
\begin{split}
      1/j_0\leq& \frac{\Xi^s(B(x_0,r_k/2))}{\Xi(B(x_0,r_k))}=\frac{\Xi^s(B(x_0,r_k/2)\cap E)}{\Xi(B(x_0,r_k))}=\lVert \boldsymbol\nu_k \rVert^s(B(0,1/2)\cap \delta_{1/r_k}(x_0^{-1}E))\\
      \leq& \lvert\lVert \boldsymbol\nu_k \rVert^s-  \nu\rvert(B(0,1/2)\cap \delta_{1/r_k}(x_0^{-1}E))) + \nu(B(0,1/2)\cap \delta_{1/r_k}(x_0^{-1}E))\\
      =&\lvert\lVert \boldsymbol\nu_k \rVert^s-  \nu\rvert(B(0,1/2)\cap \delta_{1/r_k}(x_0^{-1}E)))\overset{\eqref{eq.simtalsc}}{\leq}\lvert \Phi_{\varepsilon_k}*\lVert\boldsymbol\nu_k\rVert^s -\nu\rvert(B(0,1/2)) +k^{-1}
      \label{eq.contraddizionefinale}
\end{split}
\end{equation}
Since $\lvert \Phi_{\varepsilon_k}*\lVert\boldsymbol\nu_k\rVert^s -\nu\rvert(B(0,1/2))=\lVert \Phi_{\varepsilon_k}*\lVert\boldsymbol\nu_k\rVert^s-v\rVert_{L^1(B(0,1/2))}$, we see that if $k$ is chosen small enough the inequality 
$$1/j_0\leq\lVert \Phi_{\varepsilon_k}*\lVert\boldsymbol\nu_k\rVert^s-v\rVert_{L^1(B(0,1/2))} +k^{-1},$$
cannot be satisfied thanks to the fact that $u_k\to v$ in $L^1(B(0,1/2))$ and to \eqref{eq.stabilesingolare}. This shows that the points where (i), (ii), (iii) and (iv) hold together form a $\Xi^s$-null set.

\medskip

Thanks to Radon-Nykodim decomposition, we can write $\mu$ as $\mu=\mu^a+\mu^s$, where $\mu^a\ll\mathcal{L}^n$ and $\mu^s\perp \mathcal{L}^n$ and it is elementary to see that $\mu^s\ll\Xi^s$ since $\mu\ll\Xi$. Since $\eta_i\ll \Xi$ there are $\alpha_i\in L^1(\Xi)$ such that $\eta_i=\alpha_i\Xi$. Hence, it is easy to see that 
\begin{equation}
\begin{split}
     \mathfrak{T}(x)=&\frac{d\mathbb{T}(x)}{d\Xi}=\Big(\frac{d(\tau^1\eta^1)}{d\Xi}(x),\ldots,\frac{d(\tau^{n_1}\eta^{n_1})}{d\Xi}(x)\Big)=\Big(\frac{d(\tau^1\alpha_1\Xi)}{d\Xi}(x),\ldots,\frac{d(\tau^{n_1}\alpha_{n_1}\Xi)}{d\Xi}(x)\Big)\\
     =&(\alpha_1(x)\tau_1(x),\ldots,\alpha_{n_1}(x)\tau_{n_1}(x))=(\alpha_1(x) e_1,\ldots,\alpha_{n_1}(x) e_{n_1}),
\end{split}
    \nonumber
\end{equation}
for $\Xi$-almost every $x\in \cap_{i=1}^{n_1}\{0<\lvert\alpha_i\rvert<\infty\}$. It is thus immediate to see that this implies
$$\mathfrak{T}(x)=(\alpha_1(x) e_1,\ldots,\alpha_{n_1}(x) e_{n_1})\qquad\text{for  $\sum_{i=1}^{n_1}\eta_i$-almost every $x\in \mathbb{G}$},$$
and $\cap_{i=1}^{n_1}\{0<\lvert\alpha_i\rvert<\infty\}$ is a set of full $\sum_{i=1}^{n_1}\eta_i$-measure. Finally, since $\mu\ll \sum_{i=1}^{n_1}\eta_i$, we conclude that 
$$\mathfrak{T}(x)=\mathrm{diag}(\alpha_1(x),\ldots,\alpha_{n_1}(x))\qquad\text{for $\mu$-almost every $x\in\GG$},$$
and $\alpha_1(x),\ldots,\alpha_{n_1}(x)\in \R\setminus\{0\}$ for $\mu$-almost every $x\in\GG$. However, thanks to the discussion above we know that 
$$\Xi^s(\{x\in\mathbb{G}:\mathfrak{T}(x)=\mathrm{diag}(\alpha_1(x),\ldots,\alpha_{n_1}(x))\text{ and }0<\lvert\alpha_i\rvert<\infty\text{ for any }i=1,\ldots,n_1\})=0.$$
This, however, concludes the proof of the fact that $\mu^s=0$. 
\end{proof}

\appendix

\section{Differentiation properties for non-doubling Radon measures}

In this section we prove that for any Radon measure $\nu$ and $\nu$-almost everywhere there exists a sequence of scales on which $\nu$ behaves like a doubling measure and along such sequence of scales a form of Lebesgue differentiation theorem holds. 
 
\begin{lemma}\label{l:weakdoubl}
Let $\nu $ be a non-negative Radon measure and let $t \in (0,1)$. There exists a $\nu$-measurable set $E^t_\nu$ with $\nu(E^t_\nu)=0$ such that for all $x \in \supp (\nu)\setminus E^t_\nu$ it holds that
\begin{equation}\label{eq:weakdoubl}
\limsup_{r\to 0}\frac{\nu(B(x,tr))}{\nu(B(x,r))} >0.
\end{equation}
\end{lemma}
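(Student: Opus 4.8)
The plan is to argue by contradiction, converting the hypothesis into a polynomial decay estimate for $\nu$ and then into a vanishing Hausdorff‑measure bound. Let $\mathcal Q=\dim_{\mathrm{hom}}(\mathfrak g)$ denote the Hausdorff dimension of $(\GG,d_c)$, fix once and for all the threshold $\varepsilon:=t^{\mathcal Q+1}\in(0,1)$, and for $k\in\N$ set
\[
F_k:=\bigl\{x\in\supp(\nu)\ :\ \nu(B(x,tr))\le\varepsilon\,\nu(B(x,r))\ \text{ for all }0<r\le 1/k\bigr\}.
\]
A routine argument — reducing to rational radii and using the one‑sided continuity of $r\mapsto\nu(B(x,r))$ together with the (semi)continuity of $x\mapsto\nu(B(x,r))$ — shows that each $F_k$ is Borel, and the sequence is increasing in $k$. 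If $\lim_{r\to 0}\nu(B(x,tr))/\nu(B(x,r))=0$ then $x\in F_k$ for $k$ large, so the exceptional set $\{x\in\supp(\nu):\limsup_{r\to 0}\nu(B(x,tr))/\nu(B(x,r))=0\}$ is contained in $E^t_\nu:=\bigcup_k F_k$; since $E^t_\nu$ is $\nu$‑measurable, it suffices to prove $\nu(F_k)=0$ for every $k$.

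Suppose instead that $\nu(F_k)>0$ for some $k$. By inner regularity of the Radon measure $\nu$ we may pick a bounded Borel set $F\subseteq F_k$ with $0<\nu(F)<\infty$, say $F\subseteq B(0,M)$, and pass to the finite measure $\nu':=\nu\trace B(0,M+1)$; for $x\in F$ and $0<r\le r_0:=\min\{1/k,1\}$ the ball $B(x,r)$ lies in $B(0,M+1)$, so $\nu'(B(x,tr))\le\varepsilon\,\nu'(B(x,r))$ still holds. Iterating this downward from scale $r_0$ gives $\nu'(B(x,t^jr_0))\le\varepsilon^j\nu'(\GG)$ for all $j\ge 0$, and interpolating on intermediate scales (given $0<\rho\le r_0$, take $j$ with $t^{j+1}r_0<\rho\le t^jr_0$ and use $B(x,\rho)\subseteq B(x,t^jr_0)$) yields
\[
\nu'(B(x,\rho))\le C_F\,\rho^{\alpha}\qquad\text{for all }x\in F,\ 0<\rho\le r_0,
\]
with $\alpha=\log(1/\varepsilon)/\log(1/t)=\mathcal Q+1$ and $C_F=\varepsilon^{-1}r_0^{-\alpha}\,\nu'(\GG)$. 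This elementary computation is the technical core of the argument.

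Finally, a standard Frostman‑type covering argument turns the mass bound into a Hausdorff‑measure bound: given any cover of $F$ by sets $U_i$ of diameter at most $r_0/2$, discard those not meeting $F$, pick $x_i\in U_i\cap F$, note $U_i\subseteq B(x_i,2\,\mathrm{diam}\,U_i)$, and estimate $\nu(F)=\nu'(F)\le\sum_i\nu'\!\bigl(B(x_i,2\,\mathrm{diam}\,U_i)\bigr)\le 2^{\alpha}C_F\sum_i(\mathrm{diam}\,U_i)^{\alpha}$; taking the infimum over such covers gives $\nu(F)\le 2^{\alpha}C_F\,\Haus^{\alpha}(F)$. Since $F$ is a bounded subset of $\GG$ and $\alpha=\mathcal Q+1$ strictly exceeds the Hausdorff dimension $\mathcal Q$ of $(\GG,d_c)$ (recalled in Section~\ref{s2}), we get $\Haus^{\alpha}(F)=0$, hence $\nu(F)=0$, contradicting $\nu(F)>0$. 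Therefore $\nu(F_k)=0$ for all $k$ and $\nu(E^t_\nu)=0$, as claimed. The only delicate point is the Borel measurability of the $F_k$ (hence of $E^t_\nu$), which has to be handled according to whether $B(x,r)$ is read as an open or closed ball; the substantive input is the identity $\dim_{\Haus}(\GG,d_c)=\mathcal Q$, which is exactly what forces the contradiction once $\varepsilon$ is chosen below $t^{\mathcal Q}$.
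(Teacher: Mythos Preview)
Your argument is correct and follows a genuinely different route from the paper's. The paper argues by contradiction as well, but after assuming $\nu(E^t_\nu)>0$ it applies Severini--Egorov to extract a compact set on which the ratios $\nu(B(x,t^{k+1}))/\nu(B(x,t^k))$ are uniformly below an \emph{arbitrary} $\varepsilon$ for $k\ge k_0$; it then transfers the resulting decay estimate to \emph{Euclidean} balls via the local H\"older comparison in Remark~\ref{rk.norm}, and closes with the Euclidean Vitali--Besicovitch covering theorem and a Lebesgue-measure bound. Your proof is more intrinsic: you fix $\varepsilon=t^{\mathcal Q+1}$ from the outset (no Egorov), stay in the Carnot metric throughout, and replace the Euclidean covering step by a direct Frostman estimate against $\Haus^{\mathcal Q+1}$, which vanishes on bounded sets because $\dim_{\Haus}(\GG,d_c)=\mathcal Q$. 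Both proofs exploit the same mechanism---decay of $\nu(B(x,r))$ faster than any power $r^{\mathcal Q}$ contradicts the known dimension of the ambient space---but your version avoids the detour through Euclidean balls that the paper needs in order to invoke Besicovitch (which can fail for Carnot balls). The only point requiring care in your write-up is the Borel measurability of $F_k$; your reduction to rational radii via right-continuity of $r\mapsto\nu(B(x,r))$ for closed balls is valid.
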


\begin{proof}
Since the map $x\mapsto\nu(B(x,t^k))$ is upper semicontinuous, we infer that $\mathfrak{r}(x):=\limsup_{k\to\infty}\nu(B(x,t^{k+1})/\nu(B(x,t^k)))$ is Borel. Hence, the set $E^t_\nu:=\supp(\nu)\setminus\mathfrak{r}^{-1}(0)$ is Borel as well. Clearly, the set of points in $\supp(\nu)$ where \eqref{eq:weakdoubl} is not satisfied is contained in $E^t_\nu$.  

Assume by contradiction that $\nu(E^t_\nu)>0$. Since as seen above, the fractions $x\mapsto\nu(B(\cdot,t^{k+1}))/\nu(B(\cdot,t^k))$ are Borel functions, by Severini-Egorov's Theorem there exists a compact set $K\subset E_\nu^t$ with $\nu(K)>0$ where 
for every  $ \eps>0$ there exists $k_0\in\N$ such that 
 \[
 {\nu(B(x,t^{k+1}))} \leq \eps {\nu(B(x,t^k))}\qquad\text{for every } x\in K \text{ and every } k\geq k_0.
 \]
For any such $x\in K$ it therefore holds that 
\[
\nu(B(x,t^{k})) \leq \nu(B(x,t^{k_0})) \eps^{k-k_0}. 
\]
Since $K$ is compact, $\nu$ is Radon, and $t<1$, the right hand-side of the above inequality is uniformly bounded by some constant $C>0$. This implies by Remark \ref{rk.norm} that  $\nu(U(x,t^{sk})) \leq C \eps^{k-k_0}$, where here $U(x,r)$ denotes the closed euclidean ball with centre $x$ and radius $r$. 
Choosing $\delta<1$ and $\eps=\delta t^{sd}$, we obtain a sequence 
$r_k := t^{ks}$ such that 
\[
\nu(U(x,r_k)) \leq C\delta^{k-k_0} r_k^d. 
\] 
This last inequality can hold however only for a $\nu$-null set of points $x$, since the classical Vitali-Besicovitch theorem, applied to the covering $$\mathcal{U}:=\{U(x,r_k):x\in K\text{ and }k\geq k_0+2\},$$ provides a disjoint subcover $\{U(x_i,r_{k_i})\}_{i\in\N}$ of $\mathcal{U}$ such that 
\[
\nu(K) \leq C\delta^{k_1-k_0}\sum_{i\in\N} r_{k_i}^n \leq C\delta^{k_1-k_0}\mathcal{L}^n(U(K,1)) 
\]
where $U(K,1)$ denotes the closed Euclidean neighbourhood of radius $1$ of the compact set $K$. The arbitrariness of $\delta$ concludes that  $\nu(E^t_\nu)=0$. 
\end{proof}

\begin{proposizione}\label{p:differentiation}
Let $\nu$ be a Radon measure on $\mathbb{G}$ and let $f\in L^1(\nu)$. Then, for $\nu$-almost every $x\in\mathbb{G}$ there exists an infinitesimal sequence $r_k^x$ such that
\begin{equation}\label{eq:lebpt}
\lim_{k\to\infty}\fint_{B(x,r^x_k)} |f(y)-f(x)| d\nu(y) =0.
\end{equation}
\end{proposizione}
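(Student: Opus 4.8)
The plan is to deduce \eqref{eq:lebpt} from the following ``weak Lebesgue inequality'': for every non-negative $g\in L^1(\nu)$,
\[
\liminf_{r\to 0}\fint_{B(x,r)}g\,d\nu\le g(x)\qquad\text{for }\nu\text{-a.e. }x.
\]
Granting this, one applies it to $g:=|f-q|$ with $q$ running over a countable dense set $D\subseteq\mathbb R$ (after a routine localisation, writing $\mathbb G=\bigcup_mB(0,m)$ and noting that for $x$ interior to $B(0,m)$ small balls do not see $\nu$ outside $B(0,m)$, we may assume $\nu$ finite, so that $|f-q|\in L^1(\nu)$). For $\nu$-a.e.\ $x$ the inequality then holds simultaneously for all $q\in D$; fixing such an $x$ and $\varepsilon>0$ and choosing $q\in D$ with $|f(x)-q|<\varepsilon$, the triangle inequality $|f(y)-f(x)|\le|f(y)-q|+|q-f(x)|$ yields a sequence $r_j\downarrow0$ along which $\fint_{B(x,r_j)}|f-f(x)|\,d\nu<2\varepsilon$; letting $\varepsilon\downarrow0$ and extracting a diagonal sequence produces the infinitesimal sequence $r^x_k$ in \eqref{eq:lebpt}.

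The weak Lebesgue inequality in turn reduces to the claim: \emph{if $B\subseteq\mathbb G$ is Borel, $h\in L^1(\nu)$ is non-negative with $h=0$ on $B$, and $\liminf_{r\to 0}\fint_{B(x,r)}h\,d\nu\ge c>0$ for every $x\in B$, then $\nu(B)=0$.} Indeed, the set where the inequality fails is a countable union of sets $S_q=\{x:\ g(x)<q\le\liminf_{r\to0}\fint_{B(x,r)}g\,d\nu\}$, $q\in\mathbb Q_{>0}$; fixing a rational $\tau\in(0,q)$ and splitting $g=g\mathbf 1_{\{g<q-\tau\}}+g\mathbf 1_{\{g\ge q-\tau\}}=:g'+g''$, on $S_q\cap\{g<q-\tau\}$ one has $g''=0$ while, since $g'<q-\tau$ pointwise, $\liminf_r\fint g''\,d\nu\ge\liminf_r\fint g\,d\nu-\limsup_r\fint g'\,d\nu\ge q-(q-\tau)=\tau>0$; the claim with $h=g''$ gives $\nu(S_q\cap\{g<q-\tau\})=0$, and taking unions over $\tau$ and $q$ finishes.

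To prove the claim I would combine the weak-doubling Lemma \ref{l:weakdoubl} (with $t=\tfrac15$) with the elementary $5r$-covering lemma, valid in any metric space for families of balls with bounded radii. Replacing $B$ by $B\cap\supp(\nu)$ minus the $\nu$-null exceptional set of Lemma \ref{l:weakdoubl}, every $x\in B$ admits scales $s_j(x)\downarrow0$ with $\nu(B(x,5s_j(x)))\le\beta(x)^{-1}\nu(B(x,s_j(x)))$ for some $\beta(x)>0$. Decompose $B$, modulo $\nu$-null sets, into the countably many Borel pieces $B_{k,n}:=\{x\in B:\ \beta(x)>1/k,\ \fint_{B(x,r)}h\,d\nu>c/2\text{ for all }r\in(0,1/n)\}$ (measurability is checked as in the proof of Lemma \ref{l:weakdoubl}); by inner regularity of $\nu$ it suffices to prove $\nu(\tilde B)=0$ for each compact $\tilde B\subseteq B_{k,n}$. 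Fix $\delta\in(0,1/n)$, and for each $x\in\tilde B$ pick $s_x\in\{s_j(x):j\}\cap(0,\delta)$; then $\nu(B(x,5s_x))\le k\,\nu(B(x,s_x))$ and, because $h$ vanishes on $\tilde B$, $\int_{B(x,s_x)\setminus\tilde B}h\,d\nu=\int_{B(x,s_x)}h\,d\nu>\tfrac c2\nu(B(x,s_x))$. Applying the $5r$-covering lemma to $\{B(x,s_x):x\in\tilde B\}$ we obtain a countable disjoint subfamily $\{B(x_i,s_i)\}_i$ with $x_i\in\tilde B$ such that $\tilde B\subseteq\bigcup_iB(x_i,5s_i)$, whence
\[
\nu(\tilde B)\le\sum_i\nu(B(x_i,5s_i))\le k\sum_i\nu(B(x_i,s_i))<\frac{2k}{c}\sum_i\int_{B(x_i,s_i)\setminus\tilde B}h\,d\nu\le\frac{2k}{c}\int_{N_\delta(\tilde B)\setminus\tilde B}h\,d\nu,
\]
where $N_\delta(\tilde B):=\{x:\dist(x,\tilde B)<\delta\}$ and we used $B(x_i,s_i)\subseteq N_\delta(\tilde B)$ together with the disjointness of the balls. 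Since $\tilde B$ is compact, $N_\delta(\tilde B)\downarrow\tilde B$ as $\delta\downarrow0$, so dominated convergence ($h\in L^1(\nu)$) forces $\nu(\tilde B)=0$.

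The main obstacle, and the sole reason Lemma \ref{l:weakdoubl} is needed, is that in a Carnot group there is no Besicovitch covering theorem for $d_c$-balls, hence no control of $\nu$ of the dilated balls $B(x_i,5s_i)$ that any Vitali-type argument produces; selecting, for each centre, a \emph{weak-doubling} scale is precisely what lets the $5r$-enlargement be absorbed into a harmless constant. A secondary point of care is the Borel (or universal) measurability of the auxiliary objects $\beta$ and $B_{k,n}$, which is routine and parallels the measurability discussion already carried out in the proof of Lemma \ref{l:weakdoubl}.
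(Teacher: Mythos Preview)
Your argument is correct and complete; it is, however, organised differently from the paper's. Both proofs rest on the same two ingredients --- Lemma~\ref{l:weakdoubl} (weak doubling along a sequence of scales, with $t=1/5$) and the $5r$-Vitali covering lemma --- and both stratify the space according to the weak-doubling constant before running a covering argument on each stratum. The difference is in what is proved on each stratum. The paper introduces a dyadic maximal operator $M\varphi(x)=\sup_k\fint_{B(x,5^{-k-1})}|\varphi|\,d\nu$, establishes a weak-$(1,1)$ bound for it restricted to each $E_j$, and then runs the classical ``approximate by continuous functions'' argument. You instead bypass the maximal function entirely: you first reduce the proposition to the one-sided inequality $\liminf_{r\to0}\fint_{B(x,r)}g\,d\nu\le g(x)$, then reduce that to a density-type claim (if $h=0$ on $B$ but the averages of $h$ are uniformly bounded below on $B$, then $\nu(B)=0$), and prove the claim directly by the covering-plus-shrinking-neighbourhood estimate. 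Your route is slightly more elementary --- no maximal function, no approximation by continuous functions --- and sidesteps the delicate point in the paper's argument of selecting, for each $x$, a single scale $5^{-\mathfrak{k}}$ at which both the maximal average is large \emph{and} the doubling ratio is good. What the paper's approach buys is a reusable maximal inequality; what yours buys is a shorter, more self-contained argument tailored to the exact statement needed. The only places where care is required in your write-up are the measurability of $\beta$ and of the sets $B_{k,n}$ (which, as you note, follows from the semicontinuity of $r\mapsto\nu(B(x,r))$ exactly as in the proof of Lemma~\ref{l:weakdoubl}) and the minor imprecision that the weak-doubling scales $s_j(x)$ satisfy $\nu(B(x,5s_j))\le k\,\nu(B(x,s_j))$ rather than $\beta(x)^{-1}\nu(B(x,s_j))$ exactly, which is harmless since you work on $\{\beta>1/k\}$.
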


\begin{proof}Let us fix $t:=1/5$.
As a first step, let us define
$$E_j:=\{x\in E^t_\nu:\limsup_{k\to \infty}\frac{\nu(B(x,t^{k+1}))}{\nu(B(x,t^k))} >j^{-1}\}.$$
The set $E_j$ is easily seen to be Borel, thanks to the discussion contained in the proof of Lemma \ref{l:weakdoubl} and the arguments therein contained also prove that $\nu(\mathbb{G}\setminus \bigcup_{j\in\N}E_j)=0$. 

For any $\varphi\in L^1(\nu)$ us define the function $M\varphi:\cup_{j\in\N}E_j\to \R$ such that
$$M\varphi(x):=\sup_{k\in\N}\fint_{B(x,t^{k+1})}\lvert \varphi(y)\rvert d\nu(y),$$
Let us see that the function $M\varphi$ is $\nu$-measurable. It is easy to see that the map 
$x\mapsto \fint_{B(x,t^{k+1})}\lvert \varphi(y)\rvert d\nu(y)$,
is $\nu$-measurable and thus  $M\varphi$ is as well since it is the countable supremum of $\nu$-measurable maps. Let us now show that 
$$\nu(\{M\varphi>\lambda\}\cap E_j)\leq \frac{j}{\lambda}\int\lvert \varphi(y)\rvert d\nu(y).$$
For any $x\in\{M\varphi>\lambda\}\cap E_j$, there exists a $\mathfrak{k}=\mathfrak{k}(x,\lambda)\in\N$ such that
$$\fint_{B(x,t^\mathfrak{k+1})}\lvert \varphi(y)\rvert d\nu(y)>\lambda\qquad \text{and}\qquad \frac{\nu(B(x,t^{\mathfrak{k}+1}))}{\nu(B(x,t^\mathfrak{k}))}>j^{-1}.$$
This implies that $E_j$ is covered by the balls $\mathcal{C}:=\{B(x,t^{\mathfrak{k}(x)}):x\in E_j\}$ and thus by the classical Vitali covering lemma applied to the family of balls $\mathcal{C}$ we infer we can find countably many $y_i\in E_j$ such that the $B(y_i,t^{\mathfrak{k}(y_i)})$ cover $E_j$ and the balls $B(y_i,t^{\mathfrak{k}(y_i)+1})$ are disjoint. This implies 
\begin{equation}
\begin{split}
\nu(\{M\varphi>\lambda\}\cap E_j)\leq\sum_{i=1}^\infty \nu(B(y_i,t^{\mathfrak{k}(y_i)}))
    \leq j\sum_{i=1}^\infty \nu(B(y_i,t^{\mathfrak{k}(y_i)+1}))\leq \frac{j}{\lambda}\int\lvert\varphi(y)\rvert d\nu(y).
    \nonumber
\end{split}
\end{equation}
Let us define 
$$A_{\ell,j}:=\Big\{x\in E_j:\limsup_{k\to\infty}\fint_{B(x,t^k)}\lvert f(y)-f(x)\rvert d\nu(y)>\ell^{-1}\Big\},$$
observe that for    every continuous function $g$ we have 
\begin{equation}
    \begin{split}
        A_{\ell,j}\subseteq \{x\in E_j:M\lvert f-g\rvert>(2\ell)^{-1}\}\cup\{x\in E_j:\lvert f(x)-g(x)\rvert>(2\ell)^{-1}\},
    \end{split}
\end{equation}
This implies thanks to the above discussion that
\begin{equation}
    \nu(A_{\ell,j})\leq 2j\ell \int\lvert (f-g)(y)\rvert d\nu(y)+2\ell \int_{E_j}\lvert (f-g)(y)\rvert d\nu(y).
\end{equation}
Thanks to the arbitrariness of the choice of $g$, we finally conclude that $\nu(A_{\ell,j})=0$. This concludes the proof since the set of points where \eqref{eq:lebpt} holds in contained in the complement of  $\cup_{\ell,j\in\N} A_{\ell,j}$.
\end{proof}

%
%

\printbibliography

@incollection {All76,
    AUTHOR = {Allard, William K.},
     TITLE = {An integrality theorem and a regularity theorem for surfaces
              whose first variation with respect to a parametric elliptic
              integrand is controlled},
 BOOKTITLE = {Geometric measure theory and the calculus of variations
              ({A}rcata, {C}alif., 1984)},
    SERIES = {Proc. Sympos. Pure Math.},
    VOLUME = {44},
     PAGES = {1--28},
 PUBLISHER = {Amer. Math. Soc., Providence, RI},
      YEAR = {1986},
   MRCLASS = {49F20 (28A75)},
  MRNUMBER = {840267},
MRREVIEWER = {Marcello Ragni},
       DOI = {10.1090/pspum/044/840267},
       URL = {https://doi.org/10.1090/pspum/044/840267},
}

@article {MaPiSp,
    AUTHOR = {Magnani, Valentino and Pinamonti, Andrea and Speight, Gareth},
     TITLE = {Porosity and differentiability of {L}ipschitz maps from
              stratified groups to {B}anach homogeneous groups},
   JOURNAL = {Ann. Mat. Pura Appl. (4)},
  FJOURNAL = {Annali di Matematica Pura ed Applicata. Series IV},
    VOLUME = {199},
      YEAR = {2020},
    NUMBER = {3},
     PAGES = {1197--1220},
      ISSN = {0373-3114},
   MRCLASS = {43A80 (28A75 49Q15 53C17)},
  MRNUMBER = {4102808},
MRREVIEWER = {Dorian Guzu},
       DOI = {10.1007/s10231-019-00918-w},
       URL = {https://doi-org.ezp.biblio.unitn.it/10.1007/s10231-019-00918-w},
}

@article {antonellimerlomarstrandmattila,
    AUTHOR = {Antonelli, Gioacchino and Merlo, Andrea},
     TITLE = {On rectifiable measures in {C}arnot groups:
              {M}arstrand-{M}attila rectifiability criterion},
   JOURNAL = {J. Funct. Anal.},
  FJOURNAL = {Journal of Functional Analysis},
    VOLUME = {283},
      YEAR = {2022},
    NUMBER = {1},
     PAGES = {Paper No. 109495, 62},
      ISSN = {0022-1236},
   MRCLASS = {53C17 (22E25 26A16 28A75 49Q15)},
  MRNUMBER = {4405432},
       DOI = {10.1016/j.jfa.2022.109495},
       URL = {https://doi.org/10.1016/j.jfa.2022.109495},
}

@misc{unextendablecurve,
  doi = {10.48550/ARXIV.2105.13873},
  
  url = {https://arxiv.org/abs/2105.13873},
  
  author = {Antonelli, Gioacchino and Merlo, Andrea},
  
  keywords = {Metric Geometry (math.MG), FOS: Mathematics, FOS: Mathematics, 53C17, 22E25, 28A75, 49Q15, 26A16},
  
  title = {Unextendable intrinsic Lipschitz curves},
  
  publisher = {arXiv},
  
  year = {2021},
  
  copyright = {arXiv.org perpetual, non-exclusive license}
}

@misc{antonellimerloexistencedensity,
  doi = {10.48550/ARXIV.2009.13941},
  
  url = {https://arxiv.org/abs/2009.13941},
  
  author = {Antonelli, Gioacchino and Merlo, Andrea},
  
  keywords = {Metric Geometry (math.MG), FOS: Mathematics, FOS: Mathematics, 53C17, 22E25, 28A75, 49Q15, 26A16},
  
  title = {On rectifiable measures in Carnot groups: existence of density},
  
  publisher = {arXiv},
  
  year = {2020},
  
  copyright = {arXiv.org perpetual, non-exclusive license}
}

@article {MR3494181,
    AUTHOR = {Balogh, Zolt\'{a}n M. and Lang, Urs and Pansu, Pierre},
     TITLE = {Lipschitz extensions of maps between {H}eisenberg groups},
   JOURNAL = {Ann. Inst. Fourier (Grenoble)},
  FJOURNAL = {Universit\'{e} de Grenoble. Annales de l'Institut Fourier},
    VOLUME = {66},
      YEAR = {2016},
    NUMBER = {4},
     PAGES = {1653--1665},
      ISSN = {0373-0956},
   MRCLASS = {43A80},
  MRNUMBER = {3494181},
MRREVIEWER = {Sergei S. Platonov},
       URL = {http://aif.cedram.org/item?id=AIF_2016__66_4_1653_0},
}

@article {LD17,
    AUTHOR = {Le Donne, Enrico},
     TITLE = {A primer on {C}arnot groups: homogenous groups,
              {C}arnot-{C}arath\'{e}odory spaces, and regularity of their
              isometries},
   JOURNAL = {Anal. Geom. Metr. Spaces},
  FJOURNAL = {Analysis and Geometry in Metric Spaces},
    VOLUME = {5},
      YEAR = {2017},
    NUMBER = {1},
     PAGES = {116--137},
   MRCLASS = {53C17 (22E25 22F30 43A80)},
  MRNUMBER = {3742567},
MRREVIEWER = {Andrea Pinamonti},
       DOI = {10.1515/agms-2017-0007},
       URL = {https://doi.org/10.1515/agms-2017-0007},
}

@article{LDNG19,
    AUTHOR = {Le Donne, Enrico and Nicolussi Golo, Sebastiano},
     TITLE = {Metric Lie groups admitting dilations},
   JOURNAL = {Arkiv för Matematik},
    VOLUME = {59},
      YEAR = {2021},
      PAGES = {125 -- 163},
}

@book {Sriva,
    AUTHOR = {Srivastava, S. M.},
     TITLE = {A course on {B}orel sets},
    SERIES = {Graduate Texts in Mathematics},
    VOLUME = {180},
 PUBLISHER = {Springer-Verlag, New York},
      YEAR = {1998},
     PAGES = {xvi+261},
      ISBN = {0-387-98412-7},
   MRCLASS = {04A15 (28A05 54-01 54H05)},
  MRNUMBER = {1619545},
MRREVIEWER = {Marek Balcerzak},
       DOI = {10.1007/978-3-642-85473-6},
       URL = {https://doi.org/10.1007/978-3-642-85473-6},
}

@book{Federer1996GeometricTheory,
    AUTHOR = {Federer, Herbert},
     TITLE = {Geometric measure theory},
    SERIES = {Die Grundlehren der mathematischen Wissenschaften, Band 153},
 PUBLISHER = {Springer-Verlag New York Inc., New York},
      YEAR = {1969},
     PAGES = {xiv+676},
   MRCLASS = {28.80 (26.00)},
  MRNUMBER = {0257325},
MRREVIEWER = {J. E. Brothers},
}

@book {MR2427002,
    AUTHOR = {Krantz, Steven G. and Parks, Harold R.},
     TITLE = {Geometric integration theory},
    SERIES = {Cornerstones},
 PUBLISHER = {Birkh\"{a}user Boston, Inc., Boston, MA},
      YEAR = {2008},
     PAGES = {xvi+339},
      ISBN = {978-0-8176-4676-9},
   MRCLASS = {49Q15 (26-01 26B20 28-01 49Q05 58A25 58C35)},
  MRNUMBER = {2427002},
MRREVIEWER = {Andreas Bernig},
       DOI = {10.1007/978-0-8176-4679-0},
       URL = {https://doi.org/10.1007/978-0-8176-4679-0},
}

@book{Simon1983LecturesTheory,
   AUTHOR = {Simon, Leon},
     TITLE = {Lectures on geometric measure theory},
    SERIES = {Proceedings of the Centre for Mathematical Analysis,
              Australian National University},
    VOLUME = {3},
 PUBLISHER = {Australian National University, Centre for Mathematical
              Analysis, Canberra},
      YEAR = {1983},
     PAGES = {vii+272},
      ISBN = {0-86784-429-9},
   MRCLASS = {49-01 (28A75 49F20)},
  MRNUMBER = {756417},
MRREVIEWER = {J. S. Joel},
}

@article {MR3123745,
    AUTHOR = {Magnani, Valentino},
     TITLE = {Towards differential calculus in stratified groups},
   JOURNAL = {J. Aust. Math. Soc.},
  FJOURNAL = {Journal of the Australian Mathematical Society},
    VOLUME = {95},
      YEAR = {2013},
    NUMBER = {1},
     PAGES = {76--128},
      ISSN = {1446-7887},
   MRCLASS = {22E30 (26B10)},
  MRNUMBER = {3123745},
MRREVIEWER = {Andr\'{a}s Domokos},
       DOI = {10.1017/S1446788713000098},
       URL = {https://doi.org/10.1017/S1446788713000098},
}

@article {MFSSC,
    AUTHOR = {Franchi, Bruno and Serapioni, Raul Paolo},
     TITLE = {Intrinsic {L}ipschitz graphs within {C}arnot groups},
   JOURNAL = {J. Geom. Anal.},
  FJOURNAL = {Journal of Geometric Analysis},
    VOLUME = {26},
      YEAR = {2016},
    NUMBER = {3},
     PAGES = {1946--1994},
      ISSN = {1050-6926},
   MRCLASS = {49Q15 (22E25 53C17 58C20 58C35)},
  MRNUMBER = {3511465},
MRREVIEWER = {Jingzhi Tie},
       DOI = {10.1007/s12220-015-9615-5},
       URL = {https://doi.org/10.1007/s12220-015-9615-5},
}

@article {magnaniarea,
    AUTHOR = {Magnani, Valentino},
     TITLE = {Differentiability and area formula on stratified {L}ie groups},
   JOURNAL = {Houston J. Math.},
  FJOURNAL = {Houston Journal of Mathematics},
    VOLUME = {27},
      YEAR = {2001},
    NUMBER = {2},
     PAGES = {297--323},
      ISSN = {0362-1588},
   MRCLASS = {49Q20},
  MRNUMBER = {1874099},
MRREVIEWER = {Anna Salvadori},
}

@article {step2,
    AUTHOR = {Franchi, Bruno and Serapioni, Raul and Serra Cassano,
              Francesco},
     TITLE = {On the structure of finite perimeter sets in step 2 {C}arnot
              groups},
   JOURNAL = {J. Geom. Anal.},
  FJOURNAL = {The Journal of Geometric Analysis},
    VOLUME = {13},
      YEAR = {2003},
    NUMBER = {3},
     PAGES = {421--466},
      ISSN = {1050-6926},
   MRCLASS = {49Q15 (53C17)},
  MRNUMBER = {1984849},
MRREVIEWER = {J. E. Brothers},
       DOI = {10.1007/BF02922053},
       URL = {https://doi.org/10.1007/BF02922053},
}

@article {folland75,
    AUTHOR = {Folland, G. B.},
     TITLE = {Subelliptic estimates and function spaces on nilpotent {L}ie
              groups},
   JOURNAL = {Ark. Mat.},
  FJOURNAL = {Arkiv f\"{o}r Matematik},
    VOLUME = {13},
      YEAR = {1975},
    NUMBER = {2},
     PAGES = {161--207},
      ISSN = {0004-2080},
   MRCLASS = {58G05 (35H05 44A25)},
  MRNUMBER = {494315},
MRREVIEWER = {A. S. Dynin},
       DOI = {10.1007/BF02386204},
       URL = {https://doi.org/10.1007/BF02386204},
}

@book {MR1039321,
    AUTHOR = {Engelking, Ryszard},
     TITLE = {General topology},
    SERIES = {Sigma Series in Pure Mathematics},
    VOLUME = {6},
   EDITION = {Second},
      NOTE = {Translated from the Polish by the author},
 PUBLISHER = {Heldermann Verlag, Berlin},
      YEAR = {1989},
     PAGES = {viii+529},
      ISBN = {3-88538-006-4},
   MRCLASS = {54-01 (54-02)},
  MRNUMBER = {1039321},
MRREVIEWER = {Gary Gruenhage},
}

@book {stein,
    AUTHOR = {Stein, Elias M.},
     TITLE = {Harmonic analysis: real-variable methods, orthogonality, and
              oscillatory integrals},
    SERIES = {Princeton Mathematical Series},
    VOLUME = {43},
      NOTE = {With the assistance of Timothy S. Murphy,
              Monographs in Harmonic Analysis, III},
 PUBLISHER = {Princeton University Press, Princeton, NJ},
      YEAR = {1993},
     PAGES = {xiv+695},
      ISBN = {0-691-03216-5},
   MRCLASS = {42-02 (35Sxx 43-02 47G30)},
  MRNUMBER = {1232192},
MRREVIEWER = {Michael Cowling},
}

@misc{BURQ,
  doi = {10.48550/ARXIV.2104.09931},
  
  url = {https://arxiv.org/abs/2104.09931},
  
  author = {Banica, Valeria and Burq, Nicolas},
  
  keywords = {Analysis of PDEs (math.AP), FOS: Mathematics, FOS: Mathematics},
  
  title = {Microlocal analysis of singular measures},
  
  publisher = {arXiv, accepted for publication in Math. Z.},
  
  year = {2021},
  
  copyright = {Creative Commons Zero v1.0 Universal}
}

@book {equivmetr,
    AUTHOR = {Bonfiglioli, A. and Lanconelli, E. and Uguzzoni, F.},
     TITLE = {Stratified {L}ie groups and potential theory for their
              sub-{L}aplacians},
    SERIES = {Springer Monographs in Mathematics},
 PUBLISHER = {Springer, Berlin},
      YEAR = {2007},
     PAGES = {xxvi+800},
      ISBN = {978-3-540-71896-3; 3-540-71896-6},
   MRCLASS = {22E30 (31C45 35-02 35H10 43A80)},
  MRNUMBER = {2363343},
MRREVIEWER = {Maria Stella Fanciullo},
}

@book {Cartan,
     TITLE = {Lie groups and {L}ie algebras. {I}},
    SERIES = {Encyclopaedia of Mathematical Sciences},
    VOLUME = {20},
      NOTE = {Foundations of Lie theory. Lie transformation groups,
              A translation of {{\i}t Current problems in mathematics.
              Fundamental directions. Vol. 20} (Russian), Akad. Nauk SSSR,
              Vsesoyuz. Inst. Nauchn. i Tekhn. Inform., Moscow, 1988 [
              MR0950861 (89m:22010)],
              Translation by A. Kozlowski,
              Translation edited by A. L. Onishchik},
 PUBLISHER = {Springer-Verlag, Berlin},
      YEAR = {1993},
     PAGES = {vi+235},
      ISBN = {3-540-18697-2},
   MRCLASS = {22-01 (22Exx)},
  MRNUMBER = {1306737},
       DOI = {10.1007/978-3-642-57999-8},
       URL = {https://doi.org/10.1007/978-3-642-57999-8},
}

@article {PaoliniStepanov,
    AUTHOR = {Paolini, Emanuele and Stepanov, Eugene},
     TITLE = {Structure of metric cycles and normal one-dimensional
              currents},
   JOURNAL = {J. Funct. Anal.},
  FJOURNAL = {Journal of Functional Analysis},
    VOLUME = {264},
      YEAR = {2013},
    NUMBER = {6},
     PAGES = {1269--1295},
      ISSN = {0022-1236},
   MRCLASS = {49Q15 (58A25)},
  MRNUMBER = {3017264},
MRREVIEWER = {Laurent Moonens},
       DOI = {10.1016/j.jfa.2012.12.007},
       URL = {https://doi.org/10.1016/j.jfa.2012.12.007},
}

@incollection {ACP1,
    AUTHOR = {Alberti, Giovanni and Cs\"{o}rnyei, Marianna and Preiss, David},
     TITLE = {Structure of null sets in the plane and applications},
 BOOKTITLE = {European {C}ongress of {M}athematics},
     PAGES = {3--22},
 PUBLISHER = {Eur. Math. Soc., Z\"{u}rich},
      YEAR = {2005},
   MRCLASS = {26B30 (03E05 26A16 26A27 26B05)},
  MRNUMBER = {2185733},
MRREVIEWER = {Tam\'{a}s M\'{a}trai},
}

@inproceedings {ACP2,
    AUTHOR = {Alberti, Giovanni and Cs\"{o}rnyei, Marianna and Preiss, David},
     TITLE = {Differentiability of {L}ipschitz functions, structure of null
              sets, and other problems},
 BOOKTITLE = {Proceedings of the {I}nternational {C}ongress of
              {M}athematicians. {V}olume {III}},
     PAGES = {1379--1394},
 PUBLISHER = {Hindustan Book Agency, New Delhi},
      YEAR = {2010},
   MRCLASS = {26B05 (26A16 26B35 28A75)},
  MRNUMBER = {2827846},
MRREVIEWER = {Ryszard J. Pa\.{w}lak},
}

@article {AlbMar,
    AUTHOR = {Alberti, Giovanni and Marchese, Andrea},
     TITLE = {On the differentiability of {L}ipschitz functions with respect
              to measures in the {E}uclidean space},
   JOURNAL = {Geom. Funct. Anal.},
  FJOURNAL = {Geometric and Functional Analysis},
    VOLUME = {26},
      YEAR = {2016},
    NUMBER = {1},
     PAGES = {1--66},
      ISSN = {1016-443X},
   MRCLASS = {49Q15 (26A27 26B05 28A75)},
  MRNUMBER = {3494485},
MRREVIEWER = {Gareth Speight},
       DOI = {10.1007/s00039-016-0354-y},
       URL = {https://doi.org/10.1007/s00039-016-0354-y},
}

@misc{AlbMar2,
    title={On the structure of flat chains with finite mass},
    author={Giovanni Alberti and Andrea Marchese},
    year={2022},
    eprint={https://cvgmt.sns.it/paper/5434/},
    archivePrefix={cvgmt},
    primaryClass={math.FA}
}

@article {Bate,
    AUTHOR = {Bate, David},
     TITLE = {Structure of measures in {L}ipschitz differentiability spaces},
   JOURNAL = {J. Amer. Math. Soc.},
  FJOURNAL = {Journal of the American Mathematical Society},
    VOLUME = {28},
      YEAR = {2015},
    NUMBER = {2},
     PAGES = {421--482},
      ISSN = {0894-0347},
   MRCLASS = {46G05 (30L99 49J52 53C23)},
  MRNUMBER = {3300699},
MRREVIEWER = {Riikka Korte},
       DOI = {10.1090/S0894-0347-2014-00810-9},
       URL = {https://doi.org/10.1090/S0894-0347-2014-00810-9},
}

@article {Cheeger,
    AUTHOR = {Cheeger, J.},
     TITLE = {Differentiability of {L}ipschitz functions on metric measure
              spaces},
   JOURNAL = {Geom. Funct. Anal.},
  FJOURNAL = {Geometric and Functional Analysis},
    VOLUME = {9},
      YEAR = {1999},
    NUMBER = {3},
     PAGES = {428--517},
      ISSN = {1016-443X},
   MRCLASS = {53C23 (49J52)},
  MRNUMBER = {1708448},
MRREVIEWER = {William P. Minicozzi, II},
       DOI = {10.1007/s000390050094},
       URL = {https://doi.org/10.1007/s000390050094},
}

@misc{GP17,
Author={Gigli, Nicola and Pasqualetto, Enrico},
Title={Behaviour of the reference measure on $RCD$ spaces under charts},
Journal={Communications in Analysis and Geometry},
howpublished={To appear},
}

@misc{marino2020short,
    title={A short proof of the infinitesimal Hilbertianity of the weighted Euclidean space},
    author={Simone Di Marino and Danka Lučić and Enrico Pasqualetto},
    year={2020},
    eprint={2005.02924},
    archivePrefix={arXiv},
    primaryClass={math.FA}
}

@article {KM18,
    AUTHOR = {Kell, Martin and Mondino, Andrea},
     TITLE = {On the volume measure of non-smooth spaces with {R}icci
              curvature bounded below},
   JOURNAL = {Ann. Sc. Norm. Super. Pisa Cl. Sci. (5)},
  FJOURNAL = {Annali della Scuola Normale Superiore di Pisa. Classe di
              Scienze. Serie V},
    VOLUME = {18},
      YEAR = {2018},
    NUMBER = {2},
     PAGES = {593--610},
      ISSN = {0391-173X},
   MRCLASS = {53C23 (46G05)},
  MRNUMBER = {3801291},
MRREVIEWER = {Nan Li},
}

@incollection {MDPR,
    AUTHOR = {De Philippis, Guido and Marchese, Andrea and Rindler, Filip},
     TITLE = {On a conjecture of {C}heeger},
 BOOKTITLE = {Measure theory in non-smooth spaces},
    SERIES = {Partial Differ. Equ. Meas. Theory},
     PAGES = {145--155},
 PUBLISHER = {De Gruyter Open, Warsaw},
      YEAR = {2017},
   MRCLASS = {53C23 (28A15 49Q20)},
  MRNUMBER = {3701738},
MRREVIEWER = {Loreno Heer},
}

@article {ZimmermanC1,
    AUTHOR = {Zimmerman, Scott},
     TITLE = {The {W}hitney extension theorem for {$C^1$}, horizontal curves
              in the {H}eisenberg group},
   JOURNAL = {J. Geom. Anal.},
  FJOURNAL = {Journal of Geometric Analysis},
    VOLUME = {28},
      YEAR = {2018},
    NUMBER = {1},
     PAGES = {61--83},
      ISSN = {1050-6926},
   MRCLASS = {53C17},
  MRNUMBER = {3745849},
MRREVIEWER = {Matthew Daniel Romney},
       DOI = {10.1007/s12220-017-9807-2},
       URL = {https://doi.org/10.1007/s12220-017-9807-2},
}

@article {PSZ,
    AUTHOR = {Pinamonti, Andrea and Speight, Gareth and Zimmerman, Scott},
     TITLE = {A {$C^m$} {W}hitney extension theorem for horizontal curves in
              the {H}eisenberg group},
   JOURNAL = {Trans. Amer. Math. Soc.},
  FJOURNAL = {Transactions of the American Mathematical Society},
    VOLUME = {371},
      YEAR = {2019},
    NUMBER = {12},
     PAGES = {8971--8992},
      ISSN = {0002-9947},
   MRCLASS = {53C17 (54C20)},
  MRNUMBER = {3955570},
MRREVIEWER = {Matthew Daniel Romney},
       DOI = {10.1090/tran/7806},
       URL = {https://doi.org/10.1090/tran/7806},
}

@article {LDS,
    AUTHOR = {Le Donne, Enrico and Speight, Gareth},
     TITLE = {Lusin approximation for horizontal curves in step 2 {C}arnot
              groups},
   JOURNAL = {Calc. Var. Partial Differential Equations},
  FJOURNAL = {Calculus of Variations and Partial Differential Equations},
    VOLUME = {55},
      YEAR = {2016},
    NUMBER = {5},
     PAGES = {Art. 111, 22},
      ISSN = {0944-2669},
   MRCLASS = {28C15 (43A80 49Q15 53C17)},
  MRNUMBER = {3549917},
MRREVIEWER = {Davide Vittone},
       DOI = {10.1007/s00526-016-1054-z},
       URL = {https://doi.org/10.1007/s00526-016-1054-z},
}

@misc{julia2021lipschitz,
      title={Lipschitz functions on submanifolds in Heisenberg groups}, 
      author={Antoine Julia and Sebastiano Nicolussi Golo and Davide Vittone},
      year={2021},
      eprint={2107.00515},
      archivePrefix={arXiv},
      primaryClass={math.MG}
}

@article {JS2017,
    AUTHOR = {Juillet, Nicolas and Sigalotti, Mario},
     TITLE = {Pliability, or the {W}hitney extension theorem for curves in
              {C}arnot groups},
   JOURNAL = {Anal. PDE},
  FJOURNAL = {Analysis \& PDE},
    VOLUME = {10},
      YEAR = {2017},
    NUMBER = {7},
     PAGES = {1637--1661},
      ISSN = {2157-5045},
   MRCLASS = {22E25 (53C17 54C20 58C25 93B27)},
  MRNUMBER = {3683924},
MRREVIEWER = {Andrea Pinamonti},
       DOI = {10.2140/apde.2017.10.1637},
       URL = {https://doi.org/10.2140/apde.2017.10.1637},
}

@article {Speight2016,
    AUTHOR = {Speight, Gareth},
     TITLE = {Lusin approximation and horizontal curves in {C}arnot groups},
   JOURNAL = {Rev. Mat. Iberoam.},
  FJOURNAL = {Revista Matem\'{a}tica Iberoamericana},
    VOLUME = {32},
      YEAR = {2016},
    NUMBER = {4},
     PAGES = {1423--1444},
      ISSN = {0213-2230},
   MRCLASS = {53C17 (26A46)},
  MRNUMBER = {3593531},
MRREVIEWER = {Andrea Pinamonti},
       DOI = {10.4171/RMI/924},
       URL = {https://doi.org/10.4171/RMI/924},
}

@article {Pansu,
    AUTHOR = {Pansu, Pierre},
     TITLE = {M\'{e}triques de {C}arnot-{C}arath\'{e}odory et quasiisom\'{e}tries des
              espaces sym\'{e}triques de rang un},
   JOURNAL = {Ann. of Math. (2)},
  FJOURNAL = {Annals of Mathematics. Second Series},
    VOLUME = {129},
      YEAR = {1989},
    NUMBER = {1},
     PAGES = {1--60},
      ISSN = {0003-486X},
   MRCLASS = {53C20 (22E40)},
  MRNUMBER = {979599},
MRREVIEWER = {Gudlaugur Thorbergsson},
       DOI = {10.2307/1971484},
       URL = {https://doi.org/10.2307/1971484},
}

@article {Keith,
    AUTHOR = {Keith, Stephen},
     TITLE = {Measurable differentiable structures and the {P}oincar\'{e}
              inequality},
   JOURNAL = {Indiana Univ. Math. J.},
  FJOURNAL = {Indiana University Mathematics Journal},
    VOLUME = {53},
      YEAR = {2004},
    NUMBER = {4},
     PAGES = {1127--1150},
      ISSN = {0022-2518},
   MRCLASS = {53C23 (26D15)},
  MRNUMBER = {2095451},
MRREVIEWER = {Jana Bj\"{o}rn},
       DOI = {10.1512/iumj.2004.53.2417},
       URL = {https://doi.org/10.1512/iumj.2004.53.2417},
}

@article {Bate2,
    AUTHOR = {Bate, David},
     TITLE = {Purely unrectifiable metric spaces and perturbations of
              {L}ipschitz functions},
   JOURNAL = {Acta Math.},
  FJOURNAL = {Acta Mathematica},
    VOLUME = {224},
      YEAR = {2020},
    NUMBER = {1},
     PAGES = {1--65},
      ISSN = {0001-5962},
   MRCLASS = {31E05 (26A16 28A75 49Q15)},
  MRNUMBER = {4086714},
       DOI = {10.4310/acta.2020.v224.n1.a1},
       URL = {https://doi.org/10.4310/acta.2020.v224.n1.a1},
}

@article {Rabasa,
    AUTHOR = {Arroyo-Rabasa, Adolfo and De Philippis, Guido and Hirsch,
              Jonas and Rindler, Filip},
     TITLE = {Dimensional estimates and rectifiability for measures
              satisfying linear {PDE} constraints},
   JOURNAL = {Geom. Funct. Anal.},
  FJOURNAL = {Geometric and Functional Analysis},
    VOLUME = {29},
      YEAR = {2019},
    NUMBER = {3},
     PAGES = {639--658},
      ISSN = {1016-443X},
   MRCLASS = {49Q15 (28A75)},
  MRNUMBER = {3962875},
MRREVIEWER = {Davide Vittone},
       DOI = {10.1007/s00039-019-00497-1},
       URL = {https://doi.org/10.1007/s00039-019-00497-1},
}

@article {DPDRG18,
    AUTHOR = {De Philippis, Guido and De Rosa, Antonio and Ghiraldin,
              Francesco},
     TITLE = {Rectifiability of varifolds with locally bounded first
              variation with respect to anisotropic surface energies},
   JOURNAL = {Comm. Pure Appl. Math.},
  FJOURNAL = {Communications on Pure and Applied Mathematics},
    VOLUME = {71},
      YEAR = {2018},
    NUMBER = {6},
     PAGES = {1123--1148},
      ISSN = {0010-3640},
   MRCLASS = {49Q15 (28A75)},
  MRNUMBER = {3794529},
MRREVIEWER = {S\l awomir Kolasi\'{n}ski},
       DOI = {10.1002/cpa.21713},
       URL = {https://doi.org/10.1002/cpa.21713},
}

@article {DPR,
    AUTHOR = {De Philippis, Guido and Rindler, Filip},
     TITLE = {On the structure of {$\mathcal A$}-free measures and applications},
   JOURNAL = {Ann. of Math. (2)},
  FJOURNAL = {Annals of Mathematics. Second Series},
    VOLUME = {184},
      YEAR = {2016},
    NUMBER = {3},
     PAGES = {1017--1039},
      ISSN = {0003-486X},
   MRCLASS = {49Q20 (28B20 46A22)},
  MRNUMBER = {3549629},
MRREVIEWER = {Pei Biao Zhao},
       DOI = {10.4007/annals.2016.184.3.10},
       URL = {https://doi.org/10.4007/annals.2016.184.3.10},
}

@article {Kirchheimarea,
    AUTHOR = {Kirchheim, Bernd},
     TITLE = {Rectifiable metric spaces: local structure and regularity of
              the {H}ausdorff measure},
   JOURNAL = {Proc. Amer. Math. Soc.},
  FJOURNAL = {Proceedings of the American Mathematical Society},
    VOLUME = {121},
      YEAR = {1994},
    NUMBER = {1},
     PAGES = {113--123},
      ISSN = {0002-9939},
   MRCLASS = {28A78},
  MRNUMBER = {1189747},
MRREVIEWER = {G. Freilich},
       DOI = {10.2307/2160371},
       URL = {https://doi.org/10.2307/2160371},
}

@article {MerloAntonelli,
    AUTHOR = {Antonelli, Gioacchino and Merlo, Andrea},
     TITLE = {On rectifiable measures in {C}arnot groups: representation},
   JOURNAL = {Calc. Var. Partial Differential Equations},
  FJOURNAL = {Calculus of Variations and Partial Differential Equations},
    VOLUME = {61},
      YEAR = {2022},
    NUMBER = {1},
     PAGES = {Paper No. 7},
      ISSN = {0944-2669},
   MRCLASS = {53C17 (22E25 26A16 28A75 49Q15)},
  MRNUMBER = {4342997},
       DOI = {10.1007/s00526-021-02112-4},
       URL = {https://doi.org/10.1007/s00526-021-02112-4},
}

@article {Smirnov,
    AUTHOR = {Smirnov, S. K.},
     TITLE = {Decomposition of solenoidal vector charges into elementary
              solenoids, and the structure of normal one-dimensional flows},
   JOURNAL = {Algebra i Analiz},
  FJOURNAL = {Rossi\u{\i}skaya Akademiya Nauk. Algebra i Analiz},
    VOLUME = {5},
      YEAR = {1993},
    NUMBER = {4},
     PAGES = {206--238},
      ISSN = {0234-0852},
   MRCLASS = {49Q15 (49Q20 58A25 58C35)},
  MRNUMBER = {1246427},
MRREVIEWER = {Andrew Bucki},
}

@phdthesis{tesimonti,
AUTHOR={Monti, Roberto},
TITLE={Distances, boundaries and surface measures in Carnot-Carath\'eodory spaces},
SCHOOL={Universit\'a degli studi di Trento},
YEAR={2001},
}

@article {preissjfa,
    AUTHOR = {Preiss, D.},
     TITLE = {Differentiability of {L}ipschitz functions on {B}anach spaces},
   JOURNAL = {J. Funct. Anal.},
  FJOURNAL = {Journal of Functional Analysis},
    VOLUME = {91},
      YEAR = {1990},
    NUMBER = {2},
     PAGES = {312--345},
      ISSN = {0022-1236},
   MRCLASS = {46G05 (49J50 58B10)},
  MRNUMBER = {1058975},
MRREVIEWER = {Simon Fitzpatrick},
       DOI = {10.1016/0022-1236(90)90147-D},
       URL = {https://doi.org/10.1016/0022-1236(90)90147-D},
}

\end{document}